\documentclass[11pt, reqno]{amsart}

\usepackage{fullpage}
\usepackage{amsmath}
\usepackage{amsthm}
\usepackage{amsfonts}
\usepackage{amssymb}

\hyphenation{Min-kows-ki}
\newtheorem{lemma}{Lemma}[section]
\newtheorem{proposition}[lemma]{Proposition}
\newtheorem{theorem}[lemma]{Theorem}
\newtheorem{corollary}[lemma]{Corollary}
\theoremstyle{remark}
\newtheorem{remark}[lemma]{Remark}

\theoremstyle{definition}
\newtheorem{definition}[lemma]{Definition}

\numberwithin{equation}{section}

\def\C{{\mathbb C}}

\def\sign{{\rm sign}}

\def\Re{{\,\rm Re}}

\def\Ai{{\rm Ai}}
\def\Bi{{\rm Bi}}

\title{Semiclassical low energy scattering for one--dimensional Schr\"odinger operators with exponentially decaying potentials}

\author{Ovidiu Costin}
\address{The Ohio State University, Department of Mathematics, 100 Math Tower, 231 West 18th Avenue,
Columbus, OH 43210-1174, U.S.A.}
\email{costin@math.ohio-state.edu}
\thanks{The first author was partly supported by the National Science Foundation
DMS-0807266 and a Guggenheim fellowship.}

\author{Roland Donninger}
\address{\'Ecole Polytechnique F\'ed\'erale de Lausanne, 
MA B1 487, Station 8, CH-1015 Lausanne, Switzerland }
\email{roland.donninger@epfl.ch}
\thanks{The second author 
was partly supported by an Erwin Schr\"odinger fellowship of the
FWF (Austrian Science Fund), Project No.~J2843.}

\author{Wilhelm Schlag}
\address{University of Chicago, Department of Mathematics,
5734 South University Avenue, Chicago, IL 60637, U.S.A.}
\email{schlag@math.uchicago.edu}
\thanks{The third author was partly supported by the National
Science Foundation DMS-0617854 and a Guggenheim fellowship.}

\author{Saleh Tanveer}
\address{The Ohio State University, Department of Mathematics, 100 Math Tower, 231 West 18th Avenue,
Columbus, OH 43210-1174, U.S.A.}
\email{tanveer@math.ohio-state.edu}
\thanks{The fourth author was partly supported by the National Science Foundation
DMS-0807266}

\begin{document}
\begin{abstract}
 We consider semiclassical Schr\"odinger operators on the real line of the form 
$$H(\hbar)=-\hbar^2 \frac{d^2}{dx^2}+V(\cdot;\hbar)$$ 
with $\hbar>0$ small. The potential $V$ is assumed to be smooth, positive and exponentially decaying towards infinity. We establish semiclassical global representations of Jost solutions $f_\pm(\cdot,E;\hbar)$ with error terms that are uniformly controlled for small $E$ and $\hbar$, and construct the scattering matrix as well as the semiclassical spectral measure associated to $H(\hbar)$. 
 This is crucial in order to obtain decay bounds for the corresponding wave and Schr\"odinger flows.
 As an application we consider the wave equation on a Schwarzschild background for large angular momenta $\ell$ where the role of the small parameter $\hbar$ is played by $\ell^{-1}$.
 It follows from the  results in this paper and \cite{DSS2}, that the decay bounds obtained in \cite{DSS1}, \cite{DS} for individual 
 angular momenta $\ell$ can be summed to yield the sharp $t^{-3}$ decay for data without symmetry assumptions. 
\end{abstract}

\maketitle

\section{Introduction}
In this paper we study Schr\"odinger operators $H(\hbar)$ of the form
\begin{equation}
\label{eq:SO}
H(\hbar)f(x):=-\hbar^2 f''(x)+V(x;\hbar)f(x) 
\end{equation}
for $x \in \mathbb{R}$.
The potential $V$ is assumed to be positive, smooth \footnote{One can, of course, obtain similar results under the weaker assumption that $V$ has only a finite number of derivatives. However, for the sake of simplicity we do not pursue this issue here.} and exponentially decaying as $|x| \to \infty$.
As indicated by our notation, we also allow $V$ to depend on the semiclassical parameter $\hbar$, 
but only in a very mild way, i.e., all assumptions on $V$ are assumed to hold 
uniformly in small $\hbar$.
This will be important for our main application, the wave equation on a Schwarzschild background.
More precisely, we require $V(\cdot;\hbar)\in C^\infty(\mathbb{R})$ 
to be of the form $V(x;\hbar)=V_\pm(e^{-|x|};\hbar)$
for $\pm x \geq a$
where $a>0$ is some (large) constant and
$V_\pm(\cdot;\hbar)$ are smooth functions with $V_\pm(0;\hbar)=0$.
It follows from the standard theory (see e.g., \cite{teschl}, \cite{weidmann2}) that under the above assumptions the operator $H(\hbar)$ has a 
self--adjoint realization on $L^2(\mathbb{R})$ with domain $\mathcal{D}(H(\hbar))=H^2(\mathbb{R})$
and its spectrum is purely absolutely continuous with $\sigma(H(\hbar))=[0,\infty)$.
The present paper is devoted to the study of low energy ($E \to 0+$) scattering in the semiclassical limit $\hbar \to 0+$, i.e., 
we consider the low energy and semiclassical limits \emph{simultaneously}.
To be more precise, we are primarily interested in global semiclassical  representations of the 
Jost solutions $f_\pm(x,E;\hbar)$ and their derivatives with respect to $E$ that hold \emph{uniformly} in small 
$E$ and $\hbar$.
As a consequence, we obtain uniform bounds on relevant quantities that are constructed based on the Jost solutions
such as the reflection and transmission amplitudes as well as the spectral measure.

Our main motivation for investigating this problem comes from the large angular momentum behavior
of solutions to the Regge--Wheeler equation which describes perturbations of a Schwarzschild black
hole in general relativity \cite{RW}.
The Regge--Wheeler potential decays exponentially towards the event horizon of the black hole and thus,
one is naturally led to a Schr\"odinger operator of the form \eqref{eq:SO} where $\hbar^{-1}$ 
corresponds to the angular momentum $\ell$.
Thus, in the limit $\ell \to \infty$ the problem becomes semiclassical.
Consequently, the present paper is part of the series \cite{DSS1}, \cite{DSS2}, \cite{DS} devoted
to the study of wave evolution on a Schwarzschild background.
However, we believe that our results and techniques are of independent interest and therefore we
decided to embed our work in the broader context of rigorous quantum mechanical scattering theory in
the semiclassical limit.
Scattering theory is by now a classical subject and we cannot possibly do justice to the  vast  literature in that area.
We confine our discussion of the 
existing literature
to selected works devoted to the one--dimensional case which are related to our problem.

A standard reference for scattering on the real line is 
\cite{DS} which provides many of the by now classical results on the subject.
Low energy scattering for one--dimensional Schr\"odinger operators is studied in, e.g., \cite{Klaus88},
\cite{Newton86}, \cite{BGW85}, \cite{Yafaev82} whereas the semiclassical limit is considered in
\cite{Ramond96}. However, we find that the available results are not suitable for our purposes.
The only paper we are aware of that studies the double asymptotics $E,\hbar \to 0+$ in a way similar to us 
is \cite{schlag4} which deals with inverse square potentials.
Since the Regge--Wheeler potential exhibits inverse square decay towards spatial
infinity, \cite{schlag4} is used to deal with the far field region in our main application which we now
describe in more detail.

\subsection{The problem of wave evolution on a Schwarzschild background}

One of the major open problems in mathematical general relativity is a proof of the stability of
black holes as described by the Kerr solution.
As a first step in this direction one considers the behavior of linearized perturbations of
Schwarzschild black holes.
It is well-known that such perturbations are described by the Regge--Wheeler equation
\begin{equation} 
\label{eq:RW}
\partial_t^2\psi_\ell(t,x)-\partial_x^2\psi_\ell(t,x)+V_\ell(x)\psi_\ell(t,x)=0,\quad (t,x) \in 
[0,\infty) \times \mathbb{R}
\end{equation}
which is effectively a one--dimensional wave equation with a potential.
The number $\ell \in \mathbb{N}_0$ denotes the angular momentum and the coordinate system $(t,x)$ is chosen 
in such a way that $x \to \infty$ corresponds to spatial
infinity whereas $x \to -\infty$ is the location of the event horizon of the black hole.
Furthermore, the Regge--Wheeler potential $V_\ell$ reads
$$ V_\ell(x)=\left (1-\frac{1}{r(x)}\right)\left (\frac{\ell(\ell+1)}{r(x)^2}+
\frac{\sigma}{r(x)^3} \right )$$
where $\sigma \in \{-3,0,1\}$ is a fixed parameter which corresponds to different physical
situations.
The function $r(x)$ is implicitly defined by the equation $x=r(x)+\log(r(x)-1)$.
Consequently, $V_\ell(x)$ behaves like $\frac{\ell(\ell+1)}{x^2}$ as 
$x \to \infty$ and decays exponentially as $x \to -\infty$.
More precisely, for $x<0$, $V_\ell(x)$ can be written as a convergent series in powers of $e^x$, see 
\cite{DSS2}.
We refer the reader not familiar with this problem to the standard literature, e.g., 
\cite{Chandra} or the introduction of \cite{DSS1} as well as the references therein.
Recently  \cite{DSS1}, \cite{DS} established   decay bounds for solutions of Eq.~\eqref{eq:RW} of the form
\begin{equation} 
\label{eq:Price}
\|\langle \cdot \rangle^{-4}\psi_\ell(t,\cdot)\|_{L^\infty}
\leq C_\ell \langle t\rangle^{-3}\left \{
\|\langle \cdot \rangle^4 \psi_\ell(0,\cdot)\|_{L^1}+
\|\langle \cdot \rangle^4 \partial_x \psi_\ell(0,\cdot)\|_{L^1}+
\|\langle \cdot \rangle^4 \partial_t\psi_\ell(0,\cdot)\|_{L^1}
\right \}
\end{equation}
for $t \geq 0$ and any $\ell \in \mathbb{N}_0$.
Since the Regge--Wheeler equation \eqref{eq:RW} results from an angular momentum decomposition,
the ultimate goal is to sum the decay bounds \eqref{eq:Price} over all $\ell$.
However, in order to conclude the overall decay $t^{-3}$ one needs to control the behavior of the constants
$C_\ell$ in \eqref{eq:Price} as $\ell \to \infty$.
At this point the semiclassical analysis of the present paper (and also \cite{schlag4}) comes into play.
Indeed, by dividing Eq.~\eqref{eq:RW} by $\hbar^{-2}:=\ell(\ell+1)$ one is led to a semiclassical
problem of the type \eqref{eq:SO}.
As a consequence, the Regge--Wheeler potential is a prominent example from 
mathematical physics where our results apply, at least for $x \leq 0$.
In the case $x\geq 0$, where the potential decays like an inverse square, one has to rely on \cite{schlag4}.
In fact, in \cite{DSS2} it is shown how to synthesize \cite{schlag4}
and the present results to obtain the sharp $t^{-3}$ decay for linear perturbations of 
Schwarzschild without symmetry assumptions on the data.
At this point it has to be remarked that the long--sought $t^{-3}$ decay also follows from an 
independent result by Tataru \cite{tataru09} which appeared simultaneously to \cite{DSS2}.
An important consequence of our results (in conjunction with \cite{schlag4}) is that the small energy contributions decay rapidly as $\ell \to \infty$
and do not present an obstruction to the summation.
Indeed, as expected, the important contributions come from energies close to the maximum 
of the potential \cite{DSS2}.

\subsection{Notations and conventions}
Throughout this work we write $\langle \cdot \rangle: \mathbb{R} \to [\frac12,\infty)$ to denote a 
smooth, symmetric function that satisfies $\langle x \rangle=|x|$ for $|x|\geq 1$.
Furthermore, we assume $\langle \cdot \rangle$ to be strictly monotonically increasing on $(0,\infty)$.
For two real numbers $a$, $b$ we write $a \lesssim b$ if there exists a constant $c>0$ such that
$a \leq cb$. Unless stated otherwise, it is assumed that $c$ is absolute, i.e., it does not 
depend on any of the quantities
involved in the inequality. Similarly, we use $a \gtrsim b$ and $a \simeq b$ means $a \lesssim b$
and $b \lesssim a$.
Asymptotic equality is denoted by the symbol $\sim$.
For the Wronskian $W(f,g)$ of two functions $f,g$ we use the convention $W(f,g):=fg'-f'g$.
The expression $O(f(x))$ for some function $f$ is used to denote a generic \emph{real--valued}
function that satisfies $|O(f(x))|\lesssim |f(x)|$ in a domain of $x$ that follows from the context.
We write $O_\mathbb{C}(f(x))$ if the functions attains complex values as well.
Finally, the letter $C$ (possibly with indices) stands for a positive constant that may change its
value at each occurrence.

\subsection{The scattering matrix}
The (semiclassical) \emph{Jost solutions} $f_\pm(x,E;\hbar)$ of the operator $H(\hbar)$, where $E\geq 0$, 
are defined by the relation \footnote{Note carefully our somewhat unusual convention. We denote
by $E$ the \emph{square root} of the energy rather than the energy itself. This turns out to
be more convenient.}
$$ H(\hbar)f_\pm(\cdot,E;\hbar)=E^2 f_\pm(\cdot,E;\hbar) $$
and the asymptotic conditions $f_\pm(x,E;\hbar)\sim e^{\pm i\frac{E}{\hbar}x}$ as $x \to \pm \infty$.
Note, however, that $f_\pm(\cdot,E;\hbar)$ are not eigenfunctions of $H(\hbar)$ since they do not belong
to $L^2(\mathbb{R})$.
It is a classical result (see e.g., \cite{deift}) that the Jost solutions exist if
$V(\cdot;\hbar) \in L^1(\mathbb{R})$ (which is clearly the case here) and that they are unique.
The physical significance of the Jost solutions emerges from the fact that they correspond to outgoing waves. 
This terminology is explained by the observation that the functions $\psi_\pm(t,x,E;\hbar)=e^{-i\frac{E^2}{\hbar}t}f_\pm(x,E;\hbar)$ 
satisfy the Schr\"odinger equation
$$ i\hbar \partial_t \psi_\pm(t,\cdot,E;\hbar)
=H(\hbar)\psi_\pm(t,\cdot,E;\hbar) $$
and they behave asymptotically as $\psi_\pm(t,x,E;\hbar)\sim e^{-i\frac{E}{\hbar}(Et\mp x)}$ for $x \to \pm \infty$. 
In other words, they describe waves that travel towards $\pm \infty$.
It should again be remarked here that traditionally one writes $E$ instead of $E^2$ but in order to avoid
square roots we prefer to use $E^2$.
From the construction of the Jost solutions via Volterra iterations \cite{deift} 
it also follows that the asymptotics can be formally differentiated, i.e., 
$f_\pm'(x,E;\hbar)\sim \pm i\frac{E}{\hbar}e^{\pm i\frac{E}{\hbar}x}$ as $x \to \pm \infty$ and 
we immediately obtain the ($x$--independent) Wronskians
$$ W(f_\pm(\cdot,E;\hbar), \overline{f_\pm(\cdot,E;\hbar)})=\mp 2i\tfrac{E}{\hbar} $$
by evaluation as $x \to \pm \infty$.
Consequently, if $E \not=0$, $\{f_\pm(\cdot,E;\hbar),\overline{f_\pm(\cdot,E;\hbar)}\}$ are two fundamental systems for the ordinary differential equation $H(\hbar)f=E^2 f$ and there must exist connection coefficients $a_\pm(E;\hbar)$ and $b_\pm(E;\hbar)$ such that
$$ f_\pm(\cdot,E;\hbar)=a_\mp(E;\hbar)f_\mp(\cdot,E;\hbar)+b_\mp(E;\hbar)\overline{f_\mp(\cdot,E;\hbar)}. $$
This yields
$$ 2i\tfrac{E}{\hbar}=W(f_-,\overline{f_-})=W(a_+f_+ + b_+\overline{f_+},\overline{a_+ f_+}+\overline{b_+}f_+)
 =-2i\tfrac{E}{\hbar}|a_+|^2+2i\tfrac{E}{\hbar}|b_+|^2$$ 
where we omit the arguments $E$ and $\hbar$ occasionally in order to improve readability. 
We obtain $|b_+(E;\hbar)|\geq 1$ and consequently,
\begin{equation}
\label{eq:West}
|W(f_-,f_+)|=|W(a_+f_++b_+\overline{f_+}, f_+)|=2|b_+|\tfrac{|E|}{\hbar}\geq 2\tfrac{|E|}{\hbar} 
\end{equation}
which shows that $\{f_-(\cdot,E;\hbar),f_+(\cdot,E;\hbar)\}$ and $\{\overline{f_-(\cdot,E;\hbar)},
\overline{f_+(\cdot,E;\hbar)}\}$ are also two fundamental systems for the equation $H(\hbar)f=E^2 f$ provided that $E \not= 0$.
This is already a nontrivial statement since it contains global information.
For obvious reasons these two systems are called \emph{outgoing} and \emph{incoming}, respectively.
If $E=0$, $\{f_-(\cdot,0;\hbar),f_+(\cdot,0;\hbar)\}$ may or may not be a fundamental system for $H(\hbar)f=0$, depending on the 
special form of the potential. 
In the latter case one speaks of the existence of a \emph{zero energy resonance}.
However, we are not faced with this complication since it is not hard to see that the positivity 
assumption on the potential already excludes the existence of a zero energy resonance, see \cite{schlag4}.
A general solution $f(\cdot,E;\hbar)$ of $H(\hbar)f(\cdot,E;\hbar)=E^2 f(\cdot,E;\hbar)$ can be expanded as
$$ f(x,E;\hbar)=c_I^-(E;\hbar)\overline{f_-(x,E;\hbar)}+c_I^+(E;\hbar)\overline{f_+(x,E;\hbar)} $$
or, alternatively,
$$ f(x,E;\hbar)=c_O^+(E;\hbar)f_+(x,E;\hbar)+c_O^-(E;\hbar)f_-(x,E;\hbar) $$
and there exists a linear transformation that relates the representation with respect to
the incoming basis $(\overline{f_-},\overline{f_+})$ to the one with respect to the outgoing basis $(f_+,f_-)$.
The $2 \times 2$ matrix that represents this transformation is denoted by $\mathbb{S}(E;\hbar)$ 
and called the \emph{scattering matrix}.
We write 
$$ \mathbb{S}=\left (\begin{array}{cc}s_{11} & s_{12} \\
s_{21} & s_{22} \end{array} \right ) $$
and in the special case $c_I^-(E;\hbar)=1$, $c_I^+(E;\hbar)=0$ we obtain
$$ \overline{f_-(x,E;\hbar)}=s_{11}(E;\hbar)f_+(x,E;\hbar)+s_{21}(E;\hbar)f_-(x,E;\hbar). $$
The physical interpretation of this relation is that an incoming wave from the left scatters
at the potential and gets decomposed into a transmitted and a reflected part.
Consequently, $s_{11}$ and $s_{21}$ are called \emph{transmission} and \emph{reflection amplitudes}
and we write $s_{11}(E;\hbar)=t(E;\hbar)$, $s_{21}(E;\hbar)=r(E;\hbar)$.
A similar statement is true for an incoming wave from the right.
It is not hard to see that $s_{22}=t$ and $|t|^2+|r|^2=1$ which complies with the probabilistic interpretation of the 
quantum mechanical wave function.
Furthermore, it is well-known that $\mathbb{S}(E;\hbar)$ is unitary and completely determined by $t(E;\hbar)$
and $r(E;\hbar)$.
In semiclassical low energy scattering one is interested in the behavior of $\mathbb{S}(E;\hbar)$ as \emph{both} $E$ and $\hbar$ 
tend to $0$, \emph{simultaneously} and \emph{independently of each other}.

Let us remark that in the pure quantum mechanical setting, where $\hbar>0$ is fixed, the behavior of 
$\mathbb{S}(E;\hbar)$ for $E \to 0+$ is trivial. This follows from the fact that for exponentially decaying potentials the 
Jost solutions $f_\pm(x,E;\hbar)$ are smooth in $E$ around $0$ \cite{deift}. 
On the other hand, if $E>0$ is fixed and only $\hbar \to 0+$ is considered then the standard WKB
method works and the corresponding result is well-known and classical.
However, the problem becomes highly nontrivial if one allows both limits $E, \hbar \to 0+$ 
and moreover, if one is interested in uniform bounds for all derivatives with
respect to $E$. 
In order to approach this problem, we rely on a   coordinate transformation 
which maps $H(\hbar)f=E^2 f$ to a perturbed Bessel equation.
The resulting equation is solved by a perturbative construction around Bessel functions which is based
on suitable Volterra iterations.
At this point the asymptotic theory of Bessel functions becomes crucial.
For the convenience of the reader we have compiled the necessary results on Airy and Bessel functions
in the two Appendices \ref{sec:airy} and \ref{sec:bessel}.
However, since we need to control all derivatives of the perturbative solutions, the standard results on
Volterra equations are insufficient and we have to extend them.
This is done in Appendices \ref{sec:volterra} and \ref{sec:airy} where the required theory is developed.
As a consequence, we obtain the following representation of the Jost function of the operator $H(\hbar)$.

\begin{theorem}
\label{thm:Jostat0}
Fix small constants $E_0, \hbar_0>0$ and let $H(\hbar)$ be the semiclassical Schr\"odinger operator on $L^2(\mathbb{R})$
given by
$$ H(\hbar)f(x):=-\hbar^2 f''(x)+V(x;\hbar)f(x),\quad x\in \mathbb{R},\: \hbar \in (0,\hbar_0) $$
where the potential $V$ satisfies
\begin{enumerate}
\item $V^{(k)} \in C(\mathbb{R}\times [0,\hbar_0])$ for all $k\in\mathbb{N}_0$, \footnote{By $V^{(k)}$
we mean the $k$--th derivative with respect to the first variable, i.e., $V^{(k)}(x;\hbar)=\partial_x^k V(x;\hbar)$.}
\item $V(x;\hbar)>0$ for all $x \in \mathbb{R}$, $\hbar \in [0,\hbar_0)$,
\item $V(x;\hbar)=V_\pm(e^{-|x|};\hbar)$ for $\pm x \geq a$ where $a>0$ is some fixed constant and
the functions $V_\pm$ have the property that
$V_\pm^{(k)} \in C([0,1]\times [0,\hbar_0])$ for all $k \in \mathbb{N}_0$ and
$V_\pm(0;\hbar)=0$ for all $\hbar \in (0,\hbar_0)$.
\end{enumerate}
Then, with $\alpha:=\sqrt{\frac{\hbar^2}{4}+4E^2}$, 
the outgoing semiclassical Jost function $f_+(0,E;\hbar)$ of the operator $H(\hbar)$ 
has the representation
\begin{align*} 
f_+(0,E;\hbar)&=\alpha^\frac12 \gamma_+(E;\hbar)
e^{\frac{1}{\hbar}(S_+(E;\hbar)+iT_+(E;\hbar))}
\Big [ 1+\hbar \sigma_1(E;\hbar)+
\sigma_2(E;\hbar)e^{-\frac{2}{\hbar}S_+(E;\hbar)}\Big ] \\
f_+'(0,E;\hbar)&=\alpha^\frac12 \hbar^{-1}c_+(E;\hbar)\gamma_+(E;\hbar)
e^{\frac{1}{\hbar}(S_+(E;\hbar)+iT_+(E;\hbar))} 
\Big [ 1+\hbar \sigma_3(E;\hbar)+
\sigma_4(E;\hbar)e^{-\frac{2}{\hbar}S_+(E;\hbar)}\Big ]
\end{align*} 
where $c_+(E;\hbar), S_+(E;\hbar)\gtrsim 1$, $T_+(E;\hbar)\gtrsim E|\log(\frac{\hbar^2}{4}+4E^2)|$, $|\gamma_+(E;\hbar)|\simeq 1$,
and we have the bounds
$$ |\partial_E^\ell A(E;\hbar)|\leq C_\ell \hbar^{-\ell},\quad A\in \{c_+, \gamma_+, S_+, T_+, \sigma_1,
\sigma_2,\sigma_3,\sigma_4 \} $$
for all $\ell \in \mathbb{N}_0$, $E \in(0,E_0)$ and $\hbar\in (0,\hbar_0)$.
Furthermore, $\sigma_1(E;\hbar)$ and $\sigma_3(E;\hbar)$ are real-valued.
\end{theorem}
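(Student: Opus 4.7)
The plan is to reduce the spectral equation $-\hbar^2 f'' + V f = E^2 f$ on the far-right half-line $\{x \geq a\}$ to a perturbed Bessel equation, construct the Jost solution there by a Volterra iteration anchored to an exact Bessel function, and then propagate inward through the classically forbidden region to $x=0$ via uniform Liouville--Green and Airy asymptotics. I would start from the substitution $z = e^{-x}$ together with $g(z) = z^{1/2} f(x)$ on $\{x \geq a\}$; since $d/dx = -z\, d/dz$, a short computation converts the eigenvalue equation into
$$
z^2 g''(z) + \Big(\tfrac{1}{4} + \tfrac{E^2}{\hbar^2} - \tfrac{V_+(z;\hbar)}{\hbar^2}\Big)\, g(z) = 0.
$$
Because $V_+(0;\hbar)=0$, this is a smooth perturbation of an Euler equation whose unperturbed solutions are $z^{1/2 \pm iE/\hbar}$, and after a further rescaling of $z$ it becomes a (modified) Bessel equation of purely imaginary order $\nu \simeq 2iE/\hbar$. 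The semiclassical scale $\alpha = \sqrt{\hbar^2/4 + 4E^2}$ then enters as $\hbar\sqrt{1/4 + |\nu|^2}$, the natural norm of $(\hbar/2,\, 2E)$ in the Bessel order/argument plane, and the $\alpha^{1/2}$ prefactor in the theorem is the corresponding Bessel-function normalization.

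Next I would construct $f_+$ on $\{x\geq a\}$ by a Volterra iteration anchored to the outgoing solution of the unperturbed Bessel equation, normalized so that $f_+(x,E;\hbar) \sim e^{iEx/\hbar}$ as $x\to+\infty$. The refined Volterra framework of the appendix yields a convergent iteration together with uniform control of all $E$-derivatives of the solution. Inserting the uniform Olver/Langer asymptotics for Bessel functions of imaginary order then produces, at a matching point $x=a$, the representation $\alpha^{1/2}\gamma_+ e^{(S_+ + iT_+)/\hbar}[1 + \hbar\sigma_1 + \sigma_2 e^{-2 S_+/\hbar}]$: the series $1 + \hbar\sigma_1 + O(\hbar^2)$ is the WKB expansion of the outgoing Bessel solution in the forbidden regime, and $\sigma_2 e^{-2 S_+/\hbar}$ captures the exponentially small, subdominant contribution of the companion Bessel solution that appears through the Stokes/connection procedure near the turning point. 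One then propagates $f_+$ inward from $x=a$ to $x=0$ across the right turning point of $V-E^2$; since $V(0;\hbar)$ is bounded below uniformly and $E$ is small, $x=0$ lies strictly inside the barrier, and the Airy uniform asymptotics of the appendix connect the Bessel representation on the right of the turning point to a standard Liouville--Green representation on the forbidden side. This produces the growth factor $e^{S_+/\hbar}$ with $S_+ \gtrsim 1$ proportional to $\int \hbar^{-1}\sqrt{V-E^2}\, dx$, and an analogous formula for $f_+'(0,E;\hbar)$ with prefactor $c_+(E;\hbar) \simeq \sqrt{V(0;\hbar)}$ (hence $c_+ \gtrsim 1$).

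The main obstacle is the uniform derivative bound $|\partial_E^\ell A| \leq C_\ell \hbar^{-\ell}$ for every $A\in\{c_+,\gamma_+,S_+,T_+,\sigma_1,\sigma_2,\sigma_3,\sigma_4\}$. Each $E$-derivative of a Bessel function of order $\nu = 2iE/\hbar$ produces a factor $\hbar^{-1}$ through the chain rule on $\nu$, so by induction one expects the $\ell$-th derivative to be $O(\hbar^{-\ell})$; what must be shown is that neither the Volterra iteration nor the turning-point matching degrades this scaling. For the iteration this is precisely the purpose of the refined Volterra lemma in the appendix, which bounds iterated kernels together with all their $E$-derivatives in a single step. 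The delicate part is the transition regime $|\nu| \sim w$ (equivalently $E \sim \hbar$), where ordinary Bessel asymptotic expansions break down and one must differentiate the uniform Airy expansions instead, tracking factors of $\alpha^{-1}$ that combine with the $\hbar^{-1}$ from each chain rule to give exactly the claimed $\hbar^{-\ell}$ and no worse. The lower bound $T_+ \gtrsim E|\log(\hbar^2/4+4E^2)|$ then emerges as the phase contribution of this same transition region, and the reality of $\sigma_1,\sigma_3$ is inherited from real-valuedness properties of the Volterra kernels once the leading complex phase is factored out.
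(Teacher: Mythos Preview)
Your overall framework---reduce to a perturbed Bessel equation, build the Jost solution by Volterra iteration around an exact Bessel function, and cross the turning point via uniform Airy asymptotics---matches the paper's strategy. However, your proposal is missing the single most important technical step, which the paper calls the \emph{normal form reduction} (Proposition~3.6), and without it the Volterra iteration you describe does not produce errors of size $O(\hbar)$.

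Here is the issue. After your substitution (equivalently the paper's $y=2e^{-x/2}$), the equation on $(0,y_0)$ reads
\[
-\hbar^2 \tilde f''(y)+\Big[1+\varepsilon(y^2;\hbar)-\tfrac{\alpha^2}{y^2}\Big]\tilde f(y)=0,
\]
and after the $\alpha$-rescaling $z=y/\alpha$ the perturbation is $\varepsilon(\alpha^2 z^2;\hbar)$. You propose to iterate around the pure Bessel solution on all of $\{x\geq a\}$. But the point $x=0$ (or $x=a$) corresponds to $z\simeq\alpha^{-1}$, where $\varepsilon(\alpha^2 z^2;\hbar)=O(1)$, not $O(\hbar^2)$. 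A back-of-the-envelope estimate for the Volterra error gives
\[
\hbar_1^{-1}\int_1^{\alpha^{-1}}|\varepsilon(\alpha^2 v^2;\hbar)|\,dv
\;\lesssim\;\hbar_1^{-1}\int_1^{\alpha^{-1}}\alpha^2 v^2\,dv
\;\simeq\;\hbar_1^{-1}\alpha^{-1}=\hbar^{-1},
\]
so the iteration diverges rather than producing a factor $[1+\hbar\sigma_1+\cdots]$. The paper's cure is a \emph{second} Liouville--Green transform $w=\varphi(z,\alpha^2;\hbar)$, defined so that $\varphi'(z)^2\,[1-1/\varphi(z)^2]=1-1/z^2+\varepsilon(\alpha^2 z^2;\hbar)$; this absorbs the $O(1)$ term $\varepsilon$ into the principal part and leaves only the Schwarzian remainder, which is genuinely $O(\hbar^2)$ uniformly on the whole interval. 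Only after this reduction does the Volterra iteration around Bessel functions (Lemmas~4.2--4.8) yield errors with the bounds $|\partial_E^\ell\sigma_j|\leq C_\ell\hbar^{-\ell}$.

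A secondary point: your description of ``propagating from $x=a$ to $x=0$ across the right turning point'' is geographically off. For small $E$ the turning point sits at $x_t\sim -2\log E\gg a$, so it lies in $\{x>a\}$, not in $[0,a]$. The turning point is therefore handled \emph{inside} the Bessel analysis (this is exactly why Bessel functions of imaginary order, with their built-in Airy transition, are the right comparison), and the interval $[0,a]$ is entirely in the forbidden region. The paper in fact arranges $x_0<0$ so that $x=0$ is directly reached within the Bessel/normal-form construction, with no separate propagation step. Note also that $x_t=-2\log E$ is not smooth in $E$ at $E=0$, which is another reason the turning-point analysis must be carried out in the Bessel variable (where the turning point sits at $z=1+O(\alpha^2)$, a smooth function of $\alpha^2$) rather than in $x$.
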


Based on the representation in Theorem \ref{thm:Jostat0} we obtain our main result on
the scattering matrix.

\begin{theorem}
\label{thm:main1}
Under the assumptions of Theorem \ref{thm:Jostat0}, 
the reflection and transmission amplitudes $r(E;\hbar)$ and $t(E;\hbar)$ associated to
$H(\hbar)$ are of the form
\begin{align*} 
t(E;\hbar)&=\tau(E;\hbar)\tfrac{E}{E+\hbar}e^{-\frac{1}{\hbar}(S(E;\hbar)+iT(E;\hbar))}[1+\hbar \sigma_1(E;\hbar)] \\
r(E;\hbar)&=\rho(E;\hbar)e^{-\frac{2i}{\hbar}T_-(E;\hbar)}[1+\hbar \sigma_2(E;\hbar)]
\end{align*}
for functions $S$, $T$, $T_-$, $\tau$ and $\rho$ 
satisfying $S(E;\hbar)\gtrsim 1$, $T(E;\hbar), T_-(E;\hbar) \gtrsim E|\log(\frac{\hbar^2}{4}+4E^2)|$, 
$|\tau(E;\hbar)|\simeq 
|\rho(E;\hbar)|\simeq 1$
for $E \in (0,E_0)$ and $\hbar \in (0,\hbar_0)$.
Furthermore, we have the bounds
$$ |\partial_E^\ell A(E;\hbar)|\leq C_\ell \hbar^{-\ell},\quad A \in \{S, T, T_-, \tau, \rho, \sigma_1,
\sigma_2 \} $$
for all $\ell \in \mathbb{N}_0$, $E \in (0,E_0)$ and $\hbar \in (0,\hbar_0)$ where $C_\ell>0$ 
are constants that only depend on $\ell$.
\end{theorem}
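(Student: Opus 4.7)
\medskip

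\noindent\textbf{Proof proposal for Theorem \ref{thm:main1}.}
The plan is to derive $t$ and $r$ from the standard Wronskian identities and then substitute the representations supplied by Theorem \ref{thm:Jostat0}. Starting from the scattering relation
$\overline{f_-}=tf_++rf_-$, taking the Wronskian against $f_+$ kills the $t$--term, while taking it against $f_-$ kills the $r$--term. Evaluating $W(\overline{f_-},f_-)$ at $x=-\infty$ using the Jost asymptotics gives $-2iE/\hbar$, and one obtains the classical formulas
\[
 t(E;\hbar)=\frac{2iE/\hbar}{W(f_-,f_+)},\qquad r(E;\hbar)=\frac{W(\overline{f_-},f_+)}{W(f_-,f_+)}.
\]
The Wronskians are $x$--independent, so I evaluate everything at $x=0$.

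The representation of $f_+(0),f_+'(0)$ comes directly from Theorem \ref{thm:Jostat0}. The reflection $x\mapsto -x$ sends $H(\hbar)$ into an operator of the same form (with $V_+$ and $V_-$ interchanged) and sends $f_+$ into $f_-(-\,\cdot\,)$, so the same theorem yields
\[
 f_-(0,E;\hbar)=\alpha^{1/2}\gamma_- e^{(S_-+iT_-)/\hbar}\bigl[1+\hbar\tilde\sigma_1+\tilde\sigma_2 e^{-2S_-/\hbar}\bigr],
\]
\[
 f_-'(0,E;\hbar)=-\alpha^{1/2}\hbar^{-1}c_-\gamma_- e^{(S_-+iT_-)/\hbar}\bigl[1+\hbar\tilde\sigma_3+\tilde\sigma_4 e^{-2S_-/\hbar}\bigr],
\]
with the same qualitative bounds and derivative estimates, and with an extra minus sign in $f_-'$ coming from the chain rule. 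Substituting these into $W(f_-,f_+)(0)$ and $W(\overline{f_-},f_+)(0)$, the common prefactor is $\alpha\hbar^{-1}\gamma_+\gamma_-(c_++c_-)\,e^{(S_++S_-)/\hbar}$ up to a bracket of the form $1+\hbar\rho_j+(\text{exp.\ small})$, with phase $e^{i(T_++T_-)/\hbar}$ in the first case and $e^{i(T_+-T_-)/\hbar}$ in the second (the conjugation of $f_-(0),f_-'(0)$ flips the sign of $iT_-$ and turns $\gamma_-$ into $\overline{\gamma_-}$).

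Plugging these expressions into the Wronskian formulas, the exponential $e^{(S_++S_-)/\hbar}$ passes to the denominator in $t$, and one reads off the transmission formula with $S:=S_++S_-$, $T:=T_++T_-$, and prefactor $\frac{2iE}{\alpha\gamma_+\gamma_-(c_++c_-)}$. Since $\alpha=\sqrt{\hbar^2/4+4E^2}\simeq E+\hbar$ with analogous uniform derivative control, I absorb the smooth ratio $(E+\hbar)/\alpha$ into a new factor $\tau(E;\hbar)$ of modulus $\simeq 1$ and get the stated form of $t$. For the reflection coefficient, the big common prefactor cancels, leaving
\[
 r(E;\hbar)=\frac{\overline{\gamma_-}}{\gamma_-}\,e^{-2iT_-/\hbar}\,\frac{1+\hbar\rho_2+(\text{exp.\ small})}{1+\hbar\rho_1+(\text{exp.\ small})},
\]
so $\rho:=\overline{\gamma_-}/\gamma_-$ has $|\rho|=1$, and the quotient of the brackets has the form $1+\hbar\sigma_2$. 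The lower bounds $S\gtrsim 1$ and $T,T_-\gtrsim E|\log(\hbar^2/4+4E^2)|$ are inherited from the corresponding bounds on $S_\pm,T_\pm$ in Theorem \ref{thm:Jostat0}.

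The main technical step is verifying the uniform estimates $|\partial_E^\ell A|\le C_\ell\hbar^{-\ell}$ for $A\in\{S,T,T_-,\tau,\rho,\sigma_1,\sigma_2\}$. For $S,T,T_-$ these are direct from Theorem \ref{thm:Jostat0}. For $\tau$ and $\rho$ one applies the Leibniz rule, using that the denominators $\gamma_\pm$, $c_++c_-$ and $\alpha$ are bounded below so their reciprocals inherit the $\hbar^{-\ell}$ derivative bounds by Fa\`a di Bruno. The key point for $\sigma_1,\sigma_2$ is that the bracket quotient $(1+\hbar\rho_2+\cdots)/(1+\hbar\rho_1+\cdots)$ expands geometrically into a convergent Neumann series whose $E$--derivatives inherit the $\hbar^{-\ell}$ bound, and that the exponentially small corrections $\sigma_j e^{-2S_\pm/\hbar}$, differentiated $\ell$ times, produce at most a polynomial $\hbar^{-N_\ell}$ multiplying $e^{-2S_\pm/\hbar}$, which (since $S_\pm\gtrsim 1$) is $O(\hbar^\infty)$ and thus absorbable into the $\hbar\sigma$ errors. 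The hardest bookkeeping is this last point: ensuring, throughout the algebra, that \emph{no} term in the expansion carries a factor of $\hbar^{-2}$ or worse from the double differentiation of the phase $e^{(S_++S_-+iT_++iT_-)/\hbar}$ combined with the division by $W(f_-,f_+)$, which is what forces the $E/(E+\hbar)$ prefactor in $t$ to be isolated rather than being buried inside a $\sigma$.
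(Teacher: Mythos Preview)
Your proposal is correct and follows essentially the same route as the paper: derive the Wronskian identities $t=(2iE/\hbar)/W(f_-,f_+)$ and $r=W(\overline{f_-},f_+)/W(f_-,f_+)$, evaluate both Wronskians at $x=0$ using Theorem~\ref{thm:Jostat0} (and its reflected version for $f_-$), and read off the stated forms---this is exactly what the paper does via Lemma~\ref{lem:Jostat0} and Corollary~\ref{cor:W}. One small correction to your closing remark: the reason $E/(E+\hbar)$ must be isolated in $t$ is not a differentiation subtlety but simply that $E/\alpha$ vanishes at $E=0$ and therefore cannot be absorbed into a factor of the form $1+\hbar\sigma$, which is $\simeq 1$.
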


Some remarks are in order:
\begin{itemize}
\item Theorem \ref{thm:main1} shows that the probability for a quantum particle to tunnel through the 
potential barrier decays exponentially 
as $\hbar \to 0+$, i.e., as one approaches the classical regime.
As already mentioned, for fixed $E>0$ this exponential behavior is well-known, 
see e.g., \cite{Ramond96}. 
\item The factor $\tfrac{E}{E+\hbar}$ in $t(E;\hbar)$ reflects that zero energy is not a resonance, which is equivalent to the vanishing
of the transmission coefficient at zero energy. 
\item Theorem \ref{thm:main1} yields control \emph{over all derivatives} with respect to $E$ of 
the involved quantities.
This is the most salient feature of the result and it unveils the main difference to Schr\"odinger
operators with potentials that exhibit power law decay where
one loses powers of $E$ upon differentiating 
with respect to $E$, see e.g., \cite{schlag4}.
Such  behavior is in stark contrast to the situation here where one only loses powers of $\hbar$ and such a loss 
is in fact negligible compared
to the size of~$e^{-\frac{1}{\hbar}}$.
In particular this shows that the reflection and transmission amplitudes are smooth in $E$
at $E=0$ which is not the case for potentials that decay according to a power law.
\item The functions $S(E;\hbar)$, $T(E;\hbar)$ and $T_-(E;\hbar)$ are not uniquely determined 
and
we do not state explicit formulas here.
However, comparison with the classical WKB result for $E>0$ small but fixed shows that
in this case 
one
may take 
$$ S(E;\hbar)=\int_{x_{t,-}(\hbar)}^{x_{t,+}(\hbar)}\sqrt{|V(x;\hbar)-E^2|}dx $$
where $\pm x_{t,\pm}(\hbar)>0$ are the two solutions of $V(x;\hbar)-E^2=0$ 
which are bounded if $E>0$
is fixed.
Similar statements apply to $T(E;\hbar)$ and $T_-(E;\hbar)$, cf., e.g., \cite{Ramond96}.
\item Note further that the exponential decay of the transmission amplitude $t(E;\hbar)$ for $\hbar \to 0+$
should not be confused with the exponential decay found in \cite{Yafaev82}.
The result in \cite{Yafaev82} is valid for fixed $\hbar>0$ in the limit $E \to 0+$ and has a
completely different origin, namely the slow decay of the potential considered there.
\item The positivity assumption on the potential can be dropped and replaced by some 
weaker condition. However, one certainly has to exclude the existence of a zero energy resonance 
for the result to hold.
\end{itemize}

\subsection{The spectral measure}
The solution of the Schr\"odinger equation
\begin{equation}
\label{eq:SG}
i\hbar \partial_t \psi(t,\cdot)=H(\hbar)\psi(t,\cdot)  
\end{equation}
is of course given by $\psi(t,\cdot)=e^{-\frac{i}{\hbar}t H(\hbar)}\psi(0,\cdot)$ where the exponential has to
be interpreted according to the functional calculus for self--adjoint operators.
It is a consequence of Stone's formula (see \cite{teschl}) that the kernel $e^{-\frac{i}{\hbar}tH(\hbar)}(x,x')$ 
of the solution operator to the Schr\"odinger equation \eqref{eq:SG} is given
by the oscillatory integral
$$ e^{-\frac{i}{\hbar}tH(\hbar)}(x,x')=-\frac{2}{\pi \hbar^2}
\int_0^\infty e^{-\frac{i}{\hbar}tE^2}e(x,x',E;\hbar) E dE, \quad x'\leq x $$
with the \emph{semiclassical spectral measure}
\begin{equation}
\label{eq:spectralmeasure}
e(x,x',E;\hbar):=\mathrm{Im}\left [\frac{f_-(x',E;\hbar)f_+(x,E;\hbar)}{W(f_-(\cdot,E;\hbar), 
f_+(\cdot,E;\hbar))} \right ].
\end{equation}
Evidently, it is important to obtain bounds for the derivatives $\partial_E^\ell e(x,x',E;\hbar)$,
$\ell \in \mathbb{N}_0$,
since these immediately translate into decay estimates for the time evolution.
In this paper we prove the following result.

\begin{theorem}
\label{thm:main2}
Under the assumptions of Theorem \ref{thm:Jostat0}, the semiclassical spectral measure 
associated to the Schr\"odinger operator $H(\hbar)$ is of the form
$$ e(0,0,E;\hbar)=\gamma(E;\hbar)\hbar e^{-\frac{2}{\hbar}\tilde{S}(E;\hbar)} $$
where $\gamma$, $\tilde{S}$ are real--valued functions and $\tilde{S}(E;\hbar) \gtrsim 1$ 
for all $E \in (0,E_0)$, $\hbar \in (0,\hbar_0)$.
Furthermore, we have the bounds
$$ |\partial_E^\ell \gamma(E;\hbar)|\leq C_\ell \hbar^{-\ell},\quad |\partial_E^\ell \tilde{S}(E;\hbar)|\leq C_\ell \hbar^{-\ell} $$
for all $E \in (0,E_0)$, $\hbar \in (0,\hbar_0)$ and $\ell \in \mathbb{N}_0$.
\end{theorem}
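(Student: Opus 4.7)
The plan is to reduce $e(0,0,E;\hbar)$ to an explicit formula built from the data of Theorems~\ref{thm:Jostat0} and~\ref{thm:main1}, via the Wronskian identity $W(f_-,f_+)=\frac{2iE}{\hbar t(E;\hbar)}$ (which comes from writing $f_-=a_+f_++b_+\overline{f_+}$ and matching asymptotics as $x\to-\infty$). Substituting into \eqref{eq:spectralmeasure} gives
\[ e(0,0,E;\hbar)=-\frac{\hbar}{2E}\,\Re\bigl[t(E;\hbar)\,f_-(0,E;\hbar)\,f_+(0,E;\hbar)\bigr]. \]
Because the hypotheses on $V$ are invariant under $x\mapsto -x$ (merely swapping $V_+$ and $V_-$), Theorem~\ref{thm:Jostat0} applied to the reflected potential yields analogous representations of $f_-(0,E;\hbar)$ and $f_-'(0,E;\hbar)$ with quantities $c_-,\gamma_-,S_-,T_-$ (the $T_-$ being the one already appearing in Theorem~\ref{thm:main1}) and the same derivative bounds; the chain rule inserts an extra minus sign into $f_-'(0,E;\hbar)$. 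Recomputing $W(f_-,f_+)$ from the two Jost representations and comparing with the form of $t$ in Theorem~\ref{thm:main1} forces the natural identifications $S=S_++S_-$, $T=T_++T_-$, and $\tau=\frac{2i(E+\hbar)}{\alpha(c_++c_-)\gamma_-\gamma_+}$.

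After substitution the exponential phase factors cancel exactly, yielding an identity of the form
\[ t(E;\hbar)\,f_-(0)f_+(0)=\frac{2iE\,[1+\hbar\sigma_1(E;\hbar)]}{c_+(E;\hbar)+c_-(E;\hbar)}\,B_1(E;\hbar)\,B_2(E;\hbar), \]
where $B_1,B_2$ are the bracket factors from the representations of $f_-(0)$ and $f_+(0)$. Since $E$, $c_\pm$, and (by Theorems~\ref{thm:Jostat0} and~\ref{thm:main1}) $\sigma_1$ are all real, the prefactor $2iE/(c_++c_-)$ is purely imaginary while $1+\hbar\sigma_1$ is real, so
\[ e(0,0,E;\hbar)=\frac{\hbar\,[1+\hbar\sigma_1(E;\hbar)]}{c_+(E;\hbar)+c_-(E;\hbar)}\,\Im\bigl(B_1B_2\bigr). \]

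The crucial observation is that, again by Theorem~\ref{thm:Jostat0}, the $O(\hbar)$ corrections inside each $B_j$ involve only the \emph{real} functions $\sigma_1^\pm$; consequently $\Im(B_1B_2)$ sits entirely at the exponentially suppressed scale,
\[ \Im(B_1B_2)=\Im(\sigma_2^-)\,e^{-2S_-/\hbar}+\Im(\sigma_2^+)\,e^{-2S_+/\hbar}+O\bigl(\hbar e^{-2S_\pm/\hbar}\bigr)+O\bigl(e^{-2(S_++S_-)/\hbar}\bigr). \]
It remains to rewrite the right-hand side as $\gamma(E;\hbar)\,\hbar\,e^{-2\tilde S(E;\hbar)/\hbar}$ with real $\gamma,\tilde S$, $\tilde S\gtrsim 1$, satisfying $|\partial_E^\ell\gamma|,|\partial_E^\ell\tilde S|\lesssim\hbar^{-\ell}$; the bounds on the surrounding factors then propagate by the Leibniz and chain rules from the estimates of Theorem~\ref{thm:Jostat0}.

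The main obstacle lies in this last step: extracting a single exponential rate from the sum $e^{-2S_-/\hbar}+e^{-2S_+/\hbar}$ while preserving the $\hbar^{-\ell}$ derivative bounds. A naive choice $\tilde S=\min(S_+,S_-)$ is not smooth, and taking $\tilde S$ equal to just one of the $S_\pm$ generically produces a residual $e^{-2|S_+-S_-|/\hbar}$ whose derivatives a priori grow like $\hbar^{-2\ell}$, violating the claim. The natural remedy is to define $\tilde S$ implicitly, e.g.\ as $\tilde S:=-\tfrac{\hbar}{2}\log\bigl|\Im(B_1B_2)\bigr|$, once one checks that $\Im(B_1B_2)$ has a definite sign in the relevant range of $(E,\hbar)$ (a consequence of the WKB connection formulas underlying the proof of Theorem~\ref{thm:Jostat0}), and then to verify the derivative bounds on $\tilde S$ and the resulting $\gamma$ inductively in $\ell$ using the estimates already available.
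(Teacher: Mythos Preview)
Your approach lands on the same endpoint as the paper, namely
\[
e(0,0,E;\hbar)=\frac{\hbar}{c_+(E;\hbar)+c_-(E;\hbar)}\Bigl[\epsilon(E;\hbar)\,e^{-\frac{2}{\hbar}S_+(E;\hbar)}+\epsilon(E;\hbar)\,e^{-\frac{2}{\hbar}S_-(E;\hbar)}\Bigr],
\]
but the paper reaches it by a shorter route: it rewrites
\[
e(0,0,E;\hbar)=\Im\Bigl[\frac{f_+'(0)}{f_+(0)}-\frac{f_-'(0)}{f_-(0)}\Bigr]^{-1},
\]
and in each logarithmic derivative the prefactor $\alpha^{1/2}\gamma_\pm e^{(S_\pm+iT_\pm)/\hbar}$ cancels \emph{exactly}, leaving $c_\pm\hbar^{-1}[1+\hbar\epsilon+\epsilon_c e^{-2S_\pm/\hbar}]$. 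This bypasses any need to go through $t(E;\hbar)$, to identify $\tau$, or to know whether the $\sigma_1$ of Theorem~\ref{thm:main1} is real. On that point your argument has a small gap: Theorem~\ref{thm:main1} does \emph{not} assert that its $\sigma_1$ is real, and in fact it is not; its imaginary part is only exponentially small (of order $e^{-2S_\pm/\hbar}$), a fact that requires looking inside the proof of Corollary~\ref{cor:W} rather than citing the theorem statement. The log--derivative identity avoids this bookkeeping entirely.

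Your ``main obstacle'' at the end is not an obstacle at all. You do not need a smooth minimum, a logarithm, or any sign information. Since $S_\pm(E;\hbar)\gtrsim 1$ uniformly, there is a constant $c_0>0$ with $S_\pm\ge c_0$ throughout. Take $\tilde S\equiv c_0/2$. Then
\[
\gamma(E;\hbar)=\frac{1}{c_++c_-}\Bigl[\epsilon\,e^{-\frac{2}{\hbar}(S_+-c_0/2)}+\epsilon\,e^{-\frac{2}{\hbar}(S_--c_0/2)}\Bigr],
\]
and because $S_\pm-c_0/2\ge c_0/2>0$, each $E$--derivative of the exponentials is bounded by $C_\ell\,\hbar^{-2\ell}e^{-c_0/\hbar}$, which is $\le C_\ell'$ uniformly in $\hbar$. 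Hence $|\partial_E^\ell\gamma|\le C_\ell$, which is even stronger than the claimed $C_\ell\hbar^{-\ell}$. Your worry that one loses $\hbar^{-2\ell}$ is correct for a generic choice of $\tilde S$, but the point is that the full exponential $e^{-c_0/\hbar}$ remaining in $\gamma$ swallows any polynomial loss in $\hbar^{-1}$. The paper's own proof is equally terse about this last step, so your instinct to flag it is reasonable; the resolution, however, is elementary.
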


Again, we make several remarks.

\begin{itemize}
\item
In some sense the behavior of $e(x,x',E;\hbar)$ at $x=x'=0$ is the most important one which 
is why we only consider $e(0,0,E;\hbar)$ here.
Based on our proof it is straightforward to obtain corresponding results for 
the general case $x,x' \in \mathbb{R}$. 

\item We emphasize again that derivatives with respect to $E$ cost powers of $\hbar$ instead of
powers of $E$ as is the case for potentials that decay according to a power law.
This behavior translates via stationary phase arguments (cf.~\cite{schlag1}, \cite{schlag2})
into a decay of the corresponding time evolution
that is faster than $t^{-N}$ for any $N \in \mathbb{N}$.

\item Theorem \ref{thm:main2} shows that the spectral measure decays exponentially in the 
semiclassical limit $\hbar \to 0+$.
Furthermore, the same is true for all $E$--derivatives of $e(0,0,E;\hbar)$.
Consequently, the small energy contributions to bounds for
the associated Schr\"odinger flow
decay rapidly as $\hbar \to 0+$.
\end{itemize}

\section{Preliminary transformations}

As outlined in the introduction, we intend to study the Jost function $f_+(\cdot,E;\hbar)$ which is uniquely defined by $H(\hbar)f_+(\cdot,E;\hbar)=E^2 f_+(\cdot,E;\hbar)$ and the asymptotic condition $f_+(x,E;\hbar)\sim e^{i\frac{E}{\hbar}x}$ as $x \to \infty$.
By applying the variation of constants formula, it immediately follows that $f_+(\cdot,E;\hbar)$ satisfies the Volterra equation
$$ f_+(x,E;\hbar)=e^{i\frac{E}{\hbar}x}+\tfrac{1}{E \hbar}\int_x^\infty \sin\left (\tfrac{E}{\hbar}(y-x)\right )V(y;\hbar)f_+(y,E;\hbar)dy. $$
Evidently, this representation does not give us any useful information in the limit $\hbar \to 0+$ which we are interested in.
This is just an instance of the fact that the potential $V(x;\hbar)$ for large $|x|$ 
cannot be treated as a perturbation in the semiclassical limit 
$\hbar \to 0+$.

\subsection{The Liouville--Green transform}
\label{sec:LG}
The main tool we are going to use is the Liouville--Green transform, see e.g., \cite{Olver}.
For the sake of completeness, we briefly review the main ideas.
Consider a semiclassical problem of the form
\begin{equation} 
\label{eq:modelLG}
-\hbar^2 f''(x)+Q(x)f(x)=0 \end{equation}
for $x \in I \subset \mathbb{R}$, $I$ an open interval.
Here, the function $Q$ is assumed to be sufficiently smooth.
Now suppose $\varphi: I \to J \subset \mathbb{R}$ is a diffeomorphism (not necessarily orientation--preserving) onto some open interval $J$ and set $g(\varphi(x)):=|\varphi'(x)|^\frac12 f(x)$.
A straightforward computation shows that $g$ satisfies the equation
\begin{equation}
 \label{eq:LG}
-\hbar^2 g''(\varphi(x))+\frac{Q(x)}{\varphi'(x)^2}g(\varphi(x))=\hbar^2 \left (\frac34 \frac{\varphi''(x)^2}{\varphi'(x)^4}-\frac12 \frac{\varphi'''(x)}{\varphi'(x)^3}\right )g(\varphi(x)) 
\end{equation}
if and only if $f$ satisfies Eq.~\eqref{eq:modelLG}.
Observe that, if $\varphi$ behaves reasonably well on $I$, this transformation introduces a right--hand side that is of size $\hbar^2$ \emph{globally} and may therefore be treated as a perturbation.
The point where this procedure turns into an art form is when it comes to choose the diffeomorphism $\varphi$. This has to be done on a case by case basis.
For instance, if $Q$ does not vanish on $I$, one may choose $\varphi$ in such a way that $\frac{Q(x)}{\varphi'(x)^2}=\sign\;Q(x)$ which leads to the classical WKB ansatz.
It is a historical curiosity that the WKB method is in fact a special case of the much older Liouville--Green transform.
Another situation which we are going to encounter is when $Q$ has a single nondegenerate zero, i.e., $Q(x_t)=0$ and $Q'(x_t)\not=0$ for some $x_t \in I$.
In this case, $x_t$ is called a \emph{turning point} of the equation since the behavior of solutions changes from oscillatory to exponentially increasing / decreasing.
In order to capture this transition, one chooses $\varphi$ such that $\frac{Q(x)}{\varphi'(x)^2}=\varphi(x)$.
This leads to the so--called \emph{Langer transform} \cite{Miller} where the problem at hand is mapped to a perturbed Airy equation 
(see Appendix~\ref{sec:airy}).
However, we remark that the ``right'' choice of $\varphi$ is not always obvious since it may be advantageous to move some terms of $Q$ to the right--hand side of the equation \emph{prior} to applying the Liouville--Green transform.

\subsection{Reduction to a perturbed Bessel equation}
The semiclassical problem we study is given by
\begin{equation} 
\label{eq:main2}
-\hbar^2 f''(x)+[V(x;\hbar)-E^2]f(x)=0  
\end{equation}
for $x > x_0$ where $x_0 \in\mathbb{R}$ is fixed throughout.
According to the requirements in Theorem \ref{thm:main1} we may assume that 
$V(x;\hbar)=e^{-x}[1+\varepsilon(e^{-x};\hbar)]$ for $x > x_0$ where
$|\varepsilon(e^{-x};\hbar)|\lesssim e^{-x}$ and $\varepsilon(\cdot;\hbar)$ is smooth around $0$ 
and satisfies $\varepsilon(\cdot;\hbar)>-1$
for all $0\leq \hbar \ll 1$ (the last condition follows from the positivity assumption on the potential).
Since we only consider energies $E$ close to $0$, Eq.~\eqref{eq:main2} has a unique
nondegenerate turning point $x_t(E;\hbar)>0$ defined by the relation $V(x_t(E;\hbar);\hbar)=E^2$.
Assume for the moment that $V(x;\hbar)=e^{-x}$ exactly for $x>x_0$.
In this case, the turning point is given by $x_t(E;\hbar)=-2\log E$ and thus, it is not smooth in $E$ at $E=0$.
Since we want to control derivatives with respect to $E$, 
it is desirable to have the dependence of the turning point on $E$ be as regular as possible.
Consequently, it is advantageous to introduce new variables.
Based on formula \eqref{eq:LG}, it is reasonable to look for a diffeomorphism $\varphi$ satisfying $\varphi'(x)^2=e^{-x}$ and thus, $\varphi(x)=2e^{-\frac{x}{2}}$ is a possible choice.
Obviously, $\varphi$ maps $(x_0,\infty)$ to $(0,y_0)$ diffeomorphically where $y_0:=2e^{-\frac{x_0}{2}}$ and $\varphi'(x)<0$ for all $x \in (x_0,\infty)$, i.e., $\varphi$ is orientation--reversing.
Setting $\tilde{f}(\varphi(x)):=|\varphi(x)|^\frac12 f(x)$, Eq.~\eqref{eq:main2} transforms into
\begin{equation} 
\label{eq:modBessel}
-\hbar^2 \tilde{f}''(y)+\left [1-\frac{\frac{\hbar^2}{4}+4E^2}{y^2} \right ]\tilde{f}(y)=0 
\end{equation}
on $y \in (0,y_0)$ which turns out to be a modified Bessel equation.
Indeed, setting $\hat{f}(z):=\tilde{f}(\hbar z)$ yields
$$ \hat{f}''(z)+\left [\frac{\nu^2+\frac14}{z^2}-1 \right ]\hat{f}(z)=0,\quad \nu:=2\tfrac{E}{\hbar} $$
which is solved by $z \mapsto \sqrt{z}I_{\pm i\nu}(z)$, the modified Bessel functions.
This motivates the following.

\begin{lemma}
\label{lem:prtobessel}
Let $\varphi(x):=2e^{-\frac{x}{2}}$ and $y_0:=2e^{-\frac{x_0}{2}}$ be as above. 
Then the function $\tilde{f}$, defined by $\tilde{f}(\varphi(x))=|\varphi'(x)|^\frac12 f(x)=e^{-\frac{x}{4}}f(x)$, satisfies the equation
\begin{equation} 
\label{eq:besselform}
-\hbar^2 \tilde{f}''(y)+\left [1+\varepsilon(y^2; \hbar)-\frac{\frac{\hbar^2}{4}+4E^2}{y^2} \right ]\tilde{f}(y)=0 
\end{equation}
for $y \in (0,y_0)$ if and only if $f$ satisfies Eq.~\eqref{eq:main2} for $x \in (x_0,\infty)$.
Here, the function $\varepsilon(\cdot;\hbar)$ is smooth around $0$ and we have the bounds
$$ |\partial_y^k \varepsilon(y^2;\hbar)|\leq C_k y^{\max\{2-k,0\}} $$
for all $y \in (0,y_0)$, $0<\hbar\ll 1$ and $k \in \mathbb{N}_0$.
Finally, there exists a unique solution $\tilde{f}_+(\cdot,E;\hbar)$ to Eq.~\eqref{eq:besselform} with the asymptotic behavior
$$ \tilde{f}_+(y,E;\hbar)\sim \left(\frac{y}{2}\right)^{\frac12-2i\frac{E}{\hbar}}\quad (y \to 0+) $$
and the outgoing Jost function of the operator $H(\hbar)$ is given by
$$ f_+(x,E;\hbar)=|\varphi'(x)|^{-\frac12}\tilde{f}_+(\varphi(x),E;\hbar)
=e^{\frac{x}{4}}\tilde{f}_+(2e^{-\frac{x}{2}},E;\hbar). $$
\end{lemma}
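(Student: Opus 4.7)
The proof breaks naturally into four tasks that I would attack in the following order.

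First I would carry out the Liouville--Green computation directly. With $\varphi(x)=2e^{-x/2}$ one has $\varphi'(x)=-e^{-x/2}$, $\varphi''(x)=\tfrac12 e^{-x/2}$, $\varphi'''(x)=-\tfrac14 e^{-x/2}$, so the quantity on the right of \eqref{eq:LG} evaluates to
$\hbar^2\bigl(\tfrac34\cdot\tfrac14 e^x-\tfrac12\cdot\tfrac14 e^x\bigr)=\tfrac{\hbar^2}{16}e^x=\tfrac{\hbar^2}{4y^2}$
after using $e^x=4/y^2$. At the same time $\varphi'(x)^2=e^{-x}=y^2/4$, so $Q(x)/\varphi'(x)^2=1+\varepsilon(e^{-x};\hbar)-4E^2/y^2$. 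Transferring the right--hand side to the left and relabeling $\varepsilon(\,\cdot\,;\hbar)$ by its composition with $s\mapsto s/4$ gives exactly \eqref{eq:besselform}. This is a purely mechanical calculation once the choice $\varphi'(x)^2=e^{-x}$ is made, which is the natural Liouville--Green prescription for bringing the ``dominant'' $e^{-x}$ piece of the potential to a constant.

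Next I would verify the derivative bounds on $\varepsilon(y^2;\hbar)$. Since the hypotheses on $V$ imply that $\varepsilon(\,\cdot\,;\hbar)$ is smooth at $0$ and $\varepsilon(s;\hbar)=O(s)$ (the original $V$ behaves like $e^{-x}+O(e^{-2x})$ near $+\infty$), Faa\`a di Bruno applied to $y\mapsto\varepsilon(y^2;\hbar)$ produces a sum of terms each containing a factor $y^{2j_1+j_2}$ with $2j_1+j_2\ge\max\{2-k,0\}$, which gives the asserted estimate uniformly in $\hbar$.

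The main substantive step is the existence and uniqueness of $\tilde f_+$ with the prescribed behavior at $y=0$. The Frobenius analysis of \eqref{eq:besselform} at $y=0$ yields indicial roots $\rho_{\pm}=\tfrac12\pm 2i\tfrac{E}{\hbar}$, so formally there is a one--parameter family of solutions asymptotic to $(y/2)^{\rho_-}$. To make this rigorous I would set $\tilde f_+(y)=(y/2)^{\rho_-}u(y)$ and rewrite the equation for $u$ as a Volterra integral equation against the Green's function of the unperturbed Bessel operator $-\hbar^2\partial_y^2-(\tfrac{\hbar^2}{4}+4E^2)y^{-2}$, with forcing coming from the bounded perturbation $1+\varepsilon(y^2;\hbar)$. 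Because this forcing is uniformly bounded on $(0,y_0)$ while the Green's function is integrable in the appropriate weighted sense near $y=0$, the iteration converges and fixes $u$ uniquely under the normalization $u(0+)=1$. The main obstacle is to ensure that the construction is honestly uniform in the double limit $E,\hbar\to 0+$, which is why the integral equation must be posed with the exact two--dimensional kernel $(y/2)^{\rho_\pm}$ rather than in a perturbative rescaling of the Bessel equation.

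Finally, pulling $\tilde f_+$ back via $\varphi$ yields $f(x):=e^{x/4}\tilde f_+(2e^{-x/2},E;\hbar)$, which by construction solves $H(\hbar)f=E^2 f$. As $x\to+\infty$ we have $y=2e^{-x/2}\to 0$, so
$f(x)\sim e^{x/4}(y/2)^{\rho_-}=e^{x/4}e^{-\frac{x}{2}(\frac12-2i E/\hbar)}=e^{iEx/\hbar}$,
matching the defining asymptotic of $f_+$. The uniqueness clause of the Jost function (established by the standard Volterra construction cited earlier in the introduction) then forces $f=f_+$, proving the final identity.
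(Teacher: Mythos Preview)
Your Liouville--Green computation and the bounds on $\varepsilon$ are correct and match the paper's approach, which simply invokes formula~\eqref{eq:LG} and the assumed smoothness of $V$.

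Where you diverge from the paper is in the existence and uniqueness of $\tilde f_+$. You construct $\tilde f_+$ \emph{directly} at $y=0$ via a Frobenius/Volterra argument and then pull back to identify the result with the Jost function. The paper goes the other way: it takes the Jost function $f_+$, whose existence and uniqueness are already guaranteed by the standard $L^1$ theory cited in the introduction, and \emph{defines} $\tilde f_+$ as its image under the Liouville--Green transform. Existence and uniqueness of $\tilde f_+$ are then inherited for free, and the asymptotic $\tilde f_+(y,E;\hbar)\sim(y/2)^{\frac12-2iE/\hbar}$ drops out of the substitution $y=2e^{-x/2}$ in $f_+(x,E;\hbar)\sim e^{iEx/\hbar}$ (exactly your final computation, read in reverse). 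This bypasses the Volterra construction entirely at this stage. Your route is sound, but the paper's is shorter because the hard analytic work has already been done on the $x$-side.

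One further remark: your concern about making the construction ``honestly uniform in the double limit $E,\hbar\to0+$'' is not needed for this lemma as stated---no uniformity is claimed here. The uniform control is the subject of the subsequent sections (the normal form reduction and the perturbative Bessel constructions), where it is handled by different and more elaborate machinery than a direct Frobenius argument at $y=0$.
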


\begin{proof}
By applying formula \eqref{eq:LG}, Eq.~\eqref{eq:main2} transforms into Eq.~\eqref{eq:besselform}.
Furthermore, 
 as already remarked, Eq.~\eqref{eq:main2} has a unique solution $f_+(\cdot,E;\hbar)$ 
 (the Jost function) that satisfies $f_+(x,E;\hbar) \sim e^{i\frac{E}{\hbar}x}$ as $x \to \infty$,
see \cite{deift}.
 Consequently, by setting $f_+(x,E;\hbar)=|\varphi'(x)|^{-\frac12}\tilde{f}_+(\varphi(x),E;\hbar)$
 we see that $\tilde{f}_+(\cdot,E;\hbar)$ is a solution to Eq.~\eqref{eq:besselform} and the stated
 asymptotics of $\tilde{f}_+(\cdot,E;\hbar)$ 
 follow immediately from the asymptotics of $f_+(\cdot,E;\hbar)$ and the explicit form
 of $\varphi$.
 The bounds on $\varepsilon$ are a consequence of the assumed properties of $V$.
\end{proof}

From now on we use the shorthand notation $\alpha^2:=\frac{\hbar^2}{4}+4E^2$.
We are interested in the behavior of Eq.~\eqref{eq:besselform} for small $\alpha$.
However, this is a singular perturbation problem and it is thus natural 
 to rescale the independent variable  by setting $\tilde{g}(z):=\tilde{f}(\alpha z)$ which transforms Eq.~\eqref{eq:besselform} into
\begin{equation}
\label{eq:besselformrescale}
 -\hbar_1^2 \tilde{g}''(z)+\left [1-\tfrac{1}{z^2}+\varepsilon(\alpha^2 z^2;\hbar) \right ]\tilde{g}(z)=0
\end{equation}
for $z \in (0,\frac{y_0}{\alpha})$ where $\hbar_1:=\frac{\hbar}{\alpha}$.
Note that $0<\alpha \ll 1$ and $0<\hbar\ll 1$ implies $\hbar_1 \in (0,2)$ and thus, $\hbar_1$ is not necessarily small. 
The point of this rescaling is that $|\varepsilon(\alpha^2 z^2;\hbar)|\lesssim \alpha^2 z^2$ and therefore, Eq.~\eqref{eq:besselformrescale} is a regular perturbation of a modified Bessel equation, at least for small $z$.
Clearly, $z$ can become as large as $\alpha^{-1}$ and in this case, the perturbation $\varepsilon$ is of order $1$.
However, as we will demonstrate now, the perturbation can be made globally small by a suitable Liouville--Green transform.

\section{The normal form reduction}
First, we study a useful diffeomorphism that will appear frequently in the following.

\begin{lemma}
 \label{lem:zeta}
 Let $\zeta$ be defined by
 $$ \zeta(x):=\sign (x-1) \left |\tfrac32 \int_1^x \sqrt{|1-\tfrac{1}{u^2}|}du 
\right |^\frac23,\quad x>0. $$
 Then $\zeta: (0,\infty) \to \mathbb{R}$ is an orientation--preserving diffeomorphism that satisfies $\zeta(1)=0$ and $\zeta(x)\simeq x^{\frac23}$, $\zeta'(x) \simeq x^{-\frac13}$ for $x \geq 2$.
 Furthermore, we have the derivative bounds
$$|\zeta^{(k)}(x)|\leq C_k \langle x \rangle^{\frac23-k} $$ 
for all $x \geq \frac12$, $k\in \mathbb{N}_0$ and the asymptotic behavior
$$ \tfrac23 [-\zeta(x)]^\frac32=-\log x+\log 2 -1 + x^2\varepsilon_0(x), \quad |\varepsilon_0^{(k)}(x)|\leq C_k $$
for all $x\in (0,\frac23)$.
\end{lemma}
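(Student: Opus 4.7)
The strategy is to exploit the fact that $\zeta$ is the Langer transform associated to the modified Bessel equation, and is therefore characterized by the algebraic identity $\zeta'(x)^2\zeta(x) = Q(x)$ with $Q(x) := 1 - 1/x^2 = (x-1)(x+1)/x^2$. The proof splits into four essentially independent parts: (i) monotonicity and smoothness on $(0,1) \cup (1,\infty)$; (ii) smoothness and invertibility at the turning point $x = 1$; (iii) explicit asymptotics as $x \to \infty$ and $x \to 0^+$; (iv) the global derivative bounds.

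Parts (i) and (iii) reduce to explicit computations. Trigonometric substitution evaluates $I(x) := \int_1^x \sqrt{|1-1/u^2|}\,du$ in closed form: $I(x) = \sqrt{x^2-1} - \arccos(1/x)$ for $x \geq 1$ (via $u = \sec\theta$), and $|I(x)| = -\sqrt{1-x^2} + \log\frac{1+\sqrt{1-x^2}}{x}$ for $0 < x < 1$ (via $u = \sin\theta$). Strict positivity of the integrand off $u = 1$ yields strict monotonicity of $I$, hence of $\zeta$, with smoothness inherited from the closed forms. The large-$x$ expansion $I(x) = x - \pi/2 + O(1/x)$ plugged into $\zeta = ((3/2)I)^{2/3}$ produces $\zeta(x) \simeq x^{2/3}$, and then $\zeta'(x) = \sqrt{Q(x)/\zeta(x)} \simeq x^{-1/3}$ for $x \geq 2$. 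For the expansion on $(0, 2/3)$, writing $\sqrt{1-x^2} = 1 + x^2\eta_1(x^2)$ and $\log(1 + \sqrt{1-x^2}) = \log 2 + x^2\eta_2(x^2)$ with $\eta_j$ smooth near $0$ gives $\frac{2}{3}(-\zeta(x))^{3/2} = |I(x)| = -\log x + \log 2 - 1 + x^2\varepsilon_0(x)$ with $\varepsilon_0(x) := \eta_1(x^2) + \eta_2(x^2)$ smooth on a neighborhood of $[0, 2/3]$; the uniform derivative bounds for $\varepsilon_0$ then follow by compactness.

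The hard part is step (ii), where the integrand has a square-root singularity at $u = 1$. I would use the substitution $\sigma = \sqrt{u-1}$ for $u > 1$: this yields $I(x) = 2\int_0^{\sqrt{x-1}} \sigma^2 h(\sigma)\,d\sigma$ with $h(\sigma) := \sqrt{\sigma^2+2}/(\sigma^2+1)$ smooth and even in $\sigma$. Integration of the even function $\sigma^2 h(\sigma)$ produces an odd function vanishing to third order at $0$, so $F(\tau) := 2\int_0^\tau \sigma^2 h(\sigma)\,d\sigma = \tau^3 H(\tau^2)$ for some smooth $H$ with $H(0) = 2\sqrt{2}/3 > 0$. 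Hence $I(x) = (x-1)^{3/2} H(x-1)$ and $\zeta(x) = (x-1)\cdot[(3/2)H(x-1)]^{2/3}$ on $(1, 1+\delta)$, which extends smoothly across $x = 1$ since $(3/2)H$ is smooth and positive. The analogous substitution $\sigma = \sqrt{1-u}$ produces a smooth extension from the left. To glue the two one-sided extensions into a single smooth function I would use the identity $\zeta'^2\zeta = Q$: evaluating it at $x = 1$ forces $\zeta'(1)^3 = Q'(1) = 2$, so $\zeta'(1) = 2^{1/3}$; differentiating $k$ times and evaluating at $x = 1$ (using $\zeta(1) = 0$) yields a linear equation for $\zeta^{(k)}(1)$ with a nonzero coefficient proportional to $\zeta'(1)^2$, determining $\zeta^{(k)}(1)$ uniquely in terms of lower derivatives. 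Hence the two one-sided Taylor series at $x = 1$ must coincide, so $\zeta$ is smooth across $x = 1$, and combined with $\zeta'(1) = 2^{1/3} > 0$ this gives the diffeomorphism claim.

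Finally, the derivative bounds (iv) follow by induction from $\zeta'^2\zeta = Q$. Differentiating $k$ times expresses $\zeta^{(k+1)}$ as a rational combination of $Q, \dots, Q^{(k)}$ and $\zeta, \dots, \zeta^{(k)}$ with $\zeta\zeta'$ in the denominator. For $x \geq 2$ the estimates $\zeta \simeq x^{2/3}$, $\zeta' \simeq x^{-1/3}$, $|Q^{(j)}(x)| \lesssim x^{-2-j}$ feed back inductively to give $|\zeta^{(k)}(x)| \lesssim \langle x\rangle^{2/3-k}$; on the compact interval $[1/2, 2]$ the bound reduces to the smoothness established in (ii). The main obstacle throughout is the gluing in step (ii); once that is in place, the remaining parts are routine bookkeeping.
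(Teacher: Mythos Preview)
Your proof is correct and follows the same overall outline as the paper (explicit antiderivative for the $x\to 0^+$ expansion, Taylor-type analysis at the turning point, symbol bounds for large $x$), but two implementations differ in a way worth noting. For smoothness at $x=1$ the paper factors $1-1/u^2=(u-1)g(u)$ with $g(u)=(u+1)/u^2$ smooth and positive near $u=1$, Taylor-expands $\sqrt{g}$, and integrates term by term; this yields directly $\zeta(x)=(x-1)\cdot[\text{smooth positive function of }(x-1)]$ on a full neighborhood of $1$, so no gluing is needed. Your substitution $\sigma=\sqrt{|u-1|}$ followed by the ODE-based matching of one-sided Taylor coefficients is correct but more elaborate: the recursion $(\zeta'^2\zeta)^{(k)}(1)=Q^{(k)}(1)$ does determine $\zeta^{(k)}(1)$ uniquely (the $\zeta^{(k+1)}$ terms carry a factor $\zeta(1)=0$, and the surviving coefficient of $\zeta^{(k)}$ is $(2k+1)\zeta'(1)^2\neq 0$), but the direct factorization makes this step unnecessary. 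For the derivative bounds on $x\geq\tfrac12$ the paper invokes its symbol calculus (Appendix~A), observing that $\tfrac23\zeta^{3/2}\in S_1$ for large $x$ and composing with $t\mapsto t^{2/3}$; your inductive bootstrap from $\zeta'^2\zeta=Q$ is equally valid and more self-contained, at the cost of a little bookkeeping.
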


\begin{proof}
 It is obvious that $\zeta$ is smooth on $(0,1)\cup (1,\infty)$ and smoothness around $1$ is proved by a Taylor expansion, cf.~the proof of Lemma~\ref{lem:tauTP} below.
 This also yields $\zeta'(x)>0$ for all $x>0$ and thus, $\zeta: (0,\infty) \to \mathbb{R}$ is injective and orientation--preserving.
 Obviously, $\zeta$ is also surjective and thus, a diffeomorphism.
 The stated asymptotic behavior of $\zeta(x)$ for $x \to \infty$ is also easily seen directly from the definition and the derivative bounds for $x \geq \frac12$ follow from the theory set forth in Appendix~\ref{sec:symbol}.
 For the asymptotic behavior as $x \to 0+$ note that, for $x\in (0,1)$, we have
 \begin{align*}
  \tfrac23 [-\zeta(x)]^\frac32&=\int_x^1 \sqrt{\tfrac{1}{u^2}-1}du=\log u+\sqrt{1-u^2}-\left. \log\left (1+\sqrt{1-u^2}\right )\right |_x^1 \\
  &=-\log x-\sqrt{1-x^2}+\log\left (1+\sqrt{1-x^2} \right ) \\
  &=-\log x+\log 2-1+\tilde{\varepsilon}(x)
 \end{align*}
where $\tilde{\varepsilon}$ is smooth on $(-1,1)$ and $\tilde{\varepsilon}(x)=O(x^2)$.
\end{proof}

Our goal is to ``remove'' the perturbation $\varepsilon$ in Eq.~\eqref{eq:besselformrescale} by a Liouville--Green transform.
Note that Eq.~\eqref{eq:besselformrescale} has a unique nondegenerate positive turning point $z_t(\alpha^2;\hbar)$ which is 
implicitly defined by 
$$ 1-\frac{1}{z_t(\alpha^2;\hbar)^2}+\varepsilon(\alpha^2 z_t(\alpha^2;\hbar)^2;\hbar)=0. $$
Since $\varepsilon(0;\hbar)=0$, it follows from the smoothness of $\varepsilon(\cdot;\hbar)$ and the implicit function theorem that $z_t(\cdot;\hbar)$ is smooth around $0$ and of the form $z_t(\alpha^2;\hbar)=1+O(\alpha^2)$, uniformly in $0<\hbar\ll 1$.
According to Section \ref{sec:LG}, we are looking for a diffeomorphism $\varphi(\cdot,\alpha^2;\hbar)$ that satisfies
\begin{equation} 
\label{eq:LGbessel}
\frac{1-\frac{1}{z^2}+\varepsilon(\alpha^2 z^2;\hbar)}{\varphi'(z,\alpha^2;\hbar)^2}=1-\frac{1}{\varphi(z,\alpha^2;\hbar)^2} 
\end{equation}
for $z \in (0,\frac{y_0}{\alpha})$ where $'$ always means differentiation with respect to the first variable.
Clearly, if $\alpha=0$, we may choose $\varphi(z,0;\hbar)=z$, and thus, since $\varepsilon(\alpha^2 z^2;\hbar)$ depends smoothly on $\alpha^2$, we expect $\varphi(\cdot,\alpha^2;\hbar)$ to be a smooth perturbation of the identity.
Fortunately, Eq.~\eqref{eq:LGbessel} is separable and denoting the new independent variable by $w$ (i.e., $w=\varphi(z,\alpha^2;\hbar)$), integration yields
\begin{equation} 
\label{eq:LGbesselint1}
\int_1^w \sqrt{|1-\tfrac{1}{u^2}|}du=\int_{z_t(\alpha^2;\hbar)}^z \sqrt{|1-\tfrac{1}{u^2}+\varepsilon(\alpha^2 u^2;\hbar)|}du 
\end{equation}
which, by montonicity, defines a one--to--one correspondence between $w$ and $z$ and this implicitly yields the desired mapping $\varphi(\cdot,\alpha^2;\hbar)$.
We can immediately read off that $\varphi(z_t(\alpha^2;\hbar),\alpha^2;\hbar)=1$, i.e., the turning point is mapped to $1$, and $\lim_{z\to 0+}\varphi(z,\alpha^2;\hbar)=0$. 
We also set
\begin{equation}
\label{eq:defw0} 
w_0(\alpha^2;\hbar):=\lim_{z \to \alpha^{-1}y_0-}\varphi(z,\alpha^2;\hbar) 
\end{equation}
which is finite for any $0<\alpha \ll 1$ and $w_0(\alpha^2;\hbar) \to \infty$ as $\alpha \to 0+$.
Thus, Eq.~\eqref{eq:LGbesselint1} defines a bijective mapping $\varphi(\cdot,\alpha^2;\hbar): (0,\alpha^{-1}y_0)\to (0,w_0(\alpha^2;\hbar))$.
In order to study the analytical properties of $\varphi$, it is important to realize that $\varphi(\cdot,\alpha^2;\hbar)$ 
is smooth on $(0,z_t(\alpha^2;\hbar))\cup(z_t(\alpha^2;\hbar),\alpha^{-1}y_0)$ but at $z=z_t(\alpha^2;\hbar)$ 
(or, equivalently, $w=1$) we run into 
some difficulties since both sides of Eq.~\eqref{eq:LGbesselint1} are nonsmooth there.
However, it may still be the case that the two singularities cancel out and in fact, this is exactly what happens.
In order to see this, compare the function $\zeta$ from Lemma~\ref{lem:zeta} with the left--hand side of Eq.~\eqref{eq:LGbesselint1} which shows that it is natural to raise both sides of Eq.~\eqref{eq:LGbesselint1} to the power $\frac23$.
More precisely, taking the sign issues into account, we obtain
$$ \sign(w-1)\left |\tfrac32 \int_1^w \sqrt{|1-\tfrac{1}{u^2}|}du\right |^\frac23=\tau(z,\alpha^2;\hbar)$$ 
where
\begin{equation}
\label{eq:deftau}
\tau(z,\alpha^2;\hbar):=\sign(z-z_t(\alpha^2;\hbar))\left |\tfrac32 \int_{z_t(\alpha^2;\hbar)}^z \sqrt{|1-\tfrac{1}{u^2}+\varepsilon(\alpha^2 u^2;\hbar)|}du \right |^\frac23
\end{equation}
and it follows that $\varphi$ is in fact given by 
$\varphi(z,\alpha^2;\hbar)=\zeta^{-1}(\tau(z,\alpha^2;\hbar))$.

\subsection{Analytical properties of $\tau$}
The function $\tau(z,\alpha^2;\hbar)$ diverges for $z\to 0+$ and has an apparent singularity at the turning point $z=z_t(\alpha^2;\hbar)$. Furthermore, it also diverges for $z \to \alpha^{-1}y_0$ and $\alpha \to 0+$.
Consequently, we study the analytical properties of $\tau$ near those points separately and we start with the behavior near the turning point.

\begin{lemma}
 \label{lem:tauTP}
 Let $\delta, \alpha_0>0$ be sufficiently small. Then the function
 $\tau$, defined in \eqref{eq:deftau},
 satisfies the bounds
$$ |\partial_{\alpha^2}^\ell \partial_z^k \tau(z,\alpha^2;\hbar)|\leq C_{k,\ell}|z-z_t(\alpha^2;\hbar)|^{\max\{1-k-\ell,0\}} $$
for all $z \in (1-\delta,1+\delta)$, $\alpha \in (0,\alpha_0)$, $0<\hbar\ll 1$ and $k,\ell \in \mathbb{N}_0$.
\end{lemma}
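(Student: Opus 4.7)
The plan is to show that, despite the apparent turning-point singularity at $u=z_t(\alpha^2;\hbar)$, the $\tfrac{2}{3}$-power in the definition of $\tau$ exactly cancels the $|z-z_t|^{3/2}$ that arises upon isolating the zero of the integrand, and hence $\tau$ is in fact a smooth function of $(z,\alpha^2)$ near $(1,0)$ vanishing to first order at $z_t$. The main obstacle is bookkeeping: each estimate below must be uniform in small $\hbar$, which requires the joint smoothness of $\varepsilon(\cdot;\hbar)$ in its two arguments (assumed in Theorem \ref{thm:Jostat0}), a uniform version of the implicit function theorem for $z_t(\alpha^2;\hbar)$, and differentiation under the integral sign with $\hbar$-independent dominating functions.

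First I would factor out the zero of $Q(u,\alpha^2;\hbar):=1-u^{-2}+\varepsilon(\alpha^2 u^2;\hbar)$ at the turning point. Since $Q(z_t,\alpha^2;\hbar)=0$ and $\partial_u Q(z_t,\alpha^2;\hbar)=2z_t^{-3}+2\alpha^2 z_t\varepsilon'(\alpha^2 z_t^2;\hbar)$ is uniformly bounded below by a positive constant for small $\alpha$ (recall $z_t=1+O(\alpha^2)$ uniformly in $\hbar$), Hadamard's lemma applied uniformly in the parameters $(\alpha^2,\hbar)$ gives
$$Q(u,\alpha^2;\hbar)=(u-z_t(\alpha^2;\hbar))\,h(u,\alpha^2;\hbar),$$
with $h$ smooth in $(u,\alpha^2)$ on a neighborhood of $(1,0)$, strictly positive there, and with all derivatives bounded uniformly in small $\hbar$.

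Next I would substitute $u=z_t+(z-z_t)s$, which produces
$$\int_{z_t}^z\sqrt{|Q(u,\alpha^2;\hbar)|}\,du=\sign(z-z_t)\,|z-z_t|^{3/2}\,H(z,\alpha^2;\hbar),$$
where
$$H(z,\alpha^2;\hbar):=\int_0^1\sqrt{s\,h(z_t+(z-z_t)s,\alpha^2;\hbar)}\,ds$$
is smooth in $(z,\alpha^2)$ and bounded above and below by the positivity of $h$ (differentiation under the integral is harmless since $s\in[0,1]$ is finite and $h$ is smooth with uniform bounds). Inserting this into the definition of $\tau$ and taking the $\tfrac{2}{3}$-power, the $|z-z_t|^{3/2}$ becomes $|z-z_t|$ and the outer $\sign(z-z_t)$ combines with it to produce
$$\tau(z,\alpha^2;\hbar)=(z-z_t(\alpha^2;\hbar))\,G(z,\alpha^2;\hbar),\qquad G:=\bigl(\tfrac{3}{2}H\bigr)^{2/3},$$
with $G$ smooth and bounded above and below on a neighborhood of $(1,0)$, uniformly in small $\hbar$.

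Finally, a direct Leibniz expansion of $\tau=(z-z_t)G$ yields the claim. In the $(k,\ell)$-fold derivative, each Leibniz term distributes derivatives between the factors $(z-z_t)$ and $G$: the factor $(z-z_t)$ survives undifferentiated only in the $(j_1,m_1)=(0,0)$ term, which contributes a factor of exactly $|z-z_t|$; all other nonzero possibilities give either $1$ (if a single $\partial_z$ lands on $z-z_t$) or $-\partial_{\alpha^2}^{m_1}z_t$ (if $m_1\geq 1$ copies of $\partial_{\alpha^2}$ land on it), and these are bounded uniformly in $\hbar$ by the smoothness of $z_t$. Combined with the uniform boundedness of all $(z,\alpha^2)$-derivatives of $G$, this gives the claimed bound $|\partial_{\alpha^2}^\ell\partial_z^k\tau|\leq C_{k,\ell}|z-z_t|^{\max\{1-k-\ell,0\}}$ for $z\in(1-\delta,1+\delta)$, $\alpha\in(0,\alpha_0)$, and $0<\hbar\ll 1$.
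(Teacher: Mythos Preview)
Your proof is correct and follows the same strategy as the paper: factor the integrand as $(u-z_t)h$ with $h$ smooth and positive, extract $|z-z_t|^{3/2}$ from the integral, and let the $\tfrac23$-power cancel it to obtain $\tau=(z-z_t)G$ with $G$ smooth and nonvanishing.

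The only difference is in how the smooth factor is isolated. The paper Taylor-expands $\sqrt{g}$ about $z_t$ to arbitrary order $N$, integrates term by term, and then lets $N$ be arbitrary to get smoothness to all orders. Your substitution $u=z_t+(z-z_t)s$ achieves the same thing in one stroke by writing $H$ as an integral over the fixed interval $[0,1]$ with a smooth, uniformly bounded integrand; this is cleaner and avoids the book-keeping of the remainder $R_N$. Either way the conclusion $\tau=(z-z_t)G$ with $G$ jointly smooth in $(z,\alpha^2)$ (uniformly in small $\hbar$) immediately yields the stated bounds via Leibniz, exactly as you argue.
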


\begin{proof}
First of all, we have already noted that the turning point $z_t(\alpha^2;\hbar)$ is a smooth function of $\alpha^2$ near $0$ and satisfies $z_t(\alpha^2;\hbar)=1+O(\alpha^2)$, uniformly in $0<\hbar\ll 1$.
Consequently, by choosing $\alpha_0>0$ sufficiently small, we can always accomplish that $z_t(\alpha^2;\hbar)\in (\frac12,\frac32)$ for all $0<\alpha<\alpha_0$ and $0<\hbar\ll 1$ which we shall assume in the following and we consider $(u,\alpha,\hbar) \in (\frac12,\frac32)\times (0,\alpha_0)\times (0,\hbar_0)$ where $\hbar_0>0$ is sufficiently small.
Now we study the integrand in Eq.~\eqref{eq:deftau}.
Since the turning point is nondegenerate, we can write
  $$ 1-\tfrac{1}{u^2}+\varepsilon(\alpha^2 u^2;\hbar)=[u-z_t(\alpha^2;\hbar)]g(u,\alpha^2;\hbar),\quad
  g(z_t(\alpha^2;\hbar),\alpha^2;\hbar)\gtrsim 1 $$
  with a suitable function $g(u,\alpha^2;\hbar)$ that is smooth with respect to $u$ and $\alpha^2$ in the regime of parameters being considered here.   Choose an arbitrary $N \in \mathbb{N}$.
  Since $g$ does not change sign, the function $\tilde{g}:=\sqrt{g}$ is smooth as well and a Taylor expansion yields
  $$ \tilde{g}(u,\alpha^2;\hbar)=\sqrt{g(u,\alpha^2;\hbar)}=\sum_{j=0}^N a_j(\alpha^2; \hbar)[u-z_t(\alpha^2;\hbar)]^j+R_N(u,\alpha^2;\hbar) $$
  with a remainder of the form
  $$ R_N(u,\alpha^2;\hbar)=\frac{1}{N!}\int_{z_t(\alpha^2;\hbar)}^u \partial_s^{N+1}\tilde{g}(s,\alpha^2;\hbar)(u-s)^N ds. $$
The coefficients $a_j(\alpha^2;\hbar)$ are given by
$$ a_j(\alpha^2;\hbar)=\tfrac{1}{j!}\partial_u^{j}\tilde{g}(u,\alpha^2;\hbar)|_{u=z_t(\alpha^2;\hbar)} $$
and thus, they are smooth functions of $\alpha^2$.
We emphasize that due to the nondegeneracy of the turning point we have $a_0(\alpha^2;\hbar)\gtrsim 1$.
Furthermore, the remainder satisfies the derivative bounds
$$ |\partial_{\alpha^2}^\ell \partial_u^k R_N(u,\alpha^2;\hbar)|\leq C_{k,\ell}|u-z_t(\alpha^2;\hbar)|^{N+1-k-\ell} $$
for $k,\ell \in \mathbb{N}_0$ provided that $k+\ell\leq N$.
Consequently, for $u\geq z_t(\alpha^2;\hbar)$, we obtain an expansion of the form
\begin{align*} \sqrt{1-\tfrac{1}{u^2}+\varepsilon(\alpha^2 u^2;\hbar)}=&\sum_{j=0}^N a_j(\alpha^2;\hbar)[u-z_t(\alpha^2;\hbar)]^{j+\frac12} \\
&+[u-z_t(\alpha^2;\hbar)]^{\frac12}R_N(u,\alpha^2;\hbar) 
\end{align*}
and integration yields
\begin{align*}
 \int_{z_t(\alpha^2;\hbar)}^z \sqrt{1-\tfrac{1}{u^2}+\varepsilon(\alpha^2 u^2;\hbar)}du&=\sum_{j=0}^N \frac{a_j(\alpha^2;\hbar)}{j+\tfrac32}[z-z_t(\alpha^2;\hbar)]^{j+\frac32} \\
 &\quad +[z-z_t(\alpha^2;\hbar)]^{\frac32}\tilde{R}_N(z,\alpha^2;\hbar)
\end{align*}
provided that $z \in (\frac12,\frac32)$ and $z \geq z_t(\alpha^2;\hbar)$ where $\tilde{R}_N$ 
satisfies the same bounds as $R_N$.
We obtain
\begin{align*}
 \tau(z,\alpha^2;\hbar)&=\left \{ \tfrac32 [z-z_t(\alpha^2;\hbar)]^{\frac32}\left [\sum_{j=0}^N \frac{a_j(\alpha^2;\hbar)}{j+\tfrac32}[z-z_t(\alpha^2;\hbar)]^j +\tilde{R}_N(z,\alpha^2;\hbar) \right ] \right \}^\frac23 \\
 &=[z-z_t(\alpha^2;\hbar)]\hat{R}_N(z,\alpha^2;\hbar)
\end{align*}
where $|\partial_{\alpha^2}^\ell \partial_z^k \hat{R}_N(z,\alpha^2;\hbar)|\leq C_{k,\ell}$ provided that $k+\ell\leq N$ and $z$ is sufficiently close to $z_t(\alpha^2;\hbar)$ (recall that $a_0(\alpha^2;\hbar)\gtrsim 1$).
An analogous calculation for $z\leq z_t(\alpha^2;\hbar)$ shows that the result is in fact valid for all $z$ with $|z-z_t(\alpha^2;\hbar)|$ sufficiently small.
However, since $z_t(\alpha^2;\hbar)\to 1$ uniformly in $\hbar$ as $\alpha \to 0+$, we can always find a $\delta>0$ such that the above holds for all $z \in (1-\delta,1+\delta)$, $\alpha \in (0,\alpha_0)$ and $0<\hbar\ll1$ provided that $\alpha_0>0$ is chosen small enough.  
Since $N$ was arbitrary, the claim follows.
\end{proof}

Next, we consider the behavior as $z \to 0+$.

\begin{lemma}
\label{lem:tau0}
Let $\delta,\alpha_0>0$ be sufficiently small.
 Then the function $\tau$ given by \eqref{eq:deftau} has the form
 $$ \tfrac23[-\tau(z,\alpha^2;\hbar)]^\frac32=-\log z+\varepsilon_1(z,\alpha^2;\hbar) $$
 where $\varepsilon_1$ satisfies the bounds
 $$ |\partial_{\alpha^2}^\ell \partial_z^k \varepsilon_1(z,\alpha^2;\hbar)|\leq C_{k,\ell} $$
 for all $z \in (0,1-\frac{\delta}{2})$, $\alpha\in (0,\alpha_0)$ and $0<\hbar\ll 1$. 
\end{lemma}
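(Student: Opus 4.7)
The plan is to isolate the $-\log z$ singularity by subtracting the leading $1/u$ behavior of the integrand at $u=0$ and showing that what remains is smooth. Since $z<1-\delta/2$ and $z_t(\alpha^2;\hbar)=1+O(\alpha^2)$, for $\alpha_0$ sufficiently small we have $z<z_t(\alpha^2;\hbar)$ throughout the domain, hence $\tau<0$ and \eqref{eq:deftau} reads
\begin{equation*}
\tfrac{2}{3}[-\tau(z,\alpha^2;\hbar)]^{3/2}=\int_z^{z_t(\alpha^2;\hbar)}\sqrt{\tfrac{1}{u^2}-1-\varepsilon(\alpha^2 u^2;\hbar)}\,du.
\end{equation*}
Subtracting and adding $\int_z^{z_t}\!du/u=\log z_t-\log z$ and rewriting the difference via the conjugate identity $u^{-1}[\sqrt{1-u^2-u^2\varepsilon}-1]=-u(1+\varepsilon)/(1+\sqrt{1-u^2-u^2\varepsilon})$ produces the natural candidate
\begin{equation*}
\varepsilon_1(z,\alpha^2;\hbar)=\log z_t(\alpha^2;\hbar)-\int_z^{z_t(\alpha^2;\hbar)}\frac{u[1+\varepsilon(\alpha^2 u^2;\hbar)]}{1+\sqrt{1-u^2-u^2\varepsilon(\alpha^2 u^2;\hbar)}}\,du,
\end{equation*}
and it remains to verify the derivative bounds.

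I would split this integral at a fixed interior point $z_*:=1-\delta/3$, so that $z<z_*<z_t(\alpha^2;\hbar)$ for all admissible $z,\alpha$. On the first piece $\int_z^{z_*}$ one has $1-u^2-u^2\varepsilon\gtrsim\delta$, so the integrand $g$ is smooth in $(u,\alpha^2)$ on the compact set $[0,z_*]\times[0,\alpha_0^2]$ and Leibniz's rule plus differentiation under the integral yield the required bounds. All $\partial_z$ derivatives of $\varepsilon_1$ act only on this piece and return $-g$ or its $u$-derivatives evaluated at $z$, which inherit the same smoothness. Finally, $\log z_t(\alpha^2;\hbar)$ is smooth in $\alpha^2$ with bounded derivatives by the implicit function theorem applied to $z_t^2(1+\varepsilon(\alpha^2 z_t^2;\hbar))=1$.

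The main obstacle is the second piece $\int_{z_*}^{z_t(\alpha^2;\hbar)}g\,du$: it is independent of $z$, but a naive differentiation in $\alpha^2$ creates factors $1/\sqrt{z_t-u}$ that become non-integrable after two or more derivatives. To remove this cusp I would exploit the nondegeneracy of the turning point to factor $1-u^2-u^2\varepsilon(\alpha^2 u^2;\hbar)=(z_t(\alpha^2;\hbar)-u)\,Q(u,\alpha^2;\hbar)$ with $Q$ smooth and $Q(z_t,\alpha^2;\hbar)\gtrsim 1$, and then change variables $u=z_t(\alpha^2;\hbar)-(z_t(\alpha^2;\hbar)-z_*)\rho^2$ with $\rho\in[0,1]$. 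Under this substitution $\sqrt{1-u^2-u^2\varepsilon}=\rho\,\eta(\rho,\alpha^2;\hbar)$ with $\eta(\rho,\alpha^2;\hbar):=\sqrt{(z_t-z_*)Q(u,\alpha^2;\hbar)}$ smooth and bounded below, and the integral becomes
\begin{equation*}
2(z_t(\alpha^2;\hbar)-z_*)\int_0^1\frac{\rho\,u\,[1+\varepsilon(\alpha^2 u^2;\hbar)]}{1+\rho\,\eta(\rho,\alpha^2;\hbar)}\,d\rho,
\end{equation*}
whose integrand is smooth on $[0,1]\times[0,\alpha_0^2]$ with denominator bounded below by $1$. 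Differentiating under the integral now yields $|\partial_{\alpha^2}^\ell(\cdot)|\leq C_\ell$ on the second piece, and combining with the first piece and the smoothness of $\log z_t$ delivers the stated bounds on $\varepsilon_1$.
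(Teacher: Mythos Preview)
Your proof is correct but takes a different route from the paper's. Both arguments split the integral into a piece away from the turning point and a piece near it; the difference lies entirely in how the near-turning-point piece is handled. The paper splits at $1-\tfrac{\delta}{2}$ and simply observes that
\[
\int_{1-\frac{\delta}{2}}^{z_t(\alpha^2;\hbar)}\sqrt{\tfrac{1}{u^2}-1-\varepsilon(\alpha^2 u^2;\hbar)}\,du=\tfrac{2}{3}\bigl[-\tau(1-\tfrac{\delta}{2},\alpha^2;\hbar)\bigr]^{3/2},
\]
so the required $\partial_{\alpha^2}^\ell$ bounds follow immediately from Lemma~\ref{lem:tauTP} (which was proved just before and already encodes the turning-point smoothness). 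You instead split at $z_*=1-\tfrac{\delta}{3}$, first extract $1/u$ via the conjugate identity, and then desingularize the remaining integral $\int_{z_*}^{z_t}$ by the substitution $u=z_t-(z_t-z_*)\rho^2$, which turns the $\sqrt{z_t-u}$ cusp into a factor of $\rho$ and leaves a jointly smooth integrand on $[0,1]\times[0,\alpha_0^2]$. This is a perfectly valid and self-contained argument, but it is essentially redoing (in the weaker form needed here, with no $z$-dependence) the Taylor-expansion work that already went into Lemma~\ref{lem:tauTP}. The paper's proof is shorter precisely because it reuses that lemma; yours has the virtue of being independent of it.
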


\begin{proof}
 Let $\delta, \alpha_0>0$ be so small that Lemma~\ref{lem:tauTP} is applicable. Moreover, choose $\alpha_0>0$ so small that $z_t(\alpha^2;\hbar)\in (1-\frac{\delta}{3},1+\frac{\delta}{3})$ for all $\alpha \in (0,\alpha_0)$ and $0<\hbar\ll 1$.
 If $u \in (0,1-\frac{\delta}{2})$, the integrand in the definition of $\tau$, Eq.~\eqref{eq:deftau}, can be written as
 $$ \sqrt{\tfrac{1}{u^2}-1-\varepsilon(\alpha^2 u^2;\hbar)}=\tfrac{1}{u}\sqrt{1-u^2-u^2\varepsilon(\alpha^2 u^2;\hbar)}=\tfrac{1}{u}+g(u,\alpha^2;\hbar) $$
 and, since the expression under the square root stays away from zero, $g$ satisfies
 $$ |\partial_{\alpha^2}^\ell \partial_u^k g(u,\alpha^2;\hbar)|\leq C_{k,\ell} $$
 for all $u \in (0,1-\frac{\delta}{2})$, $\alpha \in (0,\alpha_0)$, $0<\hbar\ll 1$ and $k,\ell \in \mathbb{N}_0$.
 We split the integral in the definition of $\tau$ according to $$\int_0^{z_t(\alpha^2;\hbar)}=\int_z^{1-\frac{\delta}{2}}
+\int_{1-\frac{\delta}{2}}^{z_t(\alpha^2;\hbar)}$$
and obtain
$$ \int_z^{1-\frac{\delta}{2}}\sqrt{\tfrac{1}{u^2}-1-\varepsilon(\alpha^2 u^2;\hbar)}du=-\log z+\tilde{g}(z,\alpha^2;\hbar) $$
for $z \in (0,1-\frac{\delta}{2})$ where $\tilde{g}$ satisfies the same bounds as $g$.
For the second integral note that by definition of $\tau$ we have
$$ \int_{1-\frac{\delta}{2}}^{z_t(\alpha^2;\hbar)}\sqrt{\tfrac{1}{u^2}-1-\varepsilon(\alpha^2 u^2;\hbar)}du=\tfrac23[-\tau(1-\tfrac{\delta}{2},\alpha^2;\hbar)]^\frac32 $$
and according to Lemma~\ref{lem:tauTP} we have the bounds
$$ |\partial_{\alpha^2}^\ell [-\tau(1-\tfrac{\delta}{2},\alpha^2;\hbar)]^\frac32|\leq C_\ell |1-\tfrac{\delta}{2}-z_t(\alpha^2;\hbar)|^{\frac32-\ell}\leq C_\ell $$
since $|1-\frac{\delta}{2}-z_t(\alpha^2;\hbar)|\gtrsim \delta$ for all $\alpha \in (0,\alpha_0)$ and $0<\hbar\ll 1$ which finishes the proof.
\end{proof}

To conclude the study of $\tau$, we finally consider the case for large $z$.

\begin{lemma}
 \label{lem:taularge}
 Let $\delta,\alpha_0>0$ be sufficiently small.
 Then the function $\tilde{\tau}:=\frac23 |\tau|^\frac32$, where $\tau$ is defined in \eqref{eq:deftau}, satisfies the bounds
 \begin{align*}
 |\partial_{\alpha^2}^\ell \partial_z^k \tilde{\tau}(z,\alpha^2;\hbar)|&\leq 
C_{k,\ell}z^{1-k+2\ell}
 \end{align*}
 for all $z \in (1+\frac{\delta}{2}, \alpha^{-1}y_0)$, $\alpha \in (0,\alpha_0)$, $0<\hbar\ll 1$ and $k,\ell \in \mathbb{N}_0$.
\end{lemma}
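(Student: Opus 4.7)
The plan is to exploit that on the region $z > 1+\delta/2$, which lies entirely to the right of the turning point $z_t(\alpha^2;\hbar) \in (1-\delta/3, 1+\delta/3)$, the integrand in \eqref{eq:deftau} is strictly positive, so that $\tilde\tau$ simplifies to
$$\tilde\tau(z,\alpha^2;\hbar) = \int_{z_t(\alpha^2;\hbar)}^{z} \sqrt{h(u,\alpha^2;\hbar)}\,du, \quad h(u,\alpha^2;\hbar) := 1 - \tfrac{1}{u^2} + \varepsilon(\alpha^2 u^2;\hbar).$$
The naive move of differentiating under the integral sign would be catastrophic, since $\partial_{\alpha^2}\sqrt{h}$ blows up like $(u-z_t)^{-1/2}$ at the turning point. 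To get around this I would split at a fixed interior point $z_* := 1+\delta/4$ and write $\tilde\tau(z,\alpha^2;\hbar) = I_1(\alpha^2;\hbar) + I_2(z,\alpha^2;\hbar)$ with
$$I_1(\alpha^2;\hbar) := \int_{z_t(\alpha^2;\hbar)}^{z_*}\sqrt{h(u,\alpha^2;\hbar)}\,du, \qquad I_2(z,\alpha^2;\hbar) := \int_{z_*}^{z}\sqrt{h(u,\alpha^2;\hbar)}\,du.$$
The entire singularity of the problem is now quarantined inside the $z$-independent piece $I_1$, while $h$ is smooth and uniformly positive on the integration range of $I_2$.

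The first piece is handled directly by the previous lemma: by the very definition of $\tau$, $I_1(\alpha^2;\hbar) = \tfrac{2}{3}\tau(z_*,\alpha^2;\hbar)^{3/2}$, and since $|z_*-z_t(\alpha^2;\hbar)|\gtrsim \delta$, Lemma~\ref{lem:tauTP} specialised to $k=0$ gives $|\partial_{\alpha^2}^\ell I_1(\alpha^2;\hbar)| \leq C_\ell$ for all $\ell\in\mathbb{N}_0$. Because $\partial_z^k I_1 = 0$ for $k\geq 1$ and $z^{1-k+2\ell}\gtrsim 1$ when $k=0$ and $z\geq 1$, this constant bound is subsumed into the claimed estimate.

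The core quantitative work concerns $I_2$, and the key input is the derivative estimate
$$|\partial_{\alpha^2}^\ell \partial_u^k h(u,\alpha^2;\hbar)| \leq C_{k,\ell}\, u^{2\ell - k}, \qquad u\geq z_*.$$
I would verify this by separating the two contributions to $h$. The $-1/u^2$ piece has vanishing $\alpha^2$-derivatives and its $u$-derivatives are $O(u^{-2-k})$. For the $\varepsilon(\alpha^2 u^2;\hbar)$ piece, the chain rule through $s=\alpha^2 u^2$ shows that each $\partial_{\alpha^2}$ contributes a factor $u^2$, while each $\partial_u$ either lowers the $u$-power by one or generates an extra factor of $\alpha^2$; since $\alpha^2 u^2 \lesssim y_0^2$, any such stray $\alpha^2$ can be traded for $u^{-2}$, which after collecting yields the claimed $u^{2\ell-k}$ bound. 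Because $h\gtrsim 1$ on this range, a Fa\`a di Bruno expansion transfers the same bound to $\sqrt{h}$. From here $I_2$ is routine: for $k\geq 1$ one has $\partial_{\alpha^2}^\ell \partial_z^k I_2 = \partial_{\alpha^2}^\ell \partial_z^{k-1}\sqrt{h(z,\alpha^2;\hbar)}$ of size $\lesssim z^{2\ell-(k-1)} = z^{1-k+2\ell}$, while for $k=0$ crude integration of $|\partial_{\alpha^2}^\ell\sqrt{h(u,\alpha^2;\hbar)}|\lesssim u^{2\ell}$ over $[z_*,z]$ gives $\lesssim z^{2\ell+1}$. The hardest step conceptually is the one already addressed by the splitting: making $\partial_{\alpha^2}$-differentiation harmless in a regime where it would otherwise produce an integrand with an integrable but singular behaviour at $u=z_t$; everything else reduces to chain-rule bookkeeping against the uniform bound $\alpha^2 u^2\lesssim y_0^2$.
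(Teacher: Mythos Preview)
Your proof is correct and essentially identical to the paper's: both split the integral at a fixed interior point, invoke Lemma~\ref{lem:tauTP} for the piece containing the turning point, and obtain symbol bounds on $h$ (and hence $\sqrt{h}$) for the far piece via the constraint $\alpha^2 u^2 \lesssim y_0^2$. One minor bookkeeping slip: with your stated range $z_t \in (1-\delta/3, 1+\delta/3)$, the split point $z_* = 1+\delta/4$ need not lie to the right of $z_t$; the paper avoids this by splitting at $1+\delta/2$, and you can do the same (or shrink $\alpha_0$ further so that, say, $z_t \in (1-\delta/5,1+\delta/5)$).
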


\begin{proof}
As in the proof of Lemma~\ref{lem:tau0} we assume that $\delta,\alpha_0>0$ are so small that Lemma~\ref{lem:tauTP} holds and $z_t(\alpha^2;\hbar) \in (1-\frac{\delta}{3},1+\frac{\delta}{3})$
for all $\alpha \in (0,\alpha_0)$, $0<\hbar \ll 1$.
 According to Eq.~\eqref{eq:deftau}, we have
 $$ \tilde{\tau}(z,\alpha^2;\hbar)=\tfrac23 \tau(z,\alpha^2;\hbar)^\frac32=\int_{z_t(\alpha^2;\hbar)}^z \sqrt{1-\tfrac{1}{u^2}+\varepsilon(\alpha^2 u^2;\hbar)}du $$
 for $z \in (1+\frac{\delta}{2},\alpha^{-1}y_0)$.
 Again, we split the integral as
 \begin{equation*} 
\int_{z_t(\alpha^2;\hbar)}^z =
\int_{z_t(\alpha^2;\hbar)}^{1+\frac{\delta}{2}}+\int_{1+\frac{\delta}{2}}^z =:A(\alpha^2;\hbar)
+B(z,\alpha^2;\hbar)
\end{equation*}
and, noting that 
$$ A(\alpha^2;\hbar)=\tfrac23 \tau(1+\tfrac{\delta}{2},\alpha^2;\hbar)^\frac32, $$
we infer from Lemma~\ref{lem:tauTP} the bounds
$$ |\partial_{\alpha^2}^\ell A(\alpha^2;\hbar)|\leq C_\ell |1+\tfrac{\delta}{2}-z_t(\alpha^2;\hbar)|^{\frac32-\ell}\leq C_\ell,\quad \ell \in \mathbb{N}_0 $$
since by the assumptions on $\alpha_0$ and $\delta$, we have $|1+\tfrac{\delta}{2}-z_t(\alpha^2;\hbar)|\gtrsim \delta$ for all $\alpha \in (0,\alpha_0)$ and $0<\hbar \ll 1$.
In order to obtain suitable bounds on $B(z,\alpha^2;\hbar)$, note that, by Lemma~\ref{lem:prtobessel}, 
$\varepsilon$ satisfies 
$$ |\partial_{u^2}^k \varepsilon(\alpha^2 u^2; \hbar)|=|(\alpha^2)^k \varepsilon^{(k)}(\alpha^2 u^2; \hbar)|
\leq C_k (\alpha^2)^k \leq C_k (u^2)^{-k} $$
since $|u|\lesssim \alpha^{-1}$. Consequently, 
$\partial_{\alpha^2}^\ell \varepsilon(\alpha^2 u^2;\hbar)=(u^2)^\ell \varepsilon^{(\ell)}(\alpha^2 u^2; \hbar)$
behaves like a symbol and we infer the bounds 
$$|\partial_{\alpha^2}^\ell \partial_{u^2}^k \varepsilon(\alpha^2 u^2;\hbar)|
\leq C_{k,\ell}(u^2)^{\ell-k}, \quad k,\ell \in \mathbb{N}_0 $$
for $u \in (1+\tfrac{\delta}{2}, \alpha^{-1}y_0)$. 
This implies 
$$|\partial_{\alpha^2}^\ell \partial_u^k [1-\tfrac{1}{u^2}+\varepsilon(\alpha^2 u^2;\hbar)]|\leq C_{k,\ell} u^{-k+2\ell} $$
and, since $1-\frac{1}{u^2}+\varepsilon(\alpha^2 u^2;\hbar)\gtrsim 1$ in the parameter regime we are considering, we obtain $$|\partial_{\alpha^2}^\ell \partial_z^k B(z,\alpha^2;\hbar)|\leq C_{k,\ell} z^{1-k+2\ell}$$
for all $z \in (1+\frac{\delta}{2}, \alpha^{-1}y_0)$, $\alpha \in (0,\alpha_0)$, $0<\hbar \ll 1$ and $k,\ell\in \mathbb{N}_0$, cf.~Lemma~\ref{lem:symbc}. 
Trivially, the same bounds hold for $A(\alpha^2;\hbar)$ and the claim follows.
\end{proof}

\subsection{Application of the Liouville--Green transform}

We have collected enough information on the function $\tau$ to study the diffeomorphism $\varphi$.

\begin{proposition}
\label{prop:phi}
 Let $\zeta$, $\tau$ be as in Lemma~\ref{lem:zeta} and Eq.~\eqref{eq:deftau}, respectively, and suppose
$\alpha_0>0$ is sufficiently small. Then the function 
$\varphi:=\zeta^{-1}\circ \tau$ has the following properties:
\begin{enumerate}
 \item For any $\alpha \in (0,\alpha_0)$ and $0<\hbar\ll 1$, $\varphi(\cdot,\alpha^2;\hbar): (0,\alpha^{-1}y_0) \to (0,w_0(\alpha;\hbar))$ is an orientation--preserving diffeomorphism where $w_0$ is defined in \eqref{eq:defw0},
 \item $\varphi(z_t(\alpha^2;\hbar),\alpha^2;\hbar)=1$, i.e., $\varphi$ maps the turning point to $1$,
 \item $\varphi$ is of the form
 $$ \varphi(z,\alpha^2;\hbar)=z[1+\alpha^2 z^2 \varepsilon_2(z,\alpha^2;\hbar)] $$
 where the function $\varepsilon_2$ satisfies the bounds
 $$ |\partial_{\alpha^2}^\ell \partial_z^k \varepsilon_2(z,\alpha^2;\hbar)|\leq C_{k,\ell} 
\langle z \rangle^{-k+2\ell} $$
 for all $z \in (0,\alpha^{-1}y_0)$, $\alpha \in (0,\alpha_0)$, $0 <\hbar\ll 1$ and $k,\ell \in \mathbb{N}_0$,
 \item $\varphi(z,\alpha^2;\hbar)\simeq z$ and $\varphi'(z,\alpha^2;\hbar)\simeq 1$ for all $z \in (0,w_0(\alpha^2;\hbar))$, $\alpha \in (0,\alpha_0)$ and $0<\hbar\ll 1$.
\end{enumerate}

\end{proposition}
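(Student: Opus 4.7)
My plan is to establish the four properties in sequence, leveraging the structural lemmas on $\tau$ (Lemmas~\ref{lem:tauTP}, \ref{lem:tau0}, \ref{lem:taularge}) together with Lemma~\ref{lem:zeta} on $\zeta$.

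Properties (1) and (2) are essentially bookkeeping. The function $\tau(\cdot,\alpha^2;\hbar)$ is strictly increasing on $(0,\alpha^{-1}y_0)$ because the integrand in \eqref{eq:deftau} is positive on each side of the turning point, and it satisfies $\tau(z_t(\alpha^2;\hbar),\alpha^2;\hbar)=0$ by construction. Lemma~\ref{lem:tau0} forces $\tau\to-\infty$ as $z\to 0+$, while the upper limit as $z\to\alpha^{-1}y_0-$ is finite. Composing with $\zeta^{-1}:\mathbb{R}\to(0,\infty)$, an orientation-preserving diffeomorphism sending $0$ to $1$, immediately yields that $\varphi=\zeta^{-1}\circ\tau$ is the claimed diffeomorphism with $\varphi(z_t(\alpha^2;\hbar),\alpha^2;\hbar)=1$.

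The heart of the argument is (3). I would start from the observation that at $\alpha=0$ one has $z_t(0;\hbar)=1$ and $\varepsilon\equiv 0$, hence $\tau(\cdot,0;\hbar)=\zeta$ and $\varphi(\cdot,0;\hbar)$ is the identity, which explains why the deviation factors as $\alpha^2$ times a smooth function. To extract the additional $z^3$ and verify the symbol bounds, I would partition $(0,\alpha^{-1}y_0)$ into the three regimes of Lemmas~\ref{lem:tauTP}--\ref{lem:taularge}. Near the turning point, Lemma~\ref{lem:tauTP} gives smooth uniform control of $\tau$, and since $\zeta$ is smooth with nonvanishing derivative at $1$, the inverse function theorem transfers these bounds to $\varphi$; the region is bounded, so any $\langle z\rangle$-weight is trivial. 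For small $z$, combining $\tfrac{2}{3}[-\tau]^{3/2}=-\log z+\varepsilon_1$ from Lemma~\ref{lem:tau0} with the log-expansion of $\zeta$ from Lemma~\ref{lem:zeta} yields
$$\log(\varphi/z)=\big[\varphi^2\varepsilon_0(\varphi)-z^2\varepsilon_0(z)\big]+\big[\varepsilon_1(z,\alpha^2;\hbar)-\varepsilon_1(z,0;\hbar)\big],$$
from which $\varphi/z-1=O(\alpha^2)$ with bounded symbol norms follows by implicit differentiation (here the $z$-weight is again trivial since $z$ is bounded). The decisive regime is large $z$: Lemma~\ref{lem:taularge} gives $|\partial_{\alpha^2}^\ell\partial_z^k\tilde\tau|\leq C_{k,\ell} z^{1-k+2\ell}$, and since $\zeta(x)\simeq x^{2/3}$ with symbol-type derivative bounds, the map $x\mapsto\tfrac{2}{3}\zeta(x)^{3/2}$ behaves like $x$ and is invertible via the symbol calculus (cf.~the use of Lemma~\ref{lem:symbc} in Lemma~\ref{lem:taularge}), so $\varphi$ inherits $|\partial_{\alpha^2}^\ell\partial_z^k\varphi|\leq C_{k,\ell} z^{1-k+2\ell}$; combined with $\varphi(z,0;\hbar)=z$ and $\alpha^2 z^2\lesssim 1$ on the domain, this produces the form $z[1+\alpha^2 z^2\varepsilon_2]$ with the stated bounds. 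The extra $z^3$ (rather than $z$) reflects that the perturbation $\varepsilon(\alpha^2 u^2;\hbar)$ vanishes to order $\alpha^2 u^2$, contributing one $u^2$ upon $\alpha^2$-differentiation and a further power upon integration.

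Property (4) then follows most cleanly from the original differential relation \eqref{eq:LGbessel}, which reads $\varphi'(z,\alpha^2;\hbar)^2=[1-z^{-2}+\varepsilon(\alpha^2 z^2;\hbar)]/[1-\varphi(z,\alpha^2;\hbar)^{-2}]$; both numerator and denominator are strictly positive away from their respective turning points and vanish to the same first order there (thanks to (2), which aligns the two turning points), so the ratio is bounded above and below, giving $\varphi'\simeq 1$ globally, whence $\varphi\simeq z$ by integration from $z_t$ using $\varphi(z_t)=1$. The main obstacle I anticipate is the large-$z$ analysis in (3): propagating the weighted symbol bounds of Lemma~\ref{lem:taularge} through the nonlinear inverse $\zeta^{-1}$ requires care with the symbol calculus of Appendix~\ref{sec:symbol}, and one must check that no spurious singularities appear when gluing the three regimes together at the boundaries of the partition.
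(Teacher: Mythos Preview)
Your overall strategy---partitioning into the three regimes of Lemmas~\ref{lem:tauTP}--\ref{lem:taularge}, using $\varphi(\cdot,0;\hbar)=\mathrm{id}$ together with the fundamental theorem of calculus in $\alpha^2$, and invoking the symbol calculus for the composition $\zeta^{-1}\circ\tau$---matches the paper's proof closely, and your treatment of (1), (2), and (4) is fine (your ODE-based argument for (4) is in fact a slightly cleaner alternative to the paper's composition argument).

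There is, however, a genuine gap in your small-$z$ analysis of (3). You obtain $\varphi/z-1=O(\alpha^2)$ and declare that ``the $z$-weight is trivial since $z$ is bounded,'' but the claimed form $\varphi=z[1+\alpha^2 z^2\varepsilon_2]$ with $\varepsilon_2$ bounded requires $\varphi/z-1=O(\alpha^2 z^2)$ as $z\to 0+$, i.e., two extra orders of vanishing at the origin. Your implicit equation does not deliver this: the difference $\varepsilon_1(z,\alpha^2;\hbar)-\varepsilon_1(z,0;\hbar)$ contains a $z$-independent $O(\alpha^2)$ piece coming from the portion of the integral in \eqref{eq:deftau} near the turning point (cf.\ the term $\tfrac23[-\tau(1-\tfrac{\delta}{2},\alpha^2;\hbar)]^{3/2}$ in the proof of Lemma~\ref{lem:tau0}), so your heuristic about ``one $u^2$ upon $\alpha^2$-differentiation and a further power upon integration'' only accounts for the small-$u$ contribution, not the one near $z_t$. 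The paper closes this gap by a different mechanism: after establishing $\varphi=z[1+\alpha^2\tilde\varepsilon]$ with $\tilde\varepsilon$ merely bounded, it substitutes this ansatz back into the defining relation \eqref{eq:LGbessel} and reads off from the structure near $z=0$ that $\tilde\varepsilon'(0,\alpha^2;\hbar)=0$ and then $\tilde\varepsilon(0,\alpha^2;\hbar)=0$, after which the fundamental theorem of calculus in $z$ upgrades $\tilde\varepsilon$ to $z^2\varepsilon_2$. You need to incorporate this (or an equivalent) step.
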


\begin{proof}
 Throughout the proof we assume that $\alpha_0$ is so small that $z_t(\alpha^2;\hbar)\in (\frac23,\frac43)$ for all $\alpha \in (0,\alpha_0)$ and $0<\hbar\ll 1$.
 \begin{enumerate}
  \item As already noted, the function $\varphi(\cdot,\alpha^2;\hbar)$ is an orientation--preserving bijection between the intervals $(0,\alpha^{-1}y_0)$ and $(0,w_0(\alpha^2;\hbar))$.
  It is also clear from Eq.~\eqref{eq:LGbesselint1} that $\varphi(\cdot,\alpha^2;\hbar)$ is smooth on $(0,z_t(\alpha^2;\hbar))\cup (z_t(\alpha^2;\hbar),\alpha^{-1}y_0)$ for any $\alpha \in (0,\alpha_0)$, $0<\hbar\ll 1$ and smoothness around the turning point $z_t(\alpha^2;\hbar)$ follows from Lemmas \ref{lem:zeta} and \ref{lem:tauTP}.
  \item This is an immediate consequence of Eq.~\eqref{eq:LGbesselint1}.
  \item We distinguish different cases and start with $z$ close to zero, say, $z \in (0,\frac12)$. 
  Since we stay away from the turning point, we may drop the fractional powers and, writing $\tilde{\tau}:=-\frac23|\tau|^\frac32$, $\tilde{\zeta}:=-\frac23|\zeta|^\frac32$, we obtain $\varphi=\tilde{\zeta}^{-1}\circ \tilde{\tau}$.
  Now note that $\tilde{\zeta}$ maps $(0,\frac23)$ to $(-\infty,\tilde{\zeta}(\frac23))$ diffeomorphically. 
  Thus, $\hat{\zeta}:=\exp \circ \tilde{\zeta}: (0,\frac23) \to (0,e^{\tilde{\zeta}(\frac23)})$ diffeomorphically and,
  according to Lemma~\ref{lem:zeta}, $\hat{\zeta}$
is of the form
  $$ \hat{\zeta}(x)=e^{\tilde{\zeta}(x)}=e^{\log x-\log 2+1-x^2\varepsilon_0(x)}=\tfrac12 x e^{1-x^2\varepsilon_0(x)} $$
  which implies the bounds $|\hat{\zeta}^{(k)}(x)|\leq C_k x^{\max\{1-k,0\}}$ for all $x\in (0,\frac23)$ and $k\in \mathbb{N}_0$.
  Since $\hat{\zeta}'\simeq 1$, the inverse $\hat{\zeta}^{-1}$ satisfies the bounds
  $|(\hat{\zeta}^{-1})^{(k)}(x)|\leq C_k x^{\max\{1-k,0\}}$ for $x \in (0,e^{\tilde{\zeta}(\frac23)})$
and we note that $\hat{\zeta}^{-1}=\tilde{\zeta}^{-1} \circ \log$.
  Now consider $\hat{\tau}:=\exp \circ \tilde{\tau}: (0,\frac12) \to (0,e^{\tilde{\tau}(\frac12,\alpha^2;\hbar)})$ which, according to Lemma~\ref{lem:tau0}, looks like
  $$ \hat{\tau}(z,\alpha^2;\hbar)=e^{\tilde{\tau}(z,\alpha^2;\hbar)}=e^{\log z-\varepsilon_1(z,\alpha^2;\hbar)}=ze^{-\varepsilon_1(z,\alpha^2;\hbar)} $$
  and we obtain the bounds $|\partial_{\alpha^2}^\ell \partial_z^k \hat{\tau}(z,\alpha^2;\hbar)|\leq C_{k,\ell}z^{\max\{1-k,0\}}$ for all $z \in (0,\frac12)$, $\alpha \in (0,\alpha_0)$, $0<\hbar\ll 1$ and $k,\ell \in \mathbb{N}_0$.
  Note that 
$$ \varphi=\tilde{\zeta}^{-1}\circ \tilde{\tau}=\tilde{\zeta}^{-1}\circ \log \circ \exp \circ \tilde{\tau}=\hat{\zeta}^{-1} \circ \hat{\tau} $$
and this yields the bounds
$$ |\partial_{\alpha^2}^\ell \partial_z^k \varphi(z,\alpha^2;\hbar)|\leq C_{k,\ell}z^{\max\{1-k,0\}} $$
for all $z\in (0,\frac12)$, $\alpha \in (0,\alpha_0)$, $0<\hbar \ll 1$ and $k,\ell \in \mathbb{N}_0$.
As a consequence, since $\varphi(z,0;\hbar)=z$, the fundamental theorem of calculus yields
\begin{equation} 
\label{eq:phialpha}
\varphi(z,\alpha^2;\hbar)=z+\int_0^{\alpha^2}\partial_\beta \varphi(z,\beta;\hbar)d\beta  
\end{equation}
and we obtain
$\varphi(z,\alpha^2;\hbar)=z[1+\alpha^2 \tilde{\varepsilon}(z,\alpha^2;\hbar)]$
where
$$ |\partial_{\alpha^2}^\ell \partial_z^k \tilde{\varepsilon}(z,\alpha^2;\hbar)| \leq C_{k,\ell} $$
for all $z \in (0,\frac12)$, $\alpha \in (0,\alpha_0)$, $0<\hbar\ll 1$ and $k,\ell \in\mathbb{N}_0$.
 In order to prove that in fact $\tilde{\varepsilon}(z,\alpha^2;\hbar)=O(z^2)$, 
 we insert $\varphi(z,\alpha^2;\hbar)=z[1+\alpha^2 \tilde{\varepsilon}(z,\alpha^2;\hbar)]$ into Eq.~\eqref{eq:LGbessel} which yields
 $$ \tilde{\varepsilon}'(0,\alpha^2;\hbar)[1+\alpha^2 \tilde{\varepsilon}(0,\alpha^2;\hbar)]=0 $$
 and we infer that $\tilde{\varepsilon}'(0,\alpha^2;\hbar)=0$ since $\tilde{\varepsilon}(0,\alpha^2;\hbar)=-\frac{1}{\alpha^2}$ contradicts the above bounds on $\tilde{\varepsilon}$.
 However, this and Eq.~\eqref{eq:LGbessel} necessarily imply $\tilde{\varepsilon}(0,\alpha^2;\hbar)=0$.
 Consequently, the above bounds on $\tilde{\varepsilon}$ and the fundamental theorem of calculus show that in fact 
$\varphi(z,\alpha^2;\hbar)=z[1+\alpha^2 z^2 \varepsilon_2(z,\alpha^2;\hbar)]$ where $\varepsilon_2$ satisfies the same bounds as $\tilde{\varepsilon}$. This settles the claim for $z \in (0,\frac12)$.

Around the turning point, i.e., for $z \in (\frac13,3)$, say, it follows from Lemmas \ref{lem:zeta}, \ref{lem:tauTP}, \ref{lem:tau0} and \ref{lem:taularge} that 
$$ |\partial_{\alpha^2}^\ell \partial_z^k \varphi(z,\alpha^2;\hbar)|\leq C_{k,\ell} $$
for all $\alpha \in (0,\alpha_0)$, $0<\hbar\ll 1$, $k,\ell \in \mathbb{N}_0$ and thus, the stated bounds are a consequence of the formula \eqref{eq:phialpha} which is valid for all $z \in (0,\alpha^{-1}y_0)$.

Finally, for the large $z$ behavior, say, $z \in (2,\alpha^{-1}y_0)$, we again remove the fractional powers and use the representation $\varphi=\tilde{\zeta}^{-1} \circ \tilde{\tau}$ where this time $\tilde{\zeta}:=\tfrac23 |\zeta|^\frac32$ and $\tilde{\tau}:=\frac23 |\tau|^\frac32$.
From Lemma~\ref{lem:zeta} and Appendix~\ref{sec:symbol} it follows that $\tilde{\zeta}^{-1}$ satisfies the bounds $|(\tilde{\zeta}^{-1})^{(k)}(x)|\leq C_k x^{1-k}$ for all $x \geq c_0>0$ and $k \in \mathbb{N}_0$ where $c_0$ is arbitrary but fixed.
Furthermore, from Lemma~\ref{lem:taularge} we infer that
$|\partial_{\alpha^2}\varphi(z,\alpha^2;\hbar)|\lesssim z^3$ and thus, Eq.~\eqref{eq:phialpha} implies
$$ \varphi(z,\alpha^2;\hbar)=z[1+O(z^2 \alpha^2)]. $$
The stated derivative bounds follow from the bounds in Lemma~\ref{lem:taularge} and Eq.~\eqref{eq:phialpha}.

\item For small $z$, the estimate $\varphi'(z,\alpha^2;\hbar) \simeq 1$ follows immediately from the above and
for large $z$ it is a consequence of $\varphi=\tilde{\zeta}^{-1} \circ \tilde{\tau}$, Lemma~\ref{lem:zeta}, 
and the fact that $\tilde{\tau}'(z,\alpha^2;\hbar)\simeq 1$, uniformly in small $\alpha$, $\hbar$, 
which can be read off from the definition of $\tau$. 
The estimate $\varphi(z,\alpha^2;\hbar)\simeq z$ now follows from 
$\varphi(z,\alpha^2;\hbar)=\int_0^z \varphi'(u,\alpha^2;\hbar)du$.
 \end{enumerate}
\end{proof}

\begin{remark}
 \label{rem:w0}
 From Proposition~\ref{prop:phi} we can also read off the behavior of $w_0(\alpha^2;\hbar)$ as $\alpha \to 0+$. Indeed, we have
 $$ w_0(\alpha^2;\hbar)=
\lim_{z \to \alpha^{-1}y_0-}\varphi(z,\alpha^2;\hbar)\simeq \lim_{z \to \alpha^{-1}y_0}z \simeq \alpha^{-1}, $$
uniformly in small $\hbar$.
\end{remark}

Now we can apply the Liouville--Green transform induced by the diffeomorphism $\varphi(\cdot,\alpha^2;\hbar)$ to Eq.~\eqref{eq:besselformrescale}.

\begin{proposition}[Normal form reduction]
\label{prop:normalform}
 Assume $0<\alpha\ll 1$, $0<\hbar \ll 1$ and let $\varphi(\cdot,\alpha^2;\hbar)$ be the diffeomorphism from Proposition~\ref{prop:phi}. Furthermore, as before, set $\alpha^2=\frac{\hbar^2}{4}+4E^2$.
 Then the function $g$, defined by
$$g \left (\varphi(\tfrac{2}{\alpha}e^{-\frac{x}{2}},\alpha^2;\hbar) \right )= e^{-\frac{x}{4}}\varphi'(\tfrac{2}{\alpha}e^{-\frac{x}{2}},\alpha^2;\hbar)^{\frac12} f(x), $$ satisfies the equation
 \begin{equation}
  \label{eq:normalform}
  -\hbar_1^2 g''(w)+\left (1-\tfrac{1}{w^2} \right )g(w)=\hbar^2 V_1(w,\alpha^2;\hbar)g(w),\quad \hbar_1=\tfrac{\hbar}{\alpha}
 \end{equation}
for $w \in (0,w_0(\alpha^2;\hbar))$ if and only if $f$ is a solution to Eq.~\eqref{eq:main2} for $x>x_0$.
Here, $V_1$ is a suitable function that satisfies the bounds
$$ |\partial_{\alpha^2}^\ell \partial_w^k V_1(w,\alpha^2;\hbar)|\leq C_{k,\ell}\langle w \rangle^{-k+2\ell},\quad k,\ell \in \mathbb{N}_0 $$
in the above domain of $w,\alpha,\hbar$.
Furthermore, there exists a unique solution $g_+(\cdot,E;\hbar)$ to Eq.~\eqref{eq:normalform} with the asymptotic behavior
$$ g_+(w,E;\hbar)\sim \left (\frac{\alpha w}{2} \right )^{\frac12-2i\frac{E}{\hbar}}\quad (w \to 0+) $$
and the outgoing Jost function $f_+(\cdot,E;\hbar)$ of the operator $H(\hbar)$ is given by
$$ f_+(x,E;\hbar)=e^\frac{x}{4}\varphi'(\tfrac{2}{\alpha}e^{-\frac{x}{2}},\alpha^2;\hbar)^{-\frac12}
g_+\left ( \varphi(\tfrac{2}{\alpha}e^{-\frac{x}{2}},\alpha^2;\hbar), E;\hbar \right ). $$
\end{proposition}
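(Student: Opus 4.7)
The statement is essentially a direct application of the Liouville--Green formula \eqref{eq:LG} with the specific diffeomorphism $\varphi(\cdot,\alpha^2;\hbar)$ constructed in Proposition~\ref{prop:phi}, together with careful bookkeeping of an $\alpha^{-2}$ prefactor. Starting from Eq.~\eqref{eq:besselformrescale} with $Q(z)=1-\tfrac{1}{z^2}+\varepsilon(\alpha^2 z^2;\hbar)$ and small parameter $\hbar_1$, formula \eqref{eq:LG} and the defining relation \eqref{eq:LGbessel} (which forces $Q(z)/\varphi'(z)^2=1-1/\varphi(z)^2$) give
\[
-\hbar_1^2 g''(w)+\Big(1-\tfrac{1}{w^2}\Big)g(w)=\hbar_1^2\,\mathcal{S}(z,\alpha^2;\hbar)\,g(w),\qquad \mathcal{S}:=\tfrac{3}{4}\tfrac{\varphi''(z)^2}{\varphi'(z)^4}-\tfrac{1}{2}\tfrac{\varphi'''(z)}{\varphi'(z)^3},
\]
evaluated at $z=\varphi^{-1}(w,\alpha^2;\hbar)$. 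Writing $\hbar_1^2=\hbar^2\alpha^{-2}$ and defining $V_1(w,\alpha^2;\hbar):=\alpha^{-2}\mathcal{S}(\varphi^{-1}(w,\alpha^2;\hbar),\alpha^2;\hbar)$ brings the equation into the asserted form \eqref{eq:normalform}.

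The bounds on $V_1$ are the technical heart of the argument. I would exploit the representation $\varphi(z,\alpha^2;\hbar)=z+\alpha^2 z^3 \varepsilon_2(z,\alpha^2;\hbar)$ from Proposition~\ref{prop:phi}(3), together with the symbol-type bounds $|\partial_{\alpha^2}^\ell \partial_z^k \varepsilon_2|\lesssim \langle z\rangle^{-k+2\ell}$. Differentiating and using that $\alpha z\lesssim 1$ on the domain gives $\varphi'(z)=1+O(\alpha^2\langle z\rangle^2)$, $\varphi''(z)=O(\alpha^2\langle z\rangle)$, $\varphi'''(z)=O(\alpha^2)$; combined with the lower bound $\varphi'\simeq 1$ from Proposition~\ref{prop:phi}(4) this yields $\mathcal{S}=O(\alpha^2)$ and hence $V_1=O(1)$. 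Higher $z$-derivatives are controlled by the same symbol calculus (Appendix~\ref{sec:symbol}), each $\partial_z$ costing a power of $\langle z\rangle^{-1}$ and each $\partial_{\alpha^2}$ costing $\langle z\rangle^{2}$. Since $z=\varphi^{-1}(w)$ satisfies $\langle z\rangle\simeq \langle w\rangle$ uniformly (with analogous symbol bounds for the inverse), pulling back to the $w$-variable via the chain and Fa\`a di Bruno formulas gives precisely $|\partial_{\alpha^2}^\ell \partial_w^k V_1(w,\alpha^2;\hbar)|\lesssim \langle w\rangle^{-k+2\ell}$.

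Finally, the formula relating $f_+$ and $g_+$ is obtained by composing the three transformations of Section~2: $\tilde f(y)=e^{-x/4}f(x)$ with $y=2e^{-x/2}$, then $\tilde g(z)=\tilde f(\alpha z)$ with $z=(2/\alpha)e^{-x/2}$, then $g(w)=\varphi'(z)^{1/2}\tilde g(z)$ with $w=\varphi(z,\alpha^2;\hbar)$; composing these and inverting yields the stated expression for $f_+$. Uniqueness of $g_+$ is inherited from uniqueness of the Jost function $\tilde f_+$ in Lemma~\ref{lem:prtobessel}, and the asymptotic is obtained by tracking the known behavior $\tilde f_+(y)\sim(y/2)^{1/2-2iE/\hbar}$ through the rescaling (which turns it into $\tilde g_+(z)\sim(\alpha z/2)^{1/2-2iE/\hbar}$) and then through the Liouville--Green transform, using that $\varphi'(z)\to 1$ and $\varphi(z)/z\to 1$ as $z\to 0+$, a consequence of the $O(\alpha^2 z^2)$ remainder in Proposition~\ref{prop:phi}(3). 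The principal obstacle I expect is the extraction of the $\alpha^{-2}$ factor absorbed into $V_1$: the Schwarzian carries no small parameter a priori, and the required smallness comes entirely from the fact that $\varphi$ degenerates to the identity at $\alpha=0$, so that $\varphi''$ and $\varphi'''$ vanish there; the jointly smooth dependence on $\alpha^2$ provided by Proposition~\ref{prop:phi} upgrades this to the quantitative bound $\mathcal{S}=O(\alpha^2)$ needed to close the estimate.
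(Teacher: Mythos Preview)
Your proposal is correct and follows essentially the same route as the paper: apply the Liouville--Green formula \eqref{eq:LG} with the diffeomorphism $\varphi$ from Proposition~\ref{prop:phi} to Eq.~\eqref{eq:besselformrescale}, define $V_1$ as $\alpha^{-2}$ times the Schwarzian-type expression (equivalently, the paper writes the prefactor as $\hbar_1^2/\hbar^2$), and extract the crucial $O(\alpha^2)$ smallness of $\varphi''$ and $\varphi'''$ from the representation $\varphi(z)=z[1+\alpha^2 z^2\varepsilon_2]$ in Proposition~\ref{prop:phi}(3) together with $\varphi'\simeq 1$. The derivative bounds via symbol calculus, the bounds on $\varphi^{-1}$ obtained inductively from $\varphi\circ\varphi^{-1}=\mathrm{id}$, the chain of transformations $f\mapsto\tilde f\mapsto\tilde g\mapsto g$, and the asymptotics of $g_+$ via $\varphi(z)\sim z$, $\varphi'(z)\sim 1$ as $z\to 0+$ are all exactly as in the paper's proof.
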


\begin{remark}
The crucial point of Proposition~\ref{prop:normalform} is  that the right--hand side of Eq.~\eqref{eq:normalform} is \emph{globally} small, even after dividing by $\hbar_1^2$.
\end{remark}

\begin{proof}[Proof of Proposition~\ref{prop:normalform}]
 By construction of $\varphi(\cdot,\alpha^2;\hbar)$ (see Eqs.~\eqref{eq:LGbessel} and \eqref{eq:LG}), the function $g(\varphi(z,\alpha^2;\hbar))=\varphi'(z,\alpha^2;\hbar)^\frac12 \tilde{g}(z)$ solves Eq.~\eqref{eq:normalform} on $w \in (0,w_0(\alpha^2;\hbar))$ with 
\begin{equation} 
\label{eq:V1}
V_1(\varphi(z,\alpha^2;\hbar),\alpha^2;\hbar)=\frac{\hbar_1^2}{\hbar^2} \left (\frac34 \frac{\varphi''(z,\alpha^2;\hbar)^2}{\varphi'(z,\alpha^2;\hbar)^4}-\frac12 \frac{\varphi'''(z,\alpha^2;\hbar)}{\varphi'(z,\alpha^2;\hbar)^3} \right )
\end{equation}
if and only if $\tilde{g}$ solves Eq.~\eqref{eq:besselformrescale} on $z \in (0,\alpha^{-1}y_0)$ where $y_0=2e^{-\frac{x_0}{2}}$.
 However, this is the case if and only if $\tilde{f}(y)=\tilde{g}(\frac{y}{\alpha})$ satisfies Eq.~\eqref{eq:besselform} on $y\in (0,y_0)$ which, according to Lemma~\ref{lem:prtobessel}, is true if and only if $f(x)=e^{\frac{x}{4}}\tilde{f}(2e^{-\frac{x}{2}})$ solves Eq.~\eqref{eq:main2} on $x>x_0$.
 The asymptotic behavior of $g_+$ follows from $\varphi(z,\alpha^2;\hbar)\sim z$ and $\varphi'(z,\alpha^2;\hbar)\sim 1$ as $z \to 0+$, see Proposition~\ref{prop:phi}, which implies the asymptotics $\varphi^{-1}(w,\alpha^2;\hbar)\sim w$ and $(\varphi^{-1})'(w,\alpha^2;\hbar)\sim 1$ as $w \to 0+$ for the inverse $\varphi^{-1}(\cdot,\alpha^2;\hbar)$ of $\varphi(\cdot,\alpha^2;\hbar)$.
 
 It remains to prove the bounds on $V_1$.
 To this end, note first that, according to Proposition~\ref{prop:phi}, we have
 $$ \frac{\varphi''(z,\alpha^2;\hbar)^2}{\varphi'(z,\alpha^2;\hbar)^4}=\frac{O(\alpha^2 z)^2}{\varphi'(z,\alpha^2;\hbar)^4}=O(\alpha^4 z^2)=O(\alpha^2) $$
 and, similarly,
 $$ \frac{\varphi'''(z,\alpha^2;\hbar)}{\varphi'(z,\alpha^2;\hbar)^3}=O(\alpha^2) $$
 on $z \in (0,\alpha^{-1}y_0)$ since $\varphi'(z,\alpha^2;\hbar)\simeq 1$ and $w_0(\alpha^2;\hbar)\simeq \alpha^{-1}$, see Remark \ref{rem:w0}.
 This proves $|V_1(w,\alpha^2;\hbar)|\lesssim 1$ in the present parameter regime.
 Now note that Proposition~\ref{prop:phi} implies the bounds
 $$ |\partial_{\alpha^2}^\ell \partial_z^k \varphi(z,\alpha^2;\hbar)|\leq C_{k,\ell}\langle z \rangle^{1-k+2\ell},\quad k,\ell \in \mathbb{N}_0 $$
 for all $z \in (0,\alpha^{-1}y_0)$ and we infer inductively from
 $$ \varphi(\varphi^{-1}(w,\alpha^2;\hbar),\alpha^2;\hbar)=w $$
 the bounds
 $$ |\partial_{\alpha^2}^\ell \partial_w^k \varphi^{-1}(w,\alpha^2;\hbar)|\leq C_{k,\ell}\langle w \rangle^{1-k+2\ell},\quad k,\ell \in \mathbb{N}_0 $$
 for all $w \in (0,w_0(\alpha^2;\hbar))$, cf.~Lemma~\ref{lem:symbinv} below.
 Consequently, by setting $z=\varphi^{-1}(w,\alpha^2;\hbar)$ in Eq.~\eqref{eq:V1}, the claimed bounds on $V_1$ follow via the chain rule and the bounds on $\varphi$ from Proposition~\ref{prop:phi}, cf.~Lemma~\ref{lem:symbc}.
\end{proof}

\section{Construction of a fundamental system for the normal form equation}
\label{sec:FS}
 
In this section we construct a fundamental system to Eq.~\eqref{eq:normalform}. 
This is done by perturbing modified Bessel functions 
which are solutions to the ``homogeneous'' version (i.e., right--hand side set to zero) of Eq.~\eqref{eq:normalform}.
The fundamental system is constructed by solving suitable Volterra equations the kernels of which are composed of 
modified Bessel functions. 
This requires a good understanding of Bessel functions and we refer the reader to Appendix~\ref{sec:bessel} for the necessary results.
The main technical difficulty we are faced with comes from the fact that we need good estimates for the errors and \emph{all their derivatives}.
In order to keep things as readable as possible, we distinguish different regimes, namely exponential / oscillatory as well as  
$\frac{E}{\hbar}$ small / large.
In fact, we proceed analogously to the construction of the Bessel functions in Appendix~\ref{sec:bessel}.

\subsection{Preliminaries}
We start with an elementary result that proves to be very useful in the sequel.

\begin{lemma}
 \label{lem:perturb}
 Let $I \subset \mathbb{R}$ be an open interval and consider the inhomogeneous differential equation
 \begin{equation} 
 \label{eq:perturb}
u''(x)+V(x)u(x)=f(x)   
 \end{equation}
 for $x \in I$ with $u,V,f$ complex--valued and $V,f$ continuous on $I$.
 Suppose further there exists a function $u_0$ that does not vanish on $I$ and solves the homogeneous equation
 $$ u_0''(x)+V(x)u_0(x)=0. $$
 Define 
 $$ a(x):=\int_{x_0}^x \left [\int_{x'}^x u_0^{-2}(x'')dx'' \right ] u_0(x')f(x')dx' $$
 where $x_0 \in I$ is an arbitrary constant.
 Then $u(x):=u_0(x)[1+a(x)]$ is a solution of Eq.~\eqref{eq:perturb}.
\end{lemma}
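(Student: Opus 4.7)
My plan is to treat the formula as arising naturally from reduction of order, and then verify it by a direct differentiation. Since $u_0$ is a nowhere vanishing solution of the homogeneous equation, the standard ansatz $u(x)=u_0(x)[1+a(x)]$ reduces the problem to a first order equation for $a'$. Substituting this ansatz into \eqref{eq:perturb} and using $u_0''+Vu_0=0$, the terms involving $1+a(x)$ cancel and we are left with
\begin{equation*}
u_0(x)a''(x)+2u_0'(x)a'(x)=f(x).
\end{equation*}
Multiplying by $u_0(x)$ recognizes the left-hand side as $(u_0^2 a')'$, so this is a total derivative identity and the computation is reduced to an integration.

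Integrating $(u_0^2 a')'=u_0 f$ from $x_0$ to $x$ and imposing $a'(x_0)=0$ (which corresponds to the natural normalization of the particular solution) gives
\begin{equation*}
a'(x)=u_0(x)^{-2}\int_{x_0}^x u_0(x')f(x')\,dx',
\end{equation*}
and a further integration from $x_0$ to $x$ with the normalization $a(x_0)=0$ produces the iterated integral
\begin{equation*}
a(x)=\int_{x_0}^x u_0^{-2}(x'')\int_{x_0}^{x''}u_0(x')f(x')\,dx'\,dx''.
\end{equation*}
A single application of Fubini's theorem on the triangle $\{x_0\le x'\le x''\le x\}$ interchanges the order of integration and converts this iterated integral into precisely the formula stated in the lemma.

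The only real obstacle is keeping track of sign conventions and the order of integration so that the outer variable is $x'$ and the inner variable is $x''$; everything else is mechanical. Since $u_0$ is assumed not to vanish on $I$, the factor $u_0^{-2}$ is continuous there, and since $V$ and $f$ are continuous, all integrals converge and differentiation under the integral sign is justified. Conversely, one may simply verify the result by direct differentiation of the proposed $u(x)=u_0(x)[1+a(x)]$, using Leibniz's rule on the definition of $a$: this avoids invoking Fubini at all and gives a self-contained one-line check that $u''+Vu=f$.
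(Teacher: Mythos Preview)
Your proof is correct. The paper's own proof is the one-line ``this can be verified by straightforward differentiation,'' which is exactly the verification you mention in your final sentence; your reduction-of-order derivation via $(u_0^2 a')'=u_0 f$ and Fubini is a valid and more informative route that explains where the formula comes from, but the mathematical content is the same.
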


\begin{proof}
 This can be verified by straightforward differentiation. 
\end{proof}

In the following, the ratio $2\frac{E}{\hbar}$ will appear frequently as a parameter and therefore
we use the abbreviation $\nu:=2\frac{E}{\hbar}$.
The various parameters and their dependencies are summarized in Table \ref{tbl:params}.
\begin{table}[htb]
\caption{Parameters}
\label{tbl:params}
 \begin{tabular}{c c c}
  \hline
  Parameter & Relevant range & Useful relations \\
  \hline
  $E$ & $0<E\ll 1$ & \\ 
  $\hbar$ & $0<\hbar \ll 1$ & \\
  $\alpha:=\sqrt{\frac{\hbar^2}{4}+4 E^2}$ & $0<\alpha \ll 1$ & \\
  $\hbar_1:=\frac{\hbar}{\alpha}$ & $0<\hbar_1 < 2$ & $\hbar_1=(4\frac{E^2}{\hbar^2}+\frac14)^{-\frac12}$ \\
  $\nu:=2\frac{E}{\hbar}$ & $0<\nu<\infty$ & $\nu=\frac12 \frac{\sqrt{4-\hbar_1^2}}{\hbar_1}$, $\hbar_1=(\nu^2+\frac{1}{4})^{-\frac12}$ \\
  \hline
 \end{tabular}
\end{table}

\subsection{A fundamental system of the normal form equation for small $\nu$}
We start the construction with the case $0 < \nu\lesssim 1$ which means $E \lesssim \hbar$ (and also $\hbar_1\simeq 1$).
Thus, the classical Bessel asymptotics Eqs.~\eqref{eq:besselIasym0} and \eqref{eq:besselIasyminf} become relevant. 
The first result deals with the exponential regime, i.e., to the right of the turning point and towards infinity.

\subsubsection{The exponential regime}
After rescaling by $\hbar_1=\frac{\hbar}{\alpha}$, i.e., setting $G(v):=g(\hbar_1 v)$,
the ``homogeneous'' version of Eq.~\eqref{eq:normalform} takes
the form
\begin{equation} 
\label{eq:besselG}
G''(v)+\left (\frac{\nu^2+\frac14}{w^2}-1 \right )G(v)=0 
\end{equation}
for $\nu=2\frac{E}{\hbar}$ which is a modified Bessel equation.
Eq.~\eqref{eq:besselG} has a fundamental system $\{B_j(\cdot,\nu):j=1,2\}$ of the form
\begin{align*} 
B_1(v,\nu)&=e^{-v}[1+b_1(v,\nu)] \\
B_2(v,\nu)&=e^v[1+b_2(v,\nu)]
\end{align*}
where the functions $b_j(\cdot,\nu)$ are real--valued and satisfy the estimates
$$ |\partial_\nu^\ell \partial_v^k b_j(v,\nu)|\leq C_{k,\ell}\langle v \rangle^{-1-k},\quad k,\ell\in \mathbb{N}_0 $$
for all $v \geq 1$, say, and $0\leq \nu \leq \nu_0$ (where $\nu_0>0$ is some fixed constant).
This follows by appropriate Volterra iterations and is explicitly proved in Appendix~\ref{sec:bessel}, see
in particular Lemma~\ref{lem:bessel+}.

\begin{lemma}
 \label{lem:FSsmexp}
 Let $0<E\lesssim \hbar\ll 1$ and $w_1>0$ be sufficiently large.
 Then there exists a fundamental system $\{\check{g}_j(\cdot,E;\hbar): j=1,2\}$ for Eq.~\eqref{eq:normalform} 
 on $[w_1,w_0(\alpha^2;\hbar)]$ of the form
 $$ \check{g}_j(w,E;\hbar)=B_j(\tfrac{\alpha w}{\hbar},\nu)[1+\hbar \check{c}_j(w,E;\hbar)] $$
 where $\alpha$, $\nu$ are given in Table \ref{tbl:params}.
 The error terms $\check{c}_j(\cdot,E;\hbar)$ are real--valued and satisfy the bounds
 $$ |\partial_E^\ell \partial_w^k \check{c}_j(w,E;\hbar)|\leq C_{k,\ell}\hbar^{-\ell}, \quad
 k,\ell \in \mathbb{N}_0 $$
 for all $w \in [w_1,w_0(\alpha^2;\hbar)]$, and in the above range of $E$, $\hbar$.
\end{lemma}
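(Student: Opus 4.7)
The plan is to construct $\check g_j$ as a small perturbation of $B_j(\alpha w/\hbar,\nu)$ via a Volterra iteration. First I would rescale to the variable $v:=\alpha w/\hbar=w/\hbar_1$, setting $\tilde g(v):=g(\hbar_1 v)$. Using the identity $(\nu^2+\tfrac14)\hbar_1^2=1$ (see Table \ref{tbl:params}), Eq.~\eqref{eq:normalform} becomes the perturbed modified Bessel equation
$$ \tilde g''(v)+\left(\tfrac{\nu^2+\frac14}{v^2}-1\right)\tilde g(v)=-\hbar^2 V_1(\hbar_1 v,\alpha^2;\hbar)\,\tilde g(v), $$
posed on $v\in[w_1/\hbar_1,\,w_0(\alpha^2;\hbar)/\hbar_1]$. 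By Remark~\ref{rem:w0} and the fact that $E\lesssim\hbar$ forces $\alpha\simeq\hbar$ and $\hbar_1\simeq 1$, this interval has right endpoint of size $\simeq\hbar^{-1}$. The homogeneous equation has the fundamental system $\{B_j(\cdot,\nu)\}_{j=1,2}$ from Lemma~\ref{lem:bessel+}, valid uniformly in the present range $0\le\nu\lesssim 1$.

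Next I would apply Lemma~\ref{lem:perturb} with $u_0=B_j(\cdot,\nu)$ to rewrite the problem as a pair of Volterra equations
$$ \tilde c_j(v)=-\hbar^2\int_{v_{0,j}}^v\Big[\int_{v'}^v B_j^{-2}(v'',\nu)\,dv''\Big]\,B_j^2(v',\nu)\,V_1(\hbar_1 v',\alpha^2;\hbar)\,[1+\tilde c_j(v')]\,dv', $$
with $v_{0,1}=w_0(\alpha^2;\hbar)/\hbar_1$ (upper endpoint) for the decaying solution and $v_{0,2}=w_1/\hbar_1$ (lower endpoint) for the growing one. The exponential profiles $B_1\sim e^{-v}$, $B_2\sim e^v$ make the double‐kernel $\int_{v'}^v B_j^{-2}\,dv''\cdot B_j^2(v')$ uniformly bounded on the respective ranges; combined with $\|V_1\|_\infty\lesssim 1$ (Proposition~\ref{prop:normalform}) and the interval length $\lesssim\hbar^{-1}$, the Volterra operator norm is $\lesssim\hbar^2\cdot\hbar^{-1}=\hbar$. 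Neumann iteration then produces a unique real-valued solution with $|\tilde c_j|\lesssim\hbar$, so that defining $\check c_j(w,E;\hbar):=\hbar^{-1}\tilde c_j(w/\hbar_1,E;\hbar)$ puts $\check g_j$ in the stated form.

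For the derivative estimates I would differentiate the Volterra equation in $w$ and $E$ and again iterate. The $w$-derivatives act innocently since $\partial_w$ corresponds to $\hbar_1^{-1}\partial_v\simeq\partial_v$ and the symbol-type estimates on $V_1$ (decay of $-k$ in $\langle w\rangle$, growth of $+2\ell$ per $\partial_{\alpha^2}$) together with the derivative bounds on $B_j$ from Lemma~\ref{lem:bessel+} close up after integration against the bounded kernel. The $E$-derivatives enter through $\nu=2E/\hbar$, through $\alpha$ inside $\hbar_1$ and the argument $\hbar_1 v'$ of $V_1$, and through the moving upper limit $v_{0,1}=w_0(\alpha^2;\hbar)/\hbar_1$; each application of $\partial_E$ contributes a factor $\hbar^{-1}$ via the chain rule ($\partial_E\nu=2/\hbar$, $\partial_E\alpha=4E/\alpha$). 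The growth in $\langle v'\rangle^{2\ell}$ picked up from differentiating $V_1$ is absorbed by the fact that the integration length is only $\simeq\hbar^{-1}$, yielding at worst $\hbar^{-\ell}$ after $\ell$ $E$-derivatives. This bookkeeping is the main technical obstacle; it is carried out systematically within the weighted Volterra-iteration framework of Appendix~\ref{sec:volterra}, which was designed precisely to propagate the derivative bounds through the fixed-point argument without loss.

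The final step is cosmetic: undoing the rescaling gives $\check g_j(w,E;\hbar)=B_j(\alpha w/\hbar,\nu)[1+\hbar\check c_j(w,E;\hbar)]$ with the claimed estimates on $\check c_j$, and linear independence of $\check g_1,\check g_2$ follows from that of $B_1,B_2$ since the perturbations are of size $O(\hbar)$.
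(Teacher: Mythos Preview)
Your treatment of the growing solution $\check g_2$ matches the paper's: rescale by $\hbar_1$, set up the Volterra equation with base point $w_1/\hbar_1$, and invoke Proposition~\ref{prop:volterranox} for the derivative bounds. For the decaying solution $\check g_1$ you diverge: you run a second Volterra iteration anchored at the moving right endpoint $v_{0,1}=w_0(\alpha^2;\hbar)/\hbar_1$, whereas the paper bypasses this and obtains $\check g_1$ from $\check g_2$ via the standard reduction ansatz
\[
\check g_1(w,E;\hbar)=-2\,\check g_2(w,E;\hbar)\int_w^{w_0(\alpha^2;\hbar)}\check g_2(v,E;\hbar)^{-2}\,dv
\]
(cf.\ Lemma~\ref{lem:Airy+}). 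Your route is conceptually symmetric in $j$ and the bookkeeping you sketch for the $E$-derivatives does close up, but be aware that neither Proposition~\ref{prop:volterra} (base point at $\infty$) nor Proposition~\ref{prop:volterranox} (fixed lower base point, growing integrand) is stated for a $\lambda$-dependent \emph{upper} base point with a decaying profile; you would need to formulate and prove that variant, or reduce to Proposition~\ref{prop:volterranox} by the reflection $\tilde v=v_{0,1}-v$ and carefully track the extra $E$-dependence this pushes into the coefficients. The paper's reduction ansatz sidesteps this technicality entirely, which is why it was chosen. One small correction: the fundamental system $\{B_j\}$ you cite for $\nu\lesssim 1$ is that of Lemma~\ref{lem:FSbessel+sm}, not Lemma~\ref{lem:bessel+} (the latter is the large-$\nu$ Airy-based construction).
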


\begin{proof}
 We start with the growing solution $\check{g}_2(\cdot,E;\hbar)$ and according to 
 Lemma~\ref{lem:perturb} we consider the equation
 $$ \hbar \check{c}_2(w,E;\hbar)=-\int_{w_1}^w\int_v^w B_2(\tfrac{u}{\hbar_1},\nu)^{-2}du\; 
 B_2(\tfrac{v}{\hbar_1},\nu)^2 \alpha^2V_1(v,\alpha^2;\hbar)[1+\hbar \check{c}_2(v,E;\hbar)]dv $$
 for $w \in [w_1,w_0(\alpha^2;\hbar)]$ which is well--defined since $B_2(\frac{w}{\hbar_1},\nu)>0$
 for all $w\geq w_1$ provided that $w_1>0$ is sufficiently large (recall that $\hbar_1 \simeq 1$
 in the domain of $E$ and $\hbar$ which is being considered here).
Rescaling by $\hbar_1$ yields
 $$ \hbar \tilde{c}_2(x,E;\hbar)=-\int_{\hbar_1^{-1}w_1}^x \int_y^x B_2(u,\nu)^{-2}du\;
 B_2(y,\nu)^2 \hbar^2 V_1(\hbar_1 y,\alpha^2;\hbar)[1+\hbar \tilde{c}_2(y,E;\hbar)]dy $$
 for $x \in [\hbar_1^{-1}w_1,\hbar_1^{-1}w_0(\alpha^2;\hbar)]$ where $\check{c}_2(w,E;\hbar)=
 \tilde{c}_2(\tfrac{w}{\hbar_1},E;\hbar)$.
 This equation is of the form
 $$ \hbar \tilde{c}_2(x,E;\hbar)=\int_{\hbar_1^{-1}w_1}^x \int_y^x e^{-2u}a(u,E;\hbar)du\;
 e^{2y}b(y,E;\hbar)[1+\hbar \tilde{c}_2(y,E;\hbar)]dy $$
 for suitable functions $a$, $b$ satisfying
 $$
 |\partial_E^\ell \partial_x^k a(x,E;\hbar)|\leq C_{k,\ell}\langle x \rangle^{-k}\hbar^{-\ell},\quad
 |\partial_E^\ell \partial_x^k b(x,E;\hbar)|\leq C_{k,\ell}\langle x \rangle^{-k}\hbar^{2-\ell} $$
 for all $x \in [\hbar_1^{-1}w_1,\hbar^{-1}w_0(\alpha^2;\hbar)]$, $0<E\lesssim \hbar\ll 1$ and $k,\ell \in \mathbb{N}_0$.
 Furthermore, we recall the bounds $|\partial_E^\ell \hbar_1^{-1}w_1|\leq C_\ell \hbar^{-\ell}$ and
 $|\partial_E^\ell \hbar_1^{-1}w_0(\alpha^2;\hbar)|\leq C_\ell \hbar^{-1-\ell}$ in
 the relevant domain.
 Consequently, Proposition~\ref{prop:volterranox} yields the claim concerning $\check{g}_2(\cdot,E;\hbar)$.

 The second solution $\check{g}_1(\cdot,E;\hbar)$ can now be obtained by the standard reduction ansatz.
 More precisely, we set
 $$ \check{g}_1(w,E;\hbar)=-2\check{g}_2(w,E;\hbar)\int_w^{w_0(\alpha^2;\hbar)}
 \check{g}_2(v,E;\hbar)^{-2}dv $$
 for $w \in [w_1,w_0(\alpha^2;\hbar)]$.
 This is certainly well-defined provided that $w_1>0$ is sufficiently large
 and $\hbar>0$ is sufficiently small.
 It is straightforward to check that $\check{g}_1(\cdot,E;\hbar)$ is indeed of the stated form
 (cf.~Lemma~\ref{lem:Airy+}).
\end{proof}

\subsubsection{The oscillatory regime}
Next, we construct a fundamental system in the oscillatory regime.
To this end note that it follows by Frobenius' method that Eq.~\eqref{eq:besselG} 
has solutions $B_\pm(\cdot,\nu)$
of the form
$$ B_\pm(v,\nu)=v^{\frac12 \pm i\nu}[1+b_\pm(v,\nu)] $$
where the error terms satisfy $|\partial_\nu^\ell \partial_v^k b_\pm(v,\nu)|\leq C_{k,\ell}
v^{\max\{2-k,0\}}$ for all $v \in [0,v_0]$, $\nu \in (0,\nu_0]$, $k,\ell \in \mathbb{N}_0$
and some fixed $v_0,\nu_0>0$, cf.~Corollary \ref{cor:bessel0sm}.
Furthermore, it is known that $|B_\pm(v,\nu)|$ does not vanish for any $v>0$, see Corollary
\ref{cor:bessel0sm}.
Consequently, we construct a fundamental system of Eq.~\eqref{eq:normalform}
by perturbing $B_\pm(\frac{w}{\hbar_1},\nu)$.

\begin{lemma}
 \label{lem:FSsmosc}
 Let $0<E\lesssim \hbar \ll 1$ and $w_1>0$.
 Then there exists a fundamental system $\{\check{g}_{\pm}(\cdot,E;\hbar)\}$ for Eq.~\eqref{eq:normalform} 
 on $(0,w_1]$ which is of the form
 $$ \check{g}_{\pm}(w,E;\hbar)=B_\pm(\tfrac{\alpha w}{\hbar},\nu)[1+\hbar^2 \check{c}_{\pm}(w,E;\hbar)] $$
 where, as before, $\alpha^2=\frac{\hbar^2}{4}+4E^2$ and $\nu=2\frac{E}{\hbar}$.
 The error terms $\check{c}_{\pm}(\cdot,E;\hbar)$ satisfy the bounds
 $$ |\partial_E^\ell \partial_w^k \check{c}_{\pm}(w,E;\hbar)|\leq C_{k,\ell}w^{-k}\hbar^{-\ell}
 ,\quad k,\ell \in \mathbb{N}_0 $$
 for $w \in (0,w_1]$ and in the above range of $E$, $\hbar$.
 Finally, we have $\check{c}_\pm(0,E;\hbar)=\partial_w \check{c}_\pm(0,E;\hbar)=0$.
\end{lemma}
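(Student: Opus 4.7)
The plan is to follow the same strategy as in Lemma~\ref{lem:FSsmexp}: recast the construction as a Volterra equation via Lemma~\ref{lem:perturb}, this time with homogeneous base $u_0(w) := B_\pm(w/\hbar_1, \nu)$, which satisfies the homogeneous part of Eq.~\eqref{eq:normalform} by virtue of the identity $\hbar_1^2(\nu^2 + \tfrac14) = 1$ and the Bessel equation \eqref{eq:besselG}. Applying Lemma~\ref{lem:perturb} with $a := \hbar^2 \check{c}_\pm$ and forcing term $f := -\alpha^2 V_1\, u_0(1+a)$, and using $\alpha^2/\hbar^2 = \hbar_1^{-2}$, one arrives after the rescaling $v := w/\hbar_1$ at the fixed-point equation
\begin{equation*}
\tilde{c}_\pm(v) = -\int_0^v \mathcal{K}_\pm(v, v', \nu)\, V_1(\hbar_1 v', \alpha^2; \hbar)\,\bigl[1 + \hbar^2 \tilde{c}_\pm(v')\bigr]\, dv'
\end{equation*}
for $\tilde{c}_\pm(v) := \check{c}_\pm(\hbar_1 v)$, with kernel $\mathcal{K}_\pm(v, v', \nu) := B_\pm^2(v', \nu) \int_{v'}^v B_\pm^{-2}(v'', \nu)\, dv''$, posed on $v \in (0, w_1/\hbar_1]$. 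The two $\pm$ branches yield solutions whose leading prefactors $v^{1/2 \pm i\nu}$ have nonvanishing Wronskian $\simeq \nu$, so once the perturbative corrections are controlled they automatically form a fundamental system.

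Next I would estimate the kernel. Using the Frobenius expansion $B_\pm(v, \nu) = v^{1/2 \pm i\nu}[1 + b_\pm(v, \nu)]$ from Corollary~\ref{cor:bessel0sm}, the pointwise bound $|B_\pm^{-2}(v'', \nu)| \lesssim (v'')^{-1}$ gives $\bigl| \int_{v'}^v B_\pm^{-2}(v'', \nu)\, dv'' \bigr| \lesssim \log(v/v')$, and hence $|\mathcal{K}_\pm(v, v', \nu)| \lesssim v'\,\log(v/v')$. Since $V_1$ is bounded by Proposition~\ref{prop:normalform}, a standard Volterra iteration yields $|\tilde{c}_\pm(v)| \lesssim v^2$, which transfers to $|\check{c}_\pm(w)| \lesssim w^2$ via $\hbar_1 \simeq 1$ in the regime $E \lesssim \hbar$, and in particular produces the vanishing conditions $\check{c}_\pm(0) = \partial_w \check{c}_\pm(0) = 0$.

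To promote this to the full derivative bounds $|\partial_E^\ell \partial_w^k \check{c}_\pm(w)| \leq C_{k,\ell}\, w^{-k} \hbar^{-\ell}$, I would feed the Volterra equation into Proposition~\ref{prop:volterranox}, using the symbol-type bounds $|\partial_\nu^\ell \partial_v^k b_\pm(v, \nu)| \leq C_{k,\ell}\, v^{\max\{2-k,\, 0\}}$ from Corollary~\ref{cor:bessel0sm} together with the symbol bounds on $V_1$ from Proposition~\ref{prop:normalform}. The $\hbar^{-\ell}$ loss per $E$-derivative comes from the chain rule through $\nu = 2E/\hbar$, each of whose derivatives contributes a factor $2/\hbar$, while $\partial_E \alpha^2 = 8E \lesssim \hbar$ contributes only a bounded factor. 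The main obstacle is the oscillatory integral $\int (v'')^{-1 \mp 2i\nu}\, dv''$: evaluating it in closed form produces an apparent $1/\nu \sim \hbar/E$ factor that would be fatal as $E \to 0$. The fix is to keep this integral in integral form throughout the estimates and use only the pointwise modulus bound $1/v''$; combined with a careful induction on $(k,\ell)$ that distributes $\partial_E$ between the $\nu$- and $\alpha^2$-channels, this closes the bootstrap.
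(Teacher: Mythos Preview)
Your setup via Lemma~\ref{lem:perturb} and the leading-order estimate $|\check c_\pm(w)|\lesssim w^2$ (hence the vanishing at $w=0$) are correct. The gap is in the passage to the full derivative bounds. Proposition~\ref{prop:volterranox} cannot be applied after the \emph{linear} rescaling $v=w/\hbar_1$: its kernel template is rigidly of the form $\int_y^x e^{-2\omega u}a(u,\lambda)\,du\;e^{2\omega y}b(y,\lambda)$ with a \emph{constant} $\omega$, whereas your kernel carries the power-law factors $B_\pm(v',\nu)^2\sim (v')^{1\pm 2i\nu}$ and $B_\pm(u,\nu)^{-2}\sim u^{-1\mp 2i\nu}$. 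The function $u^{1\pm 2i\nu}=e^{(1\pm 2i\nu)\log u}$ is not of the form $e^{2\omega u}$ times a symbol, and in addition the factor $u^{-1}$ is singular at the base point $u=0$, so the hypotheses on $a$ would fail outright. Your fallback (``keep the integral in integral form and induct on $(k,\ell)$'') could in principle be made to work, but it amounts to reproving a power-law analogue of the Volterra propositions from scratch, with logarithmic factors proliferating at each $\nu$-derivative; the proposal as written does not carry this out.

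The paper's fix is a \emph{logarithmic} substitution $w=\hbar_1 e^{-x}$. Then $B_\pm(e^{-x},\nu)=e^{-(\frac12\pm i\nu)x}[1+b_\pm(e^{-x},\nu)]$ is genuinely exponential, the domain $(0,w_1]$ becomes $[-\log(w_1/\hbar_1),\infty)$ with the singular endpoint $w=0$ sent to $x=+\infty$, and the Volterra equation fits the template of Proposition~\ref{prop:volterra} (not~\ref{prop:volterranox}) with $\omega=\tfrac12(1\pm 2i\nu)$. This also dissolves the $1/\nu$ obstacle you correctly identified: the proof of Proposition~\ref{prop:volterra} only uses $\Re(\omega)\geq 0$ and $\omega\neq 0$, and here $\Re(\omega)=\tfrac12$ and $|\omega|\geq\tfrac12$ uniformly in $\nu\geq 0$, so no factor of $\nu^{-1}$ ever appears. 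One then checks that the resulting $b$-function satisfies $|\partial_E^\ell\partial_x^k b(x,E;\hbar)|\leq C_{k,\ell}\langle x\rangle^{-2-k}\hbar^{2-\ell}$ (the $e^{-y}$ from the Jacobian supplies the decay, $\hbar^2 V_1$ the prefactor), and Proposition~\ref{prop:volterra} delivers $|\partial_E^\ell\partial_x^k\tilde c_\pm|\leq C_{k,\ell}\langle x\rangle^{-1-k}\hbar^{-\ell}$, which pulls back to the claimed $w^{-k}\hbar^{-\ell}$ bounds.
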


\begin{proof}
According to Lemma~\ref{lem:perturb}, we have to construct a solution to
\begin{equation}
\label{eq:volterracpm}
\hbar^2 \check{c}_\pm(w,E;\hbar)=-\int_0^w \int_v^w B_\pm(\tfrac{u}{\hbar_1},\nu)^{-2} du\;
B_\pm(\tfrac{v}{\hbar_1},\nu)^2 \alpha^2 V_1(v,\alpha^2;\hbar)[1+\hbar^2 \check{c}_\pm(v,E;\hbar)]dv
\end{equation}
with $\hbar_1=\frac{\hbar}{\alpha}$.
By introducing the new variable $w=\hbar_1 e^{-x}$ we obtain
\begin{align*} \hbar^2 \tilde{c}_\pm(x,E;\hbar)=&-\int_x^\infty \int_x^y e^{-z}B_\pm(e^{-z},\nu)^{-2}dz\; 
e^{-y}B_\pm(e^{-y},\nu)^2 \\
&\times \hbar^2 V_1(\hbar_1 e^{-y},\alpha^2;\hbar)[1+\hbar^2 \tilde{c}_\pm(y,E;\hbar)]dy 
\end{align*}
for $x \in [-\log \frac{w_1}{\hbar_1},\infty)$ where $\check{c}_\pm(w,E;\hbar)=\tilde{c}_\pm(-\log \tfrac{w}{\hbar_1},E;\hbar)$.
Now recall that $B_\pm(e^{-x},\nu)=e^{-(\frac12 \pm i\nu)x}[1+b_\pm(e^{-x},\nu)]$ and trivially, we 
have the bounds
$$ |\partial_E^\ell \partial_x^k e^{-x}[1+b_\pm(e^{-x},\nu)]|\leq C_{k,\ell}\langle x \rangle^{-k}\hbar^{-\ell}, \quad k,\ell \in \mathbb{N}_0 $$
by noting that $\partial_E \nu=2\hbar^{-1}$.
Furthermore, according to Proposition~\ref{prop:normalform} we have the bounds
$$ |\partial_E^\ell \partial_x^k \hbar^2 e^{-x}V_1(\hbar_1 e^{-x},\alpha^2;\hbar)|\leq C_{k,\ell}\langle x \rangle^{-2-k}\hbar^{2-\ell},\quad k,\ell \in \mathbb{N}_0 $$
in the relevant domain by noting that $\alpha \gtrsim \hbar$. 
This shows that the Volterra equation for $\tilde{c}_\pm$ is of the form
$$ \hbar^2 \tilde{c}_\pm(x,E;\hbar)=\int_x^\infty \int_x^y e^{(1\pm 2i\nu)z}a(z,E;\hbar)dz\; e^{-(1\pm 2i\nu)y}
b(y,E;\hbar)[1+\hbar^2 \tilde{c}_\pm(y,E;\hbar)]dy $$
where the functions $a,b$ satisfy the estimates
$$ |\partial_E^\ell \partial_x^k a(x,E;\hbar)|\leq C_{k,\ell}\langle x \rangle^{-k}\hbar^{-\ell},
\quad |\partial_E^\ell \partial_x^k b(x,E;\hbar)|\leq C_{k,\ell}\langle x \rangle^{-2-k}
\hbar^{2-\ell} $$
for all $k,\ell \in \mathbb{N}_0$ and in the relevant domain of $x$, $E$, $\hbar$.
As a consequence, we obtain from Proposition~\ref{prop:volterra} the existence of $\tilde{c}_\pm$ 
with the bounds
$$ |\partial_E^\ell \partial_x^k \tilde{c}_\pm(x,E;\hbar)|\leq C_{k,\ell}
\langle x \rangle^{-1-k}\hbar^{-\ell} $$
for all $x \in [-\log \frac{w_1}{\hbar_1},\infty)$, $0<E\lesssim \hbar\ll 1$, $k,\ell \in \mathbb{N}_0$ and
via the chain rule, these bounds translate into the claimed ones for $\check{c}_\pm$.
Finally, the fact that $\check{c}_\pm(0,E;\hbar)=\partial_w \check{c}_\pm(0,E;\hbar)=0$ follows directly
from Eq.~\eqref{eq:volterracpm}.
\end{proof}

\begin{remark}
The bounds on $\check{c}_\pm$ in Lemma~\ref{lem:FSsmosc} are not optimal. In fact, it is not hard
to see that one has
$$|\partial_E^\ell \partial_w^k \check{c}_\pm(w,E;\hbar)|\leq C_{k,\ell}w^{\max\{2-k,0\}}\hbar^{-\ell}$$
but we leave it to the interested reader to prove these stronger bounds.
\end{remark}

\subsubsection{Matching of the fundamental systems}
As a next step, we glue together the two fundamental systems obtained in Lemmas \ref{lem:FSsmosc} and \ref{lem:FSsmexp}. This amounts to calculating the Wronskians $W(\check{g}_\pm(\cdot,E;\hbar), 
\check{g}_j(\cdot,E;\hbar))$ or the asymptotics of $\check{g}_\pm(w,E;\hbar)$ for large $w$.

\begin{lemma}
 \label{lem:Wsm}
 Let $w_1>0$ be sufficiently large.
Then the functions $\check{g}_\pm(\cdot,E;\hbar)$ from Lemma~\ref{lem:FSsmosc} have the representation
\begin{align*} \check{g}_\pm(w,E;\hbar)=&\pi^{-\frac12}2^{-\frac12\pm i\nu}\Gamma(1\pm i\nu)
\Big [e^{\frac{\alpha w}{\hbar}}
[1+\hbar \check{\gamma}_2(E;\hbar)][1+\hbar \check{d}_2(w,E;\hbar)] \\
&-ie^{\pm \nu \pi}e^{-\frac{\alpha w}{\hbar}}[1+\hbar\check{\gamma}_1(E;\hbar)][1+\hbar \check{d}_1(w,E;\hbar)]\Big ] 
\end{align*}
 where the error terms $\check{d}_j(\cdot,E;\hbar)$, $j=1,2$, are real--valued and we have the bounds
 $$ |\partial_E^\ell \partial_w^k \check{d}_j(w,E;\hbar)|\leq C_{k,\ell}\hbar^{-\ell},
 \quad |\partial_E^\ell \check{\gamma}_j(E;\hbar)|\leq C_\ell \hbar^{-\ell} $$
 for all $w \in [w_1,w_0(\alpha^2;\hbar)]$, $0<E\lesssim \hbar\ll 1$ and $k,\ell \in \mathbb{N}_0$.
\end{lemma}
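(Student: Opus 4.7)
The plan is to extend $\check{g}_\pm(\cdot,E;\hbar)$ from Lemma~\ref{lem:FSsmosc} to all of $(0,w_0(\alpha^2;\hbar))$ as solutions of the ODE~\eqref{eq:normalform}; this extension is unique by standard ODE theory. On the interval $[w_1,w_0(\alpha^2;\hbar)]$, the functions $\{\check{g}_1,\check{g}_2\}$ from Lemma~\ref{lem:FSsmexp} form a fundamental system, so I can write $\check{g}_\pm=\check{A}_\pm(E;\hbar)\check{g}_1+\check{C}_\pm(E;\hbar)\check{g}_2$ with connection constants depending only on $E,\hbar$, given by the Wronskian formulas $\check{A}_\pm=W(\check{g}_\pm,\check{g}_2)/W(\check{g}_1,\check{g}_2)$ and $\check{C}_\pm=W(\check{g}_1,\check{g}_\pm)/W(\check{g}_1,\check{g}_2)$. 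Each Wronskian is $w$-independent because~\eqref{eq:normalform} has no first-derivative term.

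I would evaluate the Wronskians by reducing to their unperturbed Bessel counterparts. The Frobenius solutions $B_\pm$ of the modified Bessel equation~\eqref{eq:besselG} coincide with $2^{\pm i\nu}\Gamma(1\pm i\nu)\sqrt{v}\,I_{\pm i\nu}(v)$, while $B_1,B_2$ are proportional to $\sqrt{v}\,K_{i\nu}(v)$ and $\sqrt{v}\,I_{i\nu}(v)$ with normalization constants fixed in Appendix~\ref{sec:bessel}. The classical connection identity expressing $I_{\pm i\nu}$ in terms of $\{I_{i\nu},K_{i\nu}\}$ then yields an exact relation of the form $B_\pm=\pi^{-1/2}2^{-1/2\pm i\nu}\Gamma(1\pm i\nu)[B_2-ie^{\pm\nu\pi}B_1]$ for the unperturbed Bessel solutions. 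This produces the prefactor $\pi^{-1/2}2^{-1/2\pm i\nu}\Gamma(1\pm i\nu)$ and the phase $-ie^{\pm\nu\pi}$ appearing in the statement, and determines the leading values of $\check{A}_\pm$ and $\check{C}_\pm$.

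To recover the full factored form I would substitute $\check{g}_j(w,E;\hbar)=B_j(\alpha w/\hbar,\nu)[1+\hbar\check{c}_j(w,E;\hbar)]$ into the decomposition of $\check{g}_\pm$ and use $B_j(v,\nu)=e^{(-1)^jv}[1+b_j(v,\nu)]$. The perturbed Wronskians equal their Bessel counterparts up to explicit $O(\hbar)$ corrections determined by the values of $\check{c}_\pm,\check{c}_j$ at a matching point; the resulting $E,\hbar$-dependent but $w$-independent multiplicative discrepancy in $\check{A}_\pm,\check{C}_\pm$ is collected into $\hbar\check{\gamma}_j(E;\hbar)$. The remaining $w$-dependent contributions, namely $b_j(\alpha w/\hbar,\nu)$ and the $w$-dependent parts of $\check{c}_j$, are absorbed into $\hbar\check{d}_j(w,E;\hbar)$. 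Real-valuedness of $\check{\gamma}_j$ and $\check{d}_j$ follows from the reality of $b_j,\check{c}_j,\check{c}_\pm$ combined with the complex conjugation symmetry $\check{g}_-=\overline{\check{g}_+}$ that swaps the $\pm$ branches.

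The principal obstacle will be the careful bookkeeping of $\hbar^{-\ell}$ derivative losses. Each $\partial_E$ can produce a factor $\hbar^{-1}$ through $\nu=2E/\hbar$ or through $\alpha$, so the chain rule must be tracked carefully, combining the symbol-type derivative bounds on $b_j$ from Appendix~\ref{sec:bessel} with those on $\check{c}_\pm,\check{c}_j$ from Lemmas~\ref{lem:FSsmosc} and~\ref{lem:FSsmexp}; one must verify that these losses compound to at most $\hbar^{-\ell}$ at order $\ell$ and no worse. A secondary technical point is confirming that the extracted constant $\check{\gamma}_j$ is genuinely $w$-independent and that the bound on $\check{d}_j$ holds uniformly as $w$ ranges from $w_1$ up to $w_0(\alpha^2;\hbar)\simeq\alpha^{-1}$, which requires monitoring the $b_j$ contribution at both endpoints of this interval.
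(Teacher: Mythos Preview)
Your approach is essentially the same as the paper's: expand $\check{g}_\pm$ in the basis $\{\check{g}_1,\check{g}_2\}$ via Wronskians, reduce the perturbed Wronskians to the unperturbed Bessel Wronskians up to $O(\hbar)$ corrections, and invoke the classical connection formula $B_\pm=\pi^{-1/2}2^{-1/2\pm i\nu}\Gamma(1\pm i\nu)[B_2-ie^{\pm\nu\pi}B_1]$ (which is exactly Lemma~\ref{lem:WBB}). One small slip: the $\check{c}_\pm$ from Lemma~\ref{lem:FSsmosc} are \emph{not} real-valued (the underlying $B_\pm$ are complex), and the statement does not assert that $\check{\gamma}_j$ are real---only $\check{d}_j$ are, which follows directly from the reality of $b_j$ and $\check{c}_j$ without reference to $\check{c}_\pm$.
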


\begin{proof}
Since $\{\check{g}_\pm(\cdot,E;\hbar)\}$ and $\{\check{g}_j(\cdot,E;\hbar): j=1,2\}$ are fundamental systems for the same equation, there exist connection coefficients $\gamma_{\pm,j}(E;\hbar)$ such that
$$ \check{g}_\pm(w,E;\hbar)=\gamma_{\pm,1}(E;\hbar)\check{g}_1(w,E;\hbar)+\gamma_{\pm,2}(E;\hbar)\check{g}_2(w,E;\hbar) $$
and this implies
$$ \gamma_{\pm,1}(E;\hbar)=\frac{W(\check{g}_\pm(\cdot,E;\hbar),\check{g}_2(\cdot,E;\hbar))}{W(\check{g}_1(\cdot,E;\hbar),\check{g}_2(\cdot,E;\hbar))},\quad
 \gamma_{\pm,2}(E;\hbar)=-\frac{W(\check{g}_\pm(\cdot,E;\hbar),\check{g}_1(\cdot,E;\hbar))}{W(\check{g}_1(\cdot,E;\hbar),\check{g}_2(\cdot,E;\hbar))}.$$
We have 
$$ W(\check{g}_1(\cdot,E;\hbar),\check{g}_2(\cdot,E;\hbar))=\tfrac{1}{\hbar_1} 
W(B_1(\cdot,\nu),B_2(\cdot,\nu))[1+O(\hbar)]=\tfrac{2}{\hbar_1}[1+O(\hbar)] $$
where the $O$--term has the property that $\partial_E^\ell O(\hbar)
=O(\hbar^{1-\ell})$ in the domain under consideration.
Analogously, we have
$$  W(\check{g}_\pm(\cdot,E;\hbar),\check{g}_j(\cdot,E;\hbar))=\tfrac{1}{\hbar_1} W(B_\pm(\cdot,\nu),B_j(\cdot,\nu))[1+O_\mathbb{C}(\hbar)]$$
and in particular,
$$  W(\check{g}_\pm(\cdot,E;\hbar),\check{g}_1(\cdot,E;\hbar))=-\hbar_1^{-1} 
\pi^{-\frac12}2^{\frac12\pm i\nu}\Gamma(1\pm i\nu)[1+O_\mathbb{C}(\hbar)], $$
see Lemma~\ref{lem:WBB}.
This shows $\gamma_{\pm,2}(E;\hbar)=\pi^{-\frac12}2^{-\frac12\pm i\nu}\Gamma(1\pm i\nu)[1+O_\mathbb{C}(\hbar)]$ and the claim follows.
\end{proof}

Based on Lemmas \ref{lem:FSsmexp}, \ref{lem:FSsmosc} and \ref{lem:Wsm}, 
we obtain the desired representation of the semiclassical Jost function of the operator $H(\hbar)$.

\begin{corollary}
 \label{cor:Jostsm}
 Let $0<E\lesssim \hbar\ll 1$.
 Then the semiclassical Jost function $f_+(\cdot,E;\hbar)$ of the operator $H(\hbar)$ has the representation
 \begin{align*} f_+(x,E;\hbar)&=2^{-\frac12+i\nu}\hbar^{\frac12-i\nu}e^\frac{x}{4}\varphi'(\tfrac{2}{\alpha}e^{-\frac{x}{2}},\alpha^2;\hbar)^{-\frac12}\check{g}_-(\varphi(\tfrac{2}{\alpha}e^{-\frac{x}{2}}, \alpha^2;\hbar),E;\hbar) \end{align*}
 for all $x>x_0$ where, as always, $\alpha^2=\frac{\hbar^2}{4}+4E^2$, $\nu=2\frac{E}{\hbar}$ and
 the function $\check{g}_-(\cdot,E;\hbar)$ is given in Lemma~\ref{lem:FSsmosc}.

\end{corollary}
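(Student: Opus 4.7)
The plan is to identify the Jost-normalized solution $g_+(\cdot,E;\hbar)$ from Proposition~\ref{prop:normalform} with a scalar multiple of $\check g_-(\cdot,E;\hbar)$ from Lemma~\ref{lem:FSsmosc} by matching their asymptotic behavior as $w\to 0+$, and then to substitute this identification into the formula for $f_+$ supplied by Proposition~\ref{prop:normalform}.

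First, I would read off the $w\to 0+$ asymptotics of $\check g_\pm(\cdot,E;\hbar)$. By Lemma~\ref{lem:FSsmosc}, $\check g_\pm(w,E;\hbar)=B_\pm(\tfrac{\alpha w}{\hbar},\nu)[1+\hbar^2\check c_\pm(w,E;\hbar)]$ with $\check c_\pm(0,E;\hbar)=0$, and by Corollary~\ref{cor:bessel0sm} the small-argument behavior of $B_\pm$ gives $B_\pm(v,\nu)\sim v^{\frac12\pm i\nu}$ as $v\to 0+$. Therefore
\[
\check g_-(w,E;\hbar)\sim \left(\tfrac{\alpha w}{\hbar}\right)^{\frac12-i\nu}\qquad (w\to 0+),
\]
where $\nu=2\tfrac{E}{\hbar}$. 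On the other hand, Proposition~\ref{prop:normalform} tells us that $g_+(\cdot,E;\hbar)$ is the unique solution of the normal form equation with
\[
g_+(w,E;\hbar)\sim \left(\tfrac{\alpha w}{2}\right)^{\frac12-2i\frac{E}{\hbar}}\qquad (w\to 0+).
\]

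Since both asymptotics have the same exponent $\tfrac12 - i\nu$, dividing one by the other yields a finite nonzero constant, and by the uniqueness of a solution with prescribed leading behavior at the regular singular point $w=0$ we obtain
\[
g_+(w,E;\hbar) = \left(\tfrac{\hbar}{2}\right)^{\frac12-i\nu}\check g_-(w,E;\hbar) = 2^{-\frac12+i\nu}\hbar^{\frac12-i\nu}\check g_-(w,E;\hbar)
\]
for all $w\in(0,w_0(\alpha^2;\hbar))$. Inserting this identity into the representation
\[
f_+(x,E;\hbar)=e^{x/4}\varphi'(\tfrac{2}{\alpha}e^{-x/2},\alpha^2;\hbar)^{-\frac12}g_+\!\left(\varphi(\tfrac{2}{\alpha}e^{-x/2},\alpha^2;\hbar),E;\hbar\right)
\]
from Proposition~\ref{prop:normalform} yields exactly the claimed formula.

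There is no real obstacle here; the only technical point is bookkeeping of the constants and verifying that the exponent $\tfrac12-i\nu$ on $\check g_-$ indeed corresponds (not to $\check g_+$) to the exponent $\tfrac12-2i\tfrac{E}{\hbar}$ appearing in the definition of $g_+$, which is immediate from $\nu=2E/\hbar$. The domain condition $x>x_0$ is inherited from Proposition~\ref{prop:normalform}, and the range $0<E\lesssim \hbar\ll 1$ is inherited from Lemma~\ref{lem:FSsmosc}.
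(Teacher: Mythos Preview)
Your proof is correct and follows essentially the same approach as the paper: both match the $w\to 0+$ asymptotics of $g_+$ (from Proposition~\ref{prop:normalform}) against those of $\check g_-$ (via Lemma~\ref{lem:FSsmosc} and Corollary~\ref{cor:bessel0sm}) to read off the constant $2^{-\frac12+i\nu}\hbar^{\frac12-i\nu}$, and then substitute into the representation of $f_+$. Your added remark on uniqueness at the regular singular point is a welcome clarification that the paper leaves implicit.
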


\begin{proof}
 According to Proposition~\ref{prop:normalform}, the solution $g_+(\cdot,E;\hbar)$ of Eq.~\eqref{eq:normalform}, which corresponds to the semiclassical Jost function, has the asymptotics
 $$ g_+(w,E;\hbar)\sim 2^{-\frac12+ i\nu}(\alpha w)^{\frac12-i\nu}\quad (w \to 0+). $$
 On the other hand, the solutions $\check{g}_\pm(w,E;\hbar)$ from Lemma~\ref{lem:FSsmosc} satisfy
 $$ \check{g}_\pm(w,E;\hbar)\sim B_\pm(\tfrac{\alpha w}{\hbar})\sim \hbar^{-\frac12\mp i\nu}(\alpha w)^{\frac12\pm i\nu}\quad (w \to 0+), $$
 see Corollary \ref{cor:bessel0sm}, and this implies $g_+(\cdot,E;\hbar)=2^{-\frac12+i\nu}\hbar^{\frac12-i\nu}\check{g}_-(\cdot,E;\hbar)$.
\end{proof}

\subsection{A fundamental system of the normal form equation for large $\nu$}

In this section we study the large $\nu$ case, i.e., $E \gg \hbar$.
It is important to note that in this regime the parameter $\hbar_1=\frac{\hbar}{\alpha}=(\nu^2+\frac14)^{-\frac12}$ can 
become arbitrarily small.
The construction relies on the large parameter asymptotics of the Bessel functions based on Airy functions, 
see Appendix~\ref{sec:bessel}.
Consequently, it is convenient to transform Eq.~\eqref{eq:normalform} according to Appendix~\ref{sec:bessel}.
First, we rescale and introduce the new independent variable $v:=\tfrac{w}{\hbar_1 \nu}$.
Setting $G(v)=g(\hbar_1 \nu v)$, Eq.~\eqref{eq:normalform} transforms into
\begin{equation}
\label{eq:G}
G''(v)-\nu^2(1-\tfrac{1}{v^2})G(v)+\tfrac{1}{4v^2}G(v)=-\alpha^2 \tilde{V}_1(v,E;\hbar)G(v)
\end{equation}
where $v \in (0,(\hbar_1\nu)^{-1} w_0(\alpha^2;\hbar))$ and 
$\tilde{V}_1(v,E;\hbar):=(\hbar_1 \nu)^2 V_1(\hbar_1 \nu v,\alpha^2;\hbar)$.
At this point is useful to note that $\hbar_1 \nu \simeq 1$ if $\nu \gtrsim 1$ and thus, we have rescaled
by a harmless factor.
From Proposition~\ref{prop:normalform} we have the bounds
$$ |\partial_E^\ell \partial_v^k \tilde{V}_1(v,E;\hbar)|\leq C_{k,\ell}\langle v \rangle^{-k}\hbar^{-\ell} $$
in the respective domain of $v$, $E$ and $\hbar$.
Following Appendix~\ref{sec:bessel} we now apply the Liouville--Green transform
$$ \Phi(\zeta(v))=\zeta'(v)^{\frac12}G(v) $$
with $\zeta: (0,\infty)\to \mathbb{R}$ from Lemma~\ref{lem:zeta}.
From Eq.~\eqref{eq:G} we obtain \footnote{Here we use the symbol $\zeta$ to denote both, the diffeomorphism from Lemma~\ref{lem:zeta} as well as the new independent variable.}
\begin{equation}
\label{eq:Phi}
\Phi''(\zeta)-\nu^2 \zeta \Phi(\zeta)-V_2(\zeta)\Phi(\zeta)=-\alpha^2 V_3(\zeta,E;\hbar)\Phi(\zeta) 
\end{equation}
for $\zeta \in (-\infty,\zeta_0(E;\hbar))$ where 
$\zeta_0(E;\hbar):=\zeta((\hbar_1 \nu)^{-1} w_0(\alpha^2;\hbar))\simeq \alpha^{-\frac23}$ and
\begin{equation} 
\label{eq:V3}
V_3(\zeta(v),E;\hbar):=\zeta'(v)^{-2}\tilde{V}_1(v,E;\hbar)
=\zeta(v)\frac{v^2}{v^2-1}\tilde{V}_1(v,E;\hbar).
\end{equation}
The function $V_2$ is given explicitly in Appendix~\ref{sec:bessel} and satisfies
$|V_2^{(k)}(\zeta)|\leq C_k\langle \zeta \rangle^{-2-k}$ for all $\zeta\in \mathbb{R}$ and $k\in \mathbb{N}_0$.
Note further that we have the bounds \footnote{In fact, $V_3$ decays much faster as $\zeta \to -\infty$, 
namely exponentially. However, we do not exploit this fact.}
\begin{align*}
|\partial_E^\ell \partial_\zeta^k V_3(\zeta,E;\hbar)|&\leq C_{k,\ell}
\langle \zeta \rangle^{-2-k}\hbar^{-\ell},\quad \zeta \leq 0 \\
|\partial_E^\ell \partial_\zeta^k V_3(\zeta,E;\hbar)|&\leq C_{k,\ell}
\langle \zeta \rangle^{1-k}\hbar^{-\ell},\quad \zeta \geq 0
\end{align*}
from Proposition~\ref{prop:normalform} and Eq.~\eqref{eq:V3}.

\subsubsection{The exponential regime}
It is a standard fact that the homogeneous version of Eq.~\eqref{eq:Phi} is solved by Bessel functions, see 
Appendix~\ref{sec:bessel}.
In particular, for $\zeta \geq 0$, there exists a fundamental system $\{\phi_j(\cdot,\nu): j=1,2\}$ of 
\begin{equation} 
\label{eq:Phi0}
\Phi''(\zeta)-\nu^2 \zeta \Phi(\zeta)-V_2(\zeta)\Phi(\zeta)=0 
\end{equation}
which is of the form
\begin{align*} 
\phi_1(\zeta,\nu)&=\Ai(\nu^\frac23 \zeta)[1+\nu^{-1}a_1(\zeta,\nu)] \\
\phi_2(\zeta,\nu)&=\Bi(\nu^\frac23 \zeta)[1+\nu^{-1}a_2(\zeta,\nu)]
\end{align*}
where $|a_j(\zeta,\nu)|\lesssim 1$ and 
$$ |\partial_\nu^\ell \partial_\zeta^k a_j(\zeta,\nu)|\leq C_{k,\ell}
\langle \zeta \rangle^{-\frac32-k}\nu^{-\ell},\quad \zeta \geq 1 $$
as well as
$$ |\partial_\nu^\ell a_j(0,\nu)|\leq C_\ell \nu^{-\ell},\quad |\partial_\nu^\ell \partial_\zeta a_j(0,\nu)|\leq C_\ell \nu^{\frac23-\ell} $$
for all $\nu \gg 1$ and $k,\ell \in \mathbb{N}_0$, see Lemma~\ref{lem:bessel+}.
Based on the Bessel functions $\phi_j(\cdot,\nu)$ we now construct a fundamental system of Eq.~\eqref{eq:Phi} in the exponential
regime.

\begin{lemma}
\label{lem:FSlgexp}
For $\zeta \in [0,\zeta_0(E;\hbar)]$ there exists a fundamental system $\{\Phi_1(\cdot,E;\hbar),\Phi_2(\cdot,E;\hbar)\}$ for Eq.~\eqref{eq:Phi} of the form
\begin{align*}
\Phi_1(\zeta,E;\hbar)&=\phi_1(\zeta,\nu)[1+\hbar \sigma_1(\zeta,E;\hbar)] \\
\Phi_2(\zeta,E;\hbar)&=\phi_2(\zeta,\nu)[1+\hbar \sigma_2(\zeta,E;\hbar)]
\end{align*}
with $\nu=2\frac{E}{\hbar}$.
The functions $\sigma_j$, $j=1,2$, are real--valued and $|\sigma_j(\zeta,E;\hbar)|\lesssim 1$ 
for $\zeta \in [0,\zeta_0(E;\hbar)]$, $0<\hbar\ll E\ll 1$. 
Furthermore, they satisfy the bounds
$$ |\partial_E^\ell \partial_\zeta^k \sigma_j(\zeta,E;\hbar)|\leq C_{k,\ell}
\langle \zeta \rangle^{-k}\hbar^{-\ell},\quad \zeta \in [1,\zeta_0(E;\hbar)] $$
as well as
$$ |\partial_E^\ell \sigma_j(0,E;\hbar)|\leq C_\ell \hbar^{-\ell},\quad 
|\partial_E^\ell \partial_\zeta \sigma_j(0,E;\hbar)|\leq C_\ell \nu^\frac23 \hbar^{-\ell} $$
for all $0<\hbar \ll E \ll 1$ and $k,\ell \in \mathbb{N}_0$.
\end{lemma}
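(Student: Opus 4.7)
The plan is to construct each $\Phi_j$ by a Volterra perturbation around $\phi_j(\cdot,\nu)$, exploiting that $\mathrm{Ai}$ and $\mathrm{Bi}$ are both strictly positive on $[0,\infty)$, so $\phi_1(\cdot,\nu)$ and $\phi_2(\cdot,\nu)$ do not vanish on $[0,\zeta_0(E;\hbar)]$ for $0<\hbar\ll E\ll 1$ and Lemma~\ref{lem:perturb} applies. Concretely, apply Lemma~\ref{lem:perturb} with base solution $\phi_2$ and forcing $-\alpha^2 V_3\Phi_2$, integrating from $0$ upward, to obtain
$$ \hbar\sigma_2(\zeta,E;\hbar)=-\int_0^\zeta\!\!\int_{\zeta'}^\zeta \phi_2(\zeta'',\nu)^{-2}d\zeta''\,\phi_2(\zeta',\nu)^2\,\alpha^2 V_3(\zeta',E;\hbar)[1+\hbar\sigma_2(\zeta',E;\hbar)]\,d\zeta'. $$
For $\Phi_1$ use $\phi_1$ as base but integrate from $\zeta_0(E;\hbar)$ down to $\zeta$; the $\mathrm{Ai}$ decay then plays the symmetric role to the $\mathrm{Bi}$ growth, and the analogous equation holds.

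Next, rescale by $\xi:=\nu^{2/3}\zeta$, absorb the $[1+\nu^{-1}a_j(\cdot,\nu)]$ prefactors (which are harmless by the bounds in the excerpt preceding the lemma), and reduce each equation to a Volterra equation whose kernel is a product of Airy functions. The standard Airy-kernel estimates recorded in Appendix~\ref{sec:airy} give
$$ \mathrm{Bi}(\xi)^2\!\int_{\xi'}^\xi \mathrm{Bi}(\xi'')^{-2}d\xi''\,+\,\mathrm{Ai}(\xi')^2\!\int_{\xi'}^\xi \mathrm{Ai}(\xi'')^{-2}d\xi''\lesssim \langle\xi\rangle^{-1/2}\langle\xi'\rangle^{-1/2}\quad (0\le\xi'\le\xi), $$
and the Jacobian of the rescaling produces an overall factor $\nu^{-4/3}$. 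Combined with $\alpha=\nu\hbar$ (which holds because $\alpha\simeq 2E$ and $\nu=2E/\hbar$ in the present regime $\hbar\ll E$) and $|V_3(\zeta,E;\hbar)|\lesssim\langle\zeta\rangle$, the forcing becomes $\alpha^2\nu^{-4/3}V_3\simeq \nu^{2/3}\hbar^2\langle\nu^{-2/3}\xi\rangle$. Integrating in $\xi$ over $[0,\zeta_0(E;\hbar)\nu^{2/3}]\simeq[0,\hbar^{-2/3}]$ against the kernel above produces a total of size $O(\hbar)$, so the Volterra iteration converges and yields real-valued $\sigma_j$ with $|\sigma_j|\lesssim 1$, with $\hbar$ appearing as advertised.

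To upgrade to derivative bounds, feed the rescaled equation into the general weighted Volterra result of Appendix~\ref{sec:volterra} (Proposition~\ref{prop:volterra} in its parameter-dependent form). The inputs required are symbol-type bounds on $V_3$ provided by Proposition~\ref{prop:normalform}, symbol-type bounds on the Bessel perturbations $a_j(\zeta,\nu)$ recalled in the excerpt, and the chain-rule identity $\partial_E\nu=2\hbar^{-1}$ which converts each $E$-derivative into a loss of $\hbar^{-1}$. For $\zeta\ge 1$ this directly gives $|\partial_E^\ell\partial_\zeta^k\sigma_j(\zeta,E;\hbar)|\le C_{k,\ell}\langle\zeta\rangle^{-k}\hbar^{-\ell}$. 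The boundary estimates at $\zeta=0$ follow by differentiating the Volterra equation once or twice in $\zeta$ before taking $\zeta\to 0$: the first $\zeta$-derivative picks up a factor proportional to $\mathrm{Bi}'(0)$ (respectively $\mathrm{Ai}'(0)$) from the unrescaled $d\zeta''$-integrand, which is of size $\nu^{2/3}$ after undoing the $\xi$-rescaling, matching the stated bound.

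The main technical obstacle is verifying that the symbol-type weight $\langle\zeta\rangle^{-k}$ in $\zeta$, combined with the $\hbar^{-\ell}$ loss in $E$, is genuinely closed under the Volterra iteration uniformly as $\nu\to\infty$ (equivalently as $\hbar/E\to 0$), since one differentiates simultaneously the two Airy-type pieces, the forcing $V_3$, and the limits of integration (via $\zeta_0(E;\hbar)\simeq\alpha^{-2/3}$, whose $E$-derivatives must be tracked). This is precisely the situation for which Appendix~\ref{sec:volterra} was developed, and the check reduces to verifying that the kernel and data fit into the class for which the appendix guarantees a contraction. The transition across $\zeta=0$ causes no separate problem because $\mathrm{Ai}$ and $\mathrm{Bi}$ and their derivatives are entire, smooth on the closed half-line, and their values at $0$ are $\nu$-independent.
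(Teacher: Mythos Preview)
Your construction of $\Phi_2$ via a Volterra iteration based at $\zeta=0$, followed by the rescaling $\xi=\nu^{2/3}\zeta$, is exactly the paper's argument; the relevant appendix result is Proposition~\ref{prop:volterraAinox} (the $\Bi$--kernel version starting from a finite base point with a $\lambda$--dependent upper endpoint), not Proposition~\ref{prop:volterra}, which is for pure exponential kernels. After rescaling the paper records the forcing bound $|\partial_E^\ell\partial_y^k\,\alpha^2\nu^{-4/3}V_3(\nu^{-2/3}y,E;\hbar)|\le C_{k,\ell}\langle y\rangle^{-k}\hbar^{4/3-\ell}$ and then trades the resulting $\langle x\rangle^{1/2}$ growth for a factor $\hbar^{-1/3}$ on the interval $[0,\nu^{2/3}\zeta_0]\simeq[0,\hbar^{-2/3}]$, which is how the final exponent $\hbar^{1-\ell}$ appears. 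Your heuristic size count agrees with this.

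The one genuine deviation is your treatment of $\Phi_1$: you propose a second Volterra iteration with base $\phi_1$ integrated from $\zeta_0(E;\hbar)$ downward, whereas the paper obtains $\Phi_1$ by the reduction ansatz
\[
\Phi_1(\zeta,E;\hbar):=-\tfrac{1}{\pi}\nu^{2/3}\,\Phi_2(\zeta,E;\hbar)\int_\zeta^{\zeta_0(E;\hbar)}\Phi_2(\eta,E;\hbar)^{-2}\,d\eta,
\]
and then verifies the stated form directly. Your route is workable, but it forces you to differentiate the $E$--dependent upper limit $\zeta_0(E;\hbar)$ inside a Volterra iteration for which the appendix does not supply a ready-made proposition (Proposition~\ref{prop:volterraAi} has the upper limit $\infty$); the boundary contributions are harmless because $\Ai(\nu^{2/3}\zeta_0)^2$ is exponentially small, but this has to be argued. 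The reduction ansatz sidesteps this and reuses the bounds already established for $\sigma_2$. Also, your displayed Airy--kernel estimate has the arguments swapped: for the $\Phi_2$ equation the kernel is $\phi_2(\zeta')^2\int_{\zeta'}^{\zeta}\phi_2^{-2}$, which after rescaling gives a bound $\lesssim\langle\xi'\rangle^{-1/2}$ (uniformly in $\xi\ge\xi'$), not $\langle\xi\rangle^{-1/2}\langle\xi'\rangle^{-1/2}$; this does not affect your convergence argument since only the $\xi'$--factor is integrated.
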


\begin{proof}
We start with the growing solution and consider the Volterra equation
$$ \hbar \sigma_2(\zeta,E;\hbar)=-\alpha^2 \int_0^\zeta \int_\eta^\zeta \phi_2(\eta',\nu)^{-2}d\eta'\;
\phi_2(\eta,\nu)^2 V_3(\eta,E;\hbar)[1+\hbar \sigma_2(\eta,E;\hbar)]d\eta $$
for $\zeta \in [0,\zeta_0(E;\hbar)]$.
Rescaling by $\nu^{-\frac23}$ yields
$$ \tilde{\sigma}_2(x,E;\hbar)=-\alpha^2 \nu^{-\frac43}\int_0^x \int_y^x \phi_2(\nu^{-\frac23}u, \nu)^{-2}du\;
\phi_2(\nu^{-\frac23}y, \nu)^2 V_3(\nu^{-\frac23}y,E;\hbar)[1+\tilde{\sigma}_2(y,E;\hbar)]dy $$
for $x \in [0,\nu^\frac23 \zeta_0(E;\hbar)]$ where $\tilde{\sigma}_2(x,E;\hbar)=\hbar \sigma_2(\nu^{-\frac23}x,E;\hbar)$.
We have the bounds $|\partial_E^\ell \nu^\frac23 \zeta_0(E;\hbar)|\leq C_\ell \hbar^{-\frac23-\ell}$.
From Eq.~\eqref{eq:V3} and Proposition~\ref{prop:normalform} we infer the estimates
$$ |\partial_E^\ell \partial_y^k \alpha^2 \nu^{-\frac43} V_3(\nu^{-\frac23}y,E;\hbar)|\leq C_{k,\ell}\langle y \rangle^{-k}\hbar^{\frac43-\ell} $$
in the relevant domain of $y, E, \hbar$ and for all $k,\ell \in \mathbb{N}_0$.
Thus, 
we obtain from Proposition~\ref{prop:volterraAinox} the existence of $\tilde{\sigma}_2(\cdot,E;\hbar)$ with 
$$ |\partial_E^\ell \partial_x^k \tilde{\sigma}_2(x,E;\hbar)|\leq 
C_{k,\ell}\langle x \rangle^{\frac12-k}\hbar^{\frac43-\ell}\leq C_{k,\ell}\langle x \rangle^{-k}\hbar^{1-\ell} $$
and the stated bounds for $\sigma_1$ follow.

The solution $\Phi_1(\cdot,E;\hbar)$ 
can now be constructed by the standard reduction ansatz, i.e., by setting
$$ \Phi_1(\zeta,E;\hbar):=-\tfrac{1}{\pi}\nu^\frac23 \Phi_2(\zeta,E;\hbar)\int_\zeta^{\zeta_0(E;\hbar)}\Phi_2(\eta,E;\hbar)^{-2}d\eta. $$
The so--defined $\Phi_1(\cdot,E;\hbar)$ is clearly a decaying solution to Eq.~\eqref{eq:Phi} and by using
$W(\Ai,\Bi)=\frac{1}{\pi}$, see e.g., \cite{Miller}, it is not hard to see that $\Phi_1(\cdot,E;\hbar)$ is indeed of the stated form.
\end{proof}

\subsubsection{The oscillatory regime}
Similarly, in the oscillatory regime $\zeta\leq 0$ there exists a fundamental system $\{\phi_\pm(\cdot,\nu)\}$ of Bessel functions for Eq.~\eqref{eq:Phi0} of the form
$$ \phi_\pm(\zeta,\nu)=[\Ai(\nu^\frac23 \zeta)\pm i\Bi(\nu^\frac23 \zeta)][1+\nu^{-1}a_\pm(\zeta,\nu)] $$
where the error terms $a_\pm$ satisfy analogous bounds as $a_j$ above in the respective domain of $\zeta$ and $\nu$, see Lemma~\ref{lem:bessel-}.
By perturbing the basis $\{\phi_\pm(\cdot,\nu)\}$ we obtain a fundamental system for Eq.~\eqref{eq:Phi}.

\begin{lemma}
\label{lem:FSlgosc}
For $\zeta\leq 0$ there exists a fundamental system $\{\Phi_\pm(\cdot,E;\hbar)\}$ of Eq.~\eqref{eq:Phi} of 
the form
$$ \Phi_\pm(\zeta,E;\hbar)=\phi_\pm(\zeta,\nu)[1+\hbar \sigma_\pm(\zeta,E;\hbar)] $$
where $\nu=2\frac{E}{\hbar}$ and $|\sigma_\pm(\zeta,E;\hbar)|\lesssim 1$
for $\zeta \leq 0$ and $0<\hbar \ll E \ll 1$.
Furthermore, the error terms $\sigma_\pm$ satisfy the bounds
$$ |\partial_E^\ell \partial_\zeta^k \sigma_\pm(\zeta,E;\hbar)|\leq C_{k,\ell}
\langle \zeta \rangle^{-\frac32-k}\hbar^{-\ell},\quad \zeta \leq -1 $$
as well as
$$ |\partial_E^\ell \sigma_\pm(0,E;\hbar)|\leq C_\ell \hbar^{-\ell},\quad
|\partial_E^\ell \partial_\zeta \sigma_\pm(0,E;\hbar)|\leq C_\ell \nu^\frac23 \hbar^{-\ell} $$
for all $0<\hbar \ll E \ll 1$ and $k,\ell \in \mathbb{N}_0$.
\end{lemma}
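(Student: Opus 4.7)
The plan is to mirror the proof of Lemma \ref{lem:FSlgexp}, but with the Volterra iteration initialized at $\zeta = -\infty$ rather than at $\zeta = 0$, since in the oscillatory regime the natural place to prescribe the perturbative data is at $-\infty$ where $V_3$ decays (in fact exponentially, as noted in the footnote). Concretely, I invoke Lemma \ref{lem:perturb} with $u_0 = \phi_\pm(\cdot,\nu)$. Because $\phi_\pm(\zeta,\nu)$ is a nonvanishing combination of Airy functions on $(-\infty,0]$ (Corollary-type facts from Appendix \ref{sec:bessel} via the Wronskian $W(\Ai,\Bi) = 1/\pi$), the perturbative ansatz $\Phi_\pm = \phi_\pm[1 + \hbar \sigma_\pm]$ leads to the Volterra equation
\begin{equation*}
\hbar \sigma_\pm(\zeta,E;\hbar) = -\alpha^2 \int_{-\infty}^{\zeta} \left[\int_{\eta}^{\zeta} \phi_\pm(\eta',\nu)^{-2}\,d\eta'\right] \phi_\pm(\eta,\nu)^2\, V_3(\eta,E;\hbar) [1 + \hbar \sigma_\pm(\eta,E;\hbar)]\,d\eta.
\end{equation*}

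Next, I rescale by $x := \nu^{2/3}\zeta$ and set $\tilde\sigma_\pm(x,E;\hbar) := \hbar\,\sigma_\pm(\nu^{-2/3}x,E;\hbar)$; this maps the equation to a Volterra integral equation in $x \in (-\infty, 0]$ whose kernel is built out of $\Ai(x) \pm i \Bi(x)$ and their amplitudes. The key bookkeeping uses the standard Airy asymptotics: $|\Ai(x) \pm i \Bi(x)|^2 \simeq \langle x \rangle^{-1/2}$ on $x \leq 0$, hence $|\phi_\pm(\nu^{-2/3}x,\nu)|^{\pm 2}$ behaves like $\langle x \rangle^{\mp 1/2}$ up to a bounded multiplicative error of symbol type. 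Combined with the bound $|\partial_E^\ell \partial_\zeta^k V_3(\zeta,E;\hbar)| \leq C_{k,\ell}\langle \zeta\rangle^{-2-k}\hbar^{-\ell}$ for $\zeta \leq 0$ from Proposition \ref{prop:normalform} and Eq.~\eqref{eq:V3}, and the identity $\alpha^2 \nu^{-4/3} \lesssim \hbar^{4/3}$, the forcing in the $x$-variable satisfies
\begin{equation*}
|\partial_E^\ell \partial_x^k [\alpha^2 \nu^{-4/3} V_3(\nu^{-2/3} x, E;\hbar)]| \leq C_{k,\ell}\langle x \rangle^{-2-k}\hbar^{4/3-\ell},
\end{equation*}
placing us in the scope of the oscillatory-regime Volterra proposition from Appendix \ref{sec:volterra} (the analogue of Proposition \ref{prop:volterraAinox} but for left-infinite intervals and oscillatory Airy kernels). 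Applying that proposition produces a unique $\tilde\sigma_\pm$ with $|\partial_E^\ell \partial_x^k \tilde\sigma_\pm(x,E;\hbar)| \leq C_{k,\ell}\langle x\rangle^{-3/2-k}\hbar^{4/3-\ell}$, and unwinding the rescaling (which costs factors of $\nu^{2k/3}$, absorbed against $\langle x\rangle$) gives the claimed bounds
\begin{equation*}
|\partial_E^\ell \partial_\zeta^k \sigma_\pm(\zeta,E;\hbar)| \leq C_{k,\ell}\langle \zeta\rangle^{-3/2-k}\hbar^{-\ell}, \qquad \zeta \leq -1.
\end{equation*}
The endpoint bounds at $\zeta = 0$ follow by direct inspection of the Volterra formula evaluated at $\zeta = 0$ together with its $\zeta$-derivative (the latter of which brings down a factor of $\phi_\pm'(0,\nu)$, explaining the $\nu^{2/3}$ weight), using $|\phi_\pm(0,\nu)| = O(1)$ and $|\phi_\pm'(0,\nu)| = O(\nu^{2/3})$.

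The main technical obstacle is not the existence of the solutions, which is routine once the right Volterra proposition is available, but propagating the sharp decay rate $\langle \zeta \rangle^{-3/2}$ through the double integral. A crude absolute-value estimate only yields $\langle \zeta \rangle^{-1}$ decay, because the inner amplitude integral $\int_\eta^\zeta |\phi_\pm|^{-2}\,d\eta'$ grows like $\langle \zeta\rangle^{3/2}-\langle \eta\rangle^{3/2}$ in amplitude. The extra half power must be recovered by exploiting the oscillatory cancellation between $\phi_+^{-2}$ and $\phi_+^{2}$ (products of two $\phi_\pm$'s of like sign have a nontrivial oscillatory phase $e^{\pm 2i \cdot \frac23(-\nu^{2/3}\zeta)^{3/2}}$), which is exactly what the oscillatory-kernel Volterra machinery in Appendix \ref{sec:volterra} is designed to capture via an integration by parts against the phase. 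This is the step I expect to require the most care, and it is the reason the lemma asserts improved $\langle \zeta\rangle^{-3/2}$ decay rather than the naive $\langle \zeta\rangle^{-1}$ one gets from the exponential-regime analogue.
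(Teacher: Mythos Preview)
Your approach matches the paper's: the same Volterra equation for $\sigma_-$ based at $-\infty$, the rescaling $x=\nu^{2/3}\zeta$, and an appeal to the oscillatory Airy-kernel Volterra machinery; the paper then simply sets $\Phi_+=\overline{\Phi_-}$. Your discussion of the oscillatory cancellation responsible for the sharp $\langle\zeta\rangle^{-3/2}$ decay is exactly the content of Proposition~\ref{prop:volterraAiBi}, which is the proposition you want here (not an analogue of Proposition~\ref{prop:volterraAinox}).

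There is, however, a bookkeeping slip with a real consequence. Your forcing bound $|\alpha^2\nu^{-4/3}V_3(\nu^{-2/3}x)|\lesssim\langle x\rangle^{-2}\hbar^{4/3}$ fails in the transition zone $1\le|x|\le\nu^{2/3}$, since there $\langle\nu^{-2/3}x\rangle^{-2}\simeq 1$ while $\langle x\rangle^{-2}$ can be as small as $\nu^{-4/3}$. The estimate that does hold uniformly is $\nu^{-4/3}\langle\nu^{-2/3}x\rangle^{-2}\lesssim\langle x\rangle^{-2}$, so the forcing is $\lesssim\alpha^2\langle x\rangle^{-2}$ with coefficient $\alpha^2\simeq E^2$ rather than $\hbar^{4/3}$. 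With this, Proposition~\ref{prop:volterraAiBi} delivers the $\langle\zeta\rangle^{-3/2}$ bounds for $|\zeta|\ge1$, but at $\zeta=0$ it only yields $|\sigma_\pm(0)|\lesssim E^2/\hbar$, which is unbounded in the regime $\hbar\ll E\ll 1$ (take $E=\hbar^{1/3}$). This is why the paper cites \emph{both} Propositions~\ref{prop:volterraAiBi} and~\ref{prop:volterraAinox}: on the bounded stretch $|x|\le\nu^{2/3}$ the forcing is genuinely $O(\hbar^{4/3})$ (since $V_3=O(1)$ and $\alpha^2\nu^{-4/3}\lesssim\hbar^{4/3}$), and a near-region iteration from the matching point $|x|=\nu^{2/3}$, exactly as in the proof of Lemma~\ref{lem:bessel+}, recovers the $O(1)$ bound at $\zeta=0$ together with the $\nu^{2/3}$ loss for $\partial_\zeta\sigma_\pm(0)$.
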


\begin{proof}
By choosing $\nu$ sufficiently large, we obtain from Lemma~\ref{lem:bessel-} that 
$|\phi_\pm(\zeta,\nu)|>0$ for all $\zeta \leq 0$.
Thus, the function $\sigma_-$ can be constructed by solving the Volterra equation
$$ \hbar\sigma_-(\zeta,E;\hbar)=-\alpha^2 \int_{-\infty}^\zeta \int_{\eta}^\zeta \phi_-(\eta',\nu)^{-2}d\eta'\;
\phi_-(\eta,\nu)^2 V_3(\eta,E;\hbar)[1+\hbar \sigma_-(\eta,E;\hbar)]d\eta. $$
This is done via Propositions \ref{prop:volterraAiBi} and \ref{prop:volterraAinox}, 
completely analogous to Lemma~\ref{lem:FSlgexp} (or Lemma~\ref{lem:bessel-}).
$\Phi_+$ is just the complex conjugate of $\Phi_-$.
\end{proof}

\subsubsection{Matching of the fundamental systems}
Next, we glue together the two fundamental systems $\{\Phi_j(\cdot,E;\hbar): j=1,2\}$ and 
$\{\Phi_\pm(\cdot,E;\hbar)\}$.
To this end we make use of the representation
$$ \phi_\pm(\zeta,\nu)=[1+\nu^{-1}\alpha_{\pm,1}(\nu)]\phi_1(\zeta,\nu)
\pm i[1+\nu^{-1}\alpha_{\pm,2}(\nu)]\phi_2(\zeta,\nu) $$
for $\zeta\geq 0$, $\nu \gg 1$ where $|\partial_\nu^\ell \alpha_{\pm,j}(\nu)|\leq C_\ell \nu^{-\ell}$ for
all $\ell \in \mathbb{N}_0$ and $j=1,2$.
This representation follows easily from the fact that the Airy functions are defined globally, 
see Lemma~\ref{lem:bessel+bessel-} for an explicit proof.

\begin{lemma}
\label{lem:Wlg}
For $\zeta\geq 0$ the functions $\Phi_\pm(\cdot,E;\hbar)$ from Lemma~\ref{lem:FSlgosc} have the representation
\begin{align*} 
\Phi_\pm(\zeta,E;\hbar)=&[1+\nu^{-1}\beta_{\pm,1}(E;\hbar)]\Phi_1(\zeta,E;\hbar) 
\pm 
i[1+\nu^{-1}\beta_{\pm,2}(E;\hbar)]\Phi_2(\zeta,E;\hbar) 
\end{align*}
where $\Phi_j(\cdot,E;\hbar)$, $j=1,2$, are from Lemma~\ref{lem:FSlgexp} and 
$\nu=2\frac{E}{\hbar}$.
The functions $\beta_{\pm,j}$ satisfy the estimates
$$ |\partial_E^\ell \beta_{\pm,j}(E;\hbar)|\leq C_\ell \hbar^{-\ell} $$
for all $0<\hbar \ll E\ll 1$ and $\ell \in \mathbb{N}_0$.
\end{lemma}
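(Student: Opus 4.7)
The plan is to exploit the fact that both $\Phi_\pm(\cdot,E;\hbar)$ and $\{\Phi_1(\cdot,E;\hbar),\Phi_2(\cdot,E;\hbar)\}$ solve the same linear second-order equation~\eqref{eq:Phi} on $(-\infty,\zeta_0(E;\hbar))$. By ODE uniqueness, $\Phi_\pm$ extends uniquely from $(-\infty,0]$ to the full interval, and on $[0,\zeta_0(E;\hbar)]$ we may write $\Phi_\pm=c_{\pm,1}\Phi_1+c_{\pm,2}\Phi_2$ with
\[
c_{\pm,1}(E;\hbar)=\frac{W(\Phi_\pm,\Phi_2)}{W(\Phi_1,\Phi_2)},\qquad
c_{\pm,2}(E;\hbar)=-\frac{W(\Phi_\pm,\Phi_1)}{W(\Phi_1,\Phi_2)}.
\]
Since these Wronskians are constant in $\zeta$, I would evaluate all of them at $\zeta=0$, where the representations from Lemmas~\ref{lem:FSlgexp} and \ref{lem:FSlgosc} are simultaneously applicable.

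The core computation combines $\Phi_j=\phi_j[1+\hbar\sigma_j]$ and its derivative $\Phi_j'(0)=\phi_j'(0)[1+\hbar\sigma_j(0)]+\phi_j(0)\hbar\,\partial_\zeta\sigma_j(0)$, and likewise for $\Phi_\pm$. Using $\phi_j(0)\simeq 1$, $\phi_j'(0,\nu)\simeq\nu^{2/3}$ together with the pointwise estimates $|\sigma_j(0,E;\hbar)|\lesssim 1$, $|\partial_\zeta\sigma_j(0,E;\hbar)|\lesssim\nu^{2/3}$ (and similarly for $\sigma_\pm$), a direct calculation yields
\[
W(\Phi_1,\Phi_2)(0)=W(\phi_1,\phi_2)(0)\bigl[1+\hbar\gamma(E;\hbar)\bigr],\qquad
W(\Phi_\pm,\Phi_j)(0)=W(\phi_\pm,\phi_j)(0)\bigl[1+\hbar\gamma_{\pm,j}(E;\hbar)\bigr]+R_{\pm,j},
\]
where the correction factors $\gamma,\gamma_{\pm,j}$ inherit the bound $|\partial_E^\ell(\cdot)|\le C_\ell\hbar^{-\ell}$ and the residuals $R_{\pm,j}$ are also of relative size $\hbar$ compared with the leading Wronskian $\simeq\nu^{2/3}$. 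The Bessel Wronskians are then evaluated using the already-stated identity
\[
\phi_\pm=[1+\nu^{-1}\alpha_{\pm,1}(\nu)]\phi_1\pm i[1+\nu^{-1}\alpha_{\pm,2}(\nu)]\phi_2,
\]
giving $W(\phi_\pm,\phi_2)=[1+\nu^{-1}\alpha_{\pm,1}]W(\phi_1,\phi_2)$ and $W(\phi_\pm,\phi_1)=\mp i[1+\nu^{-1}\alpha_{\pm,2}]W(\phi_1,\phi_2)$.

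Assembling the pieces,
\[
c_{\pm,1}=[1+\nu^{-1}\alpha_{\pm,1}(\nu)]\cdot\frac{1+\hbar\tilde\gamma_{\pm,1}(E;\hbar)}{1+\hbar\gamma(E;\hbar)},
\]
and analogously $c_{\pm,2}=\pm i[1+\nu^{-1}\alpha_{\pm,2}(\nu)]\cdot[1+O(\hbar)]$. At this point I would invoke the inequality $\hbar\lesssim\nu^{-1}$, which holds in the present regime because $\hbar\nu=2E\ll 1$, to absorb all the $O(\hbar)$ factors into the prefactor, producing coefficients of the required form $1+\nu^{-1}\beta_{\pm,j}(E;\hbar)$. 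The derivative bound $|\partial_E^\ell\beta_{\pm,j}|\le C_\ell\hbar^{-\ell}$ then follows from Leibniz's rule, the estimate $|\partial_E^\ell\alpha_{\pm,j}|\le C_\ell \hbar^{-\ell}\nu^{-\ell}$ (obtained from $\partial_E=2\hbar^{-1}\partial_\nu$ and the bounds $|\partial_\nu^\ell\alpha_{\pm,j}|\le C_\ell\nu^{-\ell}$), and the $\hbar^{-\ell}$ estimates on $\partial_E^\ell\sigma_j(0,E;\hbar)$ and $\partial_E^\ell\partial_\zeta\sigma_j(0,E;\hbar)$ furnished by Lemmas~\ref{lem:FSlgexp} and \ref{lem:FSlgosc}.

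The main technical obstacle is the bookkeeping in the Wronskian computation at $\zeta=0$: the potentially dangerous term $\hbar\,\phi_j(0)\phi_k(0)\,\partial_\zeta\sigma_j(0)$ carries an \emph{a priori} large factor $\nu^{2/3}$, which is only reconciled with the $1+O(\hbar)$ claim \emph{after} noticing that the leading Wronskian $W(\phi_1,\phi_2)(0)$ is itself of size $\nu^{2/3}$. One must therefore expand the Wronskians explicitly rather than treating $\hbar\sigma_j$ as a uniformly small multiplicative perturbation, keeping careful track of which factors of $\nu^{2/3}$ are absorbed into the denominator and which persist as genuine errors. Once this matching is made precise at the level of all $E$-derivatives (where each extra $\partial_E$ costs at most a factor $\hbar^{-1}$, compatible with the target bound), the rest of the argument is routine.
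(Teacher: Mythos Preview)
Your approach is correct and coincides with the paper's: both evaluate the connection coefficients via Wronskians at $\zeta=0$, use the factorizations from Lemmas~\ref{lem:FSlgexp} and~\ref{lem:FSlgosc} together with the Bessel matching $\phi_\pm=[1+\nu^{-1}\alpha_{\pm,1}]\phi_1\pm i[1+\nu^{-1}\alpha_{\pm,2}]\phi_2$, and absorb the $O(\hbar)$ corrections into the $\nu^{-1}$ prefactor via $\hbar\lesssim\nu^{-1}$. The paper is marginally more economical in one place: it observes that $W(\Phi_1,\Phi_2)=\nu^{2/3}W(\Ai,\Bi)$ holds \emph{exactly} by construction (since $\Phi_1$ is built from $\Phi_2$ by the reduction formula with normalization $-\tfrac{1}{\pi}\nu^{2/3}$), so no $1+\hbar\gamma$ correction is needed in the denominator; your computation of that Wronskian from the product forms is valid but slightly redundant.
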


\begin{proof}
By construction (see Lemma~\ref{lem:FSlgexp}) we have 
$W(\Phi_1(\cdot,E;\hbar),\Phi_2(\cdot,E;\hbar))=\nu^\frac23 W(\Ai, \Bi)$.
Furthermore, by evaluating the Wronskians at $\zeta=0$ and noting that $\hbar \lesssim \nu^{-1}$, 
we obtain from Lemmas \ref{lem:FSlgexp}, 
\ref{lem:FSlgosc} that
$$ W(\Phi_\pm(\cdot,E;\hbar), \Phi_j(\cdot,E;\hbar))=\nu^\frac23 W(A_\pm, A_j)
[1+\nu^{-1}\tilde{\beta}_{\pm,j}(E;\hbar)],\quad j=1,2 $$
for functions $\tilde{\beta}_{\pm,j}(E;\hbar)$ satisfying $|\partial_E^\ell \tilde{\beta}_{\pm,j}(E;\hbar)|
\leq C_\ell \hbar^{-\ell}$, $\ell \in \mathbb{N}_0$, where
$A_1:=\Ai$, $A_2:=\Bi$ and $A_\pm:=\Ai\pm i\Bi$.
The result now follows by a straightforward computation, cf.~Lemma~\ref{lem:bessel+bessel-}.
\end{proof}

Now we are ready to obtain the desired representation of the outgoing Jost function in the case of large
$\nu$.

\begin{lemma}
\label{lem:Jostlg}
For $0<\hbar \ll E\ll 1$ the semiclassical outgoing Jost function $f_+(\cdot,E;\hbar)$ of the operator $H(\hbar)$
has the representation
\begin{align*}
f_+(x,E;\hbar)=&\pi^\frac12 2^{-\frac12}e^{i(\nu+\frac{\pi}{4})}\hbar^{\frac12-i\nu}\nu^{\frac23-i\nu}
e^{\frac{x}{4}}\varphi'(\tfrac{2}{\alpha}e^{-\frac{x}{2}},\alpha^2;\hbar)^{-\frac12}\zeta'(\tfrac{\alpha}{\hbar \nu}
\varphi(\tfrac{2}{\alpha}e^{-\frac{x}{2}},\alpha^2;\hbar))^{-\frac12} \\
&\times \Phi_-(\zeta(\tfrac{\alpha}{\hbar \nu}
\varphi(\tfrac{2}{\alpha}e^{-\frac{x}{2}},\alpha^2;\hbar)),E;\hbar)
\end{align*}
where $\nu=2\frac{E}{\hbar}$, $\alpha^2=\frac{\hbar^2}{4}+4E^2$ 
and $\Phi_-(\cdot,E;\hbar)$ is given in Lemma~\ref{lem:FSlgosc}.
\end{lemma}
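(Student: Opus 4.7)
The plan is to trace $f_+$ through the chain of transformations culminating in Eq.~\eqref{eq:Phi}, and then identify the resulting solution with an explicit scalar multiple of $\Phi_-$ by matching asymptotics as $\zeta\to-\infty$. Proposition~\ref{prop:normalform} reduces matters to the unique solution $g_+(\cdot,E;\hbar)$ of the normal form Eq.~\eqref{eq:normalform} characterized by the Frobenius asymptotic $g_+(w,E;\hbar)\sim(\alpha w/2)^{1/2-i\nu}$ as $w\to 0+$. Rescaling by $v=w/(\hbar_1\nu)$ and applying the Liouville--Green transform through the diffeomorphism $\zeta$ of Lemma~\ref{lem:zeta} produces a solution $\Phi_*(\zeta,E;\hbar):=\zeta'(v)^{1/2}g_+(\hbar_1\nu v,E;\hbar)$ of Eq.~\eqref{eq:Phi} on $\zeta\le 0$, and the task is to show $\Phi_*=B(E;\hbar)\Phi_-$ with $B$ equal to the prefactor in the lemma.

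To determine $B$, I would expand $\Phi_*=A\Phi_++B\Phi_-$ in the fundamental system of Lemma~\ref{lem:FSlgosc} and compare both sides as $\zeta\to-\infty$. Because the Volterra integral defining $\Phi_\pm$ starts at $\zeta=-\infty$, one has $\Phi_\pm/\phi_\pm\to 1$ at the left endpoint, so the standard Airy asymptotics yield
\begin{align*}
\phi_\pm(\zeta,\nu)\sim\pi^{-1/2}\nu^{-1/6}(-\zeta)^{-1/4}e^{\mp i(\frac{2}{3}\nu(-\zeta)^{3/2}-\pi/4)}.
\end{align*}
On the other side, the prescribed small-$w$ asymptotic of $g_+$ together with $\zeta'(v)\sim v^{-1}(-\zeta)^{-1/2}$ at $v=0+$ produces
\begin{align*}
\Phi_*(\zeta(v),E;\hbar)\sim(\hbar\nu/2)^{1/2-i\nu}(-\zeta)^{-1/4}v^{-i\nu}.
\end{align*}
The logarithmic identity $\tfrac{2}{3}(-\zeta(v))^{3/2}=-\log v+\log 2-1+O(v^2)$ of Lemma~\ref{lem:zeta} rewrites $v^{-i\nu}=(e/2)^{i\nu}e^{i\frac{2}{3}\nu(-\zeta)^{3/2}}(1+o(1))$, so only the $e^{+i\frac{2}{3}\nu(-\zeta)^{3/2}}$ phase survives. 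Matching the two exponential components forces $A=0$ and gives $B\pi^{-1/2}\nu^{-1/6}e^{-i\pi/4}=(\hbar\nu/2)^{1/2-i\nu}(e/2)^{i\nu}$; elementary rearrangement then produces exactly $B=\pi^{1/2}2^{-1/2}e^{i(\nu+\pi/4)}\hbar^{1/2-i\nu}\nu^{2/3-i\nu}$.

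Substituting $g_+(w,E;\hbar)=B(E;\hbar)\zeta'(\tfrac{\alpha w}{\hbar\nu})^{-1/2}\Phi_-(\zeta(\tfrac{\alpha w}{\hbar\nu}),E;\hbar)$ into the identity $f_+(x,E;\hbar)=e^{x/4}\varphi'(\tfrac{2}{\alpha}e^{-x/2},\alpha^2;\hbar)^{-1/2}g_+(\varphi(\tfrac{2}{\alpha}e^{-x/2},\alpha^2;\hbar),E;\hbar)$ from Proposition~\ref{prop:normalform} delivers the stated formula. The main point requiring care is the phase bookkeeping in the matching step: the logarithmic tail of $\zeta(v)$ at $v=0$ must be tracked precisely so that the $v^{-i\nu}$ Frobenius exponent of $g_+$ transfers to the correct $e^{+i\frac{2}{3}\nu(-\zeta)^{3/2}}$ phase of $\Phi_-$, generating in particular the $e^{i\nu}$ factor that combines with $e^{i\pi/4}$ to produce the $e^{i(\nu+\pi/4)}$ in the final prefactor; all remaining steps are routine invocations of the transformations already set up in the preceding sections.
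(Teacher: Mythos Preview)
Your proof is correct and follows essentially the same approach as the paper's: both arguments identify $g_+$ with a scalar multiple of the pullback of $\Phi_-$ by matching the Frobenius asymptotic $g_+(w,E;\hbar)\sim(\alpha w/2)^{1/2-i\nu}$ against the Airy asymptotics of $\phi_\pm$ at $\zeta\to-\infty$, using the logarithmic expansion of $\zeta$ from Lemma~\ref{lem:zeta} to translate between the two. The only cosmetic difference is that the paper pulls $\Phi_\pm$ back to the $w$-variable (defining $\hat g_\pm$) and computes the asymptotics there, whereas you push $g_+$ forward to the $\zeta$-variable as $\Phi_*$ and match there; the phase bookkeeping and the resulting prefactor $\pi^{1/2}2^{-1/2}e^{i(\nu+\pi/4)}\hbar^{1/2-i\nu}\nu^{2/3-i\nu}$ come out identically.
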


\begin{proof}
By construction, the functions $\hat{g}_\pm(\cdot,E;\hbar)$ defined by
$$ \hat{g}_\pm(w,E;\hbar):=\zeta'(\tfrac{w}{\hbar_1 \nu})^{-\frac12}
\Phi_\pm(\zeta(\tfrac{w}{\hbar_1 \nu}),E;\hbar) $$
with $\Phi_\pm(\cdot,E;\hbar)$ from Lemma~\ref{lem:FSlgosc}
form a fundamental system for the normal form equation \eqref{eq:normalform} on $(0,1]$ provided that 
$\nu=2\frac{E}{\hbar} \gg 1$.
According to Lemmas \ref{lem:FSlgosc}, \ref{lem:bessel-} and the standard Airy asymptotics (see 
Corollary \ref{cor:Airy-}) we have 
$$ \Phi_\pm(\zeta,E;\hbar)\sim \phi_\pm(\zeta,\nu)\sim \Ai(\nu^{\frac23}\zeta)\pm i\Bi(\nu^{\frac23}\zeta)
\sim \pi^{-\frac12}e^{\pm i\frac{\pi}{4}}(-\nu^{\frac23}\zeta)^{-\frac14}
e^{\mp i \frac23 \nu (-\zeta)^{3/2}} $$
as $\zeta \to -\infty$ and thus,
\begin{align*} \hat{g}_\pm(w,E;\hbar)&\sim \pi^{-\frac12}e^{\pm i\frac{\pi}{4}}\nu^{-\frac16}
\zeta'(\tfrac{w}{\hbar_1 \nu})^
{-\frac12}[-\zeta(\tfrac{w}{\hbar_1 \nu})]^{-\frac14}e^{\mp i  \nu \frac23[-\zeta(\frac{w}{\hbar_1 \nu})]^{3/2}} 
\end{align*}
as $w \to 0+$.
Now recall from Lemma~\ref{lem:zeta} that
\begin{align*} \tfrac23 [-\zeta(x)]^{\frac32}&= -\log x+\log 2-1+O(x^2) \\
\zeta'(x)[-\zeta(x)]^{\frac12}&=\tfrac{1}{x}+O(x)
\end{align*}
as $x \to 0+$ which imply
\begin{align*} \hat{g}_\pm(w,E;\hbar)&\sim \pi^{-\frac12}e^{\pm i\frac{\pi}{4}}\nu^{-\frac16}
\big (\tfrac{w}{\hbar_1 \nu} \big )^{\frac12}e^{\mp i \nu [-\log (\frac{w}{\hbar_1 \nu})+\log 2-1]} \\
&\sim \pi^{-\frac12}2^{\mp i\nu}e^{\pm i(\nu+\frac{\pi}{4})}
\hbar^{-\frac12 \mp i\nu}\nu^{-\frac23\mp i\nu}(\alpha w)^{\frac12\pm i\nu}
\end{align*}
as $w \to 0+$.
According to Proposition~\ref{prop:normalform}, the solution $g_+(\cdot,E;\hbar)$ of Eq.~\eqref{eq:normalform} which corresponds to the outgoing Jost function $f_+(\cdot,E;\hbar)$ of the operator $H(\hbar)$ has the asymptotics
$$ g_+(w,E;\hbar)\sim 2^{-\frac12+i\nu}(\alpha w)^{\frac12-i\nu}\quad (w \to 0+) $$
and thus, we obtain
$$ g_+(w,E;\hbar)=\pi^\frac12 2^{-\frac12}e^{i(\nu+\frac{\pi}{4})}\hbar^{\frac12-i\nu}\nu^{\frac23-i\nu}
\hat{g}_-(w,E;\hbar) $$
which yields the claim by undoing the transformations that led to Eq.~\eqref{eq:normalform}.
\end{proof}

\section{The scattering matrix and the spectral measure}

In this section we compute the scattering matrix as well as the semiclassical spectral measure
associated to $H(\hbar)$. Thereby, we complete the proofs of
Theorems \ref{thm:main1} and \ref{thm:main2}.
The most important step consists of computing the Wronskian
$W(f_-(\cdot,E;\hbar),f_+(\cdot,E;\hbar))$.
To this end we need a similar asymptotic description of $f_-(\cdot,E;\hbar)$ as we have obtained in the previous section for
$f_+(\cdot,E;\hbar)$. 
This, however, is trivial due to the assumed symmetry of the problem.
In fact, the corresponding result for $f_-(\cdot,E;\hbar)$ can be obtained from $f_+(\cdot,E;\hbar)$ by switching 
from $x$ to $-x$.
From now on we write $\varphi_+$ instead of $\varphi$ for the diffeomorphism in Proposition~\ref{prop:phi} 
and $\varphi_-$ will denote the corresponding diffeomorphism associated with the construction of $f_-(\cdot,E;\hbar)$.
Furthermore, it is useful to introduce the following new notation.
\begin{definition}
\label{def:epsilon}
By $\epsilon(x,E;\hbar)$ we denote a \emph{generic real--valued} function that satisfies the bounds
$$ |\partial_E^\ell \partial_x^k \epsilon(x,E;\hbar)|\leq C_{k,\ell}\hbar^{-k-\ell} $$
for all $k,\ell \in \mathbb{N}_0$ and in a domain of $x,E,\hbar$ that follows from the context.
Moreover, we write $\epsilon_c(x,E;\hbar)$ if the function attains complex values as well.
\end{definition}

\subsection{The Wronskian}
Based on the results of Sec.~\ref{sec:FS} we have a good description of $f_+(\cdot,E;\hbar)$ on $(x_0,\infty)$ and similarly for $f_-(\cdot,E;\hbar)$ on $(-\infty,-x_0)$.
Without loss of generality we assume that $x_0<0$ and compute the Wronskian at $x=0$.

\begin{lemma}
\label{lem:Jostat0}
For the outgoing semiclassical Jost function $f_+(0,E;\hbar)$ of the operator $H(\hbar)$ 
we have the representation
\begin{align*} 
f_+(0,E;\hbar)&=\alpha^\frac12 \gamma_+(E;\hbar)
e^{\frac{1}{\hbar}(S_+(E;\hbar)+iT_+(E;\hbar))}
\Big [ 1+\hbar \epsilon(E;\hbar)+
\epsilon_c(E;\hbar)e^{-\frac{2}{\hbar}S_+(E;\hbar)}\Big ] \\
f_+'(0,E;\hbar)&=\alpha^\frac12 \hbar^{-1}c_+(E;\hbar)\gamma_+(E;\hbar)
e^{\frac{1}{\hbar}(S_+(E;\hbar)+iT_+(E;\hbar))} 
\Big [ 1+\hbar \epsilon(E;\hbar)+
\epsilon_c(E;\hbar)e^{-\frac{2}{\hbar}S_+(E;\hbar)}\Big ]
\end{align*} 
where $c_+(E;\hbar), S_+(E;\hbar)\gtrsim 1$, $T_+(E;\hbar)\in \mathbb{R}$, $|\gamma_+(E;\hbar)|\simeq 1$,
and we have the bounds
$$ |\partial_E^\ell A(E;\hbar)|\leq C_\ell \hbar^{-\ell},\quad A\in \{c_+, \gamma_+, S_+, T_+\} $$
for all $\ell \in \mathbb{N}_0$, $0<E\ll 1$ and $0<\hbar\ll 1$.
\end{lemma}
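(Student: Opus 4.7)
The plan is to specialize the two global representations of $f_+$ obtained in previous sections to the point $x = 0$ and to read off the exponential structure from the large-argument asymptotics of the relevant fundamental systems. Since $x_0 < 0$, the point $x = 0$ lies in the domain of the normal-form reduction; the Liouville--Green argument $\tfrac{2}{\alpha}$ at $x=0$ is of order $\alpha^{-1}$, so by Proposition \ref{prop:phi} the value $w_\ast := \varphi_+(\tfrac{2}{\alpha},\alpha^2;\hbar)$ satisfies $w_\ast \simeq 2/\alpha$ and $\alpha w_\ast/\hbar \simeq 2/\hbar$. We are therefore deep in the exponentially growing/decaying regime of the normal form equation, to the right of the turning point, which is the correct regime since $x = 0$ is classically forbidden for small $E$. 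I would treat the two regimes $\nu \lesssim 1$ and $\nu \gtrsim 1$ separately and then match them on the overlap $\nu \simeq 1$.

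For $\nu \lesssim 1$, I substitute $x = 0$ into Corollary \ref{cor:Jostsm} and apply the large-$w$ representation of $\check g_-(w_\ast,E;\hbar)$ from Lemma \ref{lem:Wsm}. This produces a linear combination of $e^{\pm \alpha w_\ast/\hbar}$, with coefficients built from $\pi^{-1/2}2^{-1/2\mp i\nu}\Gamma(1\mp i\nu)$, $\hbar^{1/2-i\nu}$, $\varphi_+'(2/\alpha,\alpha^2;\hbar)^{-1/2}$, and the $[1+\hbar\check\gamma_j][1+\hbar\check d_j]$ factors. Writing $\hbar^{1/2} = \alpha^{1/2}\hbar_1^{1/2}$ with $\hbar_1 \simeq 1$ in this regime, I define $S_+(E;\hbar) := \alpha w_\ast$ (which gives $S_+ \simeq 2$), absorb the bounded modulus of $\hbar_1^{1/2}\varphi_+'^{-1/2}\pi^{-1/2}\Gamma(1-i\nu)$ into $\gamma_+(E;\hbar)$ so that $|\gamma_+| \simeq 1$, and gather the remaining phases $e^{-i\nu\log\hbar}$, $2^{-2i\nu}$, and $\arg\Gamma(1-i\nu)$ into $e^{iT_+(E;\hbar)/\hbar}$ with $T_+$ real. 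The subdominant exponential $e^{-\alpha w_\ast/\hbar}$ (multiplied by the bounded factor $e^{-\nu\pi}$) yields precisely the $\epsilon_c(E;\hbar) e^{-2S_+/\hbar}$ error term. For $f_+'(0,E;\hbar)$ I differentiate in $x$ before setting $x = 0$; the dominant contribution comes from the derivative falling on $\check g_-$, which produces a factor $\alpha/\hbar$ via differentiation of $e^{\alpha w/\hbar}$, multiplied by $-\varphi_+'(2/\alpha)/\alpha$ from the chain rule, yielding the overall $\hbar^{-1}c_+(E;\hbar)$ prefactor with $|c_+| \simeq \varphi_+'(2/\alpha,\alpha^2;\hbar) \simeq 1$.

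For $\nu \gtrsim 1$, I proceed analogously with Lemma \ref{lem:Jostlg}, using the decomposition of $\Phi_-$ supplied by Lemma \ref{lem:Wlg}. At $x = 0$ the argument $\zeta_\ast := \zeta(\tfrac{\alpha}{\hbar\nu}w_\ast)$ is large and positive, of order $\alpha^{-2/3}$ (combine Lemma \ref{lem:zeta} with Remark \ref{rem:w0}), so the Airy asymptotics for $\zeta \to +\infty$ reduce $\Phi_2(\zeta_\ast,E;\hbar)$ and $\Phi_1(\zeta_\ast,E;\hbar)$ to $\pi^{-1/2}(\nu^{2/3}\zeta_\ast)^{-1/4}e^{\pm \frac23 \nu \zeta_\ast^{3/2}}$ modulo $O(\hbar)$ symbol corrections. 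Now $S_+(E;\hbar)$ is identified, up to an $O(\hbar)$ correction, with $\tfrac{2}{3}\hbar\nu \zeta_\ast^{3/2}$; using $\hbar\nu = 2E$ and $\alpha \simeq 2E$ in this regime, one checks $S_+ \gtrsim 1$. The prefactors $\hbar^{1/2-i\nu}\nu^{2/3-i\nu}e^{i(\nu+\pi/4)}$ from Lemma \ref{lem:Jostlg}, together with the Airy amplitude $\nu^{-1/6}\zeta_\ast^{-1/4}$ and the connection coefficients of Lemma \ref{lem:Wlg}, reorganize into $\alpha^{1/2}\gamma_+(E;\hbar)e^{iT_+(E;\hbar)/\hbar}$ with $|\gamma_+| \simeq 1$ and $T_+$ real. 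Differentiating in $x$ again pulls out a factor of order $\nu \simeq \alpha/\hbar$ from the chain through $\zeta \circ \varphi_+$, producing the $\hbar^{-1}c_+$ factor with $|c_+| \gtrsim 1$, and the $\Phi_1$ contribution yields the $\epsilon_c e^{-2S_+/\hbar}$ correction.

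The derivative estimates $|\partial_E^\ell A| \leq C_\ell \hbar^{-\ell}$ for $A \in \{c_+,\gamma_+,S_+,T_+\}$ follow from the symbol-type bounds of Proposition \ref{prop:phi}, Lemma \ref{lem:zeta}, and the bounds on the error terms $\check\gamma_j,\check d_j,\sigma_j,\beta_{\pm,j}$ in Lemmas \ref{lem:FSsmexp}, \ref{lem:Wsm}, \ref{lem:FSlgexp}, \ref{lem:Wlg}, together with the observation that $\partial_E\nu = 2\hbar^{-1}$, so each $E$-derivative costs at most one factor of $\hbar^{-1}$. The main obstacle I expect is the consistency of the two regime-dependent definitions of $S_+,T_+,\gamma_+,c_+$ on the overlap $\nu \simeq 1$: one must verify that the two identifications of the leading exponent, namely $\alpha w_\ast$ from the modified-Bessel regime and $\tfrac{2}{3}\hbar\nu \zeta_\ast^{3/2}$ from the Airy-uniform regime, agree up to an $O(\hbar)$ symbol in $E$. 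This reduces, via the asymptotic identity $\tfrac{2}{3}[-\zeta(x)]^{3/2} = -\log x + \log 2 - 1 + O(x^2)$ from Lemma \ref{lem:zeta} together with the large-$z$ behavior of $\varphi_+$ from Proposition \ref{prop:phi}, to a uniform matching of the two asymptotic expansions of the Bessel functions on their common domain of validity, which is the technical heart of the argument.
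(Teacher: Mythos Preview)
Your proposal is correct and follows essentially the same two-case approach as the paper: for $\nu \lesssim 1$ use Corollary \ref{cor:Jostsm} together with Lemma \ref{lem:Wsm}, for $\nu \gg 1$ use Lemma \ref{lem:Jostlg} together with Lemma \ref{lem:Wlg}, and in each case read off $S_+$, $T_+$, $\gamma_+$, $c_+$ from the large-argument exponential asymptotics at $w_\ast = \varphi_+(\tfrac{2}{\alpha},\alpha^2;\hbar)$. The matching you flag as the ``technical heart'' is in fact dispatched in one line in the paper: since $\tfrac23\zeta(x)^{3/2} = x[1+O(x^{-1})]$ for large $x$ by Lemma \ref{lem:zeta}, the large-$\nu$ exponent satisfies $\tfrac23\hbar\nu\zeta_\ast^{3/2} = \alpha\varphi_+(\tfrac{2}{\alpha},\alpha^2;\hbar)[1+\alpha\epsilon(E;\hbar)]$, which coincides with the small-$\nu$ formula for $S_+$ modulo an $O(\alpha)$ symbol (and the two choices of $T_+$ differ by $-2E\log\hbar_1 = O(\hbar)$ on the overlap, absorbable into $\gamma_+$).
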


\begin{proof}
We start with the case $0 \leq \nu \lesssim 1$, i.e., $\alpha \simeq \hbar$.
According to Corollary \ref{cor:Jostsm} and Proposition~\ref{prop:phi} we have
\begin{align*}
 f_+(x,E;\hbar)=&\tfrac12 \pi^{-\frac12}\hbar^{\frac12-i\nu}\Gamma(1-i\nu)
 [1+\hbar \check{\gamma}_2(E;\hbar)]e^{\frac{x}{4}}
 \varphi_+'(\tfrac{2}{\alpha}e^{-\frac{x}{2}},\alpha^2;\hbar)^{-\frac12} 
e^{\frac{\alpha}{\hbar}\varphi_+(\frac{2}{\alpha}e^{-\frac{x}{2}}, \alpha^2;\hbar)} \\
&\times \Big [ 1+\hbar \epsilon(x,E;\hbar)+
\epsilon_c(x,E;\hbar)e^{-2\frac{\alpha}{\hbar}\varphi_+(\frac{2}{\alpha}e^{-\frac{x}{2}}, \alpha^2;\hbar)}\Big ],
 \end{align*} 
 cf.~Definition \ref{def:epsilon},
 where, as always, $\alpha^2=\frac{\hbar^2}{4}+4E^2$ and $\nu=2\frac{E}{\hbar}$.
 In order to compute $f_+'(0,E;\hbar)$ we note that
\begin{align*} \partial_x \varphi_+'(\tfrac{2}{\alpha}e^{-\frac{x}{2}},\alpha^2;\hbar)^{-\frac12}|_{x=0}&=
\tfrac12 \alpha^{-1}\varphi_+'(\tfrac{2}{\alpha},\alpha^2;\hbar)^{-\frac32}
\underbrace{\varphi_+''(\tfrac{2}{\alpha},\alpha^2;\hbar)}_{O(\alpha)} \\
&=\epsilon(E;\hbar) \\
\partial_x e^{\frac{\alpha}{\hbar}\varphi_+(\frac{2}{\alpha}e^{-\frac{x}{2}}, \alpha^2;\hbar)}|_{x=0}&=
-\hbar^{-1}\varphi_+'(\tfrac{2}{\alpha},\alpha^2;\hbar)
e^{\frac{\alpha}{\hbar}\varphi_+(\frac{2}{\alpha}, \alpha^2;\hbar)} \\
&=\hbar^{-1}\epsilon(E;\hbar)e^{\frac{\alpha}{\hbar}\varphi_+(\frac{2}{\alpha}, \alpha^2;\hbar)}
\end{align*}	
by Proposition~\ref{prop:phi}.
Hence, we infer
\begin{align*} 
f_+'(0,E;\hbar)&=\tfrac12 \pi^{-\frac12}\hbar^{-\frac12-i\nu}\Gamma(1-i\nu)
 [1+\check{\gamma}_2(E;\hbar)]
 \varphi_+'(\tfrac{2}{\alpha},\alpha^2;\hbar)^{\frac12} 
e^{\frac{\alpha}{\hbar}\varphi_+(\frac{2}{\alpha}, \alpha^2;\hbar)} \\
&\quad \times \Big [ 1+\hbar \epsilon(E;\hbar)+
\epsilon_c(E;\hbar)e^{-2\frac{\alpha}{\hbar}\varphi_+(\frac{2}{\alpha}, \alpha^2;\hbar)}\Big ].
\end{align*}
 By setting $S_+(E;\hbar):=\alpha \varphi_+(\frac{2}{\alpha},\alpha^2;\hbar)$
 we obtain from Proposition~\ref{prop:phi} the bounds 
 $|\partial_E^\ell S_+(E;\hbar)|\leq C_\ell \hbar^{-\ell}$, $\ell \in \mathbb{N}_0$ 
 as well as $S_+(E;\hbar)\gtrsim 1$.
 Furthermore, we set $T_+(E;\hbar):=-2 E \log \hbar$ and the result follows by recalling that
 $\varphi_+'(\frac{2}{\alpha},\alpha^2;\hbar)\simeq 1$ and $\hbar \simeq \alpha$ in the parameter regime
 which we are considering.
 
Next, we turn to the case $\nu \gg 1$, i.e., $E \gg \hbar$. 
From Lemmas \ref{lem:Jostlg} and \ref{lem:Wlg} we have the representation
\begin{align*}
f_+(x,E;\hbar)=&\pi^\frac12 2^{-\frac12}e^{i(\nu+\frac{\pi}{4})}\hbar^{\frac12-i\nu}\nu^{\frac23-i\nu}
e^{\frac{x}{4}}\varphi_+'(\tfrac{2}{\alpha}e^{-\frac{x}{2}},\alpha^2;\hbar)^{-\frac12}\zeta'(\tfrac{\alpha}{\hbar \nu}
\varphi_+(\tfrac{2}{\alpha}e^{-\frac{x}{2}},\alpha^2;\hbar))^{-\frac12} \\
&\times \Big [
[1+\nu^{-1}\beta_{-,1}(E;\hbar)]\Phi_1(\zeta(\tfrac{\alpha}{\hbar \nu}
\varphi_+(\tfrac{2}{\alpha}e^{-\frac{x}{2}},\alpha^2;\hbar)),E;\hbar) \\
&\quad -
i[1+\nu^{-1}\beta_{-,2}(E;\hbar)]\Phi_2(\zeta(\tfrac{\alpha}{\hbar \nu}
\varphi_+(\tfrac{2}{\alpha}e^{-\frac{x}{2}},\alpha^2;\hbar)),E;\hbar) \Big ].
\end{align*}
Now recall from Lemma~\ref{lem:FSlgexp} (see also Lemma~\ref{lem:bessel+} and Corollary \ref{cor:Airy+}) that
\begin{align*}
\Phi_1(\zeta,E;\hbar)&=\phi_1(\zeta,\nu)[1+\hbar\sigma_1(\zeta,E;\hbar)] \\
&=\Ai(\nu^{\frac23}\zeta)[1+\nu^{-1}a_1(\zeta,\nu)][1+\hbar \sigma_1(\zeta,E;\hbar)] \\
&=(4\pi)^{-\frac12}\langle \nu^{\frac23}\zeta\rangle^{-\frac14}e^{-\nu \frac23 \zeta^{3/2}}
[1+\nu^{-1}a_1(\zeta,\nu)][1+\hbar \sigma_1(\zeta,E;\hbar)]
\end{align*}
for $\zeta \in [0,\zeta_0(E;\hbar)]$ and analogously,
$$ \Phi_2(\zeta,E;\hbar)=\pi^{-\frac12}\langle \nu^\frac23 \zeta\rangle^{-\frac14}e^{\nu \frac23 \zeta^{3/2}}
[1+\nu^{-1}a_2(\zeta,\nu)][1+\hbar \sigma_2(\zeta,E;\hbar)] $$
for real--valued $a_j(\cdot,E;\hbar), \sigma_j(\cdot,E;\hbar)$ that satisfy
$$ |\partial_E^\ell \partial_\zeta^k a_j(\zeta,E;\hbar)|\leq 
C_{k,\ell}\langle \zeta \rangle^{-\frac32-k}\nu^{-\ell},\quad |\partial_E^\ell \partial_\zeta^k 
\sigma_j(\zeta,E;\hbar)|\leq 
C_{k,\ell}\langle \zeta \rangle^{-k}\hbar^{-\ell} $$
for all $\zeta \in [1,\zeta_0(E;\hbar)]$, $k,\ell \in \mathbb{N}_0$, $j=1,2$ and in the relevant domain of $E$, $\hbar$.
Now note that for $|x|\lesssim 1$ we have 
$$ \zeta(\tfrac{\alpha}{\hbar \nu}
\underbrace{\varphi_+(\tfrac{2}{\alpha}e^{-\frac{x}{2}},\alpha^2;\hbar)}_{\simeq \alpha^{-1}})
\simeq (\hbar \nu)^{-\frac23} $$
and thus,
$$ |\nu^{-1}a_j(\zeta(\tfrac{\alpha}{\hbar \nu}\varphi_+(\tfrac{2}{\alpha}e^{-\frac{x}{2}},\alpha^2;\hbar)),
\nu)|\lesssim  \hbar.  $$
Consequently, $f_+(x,E;\hbar)$ can be written as
\begin{align*}
f_+(x,E;\hbar)=&-2^{-\frac12}ie^{i(\nu+\frac{\pi}{4})}\hbar^{\frac12-i\nu}\nu^{\frac23-i\nu}
e^{\frac{x}{4}}\varphi_+'(\tfrac{2}{\alpha}e^{-\frac{x}{2}},\alpha^2;\hbar)^{-\frac12}\zeta'(\tfrac{\alpha}{\hbar \nu}
\varphi_+(\tfrac{2}{\alpha}e^{-\frac{x}{2}},\alpha^2;\hbar))^{-\frac12} \\
&\times [1+\nu^{-1}\beta_{-,2}(E;\hbar)] \langle \nu^\frac23 \zeta(\tfrac{\alpha}{\hbar \nu}
\varphi_+(\tfrac{2}{\alpha}e^{-\frac{x}{2}},\alpha^2;\hbar))\rangle^{-\frac14}
e^{\nu\frac23 \zeta(\tfrac{\alpha}{\hbar \nu}
\varphi_+(\tfrac{2}{\alpha}e^{-\frac{x}{2}},\alpha^2;\hbar))^{3/2}} \\
&\times \Big [1+\hbar \epsilon(x, E;\hbar)+\epsilon_c(x, E;\hbar)e^{-2
\nu\frac23 \zeta(\tfrac{\alpha}{\hbar \nu}
\varphi_+(\tfrac{2}{\alpha}e^{-\frac{x}{2}},\alpha^2;\hbar))^{3/2}} \Big ].
\end{align*}
for $|x|\lesssim 1$.
By definition of $\zeta$ (see Lemma~\ref{lem:zeta}) we have
$$ \tfrac23 \zeta(x)^{\frac32}=x[1+O(x^{-1})], \quad \zeta'(x)^{-\frac12}\zeta(x)^{-\frac14}=1+O(x^{-2}) $$
for, say, $x \geq 2$ and the $O$--terms behave like symbols.
We therefore obtain
\begin{align*}
&\zeta'(\tfrac{\alpha}{\hbar \nu}
\varphi_+(\tfrac{2}{\alpha},\alpha^2;\hbar))^{-\frac12}\langle \nu^\frac23 \zeta(\tfrac{\alpha}{\hbar \nu}
\varphi_+(\tfrac{2}{\alpha},\alpha^2;\hbar))\rangle^{-\frac14}
e^{\nu\frac23 \zeta(\tfrac{\alpha}{\hbar \nu}
\varphi_+(\tfrac{2}{\alpha},\alpha^2;\hbar))^{3/2}} \\
&\quad=\nu^{-\frac16}[1+\hbar \epsilon(E;\hbar)]e^{\frac{1}{\hbar}S_+(E;\hbar)} 
\end{align*}
where 
$$ S_+(E;\hbar)=\hbar \nu\tfrac23 \zeta(\tfrac{\alpha}{\hbar \nu}
\varphi_+(\tfrac{2}{\alpha},\alpha^2;\hbar))^\frac32=\alpha \varphi_+(\tfrac{2}{\alpha},
\alpha^2;\hbar)[1+\alpha \epsilon(E;\hbar)] $$
and by noting that $\hbar \nu \simeq \alpha$ this yields
the claimed expression for $f_+(0,E;\hbar)$ with $T_+(E;\hbar)=-2E \log \alpha$.
The expression for $f_+'(0,E;\hbar)$ follows immediately by observing that differentiation
with respect to $x$ only affects real--valued terms and by the symbol behavior of the involved
quantities one picks up a factor $\hbar^{-1}$.
\end{proof}

Due to the symmetry of the problem we find the analogous representation of the
Jost function $f_-(0,E;\hbar)$ by simply replacing ``$+$'' in Lemma~\ref{lem:Jostat0} by ``$-$''
and attaching a minus sign in front of the expression for the derivative. 
Consequently, we can easily calculate the Wronskian.

\begin{corollary}
\label{cor:W}
The Wronskians $W(f_-(\cdot,E;\hbar),f_+(\cdot,E;\hbar))$ and $W(\overline{f_-(\cdot,E;\hbar)},
f_+(\cdot,E;\hbar))$ have the form
\begin{align*} W(f_-(\cdot,E;\hbar),f_+(\cdot,E;\hbar))&=c(E;\hbar)\gamma_-(E;\hbar)\gamma_+(E;\hbar)
\alpha \hbar^{-1} e^{\frac{1}{\hbar}
(S(E;\hbar)+iT(E;\hbar))}
[1+\hbar \epsilon_c(E;\hbar)] \\
W(\overline{f_-(\cdot,E;\hbar)},f_+(\cdot,E;\hbar))&=c(E;\hbar)\overline{\gamma_-(E;\hbar)}\gamma_+(E;\hbar)
\alpha \hbar^{-1} e^{\frac{1}{\hbar}
(S(E;\hbar)+iT(E;\hbar)-2iT_-(E;\hbar))} \\
&\quad \times [1+\hbar \epsilon_c(E;\hbar)]
\end{align*}
where $S=S_-+S_+$, $T=T_-+T_+$, $c=c_-+c_+$ and $c_\pm$, $\gamma_\pm$, $S_\pm$ as well as $T_\pm$ are from 
Lemma~\ref{lem:Jostat0}.
\end{corollary}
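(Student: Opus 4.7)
The plan is to substitute the formulas of Lemma~\ref{lem:Jostat0} directly into $W(f,g) = fg' - f'g$ evaluated at $x=0$, using the reflection symmetry of $V$ to produce the analogous representation for the left Jost function. By assumption~(3) of Theorem~\ref{thm:Jostat0}, the potential is symmetric in structure about the origin, so applying the entire construction leading to Lemma~\ref{lem:Jostat0} to the reflected problem yields
\[ f_-(0,E;\hbar) = \alpha^{1/2}\gamma_-(E;\hbar) e^{(S_-(E;\hbar)+iT_-(E;\hbar))/\hbar}\bigl[1+\hbar\epsilon + \epsilon_c e^{-2S_-/\hbar}\bigr], \]
with $f_-'(0,E;\hbar)$ of the matching form but carrying an extra overall minus sign coming from the chain rule $\partial_x g(-x)|_{x=0} = -g'(0)$.

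Plugging the four formulas into the Wronskian and collecting the common prefactor $\alpha\hbar^{-1}\gamma_-\gamma_+ e^{(S_-+S_+ + i(T_-+T_+))/\hbar}$, one obtains
\begin{align*}
W(f_-,f_+)(0) &= \alpha\hbar^{-1}\gamma_-\gamma_+ e^{(S+iT)/\hbar}\bigl[c_+\, P_- Q_+ + c_- P_+ Q_-\bigr],
\end{align*}
where $S := S_-+S_+$, $T := T_-+T_+$, and $P_\pm$, $Q_\pm$ denote the two bracket factors $[1+\hbar\epsilon + \epsilon_c e^{-2S_\pm/\hbar}]$ arising from $f_\pm(0)$ and $f_\pm'(0)$ respectively; the two contributions add (rather than cancel) precisely because the minus sign in $W$ meets the minus sign in $f_-'(0)$.

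The remaining step is to massage the bracket. Expanding $P_\mp Q_\pm$ gives $1 + \hbar\epsilon$ plus a finite sum of terms of the form $\epsilon_c e^{-2S_\pm/\hbar}$ and $\epsilon_c e^{-2(S_-+S_+)/\hbar}$; since $S_\pm\gtrsim 1$ and $|\partial_E^\ell S_\pm|\leq C_\ell \hbar^{-\ell}$, each $E$-derivative of $e^{-2S_\pm/\hbar}$ picks up at most $\hbar^{-2}$, so these quantities and their derivatives are all $O(e^{-c/\hbar})$, which is $o(\hbar^N)$ for every $N$ and hence fits inside $\hbar\epsilon_c$. Factoring out $c := c_-+c_+\gtrsim 1$ (using that $|\partial_E^\ell(c_\pm/c)|\lesssim \hbar^{-\ell}$ via the quotient rule and the symbol bounds on $c_\pm$) converts the bracket into $c(E;\hbar)[1+\hbar\epsilon_c]$, producing the first claimed identity.

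For $W(\overline{f_-},f_+)$ the same calculation applies after replacing $f_-$ by its complex conjugate. Since $S_-, T_-, c_-$ are real and the $\hbar\epsilon$ part of each bracket is real by Theorem~\ref{thm:Jostat0}, conjugation only sends $iT_-\mapsto -iT_-$ in the exponent and $\gamma_-\mapsto \overline{\gamma_-}$, while the exponentially small residual is absorbed into a new $\hbar\epsilon_c$ exactly as before. The exponent is therefore $(S_-+S_+ + i(T_+-T_-))/\hbar = (S+iT-2iT_-)/\hbar$, which is the second claimed identity. The only real obstacle is the bookkeeping needed to confirm that all surviving error terms obey the $\epsilon$/$\epsilon_c$ symbol bounds of Definition~\ref{def:epsilon} after multiplication and conjugation; everything else is algebra.
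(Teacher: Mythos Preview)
Your proof is correct and follows the same approach as the paper, which simply states that the result follows from Lemma~\ref{lem:Jostat0} and the reflection remarks preceding the corollary. You supply the bookkeeping details (the sign from the reflection, the algebra $c_+P_-Q_+ + c_-P_+Q_-$, and the absorption of the exponentially small terms into $\hbar\epsilon_c$) that the paper leaves implicit.
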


\begin{proof}
From Lemma~\ref{lem:Jostat0} and the above comments we obtain
\begin{align*} W(f_-(\cdot,E;\hbar),f_+(\cdot,E;\hbar))=&[c_-(E;\hbar)+c_+(E;\hbar)]\gamma_-(E;\hbar)\gamma_+(E;\hbar) \\
& \times\alpha \hbar^{-1}e^{\frac{1}{\hbar}(S(E;\hbar)+iT(E;\hbar))}[1+\hbar \epsilon_c(E;\hbar)] 
\end{align*}
for $S=S_-+S_+$, $T=T_-+T_+$ and similarly for $W(\overline{f_-(\cdot,E;\hbar)}, f_+(\cdot,E;\hbar))$.
\end{proof}

\subsection{Proof of Theorem \ref{thm:main1}}
Recall that the reflection and transmission amplitudes $r(E;\hbar)$ and $t(E;\hbar)$ are defined
by the relation
$$ \overline{f_-(x,E;\hbar)}=t(E;\hbar)f_+(x,E;\hbar)+r(E;\hbar)f_-(x,E;\hbar). $$
Consequently, we obtain
$$ 2i\tfrac{E}{\hbar}=W(f_-(\cdot,E;\hbar), \overline{f_-(\cdot,E;\hbar)})=t(E;\hbar)W(f_-(\cdot,E;\hbar),
f_+(\cdot,E;\hbar)) $$
and the claimed expression for $t(E;\hbar)$ follows from Corollary \ref{cor:W}.
Similarly, 
$$ W(\overline{f_-(\cdot,E;\hbar)}, f_+(\cdot,E;\hbar))=
r(E;\hbar)W(f_-(\cdot,E;\hbar),f_+(\cdot,E;\hbar)) $$
and Corollary \ref{cor:W} yields the claimed form of $r(E;\hbar)$.

\subsection{Proof of Theorem \ref{thm:main2}}

In order to prove Theorem \ref{thm:main2} note that the semiclassical spectral measure $e(0,0,E;\hbar)$
is given by
$$ e(0,0,E;\hbar)=\mathrm{Im}\left [\frac{f_-(0,E;\hbar)f_+(0,E;\hbar)}{W(f_-(\cdot,E;\hbar),
f_+(\cdot,E;\hbar))} \right ]=\mathrm{Im}\left [\frac{f_+'(0,E;\hbar)}{f_+(0,E;\hbar)}
-\frac{f_-'(0,E;\hbar)}{f_-(0,E;\hbar)} \right ]^{-1}. $$
According to Lemma~\ref{lem:Jostat0} we have
\begin{align*}
&\frac{f_+'(0,E;\hbar)}{f_+(0,E;\hbar)}-\frac{f_-'(0,E;\hbar)}{f_-(0,E;\hbar)} \\
&\quad =
[c_+(E;\hbar)+c_-(E;\hbar)]\hbar^{-1}\left [1+\hbar \epsilon(E;\hbar)+\epsilon_c(E;\hbar)e^{-\frac{2}{\hbar}S_+(E;\hbar)}
+\epsilon_c(E;\hbar)e^{-\frac{2}{\hbar}S_-(E;\hbar)} \right]
\end{align*}
and hence,
$$ \mathrm{Im}\left [\frac{f_-(0,E;\hbar)f_+(0,E;\hbar)}{W(f_-(\cdot,E;\hbar),
f_+(\cdot,E;\hbar))} \right ]=\frac{\hbar}{c_+(E;\hbar)+c_-(E;\hbar)}\left [\epsilon(E;\hbar)
e^{-\frac{2}{\hbar}S_+(E;\hbar)}+\epsilon(E;\hbar)e^{-\frac{2}{\hbar}S_-(E;\hbar)} \right ] $$
which is the claim.

\begin{appendix}

\section{Symbol calculus}
\label{sec:symbol}

Roughly speaking, a function is said to have symbol character if it behaves like a polynomial under differentiation, i.e., 
each derivative loses one power.
More precisely, we give the following definition.
\begin{definition}
A function $f: I \to \mathbb{R}$ is said to belong to the set $S_\alpha(I)$ for $\alpha \in \mathbb{R}$ and 
$I \subset \mathbb{R}$ open iff $f$ is smooth on $I$ and satisfies
$$ |f^{(k)}(x)|\leq C_k |x|^{\alpha-k} $$
for all $k \in \mathbb{N}_0$ and all $x \in I$ where $C_k>0$ is a constant that only depends on $k$.
Elements of $S_\alpha(I)$ are said to be of \emph{symbol type} (or \emph{have symbol character}, \emph{behave like symbols}).
\end{definition}

The point is that symbol behavior is preserved under the usual algebraic and differential operations.
\begin{lemma}
\label{lem:symb1}
If $f \in S_\alpha(I)$ and $g \in S_\alpha(I)$ then $f+g \in S_\alpha(I)$, $\lambda f \in S_\alpha(I)$ 
  for any $\lambda \in \mathbb{R}$, and $f' \in S_{\alpha-1}(I)$.
Furthermore, if $f \in S_\alpha(I)$ and $g \in S_\beta(I)$ then $fg \in S_{\alpha+\beta}(I)$.
\end{lemma}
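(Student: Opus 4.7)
The plan is to verify each of the four closure properties directly from the defining inequality, since $S_\alpha(I)$ is specified purely by pointwise derivative bounds of the form $|f^{(k)}(x)| \leq C_k |x|^{\alpha-k}$. Smoothness is preserved under sums, scalar multiples, differentiation, and products of smooth functions, so in every case only the quantitative derivative bound needs to be checked.

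First I would handle the easy three. For $f+g$, the triangle inequality applied termwise yields $|(f+g)^{(k)}(x)| \leq |f^{(k)}(x)| + |g^{(k)}(x)| \leq (C_k^f + C_k^g)|x|^{\alpha-k}$, giving $f+g \in S_\alpha(I)$ with constants $C_k^f + C_k^g$. For $\lambda f$, the bound is immediate with constants $|\lambda| C_k$. For $f'$, observe that $(f')^{(k)} = f^{(k+1)}$, hence $|(f')^{(k)}(x)| \leq C_{k+1}|x|^{\alpha-k-1} = C_{k+1}|x|^{(\alpha-1)-k}$, which gives $f' \in S_{\alpha-1}(I)$ with constants $C_{k+1}$.

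The product case is the only one requiring a genuine identity. I would apply the Leibniz rule
\begin{equation*}
(fg)^{(k)}(x) = \sum_{j=0}^{k} \binom{k}{j} f^{(j)}(x)\, g^{(k-j)}(x),
\end{equation*}
and use the assumed symbol bounds $|f^{(j)}(x)| \leq C_j |x|^{\alpha-j}$ and $|g^{(k-j)}(x)| \leq D_{k-j}|x|^{\beta-(k-j)}$. The powers of $|x|$ combine to $|x|^{(\alpha+\beta)-k}$ independently of $j$, and summing $\binom{k}{j} C_j D_{k-j}$ over $j$ gives a finite constant $\tilde C_k$ depending only on $k$. Hence $fg \in S_{\alpha+\beta}(I)$.

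There is no serious obstacle; the lemma is a direct bookkeeping exercise and the only point that even merits attention is tracking the exponent of $|x|$ in the Leibniz expansion, which happens to be constant across all terms by construction. No facts beyond the definition of $S_\alpha(I)$, the triangle inequality, and the Leibniz rule are needed.
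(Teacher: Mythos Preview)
Your proof is correct and follows exactly the same approach as the paper's own proof, which simply notes that the product statement is a consequence of the Leibniz rule and that the rest follows directly from the definition. You have merely written out the details explicitly.
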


\begin{proof}
The statement concerning the product $fg$ is a consequence of the Leibniz rule and the rest
 follows directly from the definition.
\end{proof}

As a very convenient fact, the symbol behavior is even preserved under composition of functions.

\begin{lemma}
 \label{lem:symbc}
 Let $f \in S_\alpha(I)$ and $g \in S_\beta(J)$ with $g(x) \in I$ and $|g(x)| \gtrsim |x|^\beta$ 
for all $x \in J$.
 Then $f \circ g \in S_{\alpha \beta}(J)$.
\end{lemma}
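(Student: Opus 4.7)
The plan is to apply Faà di Bruno's formula to control the derivatives of $f \circ g$ directly. Recall that
\begin{equation*}
(f \circ g)^{(k)}(x) = \sum_{m=1}^{k} f^{(m)}(g(x)) \, B_{k,m}\bigl(g'(x), g''(x), \ldots, g^{(k-m+1)}(x)\bigr),
\end{equation*}
where each Bell polynomial $B_{k,m}$ is a finite linear combination of monomials of the form $\prod_{j} \bigl(g^{(i_j)}(x)\bigr)^{n_j}$ with $\sum_j n_j = m$ and $\sum_j j\, n_j = k$.

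First I would record the two-sided comparison $|g(x)| \simeq |x|^\beta$ for $x \in J$, which combines the built-in upper bound $|g(x)| \leq C_0|x|^\beta$ coming from $g \in S_\beta(J)$ with the hypothesis $|g(x)| \gtrsim |x|^\beta$. Using this, the factor $|f^{(m)}(g(x))|$ can be bounded by $|g(x)|^{\alpha - m}$, and the comparison allows us to convert this to $|x|^{\beta(\alpha - m)}$ regardless of the sign of $\alpha - m$: if $\alpha - m \geq 0$ we apply the upper bound on $|g(x)|$, and if $\alpha - m < 0$ we apply the lower bound.

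Next, for a typical monomial $\prod_j (g^{(i_j)}(x))^{n_j}$ appearing in $B_{k,m}$, the symbol bounds $|g^{(i_j)}(x)| \leq C_{i_j}|x|^{\beta - i_j}$ give
\begin{equation*}
\Bigl|\prod_j \bigl(g^{(i_j)}(x)\bigr)^{n_j}\Bigr| \lesssim |x|^{\sum_j n_j(\beta - i_j)} = |x|^{m\beta - k},
\end{equation*}
using the two identities $\sum n_j = m$ and $\sum j\, n_j = k$. Multiplying by the bound on $|f^{(m)}(g(x))|$ yields $|x|^{\beta(\alpha - m) + m\beta - k} = |x|^{\alpha\beta - k}$. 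Since $m$ has dropped out, summing over the finitely many terms in Faà di Bruno's formula gives $|(f \circ g)^{(k)}(x)| \leq C_k |x|^{\alpha\beta - k}$, which is exactly the statement $f \circ g \in S_{\alpha\beta}(J)$.

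There is no real obstacle here; the only point requiring a line of care is the case distinction on the sign of $\alpha - m$ in the step $|g(x)|^{\alpha - m} \simeq |x|^{\beta(\alpha - m)}$, which is precisely why the hypothesis $|g(x)| \gtrsim |x|^\beta$ is imposed in addition to $g \in S_\beta(J)$. The rest is bookkeeping with the exponent identities built into Faà di Bruno's formula.
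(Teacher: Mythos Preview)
Your proof is correct and follows essentially the same route as the paper: both arguments reduce to the identity $(f\circ g)^{(k)}=\sum_{m=1}^{k}(f^{(m)}\circ g)\,\tilde g_{m,k}$ with $\tilde g_{m,k}\in S_{m\beta-k}(J)$, and both use the two-sided bound $|g(x)|\simeq|x|^\beta$ to convert $|g(x)|^{\alpha-m}$ into $|x|^{\beta(\alpha-m)}$ regardless of the sign of $\alpha-m$. The only cosmetic difference is that you invoke Fa\`a di Bruno's formula and the Bell-polynomial exponent identities $\sum n_j=m$, $\sum i_j n_j=k$ directly, whereas the paper rederives the same structure by a short induction on $k$; the exponent bookkeeping and the role of the hypothesis $|g(x)|\gtrsim|x|^\beta$ are identical.
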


\begin{proof}
 Since $g$ has range in $I$, the composition $f \circ g$ is well--defined and smooth on $J$.
 Furthermore, we have $|f(g(x))|\lesssim |g(x)|^\alpha \lesssim |x|^{\alpha \beta}$ for all $x \in J$ (use either $|g(x)| \lesssim |x|^\beta$ or $|g(x)| \gtrsim |x|^{\beta}$ depending on the sign of $\alpha$).
 In order to estimate $(f \circ g)^{(k)}$ for $k \in \mathbb{N}$, we claim that
 $$ (f \circ g)^{(k)}=\sum_{j=1}^k (f^{(j)} \circ g)\tilde{g}_{j,k} $$
 for suitable functions $\tilde{g}_{j,k} \in S_{j\beta-k}(J)$, $j=1,2,\dots,k$.
 To prove this, we proceed by induction. For $k=1$ we have $(f \circ g)'=(f' \circ g)g'$, i.e., $\tilde{g}_{1,1}=g'$ and Lemma~\ref{lem:symb1} implies $\tilde{g}_{1,1} \in S_{\beta-1}(J)$.
Assuming that the claim is true for $k$, we obtain
\begin{align*}
 (f \circ g)^{(k+1)}&=\sum_{j=1}^k (f^{(j+1)} \circ g)\tilde{g}_{j,k}g'+\sum_{j=1}^k (f^{(j)} \circ g)\tilde{g}_{j,k}' \\
 &=\sum_{j=1}^{k+1}(f^{(j)} \circ g)\tilde{g}_{j,k+1} 
\end{align*}
where
\begin{align*} 
\tilde{g}_{1,k+1}&=\tilde{g}_{1,k}' \in S_{\beta-(k+1)}(J) \\
\tilde{g}_{j,k+1}&=\tilde{g}_{j-1,k}g'+\tilde{g}_{j,k}' \in S_{j\beta-(k+1)}(J),\quad j=2,3,\dots,k \\
\tilde{g}_{k+1,k+1}&=\tilde{g}_{k,k}g' \in S_{(k+1)\beta-(k+1)}(J)
\end{align*}
by Lemma~\ref{lem:symb1} and the claim follows.
Consequently, we have
$$ |f^{(j)}(g(x))\tilde{g}_{j,k}(x)|\leq \tilde{C}_k |g(x)|^{\alpha-j}|x|^{j\beta-k} 
\leq C_k |x|^{\alpha \beta-j\beta+j\beta-k}=C_k |x|^{\alpha \beta-k} $$
for all $x \in J$, $k \in \mathbb{N}$ and $j=1,2,\dots,k$ where $\tilde{C}_k, C_k>0$ are constants.
\end{proof}

We also state a simple corollary.

\begin{corollary}
Let $f \in S_0(I)$ and $|f(x)| \simeq 1$ for all $x \in I$. 
Then $\frac{1}{f} \in S_0(I)$.
\end{corollary}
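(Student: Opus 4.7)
The plan is to recognize the reciprocal as a composition and invoke Lemma~\ref{lem:symbc}. Write $\tfrac{1}{f} = h \circ f$ where $h(y) := \tfrac{1}{y}$. Since $|f(x)| \simeq 1$ on $I$, there exist constants $0 < c_1 < c_2$ such that the range of $f$ is contained in $J := (c_1, c_2) \cup (-c_2, -c_1)$ (or just one of these components, depending on the sign of $f$, which is locally constant by continuity and the fact that $f$ never vanishes). On $J$ the function $h$ is smooth and satisfies $|h^{(k)}(y)| = k! |y|^{-k-1}$, which is uniformly bounded since $|y| \geq c_1$ on $J$. Because $|y| \simeq 1$ on $J$ as well, one trivially has $|h^{(k)}(y)| \leq C_k |y|^{-k}$ for all $y \in J$, i.e.\ $h \in S_0(J)$.

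Next, the hypotheses of Lemma~\ref{lem:symbc} are verified with the lemma's $f$ replaced by $h$ (so $\alpha = 0$), its $g$ replaced by our $f$ (so $\beta = 0$), and $I, J$ in the lemma playing the roles of $J, I$ here. Indeed, $f(I) \subset J$ and the bound $|f(x)| \simeq 1 = |x|^0$ supplies the required lower bound $|g(x)| \gtrsim |x|^\beta$. The lemma then produces $h \circ f \in S_{\alpha\beta}(I) = S_0(I)$, which is the claim.

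There is no real obstacle: the only thing worth checking is that $h(y) = 1/y$ genuinely belongs to $S_0$ on an interval bounded away from zero, and this follows at once from the explicit formula for its derivatives together with the two-sided bound $|y| \simeq 1$. Alternatively, one could bypass the composition lemma and proceed by a direct induction using the identity $(1/f)' = -f'/f^2$ and Leibniz' rule, which shows that $(1/f)^{(k)}$ is a finite linear combination of terms of the form $f^{(k_1)} \cdots f^{(k_j)} / f^{j+1}$ with $k_1 + \cdots + k_j = k$, each of size $\lesssim |x|^{-k}$ by Lemma~\ref{lem:symb1} and $|f| \simeq 1$; but invoking Lemma~\ref{lem:symbc} is cleaner and keeps the corollary in line with the abstract symbol calculus philosophy of this appendix.
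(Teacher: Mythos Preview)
Your argument is correct and is precisely the approach the paper intends: the corollary is placed immediately after Lemma~\ref{lem:symbc} with no proof given, so it is understood to follow by writing $1/f = h\circ f$ with $h(y)=1/y$ and applying the composition lemma just as you do. The alternative direct induction you sketch at the end is also fine but unnecessary.
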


Finally, the inverse of a function that behaves like a symbol inherits this property.

\begin{lemma}
\label{lem:symbinv}
 Let $\alpha \not=0$, $f \in S_{\alpha}(I)$ and suppose that 
$|f(x)|\gtrsim |x|^\alpha$, $|f'(x)|\gtrsim |x|^{\alpha-1}$ for all $x \in I$.
Assume further that $f'(x)\not=0$ for all $x \in I$.
 Then $f^{-1} \in S_{\frac{1}{\alpha}}(J)$ where $J:=f(I)$.
\end{lemma}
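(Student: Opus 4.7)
The plan is to prove by induction on $k$ the derivative estimates $|(f^{-1})^{(k)}(y)| \leq C_k |y|^{1/\alpha - k}$ for all $y \in J$, using the implicit identity $f \circ f^{-1} = \mathrm{id}_J$ to express $(f^{-1})^{(k)}$ in terms of lower-order derivatives of $f^{-1}$ together with derivatives of $f$ evaluated at $f^{-1}(y)$.

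First I would observe that since $f$ is smooth on $I$ with $f' \neq 0$ there, the map $f : I \to J$ is a $C^\infty$ diffeomorphism, so $f^{-1}$ is smooth on $J$. The hypothesis $|f(x)| \gtrsim |x|^\alpha$ together with the matching upper bound $|f(x)| \lesssim |x|^\alpha$ (which is the $k=0$ condition of $f \in S_\alpha(I)$) gives $|f(x)| \simeq |x|^\alpha$ on $I$, and since $\alpha \neq 0$ this inverts to $|f^{-1}(y)| \simeq |y|^{1/\alpha}$ on $J$, which is the base case $k=0$. By the same reasoning, the hypothesized lower bound on $|f'|$ combined with $f' \in S_{\alpha-1}(I)$ (Lemma~\ref{lem:symb1}) upgrades to $|f'(x)| \simeq |x|^{\alpha-1}$.

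For the inductive step, I would apply the Faà di Bruno formula to the identity $f(f^{-1}(y)) = y$. Since the right-hand side has vanishing derivatives of order $\geq 2$, isolating the $j=1$ term yields, for each $k \geq 2$,
\[
f'(f^{-1}(y))\,(f^{-1})^{(k)}(y) \;=\; -\sum_{j=2}^{k} f^{(j)}(f^{-1}(y))\, B_{k,j}\!\bigl((f^{-1})'(y),\,\ldots,\,(f^{-1})^{(k-j+1)}(y)\bigr),
\]
where $B_{k,j}$ denotes the partial Bell polynomial, a finite sum of monomials $(f^{-1})^{(m_1)}(y)\cdots (f^{-1})^{(m_j)}(y)$ with $m_i \geq 1$ and $m_1 + \cdots + m_j = k$. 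For $j \geq 2$ every index satisfies $m_i \leq k-1$, so the inductive hypothesis bounds each such monomial by $|y|^{j/\alpha - k}$. Combining this with $|f^{(j)}(f^{-1}(y))| \lesssim |f^{-1}(y)|^{\alpha - j} \simeq |y|^{1 - j/\alpha}$ (from $f \in S_\alpha$ and the $k=0$ estimate) and with $|f'(f^{-1}(y))^{-1}| \lesssim |f^{-1}(y)|^{1-\alpha} \simeq |y|^{1/\alpha - 1}$, the three exponents telescope:
\[
|y|^{1/\alpha - 1} \cdot |y|^{1 - j/\alpha} \cdot |y|^{j/\alpha - k} \;=\; |y|^{1/\alpha - k},
\]
which closes the induction.

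The only point requiring care is the exponent bookkeeping in the Faà di Bruno expansion, but once the bounds on $f^{(j)} \circ f^{-1}$, on the Bell polynomial, and on $1/(f' \circ f^{-1})$ are each expressed as a power of $|y|$ via the already-established $|f^{-1}(y)| \simeq |y|^{1/\alpha}$, the exponents cancel automatically. I do not foresee any substantive obstacle beyond this bookkeeping.
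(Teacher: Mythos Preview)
Your proposal is correct and follows essentially the same approach as the paper: both arguments establish the $k=0$ and $k=1$ bounds from $|f(x)|\simeq|x|^\alpha$ and $|f'(x)|\simeq|x|^{\alpha-1}$, then proceed by induction via differentiation of the inverse relation. The paper is terser, writing only the formula $(f^{-1})'(y)=1/f'(f^{-1}(y))$ and stating that the claim ``follows inductively based on this formula,'' whereas you spell out the Fa\`a di Bruno combinatorics explicitly; the content is the same.
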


\begin{proof}
 Since $f'$ is nonzero on $I$, $f^{-1}: J\to I$ with $J:=f(I)$ exists as a smooth function.
 By definition of the inverse and the assumptions on $f$, we have
 $$ |y|=|f(f^{-1}(y))|\simeq |f^{-1}(y)|^\alpha $$
 which implies $|f^{-1}(y)|\lesssim |y|^{\frac{1}{\alpha}}$ for all $y \in J$.
 Furthermore, note that
 $$ |(f^{-1})'(y)|=\left |\frac{1}{f'(f^{-1}(y))} \right |\lesssim |f^{-1}(y)|^{1-\alpha} \lesssim |y|^{\frac{1}{\alpha}-1} $$
 for all $y \in J$ by the assumption on $f'$.
 The claim now follows inductively based on this formula.
\end{proof}

\section{Symbol behavior of solutions to Volterra equations}
\label{sec:volterra}

In this section we discuss how the symbol behavior carries over to solutions of certain Volterra equations. 
We also allow the kernel to depend on an additional parameter $\lambda$ which is a situation we frequently encounter in this paper. It is then important to understand derivatives with respect to this parameter as well. 
We remark that the following result covers both, oscillatory and exponential behavior of the kernel. 

\begin{proposition}
\label{prop:volterra}
Let $x_0 \in \mathbb{R}$, $\lambda_0>0$ and $\omega \in \mathbb{C}\backslash \{0\}$, $\Re(\omega) \geq 0$. 
Furthermore, assume that $a(\cdot,\lambda)$ and $b(\cdot,\lambda)$ are (possibly complex--valued) functions that satisfy
$$ |\partial_\lambda^\ell \partial_x^k a(x,\lambda)|\leq C_{k,\ell}\langle x \rangle^{-k}\lambda^{-\ell},\quad  
|\partial_\lambda^\ell \partial_x^k b(x,\lambda)|\leq C_{k,\ell}\langle x \rangle^{-\alpha-k} \lambda^{\beta-\ell}$$
for all $x \geq x_0$, $\lambda \in (0,\lambda_0)$ 
and $k,\ell \in \mathbb{N}_0$ where $\alpha >1$ and $\beta \geq 0$.
Set
$$ K(x,y,\lambda):=\int_x^y e^{2\omega u}a(u,\lambda)du\; e^{-2\omega y}b(y, \lambda). $$
 Then the equation
 \begin{equation} 
\label{eq:volterra}
\varphi(x,\lambda)=\int_x^\infty K(x,y,\lambda)[1+\varphi(y,\lambda)]dy   
 \end{equation}
 has a unique solution $\varphi(\cdot,\lambda)$ that satisfies 
$$ |\partial_\lambda^\ell \partial_x^k\varphi(x,\lambda)|
\leq C_{k,\ell} \langle x \rangle^{-\alpha+1-k}\lambda^{\beta-\ell} $$ 
for all $x\geq x_0$, $\lambda \in (0,\lambda_0)$ and $k,\ell \in \mathbb{N}_0$.
\end{proposition}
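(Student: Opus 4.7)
The plan is to solve \eqref{eq:volterra} by Picard iteration and then bootstrap to control derivatives. The first step is to obtain a uniform bound on the kernel $K$. One integration by parts in the $u$-integral gives
$$e^{-2\omega y}\int_x^y e^{2\omega u}a(u,\lambda)\,du = \frac{a(y,\lambda)-e^{2\omega(x-y)}a(x,\lambda)}{2\omega} - \frac{e^{-2\omega y}}{2\omega}\int_x^y e^{2\omega u}\partial_u a(u,\lambda)\,du,$$
and since $\Re\omega\ge 0$ together with $y\ge x$ forces $|e^{2\omega(x-y)}|\le 1$, this expression and all of its derivatives in $x$, $y$, $\lambda$ are controlled by the symbol-type bounds on $a$. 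In particular, $|K(x,y,\lambda)|\lesssim |\omega|^{-1}\langle y\rangle^{-\alpha}\lambda^\beta$, with analogous estimates for $\partial_x^j\partial_y^m\partial_\lambda^\ell K$ picking up the expected powers of $\langle y\rangle^{-1}$ and $\lambda^{-1}$.

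With this in hand, the standard Volterra iteration $\varphi_0\equiv 0$, $\varphi_{n+1}(x,\lambda)=\int_x^\infty K(x,y,\lambda)[1+\varphi_n(y,\lambda)]\,dy$ converges uniformly because $\alpha>1$ renders $\int_x^\infty\langle y\rangle^{-\alpha}dy\lesssim\langle x\rangle^{1-\alpha}$ integrable, and the limit $\varphi$ satisfies $|\varphi(x,\lambda)|\lesssim \lambda^\beta\langle x\rangle^{1-\alpha}$, settling the case $k=\ell=0$.

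The $\lambda$-derivatives are handled by formally differentiating \eqref{eq:volterra} under the integral sign, which yields a Volterra equation of the same structure for $\partial_\lambda^\ell\varphi$ whose inhomogeneous term is built from $\partial_\lambda^j K$ paired with lower-order $\partial_\lambda^{\ell-j}\varphi$; an induction on $\ell$ using the same contractive Picard argument closes the bounds. For $x$-derivatives I exploit that $K(x,x,\lambda)=0$, so
$$\partial_x\varphi(x,\lambda) = -a(x,\lambda)\int_x^\infty e^{2\omega(x-y)}b(y,\lambda)[1+\varphi(y,\lambda)]\,dy.$$
One integration by parts in $y$ on the right (the boundary at infinity vanishes because $b(y,\lambda)\to 0$) produces an explicit leading term $\tfrac{1}{2\omega}a(x,\lambda)b(x,\lambda)[1+\varphi(x,\lambda)]$ of the desired size $\langle x\rangle^{-\alpha}\lambda^\beta$, plus a remainder containing $\partial_y\{b(1+\varphi)\}$ which decays by an extra factor of $\langle y\rangle^{-1}$. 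Iterating this IBP/differentiation scheme $k$ times gains the required $k$ extra powers of $\langle x\rangle^{-1}$ and yields the sharp $\langle x\rangle^{-\alpha+1-k}$ bound; combining with the induction on $\ell$ gives the full mixed estimate.

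The principal obstacle is the bootstrap in the $x$-derivative step: the IBP remainder involves $\partial_y\varphi$, the very quantity one is trying to estimate, so the inductive hypothesis must be phrased so that all $\partial_x^j\partial_\lambda^m\varphi$ with $j+m\le k+\ell-1$ are available when bounding $\partial_x^k\partial_\lambda^\ell\varphi$. The resulting triangular system is invertible because each IBP step extracts an explicit leading term of the claimed size and leaves a genuinely smaller remainder; careful accounting using the symbol calculus of Appendix~\ref{sec:symbol} applied to $a$ and $b$ is then what closes the induction.
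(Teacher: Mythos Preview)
Your approach is essentially correct, but it differs from the paper's in the treatment of the $x$-derivatives, and there are two points worth flagging.

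First, a small technical gap: after \emph{one} integration by parts your kernel bound reads
\[
e^{-2\omega y}\int_x^y e^{2\omega u}a(u,\lambda)\,du=\frac{a(y,\lambda)-e^{2\omega(x-y)}a(x,\lambda)}{2\omega}-\frac{1}{2\omega}\int_x^y e^{2\omega(u-y)}\partial_u a(u,\lambda)\,du,
\]
and in the purely oscillatory case $\Re\omega=0$ the remaining integral is only bounded by $\int_x^y\langle u\rangle^{-1}du\sim\log\langle y\rangle$, which is not uniformly bounded. The paper performs a \emph{second} IBP to reduce to $\int|\partial_u^2 a|\,du\lesssim\int\langle u\rangle^{-2}du<\infty$. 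This is easy to fix and does not affect the rest of the argument.

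Second, your scheme for $x$-derivatives (differentiate, then integrate by parts in $y$) is genuinely different from the paper's. The paper changes variables $y=\eta+x$ and observes that
\[
K(x,\eta+x,\lambda)=\int_0^\eta e^{2\omega u}a(u+x,\lambda)\,du\;e^{-2\omega\eta}b(\eta+x,\lambda),
\]
so the explicit $e^{2\omega x}$ factor \emph{cancels}: differentiating in $x$ now only hits $a(\cdot+x,\lambda)$ and $b(\cdot+x,\lambda)$, which are symbol-like. One then applies Leibniz, isolates the single top-order term $\int K(x,y,\lambda)\partial_y^k\partial_\lambda^\ell\varphi\,dy$, and runs a Volterra iteration with the \emph{same} kernel $K$. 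Your route instead produces, after differentiating $\partial_x\varphi=-a(x,\lambda)\int_x^\infty e^{2\omega(x-y)}b(y,\lambda)[1+\varphi]\,dy$, a recurring term $2\omega$ times the previous integral, which does not gain decay; you then repair this with an IBP in $y$, which in turn introduces $\partial_y\varphi$ in the remainder. Iterating, at level $k$ one faces an integral involving $\partial_y^k\varphi$. This does close---the $\partial_y^k\varphi$ term furnishes a linear Volterra equation for $\partial_x^k\varphi$ with a new (but equally good) kernel $\frac{a(x,\lambda)}{(2\omega)^k}e^{2\omega(x-y)}b(y,\lambda)$---but your sketch is vague precisely at this step (``the resulting triangular system is invertible''), and you should state explicitly that the top-order remainder is itself a Volterra equation that one solves by iteration, with inhomogeneity of the claimed size $\langle x\rangle^{-\alpha+1-k}\lambda^{\beta-\ell}$ coming from the lower-order terms. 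The paper's substitution trick avoids this bookkeeping entirely and keeps the kernel fixed throughout the induction.
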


\begin{proof}
 We seek a solution $\varphi(\cdot,\lambda)$ of the Volterra equation
 $$ \varphi(x,\lambda)=f(x,\lambda)+\int_x^\infty K(x,y,\lambda)\varphi(y,\lambda)dy $$
 where
 $$ f(x,\lambda):=\int_x^\infty K(x,y,\lambda)dy. $$
 By the assumption on $a$ and two integrations by parts, we obtain
 \begin{align*}\left |\int_x^y e^{2\omega u}a(u,\lambda)du \right |
&=\tfrac{1}{2|\omega|} \left |e^{2\omega y}a(y,\lambda)-e^{2\omega x}a(x,\lambda)-\int_x^y e^{2\omega u}\partial_u a(u,\lambda)du \right | \\
&\lesssim e^{2\mathrm{Re}(\omega) y}\left |1+\int_x^y |\partial_u^2 a(u,\lambda)|du \right | \lesssim e^{2\mathrm{Re}(\omega)y} \\
 \end{align*}
 for all $x_0 \leq x \leq y$ and $\lambda \in (0,\lambda_0)$ since $\mathrm{Re}(\omega) \geq 0$. 
This implies
 $$ |K(x,y,\lambda)|\leq \left |\int_x^y e^{2\omega u}a(u,\lambda)du\right | e^{-2 \mathrm{Re}(\omega) y}|b(y,\lambda)|\lesssim |b(y,\lambda)|\lesssim 
 \langle y \rangle^{-\alpha}\lambda^\beta $$
 for all $x_0 \leq x \leq y$ and $\lambda \in (0,\lambda_0)$ by the assumption on $b$.
 Consequently, we obtain
 $$ \int_{x_0}^\infty \sup_{\{x: x_0<x<y\}}|K(x,y,\lambda)|dy \lesssim \int_{x_0}^\infty \langle y \rangle^{-\alpha}dy \lesssim 1 $$
 since $\alpha>1$, $\beta\geq 0$ and a standard Volterra iteration yields the existence of a unique $\varphi(\cdot,\lambda)$ 
 satisfying Eq.~\eqref{eq:volterra} and $|\varphi(x,\lambda)| \lesssim 1$ for all $x \geq x_0$, $\lambda \in (0,\lambda_0)$.
 However, we have $|f(x,\lambda)|\lesssim \langle x \rangle^{-\alpha+1}\lambda^\beta$ and thus, Eq.~\eqref{eq:volterra} implies that 
 in fact $|\varphi(x,\lambda)|\lesssim \langle x \rangle^{-\alpha+1}\lambda^\beta$ for all $x \geq x_0$, $\lambda \in (0,\lambda_0)$.
This settles the case $k=\ell=0$.
 
In order to prove the derivative bounds note first that
\begin{align*} K(x,\eta+x,\lambda)&=\int_x^{\eta+x}e^{2\omega u}a(u,\lambda)du\;e^{-2\omega (\eta+x)}b(\eta+x,\lambda) \\ 
&=\int_0^\eta e^{2\omega (u+x)}a(u+x,\lambda)du\;e^{-2\omega (\eta+x)}b(\eta+x,\lambda) \\
&=\int_0^\eta e^{2\omega u}a(u+x,\lambda)du\;e^{-2\omega \eta}b(\eta+x,\lambda) 
\end{align*}
and thus,
\begin{align*} 
\left |\partial_\lambda^\ell \partial_x^k K(x,\eta+x,\lambda) \right |&\leq 
C_{k,\ell}\sum_{\ell'=0}^\ell 
\sum_{k'=0}^k \left |\int_0^\eta e^{2\omega u}\partial_\lambda^{\ell'} \partial_x^{k'} a(u+x,\lambda)du\; e^{-2\omega \eta}\partial_\lambda^{\ell-\ell'}\partial_x^{k-k'}b(\eta+x,\lambda) \right |\\
&\leq C_{k,\ell}\lambda^{\beta-\ell}\sum_{k'=0}^k \langle x\rangle^{-k'}\langle \eta+x \rangle^{-\alpha-k+k'}.
\end{align*}
Now we proceed by induction in $k+\ell$.
Let $n \in \mathbb{N}$ and assume that 
$$|\partial_\lambda^{\ell'} \partial_x^{k'} \varphi(x,\lambda)|\leq C_{k',\ell'}\langle x \rangle^{-\alpha+1-k'}\lambda^{\beta-\ell'}$$
for all $k',n' \in \mathbb{N}_0$ with $k'+\ell'<n$.
We claim that this implies $| \partial_\lambda^\ell \partial_x^k\varphi(x,\lambda)|\leq C_{k,\ell}\langle x \rangle^{-\alpha+1-k}\lambda^{\beta-\ell}$ for all $k,\ell \in \mathbb{N}_0$ with $k+\ell=n$.
Indeed, we have
\begin{align*}
 \partial_\lambda^\ell \partial_x^k \varphi(x,\lambda)&=\sum_{\ell'=0}^\ell \sum_{k'=0}^k \binom{\ell}{\ell'}\binom{k}{k'}
\int_0^\infty \partial_\lambda^{\ell-\ell'}\partial_x^{k-k'}K(x,\eta+x,\lambda) \partial_\lambda^{\ell'}\partial_x^{k'}
[1+\varphi(\eta+x,\lambda)]d\eta \\
&=\sum_{k'+\ell'<n}\int_0^\infty \partial_\lambda^{\ell-\ell'}\partial_x^{k-k'}K(x,\eta+x,\lambda) \partial_\lambda^{\ell'}\partial_x^{k'}
[1+\varphi(\eta+x,\lambda)]d\eta \\
&\quad +\int_0^\infty K(x,\eta+x,\lambda)\partial_\lambda^\ell \partial_x^k \varphi(\eta+x,\lambda)d\eta \\
&=O_\mathbb{C}(\langle x \rangle^{-\alpha+1-k}\lambda^{\beta-\ell})+\int_x^\infty K(x,y,\lambda)\partial_\lambda^\ell \partial_y^k \varphi(y,\lambda)dy
\end{align*}
and a Volterra iteration yields the claim.
\end{proof}

Furthermore, we discuss a variant which is useful when there is no decay in $x$ at all. In this case, of course, $\infty$ cannot be chosen as a base point and one has to start from a finite $x_0$. 
However, in certain situations it is possible to construct the solution on an interval which becomes
infinite as an additional parameter tends to zero.

\begin{proposition}
 \label{prop:volterranox}
Fix $x_0 \in \mathbb{R}$, $\lambda_0>0$, $\alpha >-1$, $\beta\geq \gamma \geq 0$ and assume that $\beta-(\alpha+1)\gamma\geq 0$. \footnote{The last condition follows from $\beta\geq \gamma$ if $\alpha \in (-1,0]$.}
Let $c$ be a real--valued function that satisfies $c(\lambda)\geq x_0$ and $|c^{(\ell)}(\lambda)|\leq C_\ell \lambda^{-\gamma-\ell}$ for all $\lambda \in (0,\lambda_0)$, $\ell \in \mathbb{N}_0$.
Furthermore, assume that the (possibly complex--valued) functions $a(\cdot,\lambda)$, $b(\cdot,\lambda)$ satisfy the bounds
$$ |\partial_\lambda^\ell \partial_x^k a(x,\lambda)|\leq C_{k,\ell}\langle x \rangle^{-k}\lambda^{-\ell},\quad
|\partial_\lambda^\ell \partial_x^k b(x,\lambda)|\leq C_{k,\ell}\langle x \rangle^{\alpha-k}\lambda^{\beta-\ell} $$
for all $x_0\leq x \leq c(\lambda)$, $\lambda \in (0,\lambda_0)$ and $k,\ell \in \mathbb{N}_0$.
Set
$$ K(x,y,\lambda):=\int_y^x e^{-2\omega u}a(u,\lambda)du\;e^{2\omega y}b(y,\lambda) $$
for $x_0\leq y \leq x \leq c(\lambda)$ where $\omega \in \mathbb{C}\backslash \{0\}$ and $\mathrm{Re}(\omega)\geq 0$.
Then the equation
$$ \varphi(x,\lambda)=\int_{x_0}^x K(x,y,\lambda)[1+\varphi(y,\lambda)]dy $$
has a unique solution $\varphi(\cdot,\lambda)$ that satisfies
$$ |\partial_\lambda^\ell \partial_x^k \varphi(x,\lambda)|\leq C_{k,\ell}\langle x \rangle^{\alpha+1-k}\lambda^{\beta-\ell} $$
for all $x_0 \leq x \leq c(\lambda)$, $\lambda \in (0,\lambda_0)$ and $k,\ell \in \mathbb{N}_0$.
\end{proposition}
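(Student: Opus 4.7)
The approach parallels the proof of Proposition~\ref{prop:volterra}, adapted to the bounded but $\lambda$-dependent interval $[x_0,c(\lambda)]$ and to the growth (rather than decay) in $y$ of the datum $b$. First I would establish the basic kernel estimate
$$ |K(x,y,\lambda)|\lesssim \langle y\rangle^\alpha \lambda^\beta, \qquad x_0\leq y\leq x\leq c(\lambda), $$
by integrating by parts twice in $\int_y^x e^{-2\omega u}a(u,\lambda)\,du$: the boundary contributions are controlled by $|a(\cdot,\lambda)|e^{-2\mathrm{Re}(\omega)y}\lesssim e^{-2\mathrm{Re}(\omega)y}$, and the remainder by $\int_y^x|\partial_u^2 a(u,\lambda)|\,du\lesssim 1$ times the same exponential factor (using $|e^{-2\omega u}|\leq e^{-2\mathrm{Re}(\omega)y}$ for $u\geq y$ since $\mathrm{Re}(\omega)\geq 0$). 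This absorbs the $e^{2\omega y}$ in the second factor of $K$, leaving $|K|\lesssim|b(y,\lambda)|$.

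Next I would set $f(x,\lambda):=\int_{x_0}^x K(x,y,\lambda)\,dy$ and recast the equation as $\varphi=f+T\varphi$ with $(T\psi)(x,\lambda):=\int_{x_0}^x K(x,y,\lambda)\psi(y,\lambda)\,dy$. The kernel bound and $\alpha>-1$ give $|f(x,\lambda)|\lesssim \lambda^\beta\langle x\rangle^{\alpha+1}$, while on the full domain
$$ \int_{x_0}^{c(\lambda)} |K(x,y,\lambda)|\,dy \lesssim \lambda^\beta\langle c(\lambda)\rangle^{\alpha+1}\lesssim \lambda^{\beta-(\alpha+1)\gamma}, $$
which is uniformly bounded in $\lambda\in(0,\lambda_0)$ precisely by the hypothesis $\beta-(\alpha+1)\gamma\geq 0$. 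Standard Volterra iteration then produces a unique solution, and Gronwall's inequality applied to $|\varphi(x,\lambda)|\leq|f(x,\lambda)|+C\lambda^\beta\int_{x_0}^x\langle y\rangle^\alpha|\varphi(y,\lambda)|\,dy$ yields the base case $|\varphi(x,\lambda)|\lesssim\lambda^\beta\langle x\rangle^{\alpha+1}$.

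For the derivative bounds I would proceed by induction on $n=k+\ell$. Since $K(x,x,\lambda)=0$, differentiation in $x$ gives $\partial_x\varphi(x,\lambda)=\int_{x_0}^x \partial_x K(x,y,\lambda)[1+\varphi(y,\lambda)]\,dy$ with $\partial_x K(x,y,\lambda)=e^{-2\omega(x-y)}a(x,\lambda)b(y,\lambda)$; higher $x$-derivatives produce additional boundary terms $[\partial_x^{j-1}K](x,x,\lambda)[1+\varphi(x,\lambda)]$ for $j\geq 2$, computable in terms of $a$, $b$, and lower-order $x$-derivatives of $\varphi$ at $x$, hence controlled by the inductive hypothesis. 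Differentiation in $\lambda$ produces no boundary contributions since both $x_0$ and $x$ are independent of $\lambda$. Applying Leibniz to $\partial_\lambda^\ell\partial_x^k[K(x,y,\lambda)(1+\varphi(y,\lambda))]$ and peeling off the top-order piece $K(x,y,\lambda)\partial_\lambda^\ell\partial_y^k\varphi(y,\lambda)$ (the only term in which $\varphi$ still appears at its top derivative under the integral sign), one sees that $u:=\partial_\lambda^\ell\partial_x^k\varphi$ satisfies a Volterra equation of the same shape as the original, driven by a forcing term composed of strictly lower-order derivatives of $\varphi$ and of $K$. The Gronwall/Volterra argument of the previous paragraph then closes the induction.

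The main obstacle is the bookkeeping in the inductive step: one has to verify that every term produced by Leibniz respects the envelope $\langle x\rangle^{\alpha+1-k}\lambda^{\beta-\ell}$. Each $\lambda$-derivative falling on $a$ or $b$ produces a factor $\lambda^{-1}$ (matching $\lambda^{\beta-\ell}$), while each $x$- or $y$-derivative applied to the inner oscillatory/exponential integral must be recontrolled by redoing the double integration by parts, which costs one factor of $\langle\cdot\rangle^{-1}$ per derivative. The auxiliary hypothesis $\beta\geq\gamma$, combined with $\beta-(\alpha+1)\gamma\geq 0$, is exactly what is needed so that the pointwise bounds survive every substitution $\langle x\rangle\lesssim \lambda^{-\gamma}$ forced by working up to the moving endpoint $c(\lambda)$.
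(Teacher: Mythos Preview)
Your base case and the induction for pure $\lambda$--derivatives are correct and match the paper. The gap is in the $x$--derivative step. The scheme you propose --- change variables $y=x-\eta$, apply Leibniz, peel off $\int_{x_0}^x K(x,y,\lambda)\,\partial_\lambda^\ell\partial_y^k\varphi(y,\lambda)\,dy$, and close by Gronwall --- only yields $|\partial_\lambda^\ell\partial_x^k\varphi|\lesssim|F_{k,\ell}|$ with $F_{k,\ell}$ the forcing built from lower--order data. But $F_{k,\ell}$, estimated term by term, does not gain $\langle x\rangle^{-k}$: already at $k=1$, $\ell=0$ the forcing contains $\int_{x_0}^x(\partial_2 K)(x,y,\lambda)\varphi(y,\lambda)\,dy$, and the piece $-\int_{x_0}^x a(y,\lambda)b(y,\lambda)\varphi(y,\lambda)\,dy$ inside $\partial_2 K$ carries no exponential weight in $x-y$ and is $O(\langle x\rangle^{2\alpha+2}\lambda^{2\beta})$; trading $\langle x\rangle\le\lambda^{-\gamma}$ reduces this to $O(\langle x\rangle^\alpha\lambda^\beta)$ only when $\beta\geq(\alpha+2)\gamma$, which is \emph{not} among the hypotheses (take $\alpha=2$, $\gamma=1$, $\beta=3$). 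The double integration by parts you invoke bounds $|K|$ by $|b|$; it does not manufacture the missing $\langle x\rangle^{-1}$.

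The paper flags precisely this point (``it is not completely obvious \ldots\ how to obtain decay in~$x$ for large~$x$ since the Volterra iteration starts at~$x=x_0$'') and uses a different mechanism. One writes the \emph{non--self--referencing} identity $\partial_x\varphi=\int_{x_0}^x\tilde K(x,y,\lambda)[1+\varphi(y,\lambda)]\,dy$ with $\tilde K(x,y,\lambda):=\partial_x K(x,y,\lambda)=e^{-2\omega(x-y)}a(x,\lambda)b(y,\lambda)$ and exploits the factor $e^{-2\omega(x-y)}$ directly: it localizes the $y$--integral near $y=x$ (the paper's model computation is $e^{-x}\int_0^x e^y\langle y\rangle^{-N}\,dy\lesssim\langle x\rangle^{-N}$), so that $\int_{x_0}^x|\tilde K|\,dy\lesssim\langle x\rangle^\alpha\lambda^\beta$ outright, and the boundary contribution $\tilde K(x,x_0,\lambda)=O(e^{-2\mathrm{Re}(\omega)(x-x_0)}\lambda^\beta)$ arising in higher derivatives decays rapidly in~$x$. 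The $\varphi$--part then contributes $O(\langle x\rangle^{\alpha+1}\lambda^{2\beta})$, and a single trade $\langle x\rangle\le\lambda^{-\gamma}\le\lambda^{-\beta}$ (this is where $\beta\geq\gamma$ enters) reduces it to $O(\langle x\rangle^\alpha\lambda^\beta)$. This exponential localization of $\tilde K$ --- a property the undifferentiated kernel $K$ does not enjoy --- is the engine of the $k\geq 1$ argument, and your outline does not invoke it.
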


\begin{proof}
 As in the proof of Proposition~\ref{prop:volterra} we obtain the estimate
 $$ |K(x,y,\lambda)|\lesssim |b(y,\lambda)|\lesssim \langle y \rangle^{\alpha}\lambda^\beta $$
 for all $x_0 \leq y \leq x \leq c(\lambda)$, $\lambda \in (0,\lambda_0)$ and therefore,
 $$ \int_{x_0}^{c(\lambda)}\sup_{\{x: x_0<y<x<c(\lambda)\}}|K(x,y,\lambda)|dy \lesssim \lambda^{\beta-(\alpha+1)\gamma}\lesssim 1. $$
 Consequently, a standard Volterra iteration yields the existence of $\varphi(\cdot,\lambda)$
 with $|\varphi(x,\lambda)|\lesssim 1$ for all $x_0 \leq x \leq c(\lambda)$, $\lambda \in (0,\lambda_0)$ and we obtain
 $$ |\varphi(x,\lambda)|\lesssim \int_{x_0}^x |K(x,y,\lambda)|dy\lesssim \langle x \rangle^{\alpha+1}\lambda^{\beta} $$
 in the relevant domain of $x$ and $\lambda$ which settles the case $k=\ell=0$.
 
Differentiating with respect to $\lambda$ we obtain
\begin{align*} \partial_\lambda^\ell \varphi(x,\lambda)=&\sum_{\ell'=1}^\ell \binom{\ell}{\ell'}
\int_{x_0}^x \partial_\lambda^{\ell'}K(x,y,\lambda)\partial_\lambda^{\ell-\ell'}\varphi(y,\lambda)dy \\
&+\int_{x_0}^x K(x,y,\lambda)\partial_\lambda^\ell \varphi(y,\lambda)dy
\end{align*}
for any $\ell \in \mathbb{N}$ and the claim for $k=0$ follows inductively by Volterra iteration.

If $k>0$ the situation is a bit more subtle since it is not completely obvious from the onset how 
to obtain decay in $x$ for large $x$ since the Volterra iteration starts at $x=x_0$.
The effect that provides decay can be illustrated by the following simple example: while 
$\int_0^x \langle y \rangle^{-N}dy$ for $N \in \mathbb{N}$ does not decay in $x$ as $x\to \infty$, 
we do have the estimate
$$ e^{-x}\int_0^x e^y \langle y \rangle^{-N}dy \leq C_N \langle x \rangle^{-N} $$
which follows immediately by means of one integration by parts and noting that 
$y \mapsto e^y \langle y \rangle^{-N}$ is monotonically 
increasing for $y\geq N$.
In order to exploit this, one has to use some nice properties of the differentiated kernel 
$$ \tilde{K}(x,y,\lambda):=
\partial_x K(x,y,\lambda)=e^{-2\omega(x-y)}a(x,\lambda)b(y,\lambda) $$
which the original kernel $K(x,y,\lambda)$ does not enjoy.
Consequently, the appropriate starting point is the equation 
\begin{align}
\label{eq:volterradiff}
 \partial_x \varphi(x,\lambda)&=\underbrace{K(x,x,\lambda)}_{=0}[1+\varphi(x,\lambda)]+\int_{x_0}^x \tilde{K}(x,y,\lambda)
[1+\varphi(y,\lambda)]dy \nonumber \\
&=\int_0^{x-x_0}\tilde{K}(x, x-\eta,\lambda)[1+\varphi(x-\eta,\lambda)]d\eta. 
\end{align}
Now we make the following two observations.
\begin{enumerate}
\item $|\partial_\lambda^\ell \partial_x^k \tilde{K}(x,x_0,\lambda)|\leq C_{k,\ell,N}\langle x \rangle^{-N}
\lambda^{\beta-\ell}$ for all $x_0\leq x\leq c(\lambda)$, $\lambda \in (0,\lambda_0)$, $k,\ell \in \mathbb{N}_0$ and any $N \in \mathbb{N}$,
\item we have
$$ |\partial_\lambda^\ell \partial_x^k \tilde{K}(x,x-\eta,\lambda)|\leq C_{k,\ell}\lambda^{\beta-\ell}
e^{-2\Re(\omega)\eta}\sum_{k'=0}^k \langle x \rangle^{-k'}\langle x-\eta \rangle^{\alpha-k+k'} $$
and thus,
for any $m \in \mathbb{N}_0$, 
\begin{align*} \int_0^{x-x_0}|\partial_\lambda^\ell \partial_x^k \tilde{K}(x,x-\eta,\lambda)|\langle x-\eta\rangle^{-m}d\eta
\leq C_{k,\ell,m} \langle x \rangle^{\alpha-k-m}\lambda^{\beta-\ell}
 \end{align*}
 in the relevant domain of $x$, $\lambda$ and for all $k,\ell \in \mathbb{N}_0$.
\end{enumerate}
Having this in mind we immediately infer from Eq.~\eqref{eq:volterradiff} the estimate
$$|\partial_x \varphi(x,\lambda)|\lesssim \langle x \rangle^\alpha \lambda^\beta
+\langle x \rangle^{\alpha+1}\lambda^{2\beta}\lesssim \langle x \rangle^\alpha \lambda^\beta$$ 
as claimed.
Note that we have traded $\langle x \rangle$ for $\lambda^{-\beta}$ which is possible since
$\beta \geq \gamma$ by assumption.
It is now a straightforward matter to set up an induction based on Eq.~\eqref{eq:volterradiff}
that proves the remaining bounds for the higher derivatives.
\end{proof}

\section{Airy functions}
\label{sec:airy}
Apart from the results on Volterra equations with Airy kernels, 
the material in this appendix is mostly standard, see e.g~\cite{Olver}, \cite{Miller}. 

Airy's equation
\begin{equation}
\label{eq:Airy} 
y''(x)=xy(x)  
\end{equation}
is the simplest example of an equation with a turning point.
It is well-known that the Airy function
\begin{equation} 
\Ai(x):=\tfrac{1}{\pi}\int_0^\infty \cos(\tfrac13 t^3+xt)dt, \quad x \in \mathbb{R}  
\end{equation}
is a solution of Eq.~\eqref{eq:Airy}.
The second standard solution of Airy's equation \eqref{eq:Airy} is traditionally denoted by $\Bi$ and defined by
$$ \Bi(x):=\tfrac{1}{\pi}\int_0^\infty \left [\exp(-\tfrac13 t^3+xt)+\sin(\tfrac13 t^3+xt) \right ]dt $$
for $x \in \mathbb{R}$.
We have the Wronskian relation $W(\Ai,\Bi)=\frac{1}{\pi}$. 
By transforming the real integrals into contour integrals, one obtains the analytic continuations of $\Ai$ and $\Bi$ to the complex plane.
The corresponding integral representations can then be used to study the asymptotics by the method of steepest descent.
This is by now textbook material (see e.g.,~\cite{Miller} for a very nice account) and a classical result is
\begin{align*} 
\label{eq:Airyasym}
\Ai(x)&=(4 \pi)^{-\frac12} x^{-\frac14}e^{-\frac23 x^{3/2}}[1+O(x^{-\frac32})] \\
\Bi(x)&=\pi^{-\frac12}x^{-\frac14}e^{\frac23 x^{3/2}}[1+O(x^{-\frac32})] \nonumber
\end{align*}
as $x\to \infty$.
Note, however, that this approach does not automatically yield the symbol behavior of the $O$--terms which is most important for our purposes.
Consequently, we choose a different method based on the Liouville--Green transform.

\subsection{Asymptotics in the exponential regime}
For $x\geq 0$, solutions to Eq.~\eqref{eq:Airy} are expected to increase or decrease exponentially and thus,
the idea is to transform Eq.~\eqref{eq:Airy} into the form 
$$ \psi''-\psi=V \psi $$
with a small right--hand side that can be treated as a perturbation.
This is done by means of a Liouville--Green transform.
Based on Section \ref{sec:LG}, we define the desired change of variables $\xi(x)$ by $\xi'(x)^2=x$ which immediately yields $\xi(x)=\frac23 x^\frac32$.
Clearly, $\xi: (0,\infty) \to (0,\infty)$ is a diffeomorphism.
Setting $\psi(\xi(x))=\xi'(x)^\frac12 y(x)=x^\frac14 y(x)$, Eq.~\eqref{eq:Airy} transforms into
\begin{equation} 
\label{eq:Airytransf+}
\psi''(\xi)-\psi(\xi)=-\tfrac{5}{36} \xi^{-2}\psi(\xi)  
\end{equation}
and the right--hand side is indeed small for $\xi$ large.
Since we are interested in the asymptotics as $x \to \infty$, we may restrict ourselves to 
$\xi \geq 1$.
Note that the functions $\xi \mapsto e^{\pm \xi}$ are solutions to Eq.~\eqref{eq:Airytransf+} if the right--hand side is set to zero.
Now we construct a fundamental system to Eq.~\eqref{eq:Airytransf+} by perturbing $\xi \mapsto e^{\pm \xi}$.

\begin{lemma}
\label{lem:Airy+}
 Eq.~\eqref{eq:Airytransf+} has a fundamental system $\{\psi_\pm\}$ of the form
 $$ \psi_\pm(\xi)=e^{\pm \xi}[1+a_\pm(\xi)] $$
 where the functions $a_\pm$ are real--valued and satisfy the bounds 
$$ |a_\pm^{(k)}(\xi)|\leq C_k \xi^{-1-k} $$
for all $\xi \geq 1$ and $k \in \mathbb{N}_0$.
\end{lemma}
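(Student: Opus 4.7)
The plan is to construct the decaying solution $\psi_-$ first via a Volterra iteration with base point at $+\infty$, and then derive the growing solution $\psi_+$ from $\psi_-$ by reduction of order followed by a linear adjustment to kill the spurious $\psi_-$-component. Since the inhomogeneity $-\tfrac{5}{36}\xi^{-2}$ is a simple symbol of order $-2$ with no auxiliary parameter, the whole argument amounts to applying the Volterra machinery of Appendix~\ref{sec:volterra} together with the symbol calculus of Appendix~\ref{sec:symbol}.

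For $\psi_-$, apply Lemma~\ref{lem:perturb} with $u_0(\xi)=e^{-\xi}$, write $\psi_-(\xi)=e^{-\xi}[1+a_-(\xi)]$, and impose $a_-(\xi)\to 0$ as $\xi\to\infty$. Eq.~\eqref{eq:Airytransf+} then becomes
\begin{equation*}
a_-(\xi) \;=\; \tfrac{5}{72}\int_\xi^\infty\bigl(e^{2(\xi-\eta)}-1\bigr)\,\eta^{-2}\,[1+a_-(\eta)]\,d\eta,
\end{equation*}
which has exactly the form handled by Proposition~\ref{prop:volterra} (take $\omega=1$, $a(u)\equiv 1$, $b(y)=-\tfrac{5}{36}y^{-2}$, trivial $\lambda$-dependence, $\alpha=2>1$, $\beta=0$). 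The proposition supplies a unique real $a_-$ with $|a_-^{(k)}(\xi)|\leq C_k\xi^{-1-k}$ for all $\xi\geq 1$ and $k\in\mathbb{N}_0$; real-valuedness follows since the kernel is real.

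For $\psi_+$, define, for some fixed $\xi_*\geq 1$,
\begin{equation*}
\widetilde\psi_+(\xi) \;:=\; 2\,\psi_-(\xi)\int_{\xi_*}^\xi \psi_-(\eta)^{-2}\,d\eta,
\end{equation*}
which solves Eq.~\eqref{eq:Airytransf+} and is independent of $\psi_-$ by standard reduction of order. Writing $\psi_-(\eta)^{-2}=e^{2\eta}h(\eta)$ with $h(\eta):=(1+a_-(\eta))^{-2}$, the symbol calculus of Appendix~\ref{sec:symbol} gives $|h(\eta)-1|\leq C\eta^{-1}$ and $|h^{(k)}(\eta)|\leq C_k\eta^{-1-k}$ for $k\geq 1$. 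Iterated integration by parts against the exponential weight then produces
\begin{equation*}
\int_{\xi_*}^\xi e^{2\eta}h(\eta)\,d\eta \;=\; \tfrac12 e^{2\xi}[1+\hat h(\xi)] - \tfrac12 e^{2\xi_*}h(\xi_*),
\end{equation*}
with $\hat h$ obeying the same bounds as $h-1$. Substituting back, $\widetilde\psi_+(\xi)=e^\xi[1+b_+(\xi)]-e^{2\xi_*}h(\xi_*)\psi_-(\xi)$ with $b_+$ of symbol type $\xi^{-1-k}$. Since $\psi_-$ is itself a solution, setting $\psi_+(\xi):=\widetilde\psi_+(\xi)+e^{2\xi_*}h(\xi_*)\psi_-(\xi)$ absorbs the spurious piece and yields the desired representation $\psi_+(\xi)=e^\xi[1+a_+(\xi)]$ with $a_+$ satisfying the claimed bounds.

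The main obstacle is controlling all derivatives uniformly through the reduction-of-order step: the products $(1+a_-)(1+\hat h)$, the inversion $(1+a_-)^{-2}$, and all their derivatives must remain of the form $1$ plus a symbol of order $-1$. The closure properties of symbol type under multiplication, inversion, and composition (Lemmas~\ref{lem:symb1}--\ref{lem:symbc}) are precisely designed to guarantee this, so once the Volterra step for $\psi_-$ is in place the remainder of the proof is a bookkeeping exercise.
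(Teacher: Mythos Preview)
Your proposal is correct and follows essentially the same route as the paper: construct $\psi_-$ via Lemma~\ref{lem:perturb} and Proposition~\ref{prop:volterra}, then obtain $\psi_+$ by reduction of order and control the resulting integral via integration by parts against the exponential weight. The only cosmetic difference is that the paper absorbs the boundary term $-e^{2\xi_*}e^{-2\xi}h(\xi_*)[1+a_-(\xi)]$ directly into $a_+$ (since $e^{-2\xi}$ and all its derivatives are $O(\xi^{-N})$ for every $N$), whereas you remove it by adding back the corresponding multiple of $\psi_-$; both choices yield a valid $\psi_+$ of the required form.
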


\begin{proof}
We start with $\psi_-$.
 According to Lemma~\ref{lem:perturb}, it suffices to consider the Volterra equation
 \begin{equation}
\label{eq:Airyvolt+} 
a_-(\xi)=\int_\xi^\infty \int_\xi^\eta e^{2 \eta'}d\eta'\; 
 e^{-2\eta}(-\tfrac{5}{36})\eta^{-2}[1+a_-(\eta)]d\eta.
 \end{equation}
 Consequently, Proposition~\ref{prop:volterra} yields the existence of $a_-$ satisfying the stated bounds.
 
 In order to construct the increasing solution $\psi_+$, we use the standard reduction ansatz, i.e., we set
 $$ \psi_+(\xi)=2\psi_-(\xi)\int_c^\xi \psi_-(\eta)^{-2}d\eta $$
 which is certainly well--defined for large enough $c$ and $\xi \geq c$ and it is a solution to
 Eq.~\eqref{eq:Airytransf+}.
 If we define $a_+$ by $\psi_+(\xi)=e^\xi[1+a_+(\xi)]$ it follows that
 $$
 a_+(\xi)=2e^{-2\xi}[1+a_-(\xi)]\int_c^\xi e^{2\eta}[1+\tilde{a}_-(\eta)]d\eta-1
 $$
 where $\tilde{a}_-$ satisfies $|\tilde{a}_-^{(k)}(\xi)|\leq C_k \xi^{-1-k}$ for all $\xi\geq 1$ and
 $k\in\mathbb{N}_0$ (cf.~Appendix~\ref{sec:symbol}).
 Consequently, we obtain
 \begin{align*}
 a_+(\xi)&=e^{-2\xi}[1+a_-(\xi)](e^{2\xi}-e^{2c})+2e^{-2\xi}[1+a_-(\xi)]
 \int_c^\xi e^{2\eta}\tilde{a}_-(\eta)d\eta-1 \\
 &=a_-(\xi)-e^{2c}e^{-2\xi}[1+a_-(\xi)]+2[1+a_-(\xi)]e^{-2\xi}\int_c^\xi e^{2\eta}\tilde{a}_-(\eta)d\eta.
 \end{align*}
 Now we have
 $$ e^{-2\xi}\int_c^x e^{2\eta}\tilde{a}_-(\eta)d\eta=\tfrac12 \tilde{a}_-(\xi)-\tfrac12 e^{2c}e^{-2\xi}
 \tilde{a}_-(\xi)-\tfrac12 e^{-2\xi}\int_c^\xi e^{2\eta}\tilde{a}_-(\eta)d\eta=O(\xi^{-1}) $$
 by noting that $e^{2\eta}\tilde{a}_-(\eta)$ is monotonically increasing for large $\eta$
 and the $O$--term behaves like a symbol by Appendix~\ref{sec:symbol}.
 This proves the claimed bounds for $a_+$ and by solving an initial value problem, the solution
 $\psi_+$ can be smoothly extended to $\xi\geq 1$.
 \end{proof}
 
 \begin{corollary}
 \label{cor:Airy+}
  For $x\geq 0$, the Airy functions $\Ai$ and $\Bi$ can be written as
  \begin{align*}
   \Ai(x)&=(4\pi)^{-\frac12}\langle x\rangle^{-\frac14}e^{-\frac23 x^{3/2}}[1+a(x)] \\
   \Bi(x)&=\pi^{-\frac12}\langle x \rangle^{-\frac14}e^{\frac23 x^{3/2}}[1+b(x)]
  \end{align*}
  where $a$ and $b$ are real--valued and satisfy
  $$ |a^{(k)}(x)|\leq C_k \langle x \rangle^{-\frac32-k}, \quad
  |b^{(k)}(x)|\leq C_k \langle x \rangle^{-\frac32-k}$$
for all $x \geq 0$ and $k \in \mathbb{N}_0$.
Furthermore, $\Ai(x)$ and $\Bi(x)$ do not vanish for $x\geq 0$.
 \end{corollary}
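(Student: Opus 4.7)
The plan is to invert the Liouville--Green substitution used in Lemma~\ref{lem:Airy+}, identify the pulled-back solutions with $\Ai$ and $\Bi$ by matching their classical leading asymptotics, and then upgrade the pointwise asymptotics to symbol-type derivative bounds via the composition calculus of Appendix~\ref{sec:symbol}.

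Fix $x_1 > 0$ large enough that $\tfrac{2}{3}x^{3/2}\ge 1$ and $|a_\pm(\tfrac{2}{3}x^{3/2})|<\tfrac12$ for every $x\ge x_1$ (possible by Lemma~\ref{lem:Airy+}). Set $Y_\pm(x):=x^{-1/4}\psi_\pm(\tfrac{2}{3}x^{3/2})$. Inverting the substitution $\psi(\xi(x))=x^{1/4}y(x)$ used before Eq.~\eqref{eq:Airytransf+} shows that $\{Y_-,Y_+\}$ is a fundamental system of Airy's equation \eqref{eq:Airy} on $[x_1,\infty)$. Writing $\Ai=c_-Y_-+c_+Y_+$, the super-exponential decay of $\Ai$ forces $c_+=0$, and comparison with the classical leading asymptotic $\Ai(x)\sim(4\pi)^{-1/2}x^{-1/4}e^{-\frac23 x^{3/2}}$ pins down $c_-=(4\pi)^{-1/2}$. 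Hence on $[x_1,\infty)$,
\[
\Ai(x)=(4\pi)^{-1/2}x^{-1/4}e^{-\frac23 x^{3/2}}\bigl[1+a_-(\tfrac23 x^{3/2})\bigr].
\]
Analogously, $\Bi=d_-Y_-+d_+Y_+$ with $d_+=\pi^{-1/2}$ from the classical leading asymptotic of $\Bi$, giving
\[
\Bi(x)=\pi^{-1/2}x^{-1/4}e^{\frac23 x^{3/2}}\Bigl\{1+a_+(\tfrac23 x^{3/2})+\tfrac{d_-}{d_+}e^{-\frac43 x^{3/2}}\bigl[1+a_-(\tfrac23 x^{3/2})\bigr]\Bigr\}.
\]
Now apply Lemma~\ref{lem:symbc}: since $a_\pm\in S_{-1}([1,\infty))$ by Lemma~\ref{lem:Airy+} and $x\mapsto\tfrac23 x^{3/2}\in S_{3/2}([x_1,\infty))$ with values bounded below by $1$, the compositions $a_\pm(\tfrac23 x^{3/2})$ lie in $S_{-3/2}([x_1,\infty))$. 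The exponentially small cross term belongs to $S_{-N}$ for every $N$, so combining via Lemma~\ref{lem:symb1} gives a remainder in $S_{-3/2}([x_1,\infty))$. Replacing $x^{-1/4}$ by $\langle x\rangle^{-1/4}$ on $[x_1,\infty)$ only introduces a smooth, positive factor in $S_0$, which is absorbed into the $[1+a]$ and $[1+b]$ corrections without affecting the symbol class; the resulting $a$ and $b$ are real because all ingredients are.

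To extend to $x\in[0,x_1]$, invoke the classical fact that $\Ai(x),\Bi(x)>0$ on $[0,\infty)$ (standard from the integral representations and the location of the first zero of each in $(-\infty,0)$). The defining identities then uniquely determine smooth functions $a,b$ on the compact interval $[0,x_1]$, and all their derivatives are automatically bounded there. Since $\langle x\rangle\simeq 1$ on $[0,x_1]$, the bounds $|a^{(k)}(x)|,|b^{(k)}(x)|\le C_k\langle x\rangle^{-3/2-k}$ are trivially satisfied; gluing with the bounds obtained on $[x_1,\infty)$ yields the claim on $[0,\infty)$. Non-vanishing of $\Ai$ and $\Bi$ on $[0,\infty)$ follows on $[x_1,\infty)$ from $|a_\pm(\tfrac23 x^{3/2})|<\tfrac12$ and on $[0,x_1]$ from the cited classical fact. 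The main obstacle is the composition step in controlling the derivatives of $a_\pm\circ\xi$: Lemma~\ref{lem:symbc} reduces this to the symbol behavior of the two factors, but the precise exponents of $\langle x\rangle$ arise only after tracking the factor $\xi'(x)\simeq x^{1/2}$ that the chain rule produces at each differentiation; everything else is either a classical Airy identity or an elementary compactness argument.
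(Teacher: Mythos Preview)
Your proof is correct and follows essentially the same approach as the paper: pull back the fundamental system $\{\psi_\pm\}$ from Lemma~\ref{lem:Airy+} via the Liouville--Green change of variables, match with $\Ai$ and $\Bi$ using their known asymptotics, and then extend from large $x$ down to $x=0$ by compactness and the classical non-vanishing of $\Ai,\Bi$ on $[0,\infty)$. You are somewhat more explicit than the paper about invoking Lemma~\ref{lem:symbc} to transfer the $S_{-1}$ bounds on $a_\pm$ through the composition with $\xi(x)=\tfrac23 x^{3/2}$, and about the harmless $S_0$ correction when replacing $x^{-1/4}$ by $\langle x\rangle^{-1/4}$, but these are exactly the mechanisms the paper has in mind when it says ``This yields the stated representations for $x\gtrsim 1$.''
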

 
 \begin{proof}
  Let $\psi_\pm$ be as in Lemma~\ref{lem:Airy+} and set
$$y_\pm(x):=x^{-\frac14}\psi_\pm(\xi(x))=x^{-\frac14}e^{\pm \frac23 x^{3/2}}[1+a_\pm(\xi(x))]. $$
  By construction, $\{y_\pm\}$ is a fundamental system for Airy's equation \eqref{eq:Airy} and thus, there exist constants $c_\pm$ such that $\Ai(x)=c_- y_-(x)+c_+ y_+(x)$.
  However, since $\Ai(x)$ approaches zero as $x \to \infty$ whereas $y_+(x)$ grows unbounded, it is immediately clear that $c_+=0$.
  Similarly, $\Bi$ must be of the form $\Bi(x)=d_- y_-(x)+d_+ y_+(x)$ for constants $d_\pm$.
  This yields the stated representations for $x \gtrsim 1$. 
  However, the fundamental system $\{y_\pm\}$ can be smoothly extended to $[0,\infty)$ since Airy's equation is regular there.
  The fact that $\Ai(x)$ and $\Bi(x)$ do not vanish for $x\geq 0$ is well--known and follows from the respective integral representations. 
 \end{proof}

\subsection{Asymptotics in the oscillatory regime}
Due to the sign change of the right--hand side, solutions to Airy's equation \eqref{eq:Airy} oscillate on the negative real axis and therefore, their asymptotic behavior as $x\to -\infty$ differs radically from Corollary \ref{cor:Airy+}.
Indeed, the classical result \cite{Olver} is
\begin{align*}
 \Ai(-x)\pm i\Bi(-x)\sim \pi^{-\frac12}e^{\pm i\frac{\pi}{4}}x^{-\frac14}e^{\mp i\frac23 x^{3/2}}
\end{align*}
as $x\to \infty$.
In order to estimate the error terms, it is convenient to switch from $x$ to $-x$, i.e., we consider the Airy equation
\begin{equation}
 \label{eq:Airy-}
 y''(x)=-xy(x)
\end{equation}
on $x \geq 0$.
Note that $\Ai(-x)$ and $\Bi(-x)$ are solutions to Eq.~\eqref{eq:Airy-}.
As before, we apply the Liouville--Green transform $\psi(\xi(x))=x^\frac14 y(\xi(x))$ with $\xi(x)=\frac23 x^\frac32$ which yields
\begin{equation}
 \label{eq:Airytransf-}
 \psi''(\xi)+\psi(\xi)=-\tfrac{5}{36}\xi^{-2}\psi(\xi)
\end{equation}
and this time, the appropriate approximate solutions are $\xi \mapsto e^{\pm i\xi}$.

\begin{lemma}
 There exists a fundamental system $\{\psi_\pm\}$ for Eq.~\eqref{eq:Airytransf-} of the form
 $$ \psi_\pm(\xi)=e^{\pm i\xi}[1+a_\pm(\xi)] $$
 where the functions $a_\pm$ satisfy
 $$ |a_\pm^{(k)}(\xi)|\leq C_k \xi^{-1-k} $$
 for all $\xi \geq 1$ and $k\in \mathbb{N}_0$.
\end{lemma}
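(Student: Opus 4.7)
My plan is to mirror the construction in Lemma \ref{lem:Airy+}, with the key simplification that the oscillatory character of the homogeneous solutions $\xi\mapsto e^{\pm i\xi}$ allows $\psi_+$ to be recovered by complex conjugation rather than by reduction of order.

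First, I apply Lemma \ref{lem:perturb} with $V\equiv 1$ and $u_0(\xi)=e^{-i\xi}$, which never vanishes, writing $\psi_-(\xi)=e^{-i\xi}[1+a_-(\xi)]$ and regarding $f(\xi)=-\tfrac{5}{36}\xi^{-2}\psi_-(\xi)$ as the inhomogeneity. Choosing the base point at $\infty$, this yields the Volterra equation
\begin{equation*}
a_-(\xi)=-\tfrac{5}{36}\int_\xi^\infty\left[\int_\xi^{\xi'}e^{2i\xi''}\,d\xi''\right]e^{-2i\xi'}{\xi'}^{-2}[1+a_-(\xi')]\,d\xi',
\end{equation*}
valid for $\xi\geq 1$.

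Next, I invoke Proposition \ref{prop:volterra} (with no $\lambda$ dependence) in the limiting case $\omega=i$, so that $\Re(\omega)=0\geq 0$ and $\omega\neq 0$, which is exactly the borderline admitted by the hypothesis. The role of $a(\cdot)$ is played by the constant function $1$ (trivially satisfying the required symbol bounds) and the role of $b(\cdot)$ is played by $-\tfrac{5}{36}\xi^{-2}$, which has decay exponent $\alpha=2>1$ and $\beta=0$. The proposition therefore produces a unique solution $a_-$ with
\begin{equation*}
|a_-^{(k)}(\xi)|\leq C_k\,\langle\xi\rangle^{-1-k}\qquad(\xi\geq 1,\;k\in\mathbb{N}_0),
\end{equation*}
and a routine ODE extension argument gives smoothness on all of $[1,\infty)$.

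For the second branch, I exploit the fact that Eq.~\eqref{eq:Airytransf-} has real coefficients: setting $\psi_+:=\overline{\psi_-}$ gives another genuine solution, and the decomposition $\psi_+(\xi)=e^{i\xi}[1+\overline{a_-(\xi)}]$ yields $a_+:=\overline{a_-}$ with identical derivative bounds. To confirm that $\{\psi_\pm\}$ is a fundamental system, one evaluates $W(\psi_-,\psi_+)$ at any fixed point: the leading contribution is $W(e^{-i\xi},e^{i\xi})=2i$, and the corrections from $a_\pm$ and their derivatives contribute only terms of size $O(\xi^{-1})$, so the Wronskian is nonzero for all sufficiently large $\xi$ (and hence everywhere, being $x$-independent).

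The calculation is entirely parallel to Lemma \ref{lem:Airy+}, and I do not anticipate a genuine obstacle: the only point requiring mild care is verifying that Proposition \ref{prop:volterra} remains applicable when $\omega$ is purely imaginary, which it does because the two integrations by parts in its proof only require $\Re(\omega)\geq 0$ to control the inner integral uniformly. The complex-conjugation trick also sidesteps the slightly delicate reduction-of-order analysis that was needed for $\psi_+$ in the exponential regime.
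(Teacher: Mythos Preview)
Your proposal is correct and follows essentially the same approach as the paper: construct $a_-$ via the Volterra equation from Lemma~\ref{lem:perturb} and apply Proposition~\ref{prop:volterra} with $\omega=i$, then obtain $\psi_+$ as the complex conjugate of $\psi_-$. The paper's own proof is simply the terse one-line version of exactly this argument.
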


\begin{proof}
The existence of $\psi_-$ with the stated properties follows from Proposition~\ref{prop:volterra} and 
$\psi_+$ is just the complex conjugate of $\psi_-$.
\end{proof}

As a consequence, we immediately obtain the following representation of $\Ai\pm i\Bi$ in the oscillatory regime.

\begin{corollary}
 \label{cor:Airy-}
 For $x \geq 0$, the Airy functions $\Ai \pm i\Bi$ have the representation
 $$ \Ai(-x)\pm i\Bi(-x)=\pi^{-\frac12}e^{\pm i\frac{\pi}{4}}\langle x \rangle^{-\frac14}e^{\mp i\frac23 x^{3/2}}[1+a_\pm(x)] $$
 where the $a_\pm$ satisfy
 $$ |a_\pm^{(k)}(x)|\leq C_k\langle x \rangle^{-\frac32-k} $$
 for all $x\geq 0$ and $k \in \mathbb{N}_0$.
\end{corollary}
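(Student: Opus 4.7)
The plan mirrors the proof of Corollary \ref{cor:Airy+}. First I would set
$$ y_\pm(x) := x^{-\frac14}\psi_\pm(\xi(x)) = x^{-\frac14}e^{\pm i\frac{2}{3}x^{3/2}}\bigl[1+a_\pm(\xi(x))\bigr], \qquad \xi(x) = \tfrac{2}{3}x^{3/2}, $$
using $\psi_\pm$ from the preceding lemma, and verify (just as in Section \ref{sec:LG}) that $\{y_+, y_-\}$ is a fundamental system for Eq.~\eqref{eq:Airy-} on $(0,\infty)$. Since $\Ai(-x)$ and $\Bi(-x)$ solve \eqref{eq:Airy-}, there exist constants $\alpha_\pm,\beta_\pm \in \mathbb{C}$ with
$$ \Ai(-x)\pm i\Bi(-x)=\alpha_\pm y_-(x)+\beta_\pm y_+(x) $$
on $(0,\infty)$.

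The key step — and the main obstacle — is pinning down $\beta_\pm=0$. Here the simple ``one solution grows, the other decays'' argument from Corollary \ref{cor:Airy+} fails, since $y_-$ and $y_+$ have the same modulus. Instead I would invoke the classical Airy asymptotic stated at the beginning of Appendix \ref{sec:airy}, namely
$$ \Ai(-x)\pm i\Bi(-x) \sim \pi^{-\frac12}e^{\pm i\frac{\pi}{4}}x^{-\frac14}e^{\mp i\frac{2}{3}x^{3/2}} \qquad (x \to \infty). $$
Because $a_\pm(\xi(x))=O(x^{-3/2}) \to 0$, the right-hand side of the linear combination $\alpha_\pm y_-(x)+\beta_\pm y_+(x)$ has leading order $x^{-\frac14}\bigl[\alpha_\pm e^{-i\frac23 x^{3/2}}+\beta_\pm e^{+i\frac23 x^{3/2}}\bigr]$, and the two oscillatory modes are linearly independent as $x \to \infty$ in the sharp asymptotic sense given above. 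Matching therefore forces $\alpha_+=\pi^{-1/2}e^{i\pi/4}$, $\beta_+=0$; the minus case then follows by complex conjugation (and the identity $y_+=\overline{y_-}$, coming from $\psi_+=\overline{\psi_-}$).

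With the constants determined, the representation
$$ \Ai(-x)\pm i\Bi(-x)=\pi^{-\frac12}e^{\pm i\frac{\pi}{4}}x^{-\frac14}e^{\mp i\frac{2}{3}x^{3/2}}\bigl[1+a_\pm(\xi(x))\bigr] $$
holds on $[1,\infty)$, and the stated error function in the corollary is $\widetilde a_\pm(x):=a_\pm(\xi(x))$. The derivative bounds follow from the symbol calculus of Appendix \ref{sec:symbol}: the lemma provides $a_\pm \in S_{-1}([1,\infty))$, while $\xi(x)=\tfrac23 x^{3/2} \in S_{3/2}([1,\infty))$ with $|\xi(x)| \gtrsim x^{3/2}$, so Lemma \ref{lem:symbc} gives $\widetilde a_\pm=a_\pm\circ \xi \in S_{-3/2}([1,\infty))$, which is exactly the claim for $x\geq 1$.

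Finally, to extend to $x\in [0,1]$ I would use that $\Ai(-x)$ and $\Bi(-x)$ are entire and that $\langle x\rangle^{-\frac14}e^{\mp i\frac23 x^{3/2}}$ is smooth and nonvanishing on $[0,1]$. Defining $a_\pm(x)$ on $[0,1]$ via the representation formula itself (and gluing smoothly at $x=1$) gives a smooth extension, and replacing $x^{-1/4}$ by $\langle x\rangle^{-1/4}$ harmlessly adjusts the prefactor near zero. Compactness of $[0,1]$ trivially supplies the bounds $|a_\pm^{(k)}(x)|\leq C_k$ there, which combine with the $S_{-3/2}$ bounds for $x\geq 1$ to yield $|a_\pm^{(k)}(x)|\leq C_k \langle x\rangle^{-\frac32-k}$ on all of $[0,\infty)$.
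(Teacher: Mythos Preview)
Your approach matches the paper's implicit argument (the corollary carries no proof and is meant to follow the template of Corollary \ref{cor:Airy+}): build $y_\pm$ from the preceding lemma, identify the connection constants by comparison with the classical asymptotics quoted just before that lemma, and obtain the $S_{-3/2}$ bounds for $x\ge 1$ via Lemma \ref{lem:symbc}. All of that is correct.

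One step does not go through, however. Your claim that $\langle x\rangle^{-1/4}e^{\mp i\frac23 x^{3/2}}$ is smooth on $[0,1]$ is false: since $x\mapsto x^{3/2}$ is only $C^1$ at $x=0$ (its second derivative behaves like $\tfrac12 x^{-1/2}$), the exponential is not $C^2$ there. Consequently, if $a_\pm$ is defined on $[0,1]$ by the representation formula, it is likewise not $C^2$ at the origin, and the compactness argument cannot deliver the $k\ge 2$ bounds. This is in fact an imprecision already present in the corollary's statement (and equally in Corollary \ref{cor:Airy+}). In the paper's applications --- see the proofs of Propositions \ref{prop:volterraAi} and \ref{prop:volterraAiBi} --- the representation is used only after the smooth change of variables $x=\eta(\xi):=(\tfrac32)^{2/3}\xi\langle\xi\rangle^{-1/3}$, under which the exponential becomes the genuinely smooth $e^{\mp 2i\xi}$; in that variable the error terms do satisfy $|\partial_\xi^k(\cdot)|\le C_k\langle\xi\rangle^{-1-k}$ for all $\xi\ge 0$, which is what is actually required downstream.
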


\subsection{Volterra equations with kernels composed of Airy functions}

For equations with turning points, the Airy functions are of fundamental importance. 
Consequently, they frequently appear in the kernels of Volterra equations in perturbation theory. 
The following result is the analog to Proposition~\ref{prop:volterra} in the exponential case $\omega=1$.

\begin{proposition}
\label{prop:volterraAi}
Fix $\lambda_0>0$ and suppose $a(\cdot,\lambda)$, $b(\cdot,\lambda)$ are (possibly complex--valued) functions that satisfy
$$ |\partial_\lambda^\ell \partial_x^k a(x,\lambda)|\leq C_{k,\ell}\langle x \rangle^{-k}\lambda^{-\ell},\quad  
|\partial_\lambda^\ell \partial_x^k b(x,\lambda)|\leq C_{k,\ell}\langle x \rangle^{-\alpha-k} \lambda^{\beta-\ell}$$
for all $x \geq 0$, $\lambda \in (0,\lambda_0)$ 
and $k,\ell \in \mathbb{N}_0$ where $\alpha>\frac12$, $\beta \geq 0$.
Set
$$ K(x,y,\lambda):=\int_x^y \Ai(u)^{-2}a(u,\lambda)du\; 
\Ai(y)^2 b(y, \lambda). $$
 Then the equation
 \begin{equation*} 
\varphi(x,\lambda)=\int_x^\infty K(x,y,\lambda)[1+\varphi(y,\lambda)]dy   
 \end{equation*}
 has a unique solution $\varphi(\cdot,\lambda)$ that satisfies 
$$ |\partial_\lambda^\ell \partial_x^k\varphi(x,\lambda)|
\leq C_{k,\ell} \langle x \rangle^{-\alpha+\frac12-k}\lambda^{\beta-\ell} $$ 
for all $x\geq 0$, $\lambda \in (0,\lambda_0)$ and $k,\ell \in \mathbb{N}_0$.
\end{proposition}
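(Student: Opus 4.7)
The plan is to mimic the proof of Proposition~\ref{prop:volterra}, using $\Ai(u)^{-2}$ and $\Ai(y)^2$ in place of $e^{2\omega u}$ and $e^{-2\omega y}$. The role of the constant ``frequency'' $2\omega$ is played by the $u$-dependent quantity $\phi'(u) := -2\Ai'(u)/\Ai(u)$, which satisfies $\tfrac{d}{du}\Ai(u)^{-2} = \phi'(u)\Ai(u)^{-2}$. By Corollary~\ref{cor:Airy+}, $\phi'$ is positive on $[0,\infty)$ with $\phi'(u) \simeq \langle u\rangle^{1/2}$ and behaves like a symbol of order $\tfrac12$. Moreover $\Ai$ is positive and strictly decreasing there (from $\Ai(0)>0$, $\Ai'(0)<0$, and $\Ai''=u\Ai\geq 0$), so $\Ai(y)^2/\Ai(x)^2 \leq 1$ for $0\leq x\leq y$.

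The first step is to establish the analog of the kernel bound used in Proposition~\ref{prop:volterra}. Two integrations by parts using $\Ai(u)^{-2} = \phi'(u)^{-1}\tfrac{d}{du}\Ai(u)^{-2}$, together with the fact that $a(\cdot,\lambda)/\phi'$ is of symbol order $-\tfrac12$ in $u$, yield
\[
\left|\int_x^y \Ai(u)^{-2}a(u,\lambda)\,du\right| \leq C\bigl(\Ai(y)^{-2}\langle y\rangle^{-1/2} + \Ai(x)^{-2}\langle x\rangle^{-1/2}\bigr).
\]
Multiplying by $|\Ai(y)^2 b(y,\lambda)| \lesssim \Ai(y)^2\langle y\rangle^{-\alpha}\lambda^\beta$ gives
\[
|K(x,y,\lambda)| \leq C\lambda^\beta\Bigl(\langle y\rangle^{-\alpha-1/2} + \tfrac{\Ai(y)^2}{\Ai(x)^2}\langle x\rangle^{-1/2}\langle y\rangle^{-\alpha}\Bigr).
\]
Integrating in $y$, the first term contributes $\lesssim \langle x\rangle^{-\alpha+1/2}\lambda^\beta$ (using $\alpha>\tfrac12$). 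For the second term, the same IBP trick applied to $\Ai(y)^2$ yields $\int_x^\infty \Ai(y)^2\langle y\rangle^{-\alpha}dy \lesssim \Ai(x)^2\langle x\rangle^{-\alpha-1/2}$, so this contribution is only $\lesssim \langle x\rangle^{-\alpha-1}\lambda^\beta$, which is absorbed. A standard Volterra iteration then produces the unique $\varphi(\cdot,\lambda)$ with $|\varphi(x,\lambda)| \lesssim \langle x\rangle^{-\alpha+1/2}\lambda^\beta$, settling the case $k=\ell=0$.

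For the derivative bounds one proceeds exactly as in Proposition~\ref{prop:volterra}. The key observation is the clean factorization $\partial_x K(x,y,\lambda) = -\Ai(x)^{-2}a(x,\lambda)\cdot \Ai(y)^2 b(y,\lambda)$ and its higher-order analogues after parametrizing $K(x,x+\eta,\lambda)$. Induction on $k+\ell$ then works as before, with the symbol bounds $|\partial_u^k \phi'(u)| \lesssim \langle u\rangle^{1/2-k}$ controlling all auxiliary factors that arise, and the estimate from the first step applied term by term after Leibniz-expanding.

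The main obstacle lies in the first step: whereas in Proposition~\ref{prop:volterra} the exponential kernel provides a uniform constant frequency, here one must exploit the $u$-dependent local frequency $\phi'(u) \simeq \langle u\rangle^{1/2}$, which gives an extra factor $\langle y\rangle^{-1/2}$ after integration by parts. It is precisely this gain that upgrades the naive $\langle x\rangle^{-\alpha+1}$ bound one would expect from $\int |b(y,\lambda)|\,dy$ to the sharper $\langle x\rangle^{-\alpha+1/2}$, and it is also the reason the hypothesis is $\alpha>\tfrac12$ rather than $\alpha>1$.
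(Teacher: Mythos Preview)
Your $k=\ell=0$ argument is correct and identifies the right mechanism: integration by parts against the Airy weight gains a factor $\langle y\rangle^{-1/2}$, which is precisely what relaxes the hypothesis to $\alpha>\tfrac12$ and sharpens the output to $\langle x\rangle^{-\alpha+1/2}$. This is a legitimate alternative to the paper's route.

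The paper, however, does not argue directly. It introduces the diffeomorphism $\eta(\xi)=(\tfrac32)^{2/3}\xi\langle\xi\rangle^{-1/3}$ (so $\eta(\xi)=(\tfrac32\xi)^{2/3}$ for $\xi\geq1$) and substitutes $x=\eta(\xi)$, $y=\eta(\xi')$. By Corollary~\ref{cor:Airy+} this turns $\Ai(\eta(\xi))^{\pm2}$ into $e^{\mp2\xi}$ times a symbol of order $\mp\tfrac13$ in $\xi$, and after absorbing Jacobians the transformed kernel is literally of the exponential type in Proposition~\ref{prop:volterra}, with $\tilde a$ of order $0$ and $\tilde b$ of order $-\tfrac23\alpha-\tfrac23$. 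One then reads off the bounds on $\tilde\varphi$ in $\xi$ and transports them back via the chain rule. The whole derivative structure comes for free.

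Your derivative argument, by contrast, has a real gap. In Proposition~\ref{prop:volterra} the exponentials are invariant under the shift $u\mapsto u+x$, so $\partial_x^k K(x,x+\eta,\lambda)$ only differentiates $a$ and $b$. Here, differentiating $\Ai(u+x)^{-2}$ or $\Ai(x+\eta)^2$ in $x$ produces factors of $\phi'$, which \emph{grow} like $\langle\cdot\rangle^{1/2}$. Concretely, from your own formula $\varphi'(x)=-\Ai(x)^{-2}a(x)\int_x^\infty\Ai(y)^2b(y)[1+\varphi(y)]\,dy$ one finds that the naive expansion of $\varphi''(x)$ contains two terms of size $\langle x\rangle^{-\alpha}\lambda^\beta$ rather than the required $\langle x\rangle^{-\alpha-3/2}\lambda^\beta$; the correct decay only emerges after a cancellation driven by $(\Ai^2)'=-\phi'\Ai^2$, equivalently by expanding $\int_x^\infty\Ai^2 b(1+\varphi)$ via IBP before differentiating. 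This cancellation is systematic but must be tracked through the induction, and ``proceeds exactly as before'' does not do that. The change of variables is what makes the reduction to Proposition~\ref{prop:volterra} genuinely immediate.
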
 

\begin{remark}
We emphasize two important differences between the exponential and the Airy case. 
First, the required decay in $x$ of the function $b(x,\lambda)$ is weaker. In the Airy case one only
needs $\alpha>\frac12$ whereas in the exponential case one has to require $\alpha>1$ (see Proposition
\ref{prop:volterra}).
Second, the loss of decay in $x$ of the solution $\varphi(x,\lambda)$ compared to $b(x,\lambda)$
is weaker. In the Airy case one only loses $\langle x \rangle^\frac12$ as opposed to $\langle x \rangle$
in the exponential case.
\end{remark}

\begin{proof}[Proof of Proposition~\ref{prop:volterraAi}]
 The strategy is to reduce the problem to Proposition~\ref{prop:volterra}.
 Define $\eta: \mathbb{R} \to \mathbb{R}$ by $\eta(\xi):=(\frac32)^\frac23 \xi \langle \xi \rangle^{-\frac13}$.
 Then $\eta$ is a diffeomorphism and we have 
$\eta(\xi)=(\frac32)^\frac23 \xi^\frac23$ for $\xi \geq 1$.
Furthermore, we note that $\eta'(\xi) \simeq \langle \xi \rangle^{-\frac13}$ and $|\eta^{(k)}(\xi)|\leq C_k \langle \xi \rangle^{\frac23-k}$ for all $\xi \in \mathbb{R}$ and $k \in \mathbb{N}_0$.
We intend to study the equation
\begin{align*} 
\varphi(x,\lambda)&=\int_x^\infty K(x,y,\lambda)[1+\varphi(y,\lambda)]dy=\int_{\eta^{-1}(x)}^\infty K(x,\eta(\xi'),\lambda)[1+\varphi(\eta(\xi'),\lambda)]\eta'(\xi')d\xi'.
\end{align*}
Hence, writing $\tilde{\varphi}(\xi,\lambda)=\varphi(\eta(\xi),\lambda)$, we obtain
\begin{equation} 
\label{eq:volterraAitilde}
\tilde{\varphi}(\xi,\lambda)=\int_\xi^\infty \tilde{K}(\xi,\xi',\lambda)[1+\tilde{\varphi}(\xi',\lambda)]d\xi' 
\end{equation}
where $\tilde{K}(\xi,\xi',\lambda):=K(\eta(\xi),\eta(\xi'),\lambda)\eta'(\xi')$.
By assumption, the kernel $\tilde{K}$ is of the form
\begin{align*} \tilde{K}(\xi,\xi',\lambda)&=\int_{\eta(\xi)}^{\eta(\xi')}\Ai(u)^{-2}
a(u,\lambda)du\;
 \Ai(\eta(\xi'))^2 b(\eta(\xi'),\lambda) \eta'(\xi') \\
 &=\int_{\xi}^{\xi'}\Ai(\eta(\xi''))^{-2}a(\eta(\xi''),\lambda)\eta'(\xi'')d\xi'' 
 \;
 \Ai(\eta(\xi'))^2 b(\eta(\xi'),\lambda) \eta'(\xi'). 
 \end{align*}
 According to Corollary \ref{cor:Airy+}, we have the representation
 $$ \Ai(\eta(\xi))^{\pm 2}
=(4\pi)^{\mp 1}\langle \eta(\xi)\rangle^{\mp \frac12}e^{\mp 2\xi}[1+a_\pm(\xi,\lambda)] $$
 where $|\partial_\lambda^\ell \partial_x^k a_\pm(\xi,\lambda)|\leq C_{k,\ell} \langle \xi \rangle^{-1-k}
\lambda^{-\ell}$ for all $\xi\geq 0$, $\lambda \in (0,\lambda_0)$ and $k,\ell\in \mathbb{N}_0$.
 Writing 
\begin{align*}\tilde{a}(\xi,\lambda)&=a(\eta(\xi),\lambda)\eta'(\xi)\langle \eta(\xi)\rangle^\frac12
[1+a_-(\xi)] \\ 
\tilde{b}(\xi,\lambda)&=b(\eta(\xi),\lambda)\eta'(\xi)\langle \eta(\xi)\rangle^{-\frac12}[1+a_+(\xi)],
 \end{align*}
we obtain the bounds
 $$ |\partial_\lambda^\ell \partial_\xi^k \tilde{a}(\xi,\lambda)|\leq C_{k,\ell}\langle \xi \rangle^{-k}\lambda^{-\ell}, \quad |\partial_\lambda^\ell \partial_\xi^k \tilde{b}(\xi,\lambda)|\leq C_{k,\ell}\langle \xi \rangle^{-\frac23\alpha-\frac23-k}\lambda^{\beta-\ell} $$
 for all $\xi \geq 0$, $\lambda \in (0,\lambda_0)$ and $k,\ell \in \mathbb{N}_0$, see Lemma~\ref{lem:symbc}, and $\tilde{K}$ can be written as
 $$ \tilde{K}(\xi,\xi',\lambda)=\int_\xi^{\xi'}e^{2\xi''}\tilde{a}(\xi'',\lambda)d\xi''\; e^{-2\xi'}\tilde{b}(\xi',\lambda). $$
Consequently, Proposition~\ref{prop:volterra} applies to Eq.~\eqref{eq:volterraAitilde} and we obtain the bounds 
$$|\partial_\lambda^\ell \partial_\xi^k \tilde{\varphi}(\xi,\lambda)|\leq C_{k,\ell}
\langle \xi \rangle^{-\frac23 \alpha+\frac13-k}\lambda^{\beta-\ell}$$
which, via Lemma~\ref{lem:symbc}, translate into the stated bounds on $\varphi$. 
\end{proof}

Next, we consider the oscillatory case. 

\begin{proposition}
 \label{prop:volterraAiBi}
 Let $\lambda_0>0$ and suppose $a(\cdot,\lambda)$, $b(\cdot,\lambda)$ are (possibly complex--valued) functions that satisfy
$$ |\partial_\lambda^\ell \partial_x^k a(x,\lambda)|\leq C_{k,\ell}\langle x \rangle^{-k}\lambda^{-\ell},\quad  
|\partial_\lambda^\ell \partial_x^k b(x,\lambda)|\leq C_{k,\ell}\langle x \rangle^{-\alpha-k} \lambda^{\beta-\ell}$$
for all $x \geq 0$, $\lambda \in (0,\lambda_0)$ 
and $k,\ell \in \mathbb{N}_0$ where $\alpha>\frac12$, $\beta \geq 0$.
Set
$$ K(x,y,\lambda):=\int_x^y [\Ai(-u)\pm i\Bi(-u)]^{-2}a(u,\lambda)du\; 
[\Ai(-y)\pm i\Bi(-y)]^2 b(y, \lambda). $$
 Then the equation
 \begin{equation*} 
\varphi(x,\lambda)=\int_x^\infty K(x,y,\lambda)[1+\varphi(y,\lambda)]dy   
 \end{equation*}
 has a unique solution $\varphi(\cdot,\lambda)$ that satisfies 
$$ |\partial_\lambda^\ell \partial_x^k\varphi(x,\lambda)|
\leq C_{k,\ell} \langle x \rangle^{-\alpha+\frac12-k}\lambda^{\beta-\ell} $$ 
for all $x\geq 0$, $\lambda \in (0,\lambda_0)$ and $k,\ell \in \mathbb{N}_0$.
\end{proposition}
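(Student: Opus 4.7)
My plan is to imitate, almost verbatim, the reduction used in the proof of Proposition~\ref{prop:volterraAi}, exploiting the fact that Proposition~\ref{prop:volterra} was formulated with a general $\omega\in\mathbb{C}\setminus\{0\}$ satisfying only $\Re(\omega)\geq 0$, so it already covers the purely oscillatory kernel $\omega=\mp i$. Concretely, I introduce the same diffeomorphism $\eta:\mathbb{R}\to\mathbb{R}$ with $\eta(\xi)=(3/2)^{2/3}\xi\langle\xi\rangle^{-1/3}$, so that $\eta(\xi)=(3/2)^{2/3}\xi^{2/3}$ for $\xi\geq 1$, $\eta'(\xi)\simeq\langle\xi\rangle^{-1/3}$ with the standard symbol bounds, and the phase normalises as $\tfrac{2}{3}\eta(\xi)^{3/2}=\xi$ for $\xi\geq 1$. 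Writing $\tilde\varphi(\xi,\lambda):=\varphi(\eta(\xi),\lambda)$, the Volterra equation becomes
\begin{equation*}
\tilde\varphi(\xi,\lambda)=\int_\xi^\infty \tilde K(\xi,\xi',\lambda)[1+\tilde\varphi(\xi',\lambda)]d\xi',
\qquad \tilde K(\xi,\xi',\lambda):=K(\eta(\xi),\eta(\xi'),\lambda)\,\eta'(\xi').
\end{equation*}

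The key step is to use Corollary~\ref{cor:Airy-}, which gives
\begin{equation*}
[\Ai(-\eta(\xi))\pm i\Bi(-\eta(\xi))]^{\pm 2}=\pi^{\mp 1}e^{\pm i\pi/2}\langle\eta(\xi)\rangle^{\mp 1/2}e^{\mp 2i\xi}[1+a_\pm(\xi)]
\end{equation*}
for $\xi\geq 0$, with $a_\pm$ obeying the same symbol estimates $|\partial_\lambda^\ell\partial_\xi^k a_\pm(\xi,\lambda)|\leq C_{k,\ell}\langle\xi\rangle^{-1-k}\lambda^{-\ell}$ as in the $\Ai$ case. Setting
\begin{align*}
\tilde a(\xi,\lambda)&:=a(\eta(\xi),\lambda)\eta'(\xi)\langle\eta(\xi)\rangle^{1/2}[1+a_-(\xi)]\pi^{-1}e^{-i\pi/2},\\
\tilde b(\xi,\lambda)&:=b(\eta(\xi),\lambda)\eta'(\xi)\langle\eta(\xi)\rangle^{-1/2}[1+a_+(\xi)]\pi e^{i\pi/2}
\end{align*}
(with obvious sign variants for the $-i$ choice), Lemma~\ref{lem:symbc} yields the bounds
\begin{equation*}
|\partial_\lambda^\ell\partial_\xi^k\tilde a(\xi,\lambda)|\leq C_{k,\ell}\langle\xi\rangle^{-k}\lambda^{-\ell},\qquad
|\partial_\lambda^\ell\partial_\xi^k\tilde b(\xi,\lambda)|\leq C_{k,\ell}\langle\xi\rangle^{-\frac{2\alpha}{3}-\frac23-k}\lambda^{\beta-\ell},
\end{equation*}
and $\tilde K$ takes exactly the form
\begin{equation*}
\tilde K(\xi,\xi',\lambda)=\int_\xi^{\xi'}e^{\pm 2i\xi''}\tilde a(\xi'',\lambda)d\xi''\;e^{\mp 2i\xi'}\tilde b(\xi',\lambda)
\end{equation*}
appearing in Proposition~\ref{prop:volterra} with $\omega=\mp i$ (so $\Re\omega=0$) and with effective decay exponent $\tilde\alpha:=\tfrac{2\alpha}{3}+\tfrac23>1$, which is precisely the hypothesis $\alpha>\tfrac12$.

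Applying Proposition~\ref{prop:volterra} then gives a unique $\tilde\varphi(\cdot,\lambda)$ with
\begin{equation*}
|\partial_\lambda^\ell\partial_\xi^k\tilde\varphi(\xi,\lambda)|\leq C_{k,\ell}\langle\xi\rangle^{-\tilde\alpha+1-k}\lambda^{\beta-\ell}=C_{k,\ell}\langle\xi\rangle^{-\frac{2\alpha}{3}+\frac13-k}\lambda^{\beta-\ell},
\end{equation*}
and Lemma~\ref{lem:symbc} (together with $\eta^{-1}(x)\simeq x^{3/2}$ for large $x$, cf.~Lemma~\ref{lem:symbinv}) converts this back into the claimed bound $|\partial_\lambda^\ell\partial_x^k\varphi(x,\lambda)|\leq C_{k,\ell}\langle x\rangle^{-\alpha+\frac12-k}\lambda^{\beta-\ell}$. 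The only place a reader should pause is the verification that the oscillatory kernel $\omega=\pm i$ indeed falls within the scope of Proposition~\ref{prop:volterra}; inspecting that proof, the single use of $\Re\omega\geq 0$ was to bound $|\int_x^y e^{2\omega u}a(u,\lambda)du|\lesssim e^{2\Re(\omega)y}$ via two integrations by parts using only the symbol bounds on $a$, so $\Re\omega=0$ works without modification. Hence the proof is complete with no essential new difficulty beyond the bookkeeping of the change of variables.
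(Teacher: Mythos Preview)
Your proof is correct and follows exactly the same approach as the paper: reduce to Proposition~\ref{prop:volterra} with a purely imaginary $\omega$ by means of the same change of variables $\eta$ used in the proof of Proposition~\ref{prop:volterraAi}, replacing the representation of $\Ai$ from Corollary~\ref{cor:Airy+} by that of $\Ai\pm i\Bi$ from Corollary~\ref{cor:Airy-}. The paper's proof is a two-line sketch of precisely what you have written out in detail; the only (harmless) slip is that the error terms $a_\pm$ from Corollary~\ref{cor:Airy-} do not actually depend on $\lambda$, so the $\lambda$-derivative bounds you state for them are trivially satisfied.
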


\begin{proof}
 Based on the representation of $\Ai\pm i\Bi$ from Corollary \ref{cor:Airy-}, the problem can be reduced to
 $$ \tilde{\varphi}(\xi,\lambda)=\int_\xi^\infty \tilde{K}(\xi,\xi',\lambda)[1+\tilde{\varphi}(\xi',\lambda)],\quad \xi \geq 0 $$
 where 
 $$ \tilde{K}(\xi,\xi',\lambda)=\int_\xi^{\xi'}e^{\pm 2i \xi''}\tilde{a}(\xi'',\lambda)d\xi''\;
 e^{\mp 2i \xi'}\tilde{b}(\xi',\lambda) $$
 and $\tilde{\varphi}$, $\tilde{a}$, $\tilde{b}$ are like in the proof of Proposition~\ref{prop:volterraAi}.
 Thus, as before, the claim follows from Proposition~\ref{prop:volterra}.
\end{proof}

Finally, we state the ``Airy version'' of Proposition~\ref{prop:volterranox}.

\begin{proposition}
 \label{prop:volterraAinox}
Fix $x_0 \in \mathbb{R}$, $\lambda_0>0$, $\alpha >-\frac12$, $\beta\geq \frac32 \gamma \geq 0$ and assume that $\beta-(\alpha+\frac12)\gamma\geq 0$. 
Let $c$ be a real--valued function that satisfies $c(\lambda)\geq x_0$ and $|c^{(\ell)}(\lambda)|\leq C_\ell \lambda^{-\gamma-\ell}$ for all $\lambda \in (0,\lambda_0)$, $\ell \in \mathbb{N}_0$.
Furthermore, assume that the (possibly complex--valued) functions $a(\cdot,\lambda)$, $b(\cdot,\lambda)$ satisfy the bounds
$$ |\partial_\lambda^\ell \partial_x^k a(x,\lambda)|\leq C_{k,\ell}\langle x \rangle^{-k}\lambda^{-\ell},\quad
|\partial_\lambda^\ell \partial_x^k b(x,\lambda)|\leq C_{k,\ell}\langle x \rangle^{\alpha-k}\lambda^{\beta-\ell} $$
for all $x_0\leq x \leq c(\lambda)$, $\lambda \in (0,\lambda_0)$ and $k,\ell \in \mathbb{N}_0$.
Set
$$ K(x,y,\lambda):=\int_x^y \Bi(u)^{-2}a(u,\lambda)du\;\Bi(y)^2 b(y,\lambda) $$
for $x_0\leq y \leq x \leq c(\lambda)$.
Then the equation
$$ \varphi(x,\lambda)=\int_{x_0}^x K(x,y,\lambda)[1+\varphi(y,\lambda)]dy $$
has a unique solution $\varphi(\cdot,\lambda)$ that satisfies
$$ |\partial_\lambda^\ell \partial_x^k \varphi(x,\lambda)|\leq C_{k,\ell}\langle x \rangle^{\alpha+\frac12-k}\lambda^{\beta-\ell} $$
for all $x_0 \leq x \leq c(\lambda)$, $\lambda \in (0,\lambda_0)$ and $k,\ell \in \mathbb{N}_0$.
\end{proposition}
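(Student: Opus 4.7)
My plan is to mimic the proof of Proposition~\ref{prop:volterraAi}: push the Volterra equation through the Liouville--Green-type change of variables $\eta(\xi):=(\tfrac32)^{2/3}\xi\langle \xi\rangle^{-1/3}$ so that the Airy kernel becomes an exponential kernel in $\xi$, and then invoke Proposition~\ref{prop:volterranox}. Recall $\eta$ is a diffeomorphism of $\mathbb{R}$ with $\eta(\xi)=(\tfrac32)^{2/3}\xi^{2/3}$ for $\xi\geq 1$, $\eta'(\xi)\simeq \langle\xi\rangle^{-1/3}$, and $|\eta^{(k)}(\xi)|\leq C_k\langle\xi\rangle^{2/3-k}$.

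Define $\tilde\varphi(\xi,\lambda):=\varphi(\eta(\xi),\lambda)$ and $\tilde K(\xi,\xi',\lambda):=K(\eta(\xi),\eta(\xi'),\lambda)\eta'(\xi')$. A substitution $u=\eta(\xi'')$ in the inner integral, together with Corollary~\ref{cor:Airy+} in the form
\[
\Bi(\eta(\xi))^{\pm 2}=\pi^{\mp 1}\langle \eta(\xi)\rangle^{\mp 1/2}e^{\pm 2\xi}[1+\tilde a_\pm(\xi)],
\]
where $\tilde a_\pm$ have symbol character of order $-1$ in $\xi$, converts the kernel into the form
\[
\tilde K(\xi,\xi',\lambda)=\int_{\xi'}^{\xi}e^{-2\xi''}\tilde a(\xi'',\lambda)d\xi''\;e^{2\xi'}\tilde b(\xi',\lambda),
\]
where
\[
\tilde a(\xi,\lambda):=\pi a(\eta(\xi),\lambda)\eta'(\xi)\langle \eta(\xi)\rangle^{1/2}[1+\tilde a_-(\xi)],\qquad \tilde b(\xi,\lambda):=\pi^{-1}b(\eta(\xi),\lambda)\eta'(\xi)\langle\eta(\xi)\rangle^{-1/2}[1+\tilde a_+(\xi)].
\]
Applying Lemmas~\ref{lem:symb1} and \ref{lem:symbc} to $a(\eta(\cdot),\lambda)$, $b(\eta(\cdot),\lambda)$, the Jacobian, and the weights, one obtains the bounds
\[
|\partial_\lambda^\ell \partial_\xi^k \tilde a(\xi,\lambda)|\leq C_{k,\ell}\langle \xi\rangle^{-k}\lambda^{-\ell},\qquad |\partial_\lambda^\ell \partial_\xi^k \tilde b(\xi,\lambda)|\leq C_{k,\ell}\langle\xi\rangle^{\tilde\alpha-k}\lambda^{\beta-\ell}
\]
valid for $\eta^{-1}(x_0)\leq \xi\leq \eta^{-1}(c(\lambda))=:\tilde c(\lambda)$, with
\[
\tilde\alpha:=\tfrac{2\alpha-2}{3}.
\]

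It remains to verify the hypotheses of Proposition~\ref{prop:volterranox} in the $\xi$-variable with parameters $\tilde\alpha$, $\tilde\beta:=\beta$, $\tilde\gamma:=\tfrac{3}{2}\gamma$, $\omega=1$. The condition $\tilde\alpha>-1$ translates to $\alpha>-\tfrac12$, exactly the stated hypothesis. By Lemma~\ref{lem:symbinv}, the bound $|c^{(\ell)}(\lambda)|\leq C_\ell\lambda^{-\gamma-\ell}$ gives $|\tilde c^{(\ell)}(\lambda)|\leq C_\ell\lambda^{-3\gamma/2-\ell}$, so the hypothesis on the endpoint holds with $\tilde\gamma$. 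Next, $\tilde\beta\geq\tilde\gamma$ becomes $\beta\geq \tfrac{3}{2}\gamma$, which is the second assumption; and $\tilde\beta-(\tilde\alpha+1)\tilde\gamma=\beta-\tfrac{2\alpha+1}{3}\cdot\tfrac{3\gamma}{2}=\beta-(\alpha+\tfrac12)\gamma\geq 0$, which is the third. Proposition~\ref{prop:volterranox} therefore yields a unique $\tilde\varphi$ satisfying
\[
|\partial_\lambda^\ell\partial_\xi^k\tilde\varphi(\xi,\lambda)|\leq C_{k,\ell}\langle\xi\rangle^{\tilde\alpha+1-k}\lambda^{\beta-\ell}=C_{k,\ell}\langle\xi\rangle^{(2\alpha+1)/3-k}\lambda^{\beta-\ell}
\]
in the relevant range.

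Finally, undo the transformation: $\varphi(x,\lambda)=\tilde\varphi(\eta^{-1}(x),\lambda)$, and since $\langle\eta^{-1}(x)\rangle\simeq \langle x\rangle^{3/2}$, we read off $\langle\xi\rangle^{(2\alpha+1)/3}\simeq \langle x\rangle^{\alpha+1/2}$ as required. The chain rule together with Lemmas~\ref{lem:symb1}, \ref{lem:symbc}, and \ref{lem:symbinv} propagates the derivative bounds in $\xi$ to the claimed bounds on $\partial_\lambda^\ell\partial_x^k\varphi(x,\lambda)$, where each $x$-derivative costs a factor $(\eta^{-1})'(x)\simeq \langle x\rangle^{1/2}$, consistent with the asserted loss $\langle x\rangle^{-k}$. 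The main technical point---and the only real obstacle---is the careful bookkeeping of the exponents under $\eta$, but the arithmetic above confirms that the hypotheses match exactly, so no further work is required beyond the routine symbol calculus.
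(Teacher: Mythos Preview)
Your proof is correct and follows essentially the same approach as the paper: both push the equation through the change of variables $\eta(\xi)=(\tfrac32)^{2/3}\xi\langle\xi\rangle^{-1/3}$ to convert the $\Bi$-kernel into an exponential kernel, then invoke Proposition~\ref{prop:volterranox} with the transformed parameters $\tilde\alpha=\tfrac{2}{3}\alpha-\tfrac{2}{3}$ and $\tilde\gamma=\tfrac{3}{2}\gamma$, and finally pull the bounds back via the symbol calculus. You are somewhat more explicit than the paper in verifying the three numerical hypotheses of Proposition~\ref{prop:volterranox} and in tracking the exponent arithmetic on the return trip, but there is no substantive difference in the argument.
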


\begin{proof}
Analogous to the proof of Proposition~\ref{prop:volterraAi}, the Volterra equation for $\varphi$ we want to solve
can be rewritten as
$$\tilde{\varphi}(\xi,\lambda)=\int_{\xi_0}^\xi \tilde{K}(\xi,\xi',\lambda)
[1+\tilde{\varphi}(\xi',\lambda)]d\xi',\quad \xi \in [\xi_0,\eta^{-1}(c(\lambda))] $$
where $\eta(\xi):=(\frac32)^\frac23\xi\langle \xi \rangle^{-\frac13}$, $\tilde{\varphi}(\xi,\lambda):=\varphi(\eta(\xi),\lambda)$, $\xi_0:=\eta^{-1}(x_0)$ 
 and
$$ \tilde{K}(\xi,\xi',\lambda)=\int_{\xi'}^{\xi}e^{-2\xi''}\tilde{a}(\xi'',\lambda)d\xi''\;
e^{2\xi'}\tilde{b}(\xi',\lambda). $$
The functions $\tilde{a}$ and $\tilde{b}$ are given explicitly in the proof of Proposition~\ref{prop:volterraAi} in terms of $a$ and $b$ and they satisfy the bounds
$$ |\partial_\lambda^\ell \partial_\xi^k \tilde{a}(\xi,\lambda)|\leq C_{k,\ell}\langle \xi \rangle^{-k}
\lambda^{-\ell},\quad
|\partial_\lambda^\ell \partial_\xi^k \tilde{b}(\xi,\lambda)|\leq C_{k,\ell}\langle \xi \rangle^{\frac23
\alpha-\frac23-k}
\lambda^{\beta-\ell} $$
in the relevant domain.
Furthermore, we have the bounds $|\partial_\lambda^\ell \eta^{-1}(c(\lambda))|\leq C_\ell \lambda^{-\frac32 \gamma-\ell}$ for all $\lambda \in (0,\lambda_0)$.
Consequently, Proposition~\ref{prop:volterranox} yields the existence of $\tilde{\varphi}(\cdot,\lambda)$ satisfying the bounds 
$$|\partial_\lambda^\ell \partial_\xi^k \tilde{\varphi}(\xi,\lambda)|\leq C_{k,\ell}
\langle \xi \rangle^{\frac23 \alpha+\frac13-k}\lambda^{\beta-\ell}$$
and the claim follows.
\end{proof}

\section{Modified Bessel functions}
\label{sec:bessel}

For the convenience of the reader we provide the essential details of the asymptotic theory for the modified Bessel functions.
The following results are mostly standard, see Chapter~7, \S8, Chapter 10, \S~7, and Chapter~11, \S~10 of \cite{Olver}, 
as well as \cite{Balogh}, \cite{Dunster}, \cite{Temme}. 
However, as before in Appendix~\ref{sec:airy}, our focus is a bit different and we derive
the necessary results based on the theory developed in Appendices \ref{sec:symbol}, \ref{sec:volterra}
and \ref{sec:airy}.

Consider the modified Bessel equation
\begin{equation}
\label{eq:modbessel}
 w^2 y''(w) + wy'(w) + (\nu^2 - w^2) y(w) =0
\end{equation}
where in general $w, \nu\in\C$, however, for our purposes it suffices to consider $w>0$, $\nu \in \mathbb{R}$. 
In particular, we are interested in the behavior for $\nu$ large.
Solutions of Eq.~\eqref{eq:modbessel} can be obtained by a standard Frobenius ansatz around the regular singular point $w=0$ which yields the \emph{modified Bessel function} $I_{i\nu}$, given by
\begin{equation}
 \label{eq:besselIasym0}
I_{i\nu}(w)=(\tfrac12 w)^{i\nu}\sum_{k=0}^\infty \frac{(\tfrac14 w^2)^k}{k!\Gamma(i\nu+k+1)}=\frac{2^{-i\nu}}{\Gamma(1+i\nu)}w^{i\nu}[1+b(w, \nu)], 
\end{equation}
where the function $b$ satisfies the bounds $|\partial_\nu^\ell \partial_w^k b(w, \nu)|\leq C_{k,\ell}w^{\max\{2-k,0\}}$ for, say, all $0<w \leq 2$, $|\nu|\lesssim 1$ and $k,\ell \in \mathbb{N}_0$.
In order to find the asymptotics of $I_{i\nu}(w)$ for $w \to \infty$, one needs some kind of global representation of $I_{i\nu}$ which is provided by the well--known integral formula
\begin{equation} 
\label{eq:besselIint}
I_{i\nu}(w)=\frac{\left (\frac12 w \right)^{i\nu}}{\pi^\frac12 \Gamma(\frac12+i\nu)}\int_{-1}^1 (1-t^2)^{i\nu-\frac12}e^{wt}dt. 
\end{equation}
The classical result for the asymptotics of $I_{i\nu}$ is
\begin{equation} 
\label{eq:besselIasyminf}
I_{i\nu}(w)=(2\pi w)^{-\frac12}\left [e^w(1+O(w^{-1}))-ie^{\nu \pi}e^{-w}(1+O(w^{-1})) \right ] 
\end{equation}
as $w \to \infty$.

\subsection{Asymptotics for small $\nu$}
For our purposes it is important to be able to construct solutions to the modified Bessel equation via Volterra iterations. 
This procedure has the advantage that it automatically yields derivative bounds for the error terms.
Furthermore, it may be considered as a warm--up exercise for the large $\nu$ asymptotics studied 
below and also, for the more complicated problems   we have to deal with in this paper. 

As always,
it is convenient to remove the first derivative in Eq.~\eqref{eq:modbessel} by defining a new dependent variable $B(w):=w^{\frac12}y(w)$ which consequently satisfies
\begin{equation} 
\label{eq:besselno1st}
B''(w)+\left (\frac{\nu^2+\tfrac14}{w^2}-1 \right )B(w)=0.  
\end{equation}
At this point it should be noted that $\sqrt{\cdot}I_{\pm i\nu}$ are solutions to Eq.~\eqref{eq:besselno1st} and from Eq.~\eqref{eq:besselIasym0} we immediately obtain 
$$ W(\sqrt{\cdot}I_{-i\nu}, \sqrt{\cdot}I_{i\nu})=\tfrac{2i}{\pi}\sinh(\pi \nu) $$
which implies that $\{\sqrt{\cdot}I_{-i\nu}, \sqrt{\cdot}I_{i\nu}\}$ is a fundamental system for Eq.~\eqref{eq:besselno1st} provided that $\nu \in \mathbb{R}\backslash \{0\}$.
In particular, this shows that there is no $w_0>0$ such that $I_{-i\nu}(w_0)=I_{i\nu}(w_0)=0$ and, since $\overline{I_{i\nu}}=I_{-i\nu}$, we conclude that $I_{i\nu}(w)$ does not vanish for any $w>0$ and $\nu \in \mathbb{R}$ (the case $\nu=0$ follows directly from Eq.~\eqref{eq:besselIint}).

\subsubsection{The exponential regime}
We proceed by constructing a fundamental system for Eq.~\eqref{eq:besselno1st} with known 
behavior as $w \to \infty$.
This is the exponential region of the equation and we expect to obtain an exponentially decaying and an exponentially increasing solution.
Matching this system to $\sqrt{\cdot}I_{i\nu}$ yields bounds for the derivatives of the $O$--terms in Eq.~\eqref{eq:besselIasyminf}.
In order to construct the desired fundamental system, we rewrite Eq.~\eqref{eq:besselno1st} as 
$$ B''(w)-B(w)=-\frac{\nu^2+\frac14}{w^2}B(w) $$
and treat the right--hand side perturbatively.

\begin{lemma}
\label{lem:FSbessel+sm}
Fix $w_0>0$ sufficiently large and $\nu_0>0$ arbitrary.
Then there exists a fundamental system $\{B_1(\cdot,\nu), B_2(\cdot,\nu)\}$ for Eq.~\eqref{eq:besselno1st} of the form
\begin{align*}
 B_1(w,\nu)&=e^{-w}[1+b_1(w,\nu)] \\
 B_2(w,\nu)&=e^{w}[1+b_2(w,\nu)]
\end{align*}
where the functions $b_j(\cdot,\nu)$, $j=1,2$, are real--valued and satisfy
$$ |\partial_\nu^\ell \partial_w^k b_j(w,\nu)|\leq C_{k,\ell}w^{-1-k} $$
for all $w \geq w_0$, $\nu \in [-\nu_0,\nu_0]$ and $k,\ell \in \mathbb{N}_0$.
\end{lemma}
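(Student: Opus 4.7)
The plan is to mirror the argument for Lemma~\ref{lem:Airy+}, only replacing the Airy perturbation $-\tfrac{5}{36}\xi^{-2}$ by the Bessel one $-(\nu^2+\tfrac14)w^{-2}$. Since $\nu^2+\tfrac14$ is polynomial in $\nu$, the $\nu$-derivatives of the perturbation are trivial and the bounds $|\partial_\nu^\ell\partial_w^k[(\nu^2+\tfrac14)w^{-2}]|\leq C_{k,\ell}\langle w\rangle^{-2-k}$ hold uniformly on $[-\nu_0,\nu_0]$. This $\nu$-dependence is tame enough that it plays no dynamical role in the construction.

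I would first rewrite Eq.~\eqref{eq:besselno1st} as $B''-B = -(\nu^2+\tfrac14)w^{-2}B$, whose homogeneous part has fundamental system $\{e^{-w},e^w\}$, and construct the decaying solution $B_1(w,\nu)=e^{-w}[1+b_1(w,\nu)]$ via Lemma~\ref{lem:perturb} with $u_0(w)=e^{-w}$. This produces a Volterra equation
$$b_1(w,\nu)=\int_w^\infty\int_w^\eta e^{2\eta'}\,d\eta'\cdot e^{-2\eta}\bigl(-\tfrac{\nu^2+1/4}{\eta^2}\bigr)[1+b_1(\eta,\nu)]\,d\eta$$
on $[w_0,\infty)$ that falls directly under Proposition~\ref{prop:volterra} with $\omega=1$, $a\equiv1$, $b(\eta,\nu)=-(\nu^2+\tfrac14)\eta^{-2}$ (so $\alpha=2>1$, $\beta=0$, and the $\lambda$-parameter there is vacuous, with $\nu$ treated as a tame extra variable via the polynomial dependence). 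The proposition yields $b_1$ with the claimed symbol-type bounds, and reality follows from the real kernel.

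For the growing solution I would use the reduction-of-order ansatz $B_2(w,\nu) := 2B_1(w,\nu)\int_c^w B_1(\eta,\nu)^{-2}\,d\eta$ for a sufficiently large $c\geq w_0$ (chosen so that $1+b_1$ stays away from zero), and then argue exactly as in Lemma~\ref{lem:Airy+}: expand $B_1(\eta,\nu)^{-2} = e^{2\eta}[1+\tilde b_1(\eta,\nu)]$ with $\tilde b_1$ inheriting symbol bounds from $b_1$ via Lemmas~\ref{lem:symb1}--\ref{lem:symbc}, integrate the leading term $e^{2\eta}$ explicitly, and dispose of the remaining integral $e^{-2w}\int_c^w e^{2\eta}\tilde b_1(\eta,\nu)\,d\eta$ by a single integration by parts (using that $e^{2\eta}\tilde b_1$ is eventually monotone) to obtain an $O(w^{-1})$ remainder whose derivatives satisfy the right symbol bounds. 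The result is $B_2(w,\nu) = e^w[1+b_2(w,\nu)]$ with the required estimates; one extends $B_2$ smoothly to all $w\geq w_0$ by solving an initial value problem at $w=c$, and $W(B_1,B_2)=2+O(w^{-1})\neq 0$ confirms independence.

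The hard part will be essentially bookkeeping at the reduction-of-order step, where one must track that the single integration by parts produces symbol-type estimates for $b_2$ in both $w$ and $\nu$, not merely pointwise decay of $b_2$. But this is the same bookkeeping already executed in the proof of Lemma~\ref{lem:Airy+}, so no new ideas are needed beyond what the appendices have provided.
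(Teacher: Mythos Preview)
Your proposal is correct and follows essentially the same route as the paper: construct $b_1$ from the Volterra equation of Lemma~\ref{lem:perturb} and invoke Proposition~\ref{prop:volterra}, then obtain $B_2$ by reduction of order exactly as in Lemma~\ref{lem:Airy+}. The only cosmetic difference is that the paper starts the reduction integral at $w_0$ and adds a constant, writing $B_2(w,\nu)=2B_1(w,\nu)\bigl[\int_{w_0}^w B_1(v,\nu)^{-2}\,dv+1\bigr]$, which differs from your formula by a multiple of $B_1$ and so is immaterial.
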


\begin{proof}
 We start with the decaying solution $B_1$. According to Lemma~\ref{lem:perturb}, we have to construct a solution $b_1(\cdot,\nu)$ to the equation
 $$ b_1(w,\nu)=\int_w^\infty \int_w^v e^{2u}du\;e^{-2v}\left (-\frac{\nu^2+\frac14}{v^2} \right )[1+b_1(v,\nu)]dv $$
 for $w \geq w_0$ and thus, Proposition~\ref{prop:volterra} yields the desired result.
 If $w_0$ is sufficiently large, we obviously have $B_1(w,\nu)>0$ for all $w \geq w_0$ and we obtain $B_2(\cdot,\nu)$ 
 by the standard reduction ansatz, i.e.,
 $$ B_2(w,\nu):=2B_1(w,\nu)\left [\int_{w_0}^w B_1(v,\nu)^{-2}dv+1 \right ], $$
 cf.~the proof of Lemma~\ref{lem:Airy+}.
\end{proof}

\subsubsection{The oscillatory regime}
Next, we deal with the oscillatory regime.
The behavior of solutions in this regime is completely described by Eq.~\eqref{eq:besselIasym0} and
there is no need to employ Volterra iterations since everything follows by Frobenius' method.
As a consequence, we obtain the following.

\begin{corollary}
\label{cor:bessel0sm}
Fix $w_0,\nu_0>0$.
Then there exists a fundamental system $\{B_\pm(\cdot,\nu)\}$ for Eq.~\eqref{eq:besselno1st} of the form
 $$ B_\pm(w,\nu)=w^{\frac12\pm i\nu}[1+b_\pm(w,\nu)] $$
 where the functions $b_\pm(\cdot,\nu)$ satisfy the bounds
 $$ |\partial_\nu^\ell \partial_w^k b_\pm(w,\nu)|\leq C_{k,\ell}w^{\max\{2-k,0\}} $$
 for all $0 \leq w \leq w_0$, $0< \nu \leq \nu_0$ and $k,\ell \in \mathbb{N}_0$.
\end{corollary}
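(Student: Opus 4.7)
My plan is to read the corollary off directly from the Frobenius series \eqref{eq:besselIasym0}, without any new Volterra iteration. Define
$$ B_\pm(w,\nu) := 2^{\pm i\nu}\,\Gamma(1\pm i\nu)\,w^{\frac12}I_{\pm i\nu}(w). $$
Since $\sqrt{\cdot}\,I_{\pm i\nu}$ solves the no-first-derivative form \eqref{eq:besselno1st}, so does each $B_\pm(\cdot,\nu)$. Using the explicit series
$$ I_{\pm i\nu}(w)=\Big(\tfrac{w}{2}\Big)^{\pm i\nu}\sum_{k=0}^\infty \frac{(w^2/4)^k}{k!\,\Gamma(\pm i\nu+k+1)}, $$
the normalization is chosen precisely so that
$$ B_\pm(w,\nu)=w^{\frac12\pm i\nu}\Big[1+b_\pm(w,\nu)\Big],\qquad b_\pm(w,\nu):=\Gamma(1\pm i\nu)\sum_{k=1}^\infty\frac{(w^2/4)^k}{k!\,\Gamma(\pm i\nu+k+1)}. $$

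Next I would verify the derivative bounds on $b_\pm$. Each summand is $\frac{(w^2/4)^k}{k!}\cdot\frac{\Gamma(1\pm i\nu)}{\Gamma(\pm i\nu+k+1)}$, and the ratio of Gamma functions equals $\prod_{j=1}^{k}(\pm i\nu+j)^{-1}$, which is smooth in $\nu$ on $[-\nu_0,\nu_0]$ with bounds decaying like $1/k!$ for large $k$. Hence for $w\le w_0$ and $|\nu|\le \nu_0$ the series converges absolutely, along with all $\partial_w^k\partial_\nu^\ell$ derivatives (obtained by termwise differentiation, which is legitimate by the uniform convergence). Since every term carries the factor $w^{2k}$ with $k\ge 1$, applying $\partial_w^k$ at most lowers the power of $w$ by $k$, giving the bound $C_{k,\ell}w^{\max\{2-k,0\}}$, while $\partial_\nu^\ell$ does not touch powers of $w$ at all. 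This gives exactly the estimate claimed in the statement.

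Finally, I would check that $\{B_-(\cdot,\nu),B_+(\cdot,\nu)\}$ is a fundamental system for $\nu\in(0,\nu_0]$. This is immediate from the Wronskian identity already recorded in the text,
$$ W\big(\sqrt{\cdot}\,I_{-i\nu},\sqrt{\cdot}\,I_{i\nu}\big)=\tfrac{2i}{\pi}\sinh(\pi\nu), $$
which is nonzero for $\nu>0$; factoring out the constants $2^{\pm i\nu}\Gamma(1\pm i\nu)$ gives $W(B_-,B_+)\neq 0$.

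There is no real obstacle here: the construction is just a renormalization of the Frobenius series, and every bound is an immediate consequence of termwise differentiation on a compact parameter set. The only mildly delicate point is keeping track of the Gamma-function factors to see that smoothness in $\nu$ at $\nu=0$ (which is actually excluded from the statement since $0<\nu\le\nu_0$) or near any pole is not needed; for $\nu$ in a compact subset of $(0,\nu_0]\cup\{0\}$ all the Gamma functions appearing are analytic, so the derivative bounds in $\nu$ are uniform.
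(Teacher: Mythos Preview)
Your proposal is correct and matches the paper's approach exactly: the paper states (just before the corollary) that ``everything follows by Frobenius' method'' via Eq.~\eqref{eq:besselIasym0}, and Lemma~\ref{lem:WBB} confirms that the intended definition is precisely $B_\pm(w,\nu)=2^{\pm i\nu}\Gamma(1\pm i\nu)\sqrt{w}\,I_{\pm i\nu}(w)$. Your termwise-differentiation argument for the bounds and the Wronskian check for linear independence fill in the details the paper leaves implicit.
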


\subsubsection{Matching of the fundamental systems and the classical Bessel asymptotics}

In order to be able to glue together the fundamental systems $\{B_\pm(\cdot,\nu)\}$ and $\{B_j(\cdot,\nu): j=1,2\}$ obtained in Lemma~\ref{lem:FSbessel+sm} and Corollary \ref{cor:bessel0sm}, we have to calculate the Wronskians $W(B_\pm(\cdot,\nu),B_j(\cdot,\nu))$, or, equivalently, the precise asymptotics of $B_\pm(w,\nu)$ as $w \to \infty$. 
However, without any global information, one can only obtain a trivial result here, namely that the Wronskian is a smooth function of $\nu$.
Thus, at this point we have to make use of the global information provided by the classical asymptotics Eq.~\eqref{eq:besselIasyminf} that follow from the integral representation \eqref{eq:besselIint}.
Note that the following result yields the desired symbol character of the $O$--terms in Eq.~\eqref{eq:besselIasyminf}.

\begin{lemma}
 \label{lem:WBB}
 Let $w_0>0$ be sufficiently large and $\nu_0>0$ be arbitrary.
 The Bessel functions $B_\pm(\cdot,\nu)$ from Corollary \ref{cor:bessel0sm} have the representation
\begin{align*} B_\pm(w,\nu)&=2^{\pm i\nu}\Gamma(1\pm i\nu)\sqrt{w}I_{\pm i\nu}(w)\\
 &=\pi^{-\frac12}2^{-\frac12\pm i\nu}
 \Gamma(1\pm i\nu)\left [e^w[1+b_2(w,\pm \nu)]-ie^{\pm \nu \pi}e^{-w}[1+b_1(w,\pm \nu)]\right ] 
\end{align*}
where the error terms $b_j(\cdot,\pm \nu)$, $j=1,2$, are real--valued and satisfy the bounds
$$ |\partial_\nu^\ell \partial_w^k b_j(w,\pm \nu)|\leq C_{k,\ell}w^{-1-k} $$
 for all $w\geq w_0$, $0 < \nu \leq \nu_0$ and $k,\ell \in \mathbb{N}_0$.
 In particular, we have 
$$W(B_\pm(\cdot,\nu),B_1(\cdot,\nu))=-\pi^{-\frac12}2^{\frac12\pm i\nu}
 \Gamma(1\pm i\nu) $$
 and $|B_\pm(w,\nu)|>0$ for all $w>0$ and $0 < \nu \leq \nu_0$.
\end{lemma}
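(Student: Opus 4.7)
The plan is to anchor the locally constructed Frobenius solutions $B_\pm$ (Corollary~\ref{cor:bessel0sm}) to the globally defined modified Bessel function $I_{\pm i\nu}$ via their common small-$w$ behavior, and then to re-express $\sqrt{w}\,I_{\pm i\nu}(w)$ in the large-$w$ fundamental system $\{B_1,B_2\}$ from Lemma~\ref{lem:FSbessel+sm}, using the classical asymptotic~\eqref{eq:besselIasyminf} only to pin down connection coefficients.

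First I would establish the identity $B_\pm(w,\nu)=2^{\pm i\nu}\Gamma(1\pm i\nu)\sqrt{w}\,I_{\pm i\nu}(w)$. Both sides solve Eq.~\eqref{eq:besselno1st}. The Frobenius series~\eqref{eq:besselIasym0} gives $2^{\pm i\nu}\Gamma(1\pm i\nu)\sqrt{w}\,I_{\pm i\nu}(w)=w^{1/2\pm i\nu}[1+O(w^{2})]$ as $w\to 0+$, and Corollary~\ref{cor:bessel0sm} supplies exactly the same leading behavior for $B_\pm(w,\nu)$. Since the two Frobenius exponents $1/2\pm i\nu$ are linearly independent for $\nu\neq 0$, any solution of Eq.~\eqref{eq:besselno1st} whose small-$w$ expansion has no admixture of the companion term $w^{1/2\mp i\nu}$ is unique up to an overall constant, and the normalizing constants here coincide.

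Next I would decompose $\sqrt{w}\,I_{i\nu}(w)=c_1(\nu)B_1(w,\nu)+c_2(\nu)B_2(w,\nu)$. The coefficients are read off at leading order as $w\to\infty$ by matching $c_2(\nu)e^{w}+c_1(\nu)e^{-w}$ against~\eqref{eq:besselIasyminf}, giving $c_2(\nu)=(2\pi)^{-1/2}$ and $c_1(\nu)=-i(2\pi)^{-1/2}e^{\nu\pi}$. Multiplying by $2^{i\nu}\Gamma(1+i\nu)$ and invoking the first step yields the stated form of $B_+$; the expression for $B_-$ is obtained either by running the same argument with $I_{-i\nu}$ or by complex conjugation, using that $B_1,B_2$ are real-valued and $\overline{I_{i\nu}}=I_{-i\nu}$ for real $\nu$. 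The decisive point is that the symbol-type derivative bounds on $b_1$ and $b_2$ are inherited verbatim from Lemma~\ref{lem:FSbessel+sm}: the crude $O(w^{-1})$ estimate in~\eqref{eq:besselIasyminf} is only used to fix the two scalars $c_j(\nu)$, and plays no role in controlling the errors.

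With the representation in hand, the Wronskian is computed by evaluating as $w\to\infty$. Setting $A_\pm:=\pi^{-1/2}2^{-1/2\pm i\nu}\Gamma(1\pm i\nu)$, the dominant contributions $B_\pm\sim A_\pm e^{w}$, $B_\pm'\sim A_\pm e^{w}$, $B_1\sim e^{-w}$, $B_1'\sim -e^{-w}$ combine (since $W$ is constant the subdominant pieces cancel in the limit) to give $W(B_\pm,B_1)=-2A_\pm=-\pi^{-1/2}2^{1/2\pm i\nu}\Gamma(1\pm i\nu)$. The non-vanishing of $|B_\pm(w,\nu)|$ on $(0,\infty)$ follows immediately from the identification $B_\pm=2^{\pm i\nu}\Gamma(1\pm i\nu)\sqrt{w}\,I_{\pm i\nu}(w)$ combined with the non-vanishing of $I_{\pm i\nu}$ on $(0,\infty)$ recorded just above Lemma~\ref{lem:FSbessel+sm}. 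The main and really only substantive obstacle is the global identification step: the two perturbative constructions sit in disjoint regimes and cannot be glued by local means alone, so the integral representation~\eqref{eq:besselIint} (feeding~\eqref{eq:besselIasyminf}) must be imported as the single genuinely global input; everything else is transferred from the Volterra machinery already in place.
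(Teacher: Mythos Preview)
Your proposal follows essentially the same route as the paper: identify $B_\pm$ with $2^{\pm i\nu}\Gamma(1\pm i\nu)\sqrt{w}\,I_{\pm i\nu}$ via the small-$w$ Frobenius behavior, expand in the Volterra basis $\{B_1,B_2\}$, import the classical asymptotics~\eqref{eq:besselIasyminf} as the sole global input to pin down the connection coefficients, and then read off the Wronskian and the nonvanishing. Your Wronskian computation (using only the dominant $e^{w}$ contributions and constancy of $W$) is correct and, as you note, only requires $c_2$.

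There is one point you gloss over. You write that both $c_1$ and $c_2$ are ``read off at leading order as $w\to\infty$'' from~\eqref{eq:besselIasyminf}. For $c_2$ this is fine: divide by $e^{w}$ and let $w\to\infty$. For the recessive coefficient $c_1$, however, naive matching is not rigorous, because the $e^{-w}$ contribution in $c_1B_1+c_2B_2$ is masked by the $e^{w}$ term; subtracting $c_2B_2$ leaves $e^{w}\bigl[O(w^{-1})-b_2(w,\nu)\bigr]$, and nothing a priori forces this difference to be $o(e^{-w})$. The paper closes this gap with a small but essential trick: since $B_1,B_2$ are real-valued and $c_2$ has been shown to be real, taking imaginary parts gives $\mathrm{Im}\bigl(\sqrt{w}\,I_{i\nu}(w)\bigr)=\mathrm{Im}(c_1)\,B_1(w,\nu)$, and now multiplying by $e^{w}$ and letting $w\to\infty$ cleanly yields $\mathrm{Im}(c_1)=-(2\pi)^{-1/2}e^{\nu\pi}$ from~\eqref{eq:besselIasyminf}. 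You should insert this step; without it the determination of $c_1$ (and hence the displayed representation of $B_\pm$) is not justified, even though your Wronskian and nonvanishing arguments stand on their own.
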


\begin{proof}
 According to Eq.~\eqref{eq:besselIasym0}, we have the asymptotics 
$$ \sqrt{w}I_{\pm i\nu}(w)\sim \frac{2^{\mp i\nu}}{\Gamma(1\pm i\nu)}w^{\frac12\pm i\nu}\quad (w \to 0+) $$
and thus, by comparison with the asymptotics of $B_\pm(\cdot,\nu)$, we obtain
$$B_\pm(w,\nu)=2^{\pm i\nu}\Gamma(1\pm i\nu)\sqrt{w}I_{\pm i\nu}(w). $$
This implies $|B_\pm(w,\nu)|>0$ for all $w>0$.
For notational convenience we set $\tilde{B}_\pm(w,\nu):=(2\pi)^\frac12 \sqrt{w}I_{\pm i\nu}(w)$.
From Eq.~\eqref{eq:besselIasyminf} we have the asymptotics
$$ \tilde{B}_\pm(w,\nu)=e^w[1+O(w^{-1})]-ie^{\pm\nu\pi}e^{-w}[1+O(w^{-1})] $$
as $w \to \infty$.
Since $\tilde{B}_\pm(\cdot,\nu)$ is a solution to Eq.~\eqref{eq:besselno1st} and $\{B_j(\cdot,\nu):j=1,2\}$ 
from Lemma~\ref{lem:FSbessel+sm} 
is a fundamental system for that equation, there must exist connection coefficients $\tilde{\beta}_{\pm,j}(\nu)$ such that
$$ \tilde{B}_\pm(w,\nu)=\tilde{\beta}_{\pm,1}(\nu)B_1(w,\nu)+\tilde{\beta}_{\pm,2}(\nu)B_2(w,\nu). $$
Dividing this equation by $e^{w}$ and letting $w \to \infty$, we obtain $\tilde{\beta}_{\pm,2}(\nu)=1$.
This implies
$$ \mathrm{Im}\tilde{B}_\pm(w,\nu)=\mathrm{Im}\tilde{\beta}_{\pm,1}(\nu)B_1(w,\nu) $$
and multiplication by $e^w$ yields $\mathrm{Im}\tilde{\beta}_{\pm,1}(\nu)=-e^{\pm \nu \pi}$ in the limit $w \to \infty$.
Since $B_\pm(\cdot,\nu)=(2\pi)^{-\frac12}2^{\pm i\nu}\Gamma(1\pm i\nu)\tilde{B}_\pm(\cdot,\nu)$, we infer
$$ \frac{1}{(2\pi)^{-\frac12}2^{\pm i\nu}\Gamma(1\pm i\nu)}\frac{W(B_\pm(\cdot,\nu),B_1(\cdot,\nu))}{W(B_1(\cdot,\nu),B_2(\cdot,\nu))}=
-\tilde{\beta}_{\pm,2}(\nu)=-1 $$
and the claim follows from $W(B_1(\cdot,\nu),B_2(\cdot,\nu))=2$.
\end{proof}

\subsection{Asymptotics for large $\nu$}
For the construction of fundamental systems of Eq.~\eqref{eq:besselno1st} in the case of large $\nu$, a more careful treatment of the turning point is necessary. 
The character of the solutions is dictated by the sign of $(\nu^2+\frac14-w^2)$, i.e., if $\nu^2+\frac14-w^2>0$  solutions tend to oscillate whereas for $\nu^2+\frac14-w^2<0$ we have exponentially increasing or decreasing behavior.
The turning point $w_t$ which separates these two regimes is $w_t(\nu)=\sqrt{\nu^2+\frac14}$ and it moves with $\nu$ and behaves like $w_t(\nu)\sim \nu$ for $\nu \to \infty$.
Consequently, it is reasonable to base the analysis of Bessel's equation on Airy functions.
However, it turns out to be crucial for the asymptotic analysis to fix the turning point first by introducing the rescaled variable $x:=\frac{w}{\nu}$, i.e., we set $\psi(x):=B(\nu x)$ and Eq.~\eqref{eq:besselno1st} transforms into
\begin{equation} 
\label{eq:besselrescaled}
\psi''(x)-\nu^2 (1-\tfrac{1}{x^2})\psi(x)=-\tfrac{1}{4x^2}\psi(x). 
\end{equation}
on $x>0$.
Heuristically speaking, we expect the right--hand side to be negligible provided that $\nu$ is sufficiently large. 
Consequently, we ignore the right--hand side altogether for the moment and observe that 
the turning point for the ``remaining equation'' is now fixed at $x=1$.

\subsubsection{Reduction to a perturbed Airy equation}
The idea is now to define new variables such that Eq.~\eqref{eq:besselrescaled} turns into a perturbed Airy equation.
This is accomplished by means of a Liouville--Green transform.
According to Eq.~\eqref{eq:LG} we require 
\begin{equation} 
\label{eq:LGtrafo}
\zeta'(x)^2 \zeta(x)=1-\tfrac{1}{x^2}  
\end{equation}
which yields
\begin{equation} 
\label{eq:zeta}
\zeta(x)=\sign(x-1)\left |\tfrac32 \int_1^x \sqrt{\left |1-\tfrac{1}{y^2} \right |}dy \right |^\frac23 
\end{equation}
by separation of variables, cf.~Lemma~\ref{lem:zeta}.
Note that $\zeta$ is chosen in such a way that the turning point $x=1$ is mapped to $0$.
By a Taylor expansion around $x=1$ it is readily seen that $\zeta: (0,\infty) \to \mathbb{R}$ is smooth (in fact, real--analytic) and obviously surjective. Furthermore, we have $\zeta'>0$ and this shows that $\zeta$ maps $(0,\infty)$ to $\mathbb{R}$ diffeomorphically and thus, $\zeta$ defines an admissible change of variables as desired.
Consequently, Eq.~\eqref{eq:besselrescaled} transforms into
\begin{equation} 
\label{eq:Airypert}
\underbrace{\phi''(\zeta)=\nu^2 \zeta \phi(\zeta)}_{\mbox{Airy}}+\underbrace{V_2(\zeta)\phi(\zeta)}_{\mbox{pert.}} 
\end{equation}
for $\zeta \in \mathbb{R}$ with $\phi(\zeta(x)):=\zeta'(x)^\frac12 \psi(x)$ and, cf.~Eq.~\eqref{eq:LG},
$$ V_2(\zeta(x))=-\frac{3}{4}\frac{\zeta''(x)^2}{\zeta'(x)^4}+\frac{1}{2}\frac{\zeta'''(x)}{\zeta'(x)^3}-\frac{1}{4x^2 \zeta'(x)^2}=\frac{5}{16 \zeta(x)^2}+\frac{\zeta(x)x^2(4+x^2)}{4(1-x^2)^3} $$
where the last expression follows straightforwardly from Eq.~\eqref{eq:LGtrafo}.
Note that the apparent singularity at $x=1$ cancels by a Taylor expansion and the fact that $\zeta(1)=0$. Consequently, we have $V_2 \in C^\infty(\mathbb{R})$ and
it is evident from the above expression and Appendix~\ref{sec:symbol} that $V_2$ satisfies the bounds $|V_2^{(k)}(\zeta)|\leq C_k \langle \zeta \rangle^{-2-k}$ for all $k \in \mathbb{N}_0$ and all $\zeta \in \mathbb{R}$.

\subsubsection{The exponential regime}
The Airy equation 
$$ \phi''(\zeta)=\nu^2 \zeta \phi(\zeta) $$
has the fundamental system $\{\Ai(\nu^\frac23 \cdot), \Bi(\nu^\frac23 \cdot)\}$ where $\Ai$, $\Bi$ are the standard Airy functions, see Appendix~\ref{sec:airy}. 
This immediately allows us to construct a fundamental system for Eq.~\eqref{eq:Airypert} in the exponential regime. 
\begin{lemma}
\label{lem:bessel+}
 For $\zeta \geq 0$, $\nu \gg 1$ there exists a fundamental system $\{\phi_1(\cdot,\nu),\phi_2(\cdot,\nu)\}$ of 
Eq.~\eqref{eq:Airypert} of the form
 \begin{align*}
  \phi_1(\zeta, \nu)&=\Ai(\nu^\frac23 \zeta)[1+\nu^{-1}a_1(\zeta, \nu)] \\
  \phi_2(\zeta, \nu)&=\Bi(\nu^\frac23 \zeta)[1+\nu^{-1}a_2(\zeta, \nu)] 
 \end{align*}
where the functions $a_j$, $j=1,2$, are smooth, real--valued and $|a_j(\zeta,\nu)|\lesssim 1$ in 
the above range of $\zeta$ and $\nu$. Furthermore, $a_j$ satisfy the bounds
$$ |\partial_\nu^\ell \partial_\zeta^k a_j(\zeta, \nu)|\leq C_{k,\ell} \langle \zeta \rangle^{-\frac32-k}\nu^{-\ell},\quad \zeta \geq 1 $$
as well as
$$ |\partial_\nu^\ell a_j(0,\nu)|\leq C_\ell \nu^{-\ell},\quad |\partial_\nu^\ell \partial_\zeta a_j(0,\nu)|\leq C_\ell \nu^{\frac23-\ell} $$
for all $k,\ell \in \mathbb{N}_0$ and $\nu\gg 1$.
\end{lemma}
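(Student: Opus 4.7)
The plan is to perturb off the unperturbed Airy basis by a Volterra iteration, treating $V_2(\zeta)\phi(\zeta)$ as the inhomogeneity. I begin with the growing solution $\phi_2$. By Lemma~\ref{lem:perturb} applied with $u_0(\zeta)=\Bi(\nu^{2/3}\zeta)$, constructing $\phi_2$ in the stated form reduces to solving
\[
\nu^{-1}a_2(\zeta,\nu)=-\int_0^\zeta\!\!\int_\eta^\zeta \Bi(\nu^{2/3}\eta')^{-2}d\eta'\;\Bi(\nu^{2/3}\eta)^2 V_2(\eta)[1+\nu^{-1}a_2(\eta,\nu)]\,d\eta,
\]
together with its variant on the half-line $[\zeta,\infty)$ if needed to get decay at infinity. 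Rescaling by $x=\nu^{2/3}\zeta$ puts the kernel into the standard form $\int_y^x\Bi(u)^{-2}\tilde a(u,\nu)\,du\cdot\Bi(y)^2\tilde b(y,\nu)$, where the symbol bounds $|V_2^{(k)}(\zeta)|\lesssim\langle\zeta\rangle^{-2-k}$ translate (via Lemma~\ref{lem:symbc}) into $|\partial_\nu^\ell\partial_y^k\tilde b(y,\nu)|\lesssim\langle y\rangle^{-2-k}\nu^{-4/3-\ell}$. With $\alpha=2$, $\beta=4/3$, $\gamma=0$, Proposition~\ref{prop:volterraAinox} (applied on $[0,\nu^{2/3}\zeta]$ to obtain symbol bounds there) combined with Proposition~\ref{prop:volterraAi} (for the tail) yields the existence of $a_2$ with $|\partial_\nu^\ell\partial_x^k a_2(x,\nu)|\lesssim\langle x\rangle^{-3/2-k}\nu^{-\ell}$ after undoing the $\nu^{2/3}$-rescaling via Lemma~\ref{lem:symbc}; this already yields the bounds for $\zeta\geq 1$.

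For $\phi_1$ I use reduction of order: set
\[
\phi_1(\zeta,\nu):=-\tfrac{1}{\pi}\nu^{2/3}\,\phi_2(\zeta,\nu)\int_\zeta^\infty \phi_2(\eta,\nu)^{-2}d\eta,
\]
which is well-defined because $\phi_2$ grows like $\Bi(\nu^{2/3}\zeta)$ at infinity. The normalization $\tfrac{1}{\pi}\nu^{2/3}$ is chosen so that $W(\phi_1,\phi_2)=W(\Ai(\nu^{2/3}\cdot),\Bi(\nu^{2/3}\cdot))=\tfrac{\nu^{2/3}}{\pi}$, which guarantees $\phi_1$ is a decaying solution in the correct normalization. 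Writing $\phi_1(\zeta,\nu)=\Ai(\nu^{2/3}\zeta)[1+\nu^{-1}a_1(\zeta,\nu)]$ and using the asymptotics from Corollary~\ref{cor:Airy+} together with the already-established bounds on $a_2$, an integration-by-parts argument identical in structure to the one used for $\psi_+$ in the proof of Lemma~\ref{lem:Airy+} produces the claimed bounds on $a_1$ for $\zeta\geq 1$.

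The remaining issue, and the only genuinely delicate point, is the behavior at $\zeta=0$, where the natural scale of $\zeta$-derivatives is $\nu^{2/3}$ rather than $1$ (since $\Ai(\nu^{2/3}\zeta)$ varies on scale $\nu^{-2/3}$). To obtain $|\partial_\nu^\ell a_j(0,\nu)|\lesssim\nu^{-\ell}$ and $|\partial_\nu^\ell\partial_\zeta a_j(0,\nu)|\lesssim\nu^{2/3-\ell}$ I evaluate the Volterra representation for $a_2$ (and its $\partial_\zeta$-derivative) at $\zeta=0$; the integrand is bounded uniformly in $\nu$ on compacts, so pointwise bounds follow from the uniform control of $V_2$ near zero, while one $\zeta$-derivative hitting $\Bi(\nu^{2/3}\eta)$ factors costs exactly the anticipated $\nu^{2/3}$. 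The same analysis applies to $a_1$ via the reduction-of-order formula after one integration by parts to transfer the singular behavior of $\phi_2^{-2}$ onto the Wronskian-type factor. The main technical obstacle is book-keeping the two distinct scales ($1$ for $\zeta$-derivatives in the regime $\zeta\gtrsim 1$, versus $\nu^{2/3}$ near $\zeta=0$) in a single symbol calculus; this is why the bounds are stated separately in the two regions.
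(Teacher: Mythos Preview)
Your approach reverses the paper's order: the paper constructs the \emph{decaying} solution $\phi_1$ first via a Volterra iteration from $\zeta=\infty$ (so that Proposition~\ref{prop:volterraAi} applies directly and the $\langle\zeta\rangle^{-3/2}$ decay of $a_1$ comes for free), and then obtains $\phi_2$ by reduction of order starting from $\zeta=0$. Your reversed strategy---growing solution first via Volterra from $0$, then decaying solution by reduction toward $\infty$---is legitimate in principle, and is indeed what the paper uses in Lemma~\ref{lem:FSlgexp} where the $\zeta$-domain is \emph{bounded}. On the unbounded half-line $[0,\infty)$, however, your execution has genuine gaps.

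First, the bound $|\partial_\nu^\ell\partial_y^k\tilde b(y,\nu)|\lesssim\langle y\rangle^{-2-k}\nu^{-4/3-\ell}$ is too strong: one has $\tilde b(y,\nu)=\nu^{-4/3}V_2(\nu^{-2/3}y)$, so at $y\simeq\nu^{2/3}$ the right-hand side of your bound is $\simeq\nu^{-8/3}$ while $\tilde b\simeq\nu^{-4/3}$. The correct estimate is $|\tilde b|\lesssim\min(\nu^{-4/3},\langle y\rangle^{-2})$, which forces the two-region treatment the paper carries out. Second, Proposition~\ref{prop:volterraAi} is specifically for kernels built from $\Ai$; for the growing $\Bi$-based kernel one cannot run a Volterra iteration from $\infty$ at all, so invoking it ``for the tail'' of $a_2$ does not make sense. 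Third, and most importantly, Proposition~\ref{prop:volterraAinox} (which requires $\alpha>-\tfrac12$ in its own notation, so your ``$\alpha=2$'' cannot be the decay exponent there) yields at best $|a_2|\lesssim 1$, not $|a_2(\zeta)|\lesssim\langle\zeta\rangle^{-3/2}$. Integrating from $\zeta=0$ produces an $a_2$ with $a_2(0)=0$ but $a_2(\infty)$ a generically nonzero $O(1)$ constant; to match the lemma you must renormalize $\phi_2$ by $1+\nu^{-1}a_2(\infty)$ and then re-verify all derivative bounds for the shifted error, a step you omit. The paper's order sidesteps this because iterating from $\infty$ builds in $a_1(\infty)=0$ automatically.
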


\begin{proof}
We start with $\phi_1$ and according to Lemma~\ref{lem:perturb} we have to construct a solution $a_1$ of
 \begin{align*}
\nu^{-1} a_1(\zeta, \nu)&=\int_\zeta^\infty \int_\zeta^\eta \Ai(\nu^\frac23 \eta')^{-2}d\eta' \Ai (\nu^\frac23 \eta)^2V_2(\eta)[1+\nu^{-1}a_1(\eta, \nu)]d\eta \\
&=\nu^{-\frac43}\int_{\nu^\frac23 \zeta}^\infty \int_{\nu^\frac23 \zeta}^y \Ai(u)^{-2} du\;\Ai(y)^2 V_2(\nu^{-\frac23}y)[1+\nu^{-1}a_1(\nu^{-\frac23} y, \nu)]dy
 \end{align*}
 for $\zeta \geq 0$.
 Thus, the function $\tilde{a}_1(x, \nu):=\nu^{-1}a_1(\nu^{-\frac23}x)$ satisfies the Volterra equation
 $$ \tilde{a}_1(x, \nu)=\int_x^\infty K(x,y, \nu)[1+\tilde{a}_1(y, \nu)]dy $$
 where
 $$ K(x,y, \nu):=\int_x^y \Ai(u)^{-2}du\;\Ai(y)^2 \nu^{-\frac43}V_2(\nu^{-\frac23}y). $$
 In view of Proposition~\ref{prop:volterraAi} we set $\lambda:=\nu^{-1}$ and, since we are interested in large $\nu$, we assume $\lambda \in (0,1)$.
 If $x \geq \lambda^{-\frac23} \geq 1$, we have $x=\langle x \rangle$ and hence, $\langle \lambda^{\frac23}x\rangle^{-2}=\lambda^{-\frac43}x^{-2}$ which yields the bounds
$$ |\partial_\lambda^\ell \partial_x^k \lambda^{\frac43}V_2(\lambda^{\frac23} x)|
\leq C_{k,\ell}\langle x \rangle^{-2-k}\lambda^{-\ell},\quad k,\ell \in \mathbb{N}_0 $$
for all $x\geq \lambda^{-\frac23}$ and $\lambda \in (0,1)$.
Consequently, Proposition~\ref{prop:volterraAi} yields the existence of $\tilde{a}_1$ with the bounds
 $$ |\partial_\lambda^\ell \partial_x^k \tilde{a}_1(x, \lambda^{-1})|\leq C_{k,\ell}\langle x \rangle^{-\frac32-k}\lambda^{-\ell} $$
 and, via Lemma~\ref{lem:symbc}, these translate into
 $$ |\partial_\nu^\ell \partial_x^k \tilde{a}_1(x, \nu)|\leq C_{k,\ell}\langle x \rangle^{-\frac32-k}\nu^{-\ell} $$
 for all $x \geq \nu^\frac23$, $\nu > 1$ and $k,\ell \in \mathbb{N}_0$.
We obtain $|a_1(\zeta, \nu)|=|\nu \tilde{a}_1(\nu^{\frac23}\zeta, \nu)|\lesssim \nu \langle \nu^\frac23 \zeta \rangle^{-\frac32}$ and
 $$ |\partial_\nu^\ell \partial_\zeta^k a_1(\zeta, \nu)|\leq C_{k,\ell}\langle \zeta \rangle^{-\frac32-k}\nu^{-\ell} $$
 for all $\zeta \geq 1$, $\nu > 1$ and $k,\ell \in \mathbb{N}_0$ as claimed.
In the case $0 \leq x \leq \lambda^{-\frac23}$ (which implies $\langle x \rangle \leq \lambda^{-\frac23}$) we have
\begin{align*}
\tilde{a}_1(x,\lambda^{-1})=\tilde{a}_1(\lambda^{-\frac23},\lambda^{-1})+\int_x^{\lambda^{-\frac23}}K(x,y,\lambda^{-1})[1+\tilde{a}_1(y,\lambda^{-1})]dy
\end{align*}
and from the above we obtain the bounds 
$|\partial_\lambda^\ell \tilde{a}_1(\lambda^{-\frac23},\lambda^{-1})|\leq C_\ell \lambda^{1-\ell}$ for $\ell \in \mathbb{N}_0$.
Furthermore, we have
$$ |\partial_\lambda^\ell \partial_x^k \lambda^{\frac43}V_2(\lambda^\frac23 x)|\leq C_{k,\ell}\langle x \rangle^{-k}\lambda^{\frac43-\ell} $$
for all $0\leq x \leq \lambda^{-\frac23}$, $\lambda \in (0,1)$ and $k,\ell \in \mathbb{N}_0$ and, similar to 
Proposition~\ref{prop:volterraAi} (cf.~also Proposition~\ref{prop:volterraAinox}), we infer inductively by Volterra iterations that 
$$|\partial_\lambda^\ell \partial_x^k \tilde{a}_1(x,\lambda^{-1})\leq C_{k,\ell}\langle x \rangle^{-k}\lambda^{1-\ell}$$
which implies the claimed bounds for $a_1(0,\nu)$ and $\partial_\zeta a_1(0,\nu)$.

For the function $\phi_2$ we apply the standard reduction ansatz, i.e., we use the fact that 
$\phi_1(\zeta, \nu)>0$ for all $\zeta \geq 0$ provided that $\nu$ is large enough (hence the condition $\nu \gg 1$) and set
$$ \phi_2(\zeta, \nu)=\tfrac{1}{\pi}\nu^{\frac23} \phi_1(\zeta, \nu) \int_0^\zeta \phi_1(\eta, \nu)^{-2}d\eta
+\phi_1(\zeta,\nu). $$
By recalling that $W(\Ai,\Bi)=\frac{1}{\pi}$ 
it is readily verified that $\phi_2$ is indeed of the stated form. 
\end{proof}

\subsubsection{The oscillatory regime}
For $\zeta \leq 0$ we cannot apply the same construction as in Lemma~\ref{lem:bessel+} due to the zeros of $\Ai$ and $\Bi$ on the negative real axis. Consequently, we construct a complex 
fundamental system based on $\Ai\pm i\Bi$.

\begin{lemma}
 \label{lem:bessel-}
 For $\zeta \leq 0$, $\nu \geq 1$ there exists a fundamental system $\{\phi_\pm(\cdot, \nu)\}$ of Eq.~\eqref{eq:Airypert} of the form
 $$ \phi_\pm(\zeta, \nu)=[\Ai(\nu^\frac23 \zeta)\pm i\Bi(\nu^\frac23 \zeta)][1+\nu^{-1}a_\pm(\zeta, \nu)] $$
 where the functions $a_\pm(\cdot, \nu)$ are smooth and $|a_\pm(\zeta,\nu)|\lesssim 1$ in the above range 
 of $\zeta$ and $\nu$.
 Furthermore, $a_\pm$ satisfy the bounds
 $$ |\partial_\nu^\ell \partial_\zeta^k a_\pm(\zeta, \nu)|\leq C_{k,\ell}\langle \zeta\rangle^{-\frac32-k}\nu^{-\ell},\quad \zeta \leq -1 $$
 as well as
 $$ |\partial_\nu^\ell a_\pm(0,\nu)|\leq C_\ell \nu^{-\ell},\quad 
 |\partial_\nu^\ell \partial_\zeta a_\pm(0,\nu)|\leq C_{k,\ell}\nu^{\frac23-\ell} $$
 for all $\nu \geq 1$ and $k,\ell \in \mathbb{N}_0$.
\end{lemma}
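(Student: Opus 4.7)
The plan is to construct $\phi_-$ by a Volterra iteration based on the unperturbed Airy combination $\Ai(\nu^{\frac23}\zeta) - i\Bi(\nu^{\frac23}\zeta)$, and then obtain $\phi_+$ as its complex conjugate. Since $V_2$ is real--valued, $\overline{\phi_-}$ automatically solves Eq.~\eqref{eq:Airypert}. In contrast to Lemma~\ref{lem:bessel+}, the combinations $\Ai \pm i\Bi$ do not vanish on the negative real axis (their modulus is $\sim \pi^{-\frac12}\langle \nu^{\frac23}\zeta\rangle^{-\frac14}$ by Corollary~\ref{cor:Airy-}), so one can start the iteration at $\zeta = -\infty$ instead of at an interior point.

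Applying Lemma~\ref{lem:perturb} with base solution $u_0(\zeta) := \Ai(\nu^{\frac23}\zeta) - i\Bi(\nu^{\frac23}\zeta)$ and base point $-\infty$ leads to the Volterra equation
\begin{equation*}
\nu^{-1}a_-(\zeta,\nu) = -\int_{-\infty}^\zeta \int_\eta^\zeta u_0(\eta')^{-2} d\eta'\; u_0(\eta)^2 V_2(\eta)\bigl[1+\nu^{-1}a_-(\eta,\nu)\bigr]d\eta.
\end{equation*}
After the rescaling $x := -\nu^{\frac23}\zeta$ and $\tilde{a}_-(x,\nu) := \nu^{-1}a_-(-\nu^{-\frac23}x,\nu)$, this becomes a Volterra equation of exactly the form treated by Proposition~\ref{prop:volterraAiBi}, with the role of $b(y,\lambda)$ played by $\lambda^{\frac43} V_2(-\lambda^{\frac23}y)$ for $\lambda = \nu^{-1}$. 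Using the symbol bound $|V_2^{(k)}(\zeta)|\leq C_k \langle \zeta\rangle^{-2-k}$ established before Lemma~\ref{lem:bessel+}, together with the observation that on $x \geq \nu^{\frac23}$ one has $\langle -\nu^{-\frac23}y\rangle = \nu^{-\frac23}y$, the hypotheses of Proposition~\ref{prop:volterraAiBi} are satisfied with $\alpha = 2$ and $\beta = 0$. This yields $|\partial_\nu^\ell \partial_x^k \tilde{a}_-(x,\nu)|\leq C_{k,\ell}\langle x\rangle^{-\frac32-k}\nu^{-\ell}$ for $x \geq \nu^{\frac23}$, which via Lemma~\ref{lem:symbc} translates into the claimed bounds on $a_-(\zeta,\nu)$ for $\zeta \leq -1$.

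To extend the estimates to the transition region $\zeta \in [-1,0]$ (equivalently $x \in [0,\nu^{\frac23}]$), one rewrites the equation as
\begin{equation*}
\tilde{a}_-(x,\nu) = \tilde{a}_-(\nu^{\frac23},\nu) + \int_x^{\nu^{\frac23}}\tilde{K}(x,y,\nu)\bigl[1+\tilde{a}_-(y,\nu)\bigr]dy,
\end{equation*}
treating $\tilde{a}_-(\nu^{\frac23},\nu)$ (and its $\nu$-derivatives) as initial data inherited from the previous step. On this bounded interval the rescaled potential $\nu^{-\frac43}V_2(-\nu^{-\frac23}y)$ is merely bounded, but the length of the integration domain is $\nu^{\frac23}$, which fits the framework of Proposition~\ref{prop:volterranox} exactly as in the argument for the region $0 \leq x \leq \lambda^{-\frac23}$ at the end of the proof of Lemma~\ref{lem:bessel+}. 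A straightforward induction in $k + \ell$ yields control of $\tilde{a}_-$ and its derivatives on $[0,\nu^{\frac23}]$; undoing the rescaling via $\partial_\zeta = -\nu^{\frac23}\partial_x$ then produces the bounds $|\partial_\nu^\ell a_-(0,\nu)|\leq C_\ell \nu^{-\ell}$ and $|\partial_\nu^\ell \partial_\zeta a_-(0,\nu)|\leq C_\ell \nu^{\frac23-\ell}$, as required.

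The main obstacle is the bookkeeping at the interface $x = \nu^{\frac23}$: one must verify that the bounds for $\tilde{a}_-(\nu^{\frac23},\nu)$ and its $\nu$-derivatives coming from the outer Volterra iteration are compatible with the initial data needed to launch the inner iteration on $[0,\nu^{\frac23}]$, while properly accounting for the chain-rule factors $\nu^{\frac23}$ introduced when passing from $\partial_x$ to $\partial_\zeta$. Once $\phi_-$ has been constructed with the required estimates, setting $\phi_+ := \overline{\phi_-}$ yields the second element of the fundamental system, and linear independence follows from a Wronskian computation at $\zeta = 0$ using $W(\Ai,\Bi) = \frac{1}{\pi}$ together with the smallness of $\nu^{-1}a_\pm(0,\nu)$ for $\nu$ large.
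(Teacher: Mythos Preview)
Your proposal is correct and follows the same approach as the paper: after the sign change $\zeta \mapsto -\zeta$ (your rescaling $x = -\nu^{2/3}\zeta$), the argument is a verbatim repetition of the proof of Lemma~\ref{lem:bessel+} with Proposition~\ref{prop:volterraAiBi} replacing Proposition~\ref{prop:volterraAi}, and $\phi_+ := \overline{\phi_-}$ replaces the reduction ansatz since $V_2$ is real. The paper's proof says exactly this in two sentences; you have simply unpacked the phrase ``completely analogous to the proof of Lemma~\ref{lem:bessel+}'' in more detail.
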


\begin{proof}
After switching from $\zeta$ to $-\zeta$, the proof becomes completely analogous to the proof of Lemma~\ref{lem:bessel+}. The only difference being that one has to apply Proposition~\ref{prop:volterraAiBi} instead of Proposition~\ref{prop:volterraAi}. Furthermore, the reduction ansatz is not needed: once $\phi_-$ is constructed, one sets $\phi_+:=\overline{\phi_-}$.
\end{proof}

\subsubsection{Matching of the fundamental systems}

Finally, we match the fundamental systems $\{\phi_j(\cdot, \nu): j=1,2\}$ and $\{\phi_\pm(\cdot,\nu)\}$ from 
Lemmas \ref{lem:bessel+} and \ref{lem:bessel-}.
This is done by evaluating their Wronskians at $\zeta=0$.
Unlike in the small $\nu$ case, we do not use any global information on the Bessel functions. 
This is not necessary here since we have a small parameter $\nu^{-1}$ and the Airy functions are defined globally.

\begin{lemma}
\label{lem:bessel+bessel-}
 For $\zeta \geq 0$, the functions $\phi_\pm(\cdot, \nu)$ from Lemma~\ref{lem:bessel-} have the representation
 $$ \phi_\pm(\zeta, \nu)=[1+\nu^{-1}\alpha_{\pm,1}(\nu)]\phi_1(\zeta, \nu)\pm i[1+\nu^{-1}\alpha_{\pm,2}(\nu)]\phi_2(\zeta, \nu) $$
 where $\phi_j$, $j=1,2$, are from Lemma~\ref{lem:bessel+} and the coefficients $\alpha_{\pm,j}$ satisfy the bounds
 $$ |\partial_\nu^\ell \alpha_{\pm,j}(\nu)|\leq C_\ell \nu^{-\ell} $$
 for all $\nu \gg 1$ and $\ell \in \mathbb{N}_0$, $j=1,2$.
\end{lemma}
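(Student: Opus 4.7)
Since $\{\phi_1(\cdot,\nu),\phi_2(\cdot,\nu)\}$ is a fundamental system for Eq.~\eqref{eq:Airypert} on $\zeta\ge 0$, there exist (unique, $\zeta$-independent) connection coefficients $c_{\pm,1}(\nu)$ and $c_{\pm,2}(\nu)$ with
$$ \phi_\pm(\zeta,\nu)=c_{\pm,1}(\nu)\phi_1(\zeta,\nu)+c_{\pm,2}(\nu)\phi_2(\zeta,\nu) $$
and Cramer's rule gives
$$ c_{\pm,1}(\nu)=\frac{W(\phi_\pm(\cdot,\nu),\phi_2(\cdot,\nu))}{W(\phi_1(\cdot,\nu),\phi_2(\cdot,\nu))},\qquad c_{\pm,2}(\nu)=-\frac{W(\phi_\pm(\cdot,\nu),\phi_1(\cdot,\nu))}{W(\phi_1(\cdot,\nu),\phi_2(\cdot,\nu))}. $$
The plan is to evaluate all three Wronskians at $\zeta=0$ and to read off the claimed $1+\nu^{-1}\alpha_{\pm,j}(\nu)$ structure. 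First, the construction of $\phi_2$ in Lemma~\ref{lem:bessel+} by reduction of order gives $W(\phi_1(\cdot,\nu),\phi_2(\cdot,\nu))=\tfrac{1}{\pi}\nu^{2/3}$ \emph{exactly} (a direct differentiation check using $W(\Ai,\Bi)=\tfrac{1}{\pi}$), so there is no small-parameter expansion needed in the denominator.

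Next, from Lemmas~\ref{lem:bessel+} and~\ref{lem:bessel-}, at $\zeta=0$ we have, writing $A=\Ai(0)$, $B=\Bi(0)$, $A'=\Ai'(0)$, $B'=\Bi'(0)$,
\begin{align*}
\phi_j(0,\nu)&=\Ai/\Bi(0)\cdot[1+\nu^{-1}a_j(0,\nu)],\\
\phi_j'(0,\nu)&=\nu^{2/3}A'/B'\cdot[1+\nu^{-1}a_j(0,\nu)]+\Ai/\Bi(0)\cdot\nu^{-1}\partial_\zeta a_j(0,\nu),
\end{align*}
and analogous formulas for $\phi_\pm$ with $\Ai\pm i\Bi$ in place of $\Ai$ or $\Bi$. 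Inserting these in $W(\phi_\pm,\phi_2)$ and $W(\phi_\pm,\phi_1)$ at $\zeta=0$, the leading $\nu^{2/3}$-terms produce
$$ W(\phi_\pm,\phi_2)(0)=\nu^{2/3}(AB'-A'B)[1+O(\nu^{-1})]=\tfrac{\nu^{2/3}}{\pi}[1+O(\nu^{-1})], $$
$$ W(\phi_\pm,\phi_1)(0)=\mp i\nu^{2/3}(AB'-A'B)[1+O(\nu^{-1})]=\mp\tfrac{i\nu^{2/3}}{\pi}[1+O(\nu^{-1})], $$
where the cross terms $\pm iBB'\mp iB'B$ and $\pm iAA'\mp iA'A$ cancel. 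Dividing by $W(\phi_1,\phi_2)=\nu^{2/3}/\pi$ yields
$$ c_{\pm,1}(\nu)=1+\nu^{-1}\alpha_{\pm,1}(\nu),\qquad c_{\pm,2}(\nu)=\pm i[1+\nu^{-1}\alpha_{\pm,2}(\nu)], $$
which is exactly the claimed representation.

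The only subtle point in the bookkeeping is that the $\partial_\zeta a$–terms in $\phi_j'(0,\nu)$ and $\phi_\pm'(0,\nu)$ are only bounded by $\nu^{2/3}$ (not by a constant), and carry a prefactor $\nu^{-1}$; multiplied out, they contribute $\nu^{-1/3}$ to the Wronskian, which is indeed $\nu^{-1}$ relative to the leading $\nu^{2/3}$. Tracking the $\ell$-th $\nu$-derivative of the ratio then reduces to combining the bound $|\partial_\nu^\ell a_j(0,\nu)|\le C_\ell\nu^{-\ell}$ and $|\partial_\nu^\ell\partial_\zeta a_j(0,\nu)|\le C_\ell\nu^{2/3-\ell}$ (and similarly for $a_\pm$) from Lemmas~\ref{lem:bessel+}–\ref{lem:bessel-} with the quotient rule via the symbol calculus of Appendix~\ref{sec:symbol} (Lemma~\ref{lem:symbc} and its corollary), noting that the denominator $1+O(\nu^{-1})$ stays bounded away from zero for $\nu\gg 1$. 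This yields the required $|\partial_\nu^\ell\alpha_{\pm,j}(\nu)|\le C_\ell\nu^{-\ell}$ for all $\ell\in\mathbb{N}_0$. The main (and essentially the only) potential obstacle is exactly this derivative bookkeeping, but it is handled by the symbol calculus once the leading-order cancellations above are in place.
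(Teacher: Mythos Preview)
Your proof is correct and follows essentially the same approach as the paper's own argument: express the connection coefficients via Wronskians, evaluate at $\zeta=0$ using the data from Lemmas~\ref{lem:bessel+} and~\ref{lem:bessel-}, and invoke the symbol calculus for the derivative bounds. Your observation that $W(\phi_1,\phi_2)=\tfrac{1}{\pi}\nu^{2/3}$ holds \emph{exactly} (from the reduction-of-order construction of $\phi_2$) is a minor sharpening over the paper, which writes the denominator as $\nu^{2/3}W(A_1,A_2)[1+O(\nu^{-1})]$; either way the conclusion is the same.
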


\begin{proof}
The ansatz
$$ \phi_\pm(\zeta, \nu)=\tilde{\alpha}_{\pm,1}(\nu)\phi_1(\zeta, \nu)+\tilde{\alpha}_{\pm,2}(\nu)\phi_2(\zeta, \nu) $$
implies
$$ \tilde{\alpha}_{\pm,1}(\nu)=\frac{W(\phi_\pm(\cdot, \nu),\phi_2(\cdot, \nu))}{W(\phi_1(\cdot, \nu), \phi_2(\cdot, \nu))},\quad
\tilde{\alpha}_{\pm,2}(\nu)=\frac{W(\phi_\pm(\cdot, \nu),\phi_1(\cdot, \nu))}{W(\phi_2(\cdot, \nu), \phi_1(\cdot, \nu))}. $$
Note that the Wronskians are functions of $\nu$ only since Eq.~\eqref{eq:besselrescaled} does not have a first derivative.
 Setting $A_1:=\Ai$, $A_2:=\Bi$ and $A_\pm:=\Ai\pm i\Bi$, we obtain, for $j=1,2$,
 \begin{align*} 
\phi_j(0, \nu)&=A_j(0)[1+O(\nu^{-1})]& \phi'_j(0, \nu)&=\nu^\frac23 A_j'(0)[1+O(\nu^{-1})] \\
\phi_\pm(0, \nu)&=A_\pm(0)[1+O_\mathbb{C}(\nu^{-1})] &\phi'_\pm(0, \nu)&=\nu^\frac23 A_\pm'(0)[1+O_\mathbb{C}(\nu^{-1})]
 \end{align*}
 where all $O$--terms behave like symbols.
 This yields
 $$
  \tilde{\alpha}_{\pm,1}(\nu)=\frac{\nu^\frac23 W(A_1\pm iA_2,A_2)[1+O_\mathbb{C}(\nu^{-1})]}{\nu^\frac23 W(A_1,A_2)[1+O(\nu^{-1})]}=1+O_\mathbb{C}(\nu^{-1})
 $$
 provided that $\nu$ is sufficiently large, and, analogously,
 $$
 \tilde{\alpha}_{\pm,2}(\nu)=\frac{\nu^\frac23 W(A_1\pm iA_2,A_1)[1+O_\mathbb{C}(\nu^{-1})]}{\nu^\frac23 W(A_2,A_1)[1+O(\nu^{-1})]}=\pm i+O_\mathbb{C}(\nu^{-1}) $$
 where the $O_\mathbb{C}$--terms behave like symbols.
\end{proof}

\end{appendix}

\bibliography{semexp}
\bibliographystyle{plain}

\end{document}